\newtheorem{thm}{Theorem}[section]
\newtheorem*{THMA}{Theorem A}
\newtheorem*{THMB}{Theorem B}
\newtheorem*{THMC}{Theorem C}
\newtheorem*{THMD}{Theorem D}
\newtheorem{lemma}[thm]{Lemma}
\newtheorem{cor}[thm]{Corollary}
\newtheorem{claim}{Claim}[thm]
\newtheorem{prop}[thm]{Proposition}
\newtheorem{fact}[thm]{Fact}
\newtheorem{example}[thm]{Example}
\newtheorem*{FACT}{Fact}
\theoremstyle{definition}
\newtheorem{defn}[thm]{Definition}
\theoremstyle{remark}
\newtheorem{results}[thm]{Results}
\newtheorem{remark}[thm]{Remark}
\newtheorem*{remarks}{Remarks}
\newtheorem{Q}[thm]{Question}
\renewcommand\mid{\mathrel{|}\allowbreak}
\renewcommand{\restriction}{\mathbin\upharpoonright}
\newcommand\bigdiagonal{\bigtriangleup}
\newcommand{\one}{\mathbbm{1}}
\newcommand\forces{\Vdash}
\newcommand{\etalchar}[1]{$^{#1}$}
\newcommand{\sqleft}[1]{\mathrel{_{#1}{\sqsubseteq}}}
\newcommand\sqx{\sqleft{\chi}}
\newcommand\sq{\sqsubseteq}
\newcommand\s{\subseteq}
\newcommand\br{\blacktriangleright}
\newcommand\id{\mathrm{id}}
\newcommand\axiomfont[1]{\textsf{\textup{#1}}}
\newcommand\zfc{\axiomfont{ZFC}}
\newcommand\ch{\textup{CH}}
\newcommand\gch{\textup{GCH}}
\newcommand\frp{\textup{FRP}}
\newcommand\ns{\textup{NS}}
\DeclareMathOperator{\p}{P}
\DeclareMathOperator{\suc}{succ}
\DeclareMathOperator{\reg}{Reg}
\DeclareMathOperator{\card}{Card}
\DeclareMathOperator{\cof}{cof}
\DeclareMathOperator{\cf}{cf}
\DeclareMathOperator{\cl}{cl}
\DeclareMathOperator{\refl}{Refl}
\DeclareMathOperator{\coll}{Coll}
\DeclareMathOperator{\dom}{dom}
\DeclareMathOperator{\rng}{Im}
\DeclareMathOperator{\otp}{otp}
\DeclareMathOperator{\acc}{acc}
\DeclareMathOperator{\nacc}{nacc}
\DeclareMathOperator{\col}{Col}
\DeclareMathOperator{\chr}{Chr}
\author{Chris Lambie-Hanson}
\address{Department of Mathematics, Bar-Ilan University, Ramat-Gan 5290002, Israel.}
\urladdr{http://u.math.biu.ac.il/~lambiec/}
\author{Assaf Rinot}
\address{Department of Mathematics, Bar-Ilan University, Ramat-Gan 5290002, Israel.}
\urladdr{http://www.assafrinot.com}
\subjclass[2010]{Primary 03E35. Secondary 05C15, 05C63.}
\keywords{Compactness, Rado's conjecture, Chang's conjecture, Fodor-type reflection, $\Delta$-reflection, $C$-sequence graph, chromatic number, coloring number, square principles, parameterized proxy principle}
\title[Reflection on coloring and chromatic]{Reflection on the coloring and chromatic numbers}
\begin{document}
\begin{abstract}
  We prove that reflection of the coloring number of graphs is consistent with non-reflection of the chromatic number.
  Moreover, it is proved that incompactness for the chromatic number of graphs (with arbitrarily large gaps) is compatible
  with each of the following compactness principles: Rado's conjecture, Fodor-type reflection, $\Delta$-reflection, Stationary-sets reflection, Martin's Maximum,
  and a generalized Chang's conjecture. This is accomplished by showing that, under $\gch$-type assumptions,
  instances of incompactness for the chromatic number can be derived from square-like
  principles that are compatible with large amounts of compactness.

  In addition, we prove that, in contrast to the chromatic number, the coloring number does not admit arbitrarily
  large incompactness gaps.
\end{abstract}
\maketitle

\section{Introduction}

\begin{defn}\label{def11}
  A \emph{graph} is a pair $\mathcal G = (G,E)$, where $E \subseteq [G]^2$. Elements of
  $G$ are called the \emph{vertices} of $\mathcal G$, and elements of $E$ are called the
  \emph{edges} of $\mathcal G$. If $x \in G$, then
  the \emph{neighborhood} of $x$ in $\mathcal G$ is $N_{\mathcal G}(x):=\{y \in G \mid \{x,y\} \in E\}$;
  if $\lhd$ is an ordering of $G$, then $N_{\mathcal G}^\lhd(x):=\{y\in N_{\mathcal G}(x)\mid y\lhd x\}$.
\end{defn}
For an arbitrary graph $\mathcal G$, the set of
  vertices of $\mathcal G$ will often be denoted by $V(\mathcal G)$, and the set of edges by $E(\mathcal G)$.

\begin{defn}
  Suppose $\mathcal G$ is a graph.
  \begin{enumerate}
    \item A function $c$ on $V(\mathcal G)$ is called a
      \emph{chromatic coloring} of $\mathcal G$ if  $c(x) \neq c(y)$ for all $\{x,y\} \in E(\mathcal G)$.
      The \emph{chromatic number} of $\mathcal G$, denoted $\chr(\mathcal G)$, is the least cardinal $\chi$ for
      which there exists a chromatic coloring $c:V(\mathcal G)\rightarrow \chi$.
    \item The \emph{coloring number} of $\mathcal G$, denoted $\col(\mathcal G)$, is the least cardinal $\kappa$ for which
      there exists a well-ordering $\vartriangleleft$ of $V(\mathcal G)$ such that $|N_{\mathcal G}^\lhd(x)|<\kappa$ for all $x \in V(\mathcal G)$.
  \end{enumerate}
\end{defn}

It is evident that $\chr(\mathcal G) \leq \col(\mathcal G)$ for every graph $\mathcal G$.

\medskip

By a classic result of de Bruijn and Erd\H{o}s \cite{MR0046630}, if $\mathcal G$ is a graph,  $k$ is a positive integer,
and all finite subgraphs of $\mathcal G$ have chromatic number $\le k$, then $\chr(\mathcal G)\le k$.
Questions involving generalizations of this theorem (to infinite cardinal numbers,
as well as to other cardinal functions) have attracted a lot of attention;
we highlight a number of known results regarding compactness for chromatic and coloring numbers in Section~\ref{section2}.

Counterexamples to compactness are captured by the following concepts:

\begin{defn}
  Suppose $\mathcal G$ is a graph and $\mu \leq \kappa$ are cardinals. $\mathcal G$
  is said to be \emph{$(\mu, \kappa)$-chromatic} (resp. \emph{$(\mu, \kappa)$-coloring})
  if $\chr(\mathcal G) = \kappa$ (resp. $\col(\mathcal G) = \kappa$) and $\chr(\mathcal G') \leq \mu$ (resp. $\col(\mathcal G') \leq \mu$) for every subgraph
  $\mathcal G'$ of $\mathcal G$ with $|V(\mathcal G')| < |V(\mathcal G)|$.
\end{defn}

In \cite{paper12}, the second author introduces a family of graphs, denoted $G(\vec C)$,
and investigates their features.
It is established there that if $\vec C$ is a \emph{coherent} sequence of local clubs along a regular
cardinal $\kappa$ and $G$ is a subset of $\kappa$ all of whose proper initial segments are non-stationary,
then $G(\vec C)$ is $(\aleph_0,\theta)$-chromatic for some cardinal $\theta\le\kappa$.
In addition, in \cite{paper12}, various constructions are given of coherent sequences $\vec C$ and non-reflecting stationary sets $G$
for which $\theta$ --- that is, $\chr(G(\vec C))$ --- is arbitrarily large.

In this paper, it is proved that if $\vec C$ is coherent, then $G(\vec C)$ is $(\aleph_0,\theta)$-chromatic even if $G=\kappa$.
This eliminates the need for the existence of non-reflecting stationary sets,
thereby opening the door for compatibility of the incompactness for the chromatic number with compactness for the coloring number.\footnote{The existence of a non-reflecting stationary set
implies incompactness for the coloring number; see Lemma~\ref{nonreflecting_set_col}.}

Furthermore, it is shown here that weaker forms of coherence of $\vec C$ suffice to infer that $G(\vec C)$ is $(\chi,\theta)$-chromatic, even when $\theta\gg\chi$.
This allows the compatibility of the incompactness for the chromatic number with very large cardinals.

To succinctly state some of the consequences of the work in this paper,
let $\mathcal E(\chi,\kappa)$ stand for the assertion that there exists a $(\chi,\kappa)$-chromatic graph of size $\kappa$. We have:

\begin{THMA}
  Assuming the consistency of large cardinal axioms,\footnote{The strength needed differs depending on the statement;
  see Section~\ref{consistency_sect} for the precise large cardinal axioms that are used.} the following are consistent:
  \begin{enumerate}
    \item $(\aleph_{\omega+1}, \aleph_\omega) \twoheadrightarrow (\aleph_1, \aleph_0)$ together with $\mathcal E(\aleph_0, \aleph_{\omega+1})$;
    \item   $\frp(<\aleph_3)$ together with $\mathcal E(\aleph_0,\aleph_2)$;
    \item Rado's Conjecture together with  $\mathcal E(\aleph_2,\kappa)$ holding for all regular $\kappa > \aleph_2$;
    \item Martin's Maximum together with  $\mathcal E(\aleph_2,\kappa)$ holding for all regular $\kappa > \aleph_2$;
    \item $\chi$ is a supercompact cardinal together with $\mathcal E(\chi,\kappa)$ holding for all regular $\kappa > \chi$;
    \item \begin{enumerate}
        \item $\Delta_{\aleph_{\omega^2}, \aleph_{\omega^2+1}}$ together with $\mathcal E(\aleph_0, \aleph_{\omega^2+1})$;
        \item $\Delta_\kappa$ together with $\mathcal E(\aleph_0, \kappa)$, where $\kappa$ is inaccessible;
      \end{enumerate}
    \item \begin{enumerate}
        \item Reflection of stationary subsets of $E^{\aleph_2}_{\aleph_0}$ together with $\mathcal E(\aleph_0,\aleph_2)$;
        \item Reflection of stationary subsets of $\aleph_{\omega+1}$ together with $\mathcal E(\aleph_0,\aleph_{\omega+1})$;
        \item Reflection of stationary subsets of $\kappa$ together with $\mathcal E(\aleph_0,\kappa)$, where $\kappa$ is the least inaccessible cardinal.
      \end{enumerate}
  \end{enumerate}
\end{THMA}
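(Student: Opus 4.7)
The plan is to prove each numbered item via the same two-step template. First, invoke the main graph-theoretic result announced in the introduction: given a (weakly) coherent $C$-sequence $\vec C$ on a regular cardinal $\kappa$, the graph $G(\vec C)$ is $(\chi,\theta)$-chromatic for some $\theta\le\kappa$, with $\chi=\aleph_0$ in the strong-coherence case and larger $\chi$ available for weaker forms of coherence. Combined with appropriate $\gch$-style hypotheses that ensure $\theta=\kappa$ and that the underlying set has the desired size, this yields the $\mathcal E(\chi,\kappa)$-witness. Second, realize the stated compactness principle in a forcing extension that still carries such a $\vec C$. In each clause, the relevant form of coherence is captured by a suitable instance of the parameterized proxy principle, so the task reduces to exhibiting a single model in which a particular proxy instance coexists with the compactness principle.

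For the second step, each clause is handled by a separate forcing construction, but all follow a common pattern: start from a ground model containing large cardinals of the required consistency strength, then apply an iteration or a L\'evy collapse that realizes the compactness principle at small cardinals while arranging (or preserving) the targeted parameterized proxy principle at $\kappa$. For clause (1), one works in a model of the Chang-type transfer principle and arranges a coherent sequence on $\aleph_{\omega+1}$ by a gentle preparatory forcing. For clause (2), the standard consistency proof of $\frp(<\aleph_3)$ is combined with a coherent $C$-sequence on $\aleph_2$. Clauses (3) and (4) are placed in the canonical models of Rado's Conjecture and $\mm$, obtained from a supercompact collapsed to $\aleph_2$; both principles enforce reflection only at $\aleph_1$, leaving room for $\aleph_2$-parameterized proxy sequences at all higher regular $\kappa$. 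Clause (5) is set in a model in which $\chi$ is supercompact and $\gch$ holds above $\chi$, with a $\chi$-parameterized proxy sequence arranged at each regular $\kappa>\chi$. Clauses (6) and (7) are handled inside the known consistency proofs of $\Delta$-reflection and stationary-set reflection, by checking that the proxy principle survives the relevant iteration.

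The main obstacle is concentrated in clauses (3)--(5), where the compactness principle is very strong and the coloring-number parameter is $\chi>\aleph_0$. Here one must formulate the parameterized proxy principle precisely enough that it is compatible with $\chi$-reflection at the higher regular $\kappa$: Rado's Conjecture and $\mm$ both rule out many square-like principles at $\aleph_1$, but are compatible with weak parameterized versions at $\kappa>\aleph_2$, provided one is careful about the parameters chosen. The bulk of the technical work will be to verify that the canonical consistency proofs of Rado's Conjecture, $\mm$, and supercompactness do not destroy the required proxy sequences, either because those sequences already exist in the extension for $\gch$-like reasons, or because a suitably directed-closed preparatory forcing can be interleaved without disturbing the compactness principle. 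The remaining items, where $\chi=\aleph_0$, are easier because the strong-coherence case of the graph-theoretic theorem applies directly and the required proxy instance is essentially $\square$-like at the target cardinal.
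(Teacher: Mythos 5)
Your overall architecture matches the paper's: every clause is routed through a single incompactness engine (Corollary~\ref{incompactness_cor_1}(2), i.e.\ $\p^-(\kappa,2,{\sq_\chi},\kappa,\{\kappa\},2,2)$ yields a $(\chi,\kappa)$-chromatic graph of size $\kappa$ via capturing $C$-sequences together with Lemmas~\ref{lemma1} and~\ref{large_chromatic_number_lemma}), and clauses (1), (3), (4), (5) are handled essentially as you describe, by forcing the proxy principle with the $\chi$-directed closed, $\kappa$-strategically closed poset $\mathbb{P}(\kappa,\chi)$ over the standard models of Chang's Conjecture, Rado's Conjecture, $\mm$, and indestructible supercompactness, using preservation of those principles under $\chi$-directed closed (resp.\ strategically closed) forcing.

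The gap is in clauses (6)(b) and (7). There, ``checking that the proxy principle survives the relevant iteration'' is not a viable plan: the generic sequence added by $\mathbb{P}(\kappa,\aleph_0)$ only hits ground-model stationary sets, while $\refl(\kappa)$ and $\Delta_\kappa$ must hold for \emph{all} stationary sets of the extension, and the square-adding forcing introduces new stationary sets that the known consistency proofs of reflection do not control. The paper's mechanism (Theorems~\ref{refl_thm} and~\ref{inaccessible_thm_1}) is to follow $\mathbb{P}(\kappa,\aleph_0)$ with an iteration $\mathbb{Q}$ destroying the stationarity of every set that the threading forcing $\mathbb{T}$ would render non-stationary; reflection in $V[g*h]$ is then verified by passing to the threaded extension $V[g*h*k]$ --- where $\mathbb{P}*(\dot{\mathbb{Q}}\times\dot{\mathbb{T}})$ has a dense $\kappa$-directed closed subset, so $\refl^*(S)$ or indestructible supercompactness applies --- and pulling back by distributivity, while Theorem~\ref{iteration_lemma} separately shows that the hitting property of $\vec{C}$ survives $\mathbb{Q}$. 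Your sketch contains none of this, and without it the step fails. Relatedly, for clauses (2) and (6)(a) the paper does no extra forcing at all: in Miyamoto's model $\square(\aleph_2)$ holds outright (the Mahlo cardinal is not weakly compact in $L$), and Fontanella--Hayut already provide $\square(\aleph_{\omega^2+1})$ alongside $\Delta$-reflection; Theorem~\ref{improved_square_thm_1}(5),(6) then upgrades $\gch+\square(\kappa)+{}$reflection to the full proxy principle purely combinatorially --- the ``reflection helps incompactness'' phenomenon of Theorem~C. Your alternative of interleaving a preparatory forcing is dubious for clause (2), since $\mathbb{P}(\aleph_2,\aleph_0)$ is only strategically closed rather than $\aleph_2$-directed closed, and there is no evident reason that $\frp(\aleph_2)$ survives it.
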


\begin{proof}
  The proofs of all of the statements rely on Corollary \ref{incompactness_cor_1}(2).
  (1) then follows from Corollary \ref{chang_cor},
  (2) from Corollary \ref{frp_cor},
  and (3) and (4) from   Corollary \ref{aleph_2_cor}.
  (5)  follows from Corollary~\ref{supercompact_cor},
  (6)(a) from Corollary \ref{fontanella_hayut_cor}, (6)(b)
  from Theorem \ref{inaccessible_thm_1}, and
  (7) from  Theorem \ref{refl_thm} and \cite[$\S3.3$]{hayut_lh}.
\end{proof}

To put Theorem A into context, let us point out a few relationships between the above principles and reflection of cardinal functions.

\begin{FACT}
\begin{enumerate}

\item If $(\aleph_{\omega+1}, \aleph_\omega) \twoheadrightarrow (\aleph_1, \aleph_0)$ holds, and  $\theta<\kappa\le\aleph_{\omega+1}$  are infinite cardinals,
then every $\kappa$-sized graph, all of whose strictly smaller subgraphs have coloring number $\le\theta$, has coloring number $\le\theta^+$.
\item    $\frp(<\chi$) is equivalent to the assertion that any graph of size $<\chi$ of uncountable coloring number has an $\aleph_1$-sized subgraph of uncountable coloring number.
\item Rado's Conjecture is equivalent to the assertion that any tree whose comparability graph is uncountably chromatic has an $\aleph_1$-sized subtree whose comparability graph is uncountably chromatic.
\item If there exists an $(\aleph_0,\ge\aleph_1)$-coloring graph of size $\kappa$, then there exists a tree of size $\le\kappa^{\aleph_0}$ whose comparability graph is $(\aleph_0,\ge\aleph_1)$-chromatic. In particular, Rado's Conjecture implies $\frp$.
\item Suppose that $\theta<\chi\le\kappa$  are infinite cardinals such that $\chi$ is strongly compact. Then every graph of size $\kappa$ and chromatic number $>\theta$ has a
  subgraph of size $<\chi$ and chromatic number $>\theta$.
\item  Suppose that $\theta < \chi\le\kappa$ are infinite cardinals such that $\kappa$ is singular or $\Delta_{\chi, \kappa}$ holds.
  Then every graph of size $\kappa$ and coloring number $>\theta$ has a strictly smaller subgraph of coloring number $>\theta$.
\end{enumerate}
\end{FACT}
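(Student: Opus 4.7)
The six items compress known model- and graph-theoretic characterisations, so my plan is to handle them one by one, citing standard tools rather than redeveloping them.

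For (1), I would use the elementary-submodel reformulation of $(\aleph_{\omega+1},\aleph_\omega)\twoheadrightarrow(\aleph_1,\aleph_0)$: every structure of size $\aleph_{\omega+1}$ in a countable language admits an elementary submodel of size $\aleph_1$ meeting $\aleph_\omega$ in a countable set. Given a $\kappa$-sized graph $\mathcal G$ whose strictly smaller subgraphs all have coloring number $\le \theta$, the case $\kappa<\aleph_{\omega+1}$ is Shelah's singular compactness for the coloring number (cf.\ item~(6)), so I would concentrate on $\kappa=\aleph_{\omega+1}$. I would build a continuous elementary chain $\langle M_\alpha:\alpha<\omega_1\rangle$ covering $V(\mathcal G)$ with $|M_\alpha|=\aleph_\omega$, pick well-orderings $\lhd_\alpha$ of $V(\mathcal G)\cap M_\alpha$ witnessing $\col\le\theta$ (available by hypothesis, since each piece is strictly smaller than $\kappa$), and then concatenate. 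The principal obstacle is to bound, for each $y\in M_{\alpha+1}\setminus M_\alpha$, the number of neighbors of $y$ lying in $\bigcup_{\beta<\alpha}M_\beta$ by $\theta$; applied to the structure expanded by predicates naming the edge relation and the filtration, the Chang thinning on $\aleph_\omega$ is precisely what forces this count below $\theta^+$.

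For (2) I would appeal to the known characterisation of $\frp(<\chi)$ as reflection of the coloring number to $\aleph_1$-sized subgraphs. For (3) I would invoke Galvin's theorem that a tree is special iff its comparability graph has countable chromatic number; with it, the usual form of Rado's Conjecture (``every non-special tree has a non-special subtree of size $\aleph_1$'') transcribes directly into the graph-theoretic statement. For (4) I would run a standard tree-from-graph construction: from $\mathcal G$ of size $\kappa$ with $\col(\mathcal G)>\aleph_0$, form the tree $T$ of countable attempted witnesses to $\col(\mathcal G)\le\aleph_0$ along initial enumerations of $V(\mathcal G)$, ordered by end-extension. A counting argument bounds $|T|$ by $\kappa^{\aleph_0}$; strictly smaller subtrees, being essentially witnesses for col$\le\aleph_0$ on strictly smaller subgraphs, are special and hence countably chromatic by Galvin; while a specialization of $T$ itself would assemble into a global witness refuting $\col(\mathcal G)>\aleph_0$. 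The ``in particular'' clause combines (3) with (2).

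For (5), strong compactness of $\chi$ yields Downward L\"owenheim--Skolem for $\mathcal L_{\chi,\omega}$, and ``chromatic number $>\theta$'' is expressible by a single sentence of $\mathcal L_{\theta^+,\omega}\subseteq\mathcal L_{\chi,\omega}$ when $\theta<\chi$, so any size-$\kappa$ counterexample reflects to a size-$<\chi$ subgraph of chromatic number $>\theta$. For (6), the singular case is Shelah's singular-compactness theorem for the coloring number, and the regular case follows because $\Delta_{\chi,\kappa}$ furnishes, for every size-$\kappa$ graph equipped with a filtration into $<\chi$-sized pieces, a stationary set of pieces on which the Shelah-style amalgamation succeeds, so any $\col>\theta$ witness at level $\kappa$ must already appear strictly below. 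The only genuinely delicate step I foresee is the splicing argument in (1); items (2)--(6) are short applications of named theorems.
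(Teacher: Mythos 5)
Your items (2), (3), and (6) match the paper's treatment, which simply cites the relevant literature (Fuchino et al., Todorcevic/Galvin, and Shelah's singular compactness together with Proposition~\ref{delta_reflection_coloring_prop}); item (4) is not proved in the paper at all but deferred to a forthcoming reference, so your sketch there cannot be checked against anything. The genuine problem is item (1). A continuous elementary chain $\langle M_\alpha \mid \alpha<\omega_1\rangle$ with $|M_\alpha|=\aleph_\omega$ has union of size at most $\aleph_1\cdot\aleph_\omega=\aleph_\omega$, so it cannot cover the vertex set of a graph of size $\kappa=\aleph_{\omega+1}$; the construction as described never gets off the ground. The paper's actual argument (Lemma~\ref{chang_coloring}, used together with Theorem~\ref{coloring_small_gap_lemma}) runs in the opposite direction: assuming $\col(\mathcal G)>\theta^+$, Shelah's criterion (Lemma~\ref{stat_coloring_lemma}) makes $S_{\theta^+}(\mathcal G)$ stationary, Fodor's Lemma fixes a single $\varepsilon^*$ below which $\theta$-many neighbors accumulate for cofinally many vertices, and then a single Chang submodel $M\prec H(\lambda^{++})$ with $|M\cap\lambda^+|=\aleph_1$ and $|M\cap\lambda|=\aleph_0$ produces, via Lemma~\ref{neighborhood_lemma} applied to $A:=M\cap\varepsilon^*$ and $B:=\Delta\cap M$, a strictly smaller subgraph of coloring number $>\theta$, contradicting the hypothesis. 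Note also that your reduction of the case $\kappa<\aleph_{\omega+1}$ to singular compactness ignores the regular cardinals $\kappa=\aleph_n$; those are handled by Theorem~\ref{coloring_small_gap_lemma}, since such a $\kappa$ is not the successor of a singular cardinal.

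A smaller but real issue is item (5): ``$\chr(\mathcal G)>\theta$'' is not a sentence of $\mathcal L_{\theta^+,\omega}$ (it quantifies over colorings), and strong compactness of $\chi$ is not a downward L\"owenheim--Skolem property. The correct argument, the one in de Bruijn--Erd\H{o}s that the paper cites, is the contrapositive via compactness: expand the language by unary predicates $\langle P_i\mid i<\theta\rangle$, write the theory asserting that they partition the vertices into independent sets, observe that every subtheory of size $<\chi$ is satisfiable because the corresponding induced subgraph has chromatic number $\le\theta$, and apply $\chi$-compactness of $\mathcal L_{\chi,\omega}$ to obtain a $\theta$-coloring of all of $\mathcal G$. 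Your phrasing reverses the direction of reflection and invokes the wrong model-theoretic property of $\chi$.
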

\begin{proof}
 (1) By Lemma~\ref{chang_coloring} below.
 (2) By Theorem 3.1 of \cite{more_frp}.
 (3) By Theorem~6 of \cite{MR686495}.
 (4) This will appear in \cite{rc_implies_frp}.
 (5) By the proof of Theorem 1 of  \cite{MR0046630}.
 (6) By \cite{shelah_singular_compactness} (See also Proposition~\ref{delta_reflection_coloring_prop} below).
 \end{proof}

On the purely combinatorial side, we prove that the combination of $\gch$ and square-like principles gives rise to incompactness graphs.
In order to state the next theorem, we shall need the following definition (for missing notions, see the Notation subsection below).

\begin{defn}[\cite{paper29}] For infinite regular cardinals $\chi<\kappa$,
the principle $\square(\kappa,{\sq_\chi})$ asserts the existence of a sequence $\vec C=\langle C_\alpha\mid\alpha<\kappa\rangle$ satisfying the following:
\begin{itemize}
\item for every limit ordinal $\alpha<\kappa$, $C_\alpha$ is a club in $\alpha$;
\item for every $\alpha<\kappa$ and $\bar\alpha\in\acc(C_\alpha)$, if $\otp(C_\alpha)\ge\chi$,  then $C_{\bar\alpha}=C_\alpha\cap\bar\alpha$;
\item for every club $D$ in $\kappa$, there exists some $\alpha\in\acc(D)$ such that $D\cap\alpha\neq C_\alpha$.
\end{itemize}
\end{defn}

The principle $\square(\kappa,{\sq_\omega})$ is commonly denoted by $\square(\kappa)$.

\begin{THMB} Suppose that $\lambda$ is an uncountable cardinal and that $\gch$ and $\square(\lambda^+)$ both hold.
\begin{enumerate}
\item If $\lambda$ is regular, then there exists an $(\aleph_0,\ge\lambda)$-chromatic graph of size $\lambda^+$;
\item If $\lambda$ is singular, then there exists an $(\aleph_0,\lambda^+)$-chromatic graph of size $\lambda^+$.
\end{enumerate}
More generally, suppose that $\aleph_0\le\cf(\chi)=\chi<\lambda$ are cardinals and that $\gch$ and $\square(\lambda^+,{\sq_\chi})$ both hold.
\begin{enumerate}
\item If $\lambda$ is regular, then there exists a $(\chi,\ge\lambda)$-chromatic graph of size $\lambda^+$;
\item If $\lambda$ is singular, then there exists a $(\chi,\lambda^+)$-chromatic graph of size $\lambda^+$.
\end{enumerate}
\end{THMB}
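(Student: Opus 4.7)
The plan is to fix a sequence $\vec C = \langle C_\alpha \mid \alpha < \lambda^+\rangle$ witnessing $\square(\lambda^+, {\sq_\chi})$, form the $C$-sequence graph $\mathcal G := G(\vec C)$ of \cite{paper12} on vertex set $\lambda^+$, and verify that it is the desired $(\chi,{\ge}\lambda)$- or $(\chi,\lambda^+)$-chromatic graph.

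First I would dispatch the upper bound on small subgraphs: any subgraph of $\mathcal G$ of cardinality less than $\lambda^+$ embeds into $G(\vec C)\restriction \beta$ for some $\beta<\lambda^+$, so it suffices to produce a chromatic $\chi$-coloring of $G(\vec C)\restriction\beta$. The coherence hypothesis, namely $C_{\bar\alpha}=C_\alpha\cap\bar\alpha$ whenever $\bar\alpha\in\acc(C_\alpha)$ and $\otp(C_\alpha)\ge\chi$, should allow one to reproduce the recursive coloring construction from \cite{paper12} up to $\beta$, with the ``short'' levels (those $\alpha$ with $\otp(C_\alpha)<\chi$) handled locally using fewer than $\chi$ fresh colors per level. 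This is the announced extension of the purely coherent case described in the introduction.

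The principal technical hurdle is the lower bound $\chr(\mathcal G)\ge\lambda$. I would argue by contradiction: given a hypothetical chromatic coloring $c:\lambda^+\to\theta$ with $\theta<\lambda$, the aim is to extract a club $D\s\lambda^+$ satisfying $D\cap\alpha=C_\alpha$ for stationarily many $\alpha\in\acc(D)$, violating the non-triviality clause of $\square(\lambda^+,{\sq_\chi})$. Under $\gch$, one obtains a continuous elementary chain $\langle M_\xi\mid \xi<\lambda^+\rangle$ of elementary submodels of $H(\lambda^{++})$ of size $\lambda$, each containing $c$ and $\vec C$ and with $M_\xi\cap\lambda^+$ an ordinal; the set $D$ of such ordinals is a club. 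The edges of $G(\vec C)$ are engineered so that from $c\restriction M_\xi$ one can pin down the combinatorial shape of $\vec C$ at $M_\xi\cap\lambda^+$, and the fact that $c$ avoids monochromatic edges then forces $C_{M_\xi\cap\lambda^+}$ to agree with an initial segment of $D$ stationarily often, supplying the forbidden threading.

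For singular $\lambda$ the argument only delivers $\chr(\mathcal G)\ge\lambda$, so it remains to rule out $\chr(\mathcal G)=\lambda$. Exploiting $\cf(\lambda)<\lambda^+$, a hypothetical chromatic coloring $c:\lambda^+\to\lambda$ admits a stationary $A\s\lambda^+$ and $\theta<\lambda$ with $c[A]\s\theta$, obtained by pigeonhole on the preimages of a cofinal sequence in $\lambda$; a refinement of the threading construction, restricted to the stationary collection of models $M_\xi$ with $M_\xi\cap\lambda^+\in A$, then yields the same contradiction with parameter $\theta<\lambda$. The delicate core throughout is the transfer from coloring-uniformity to $\vec C$-threading at the critical levels.
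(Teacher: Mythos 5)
Your upper bound is broadly in line with the paper: Lemma~\ref{lemma1} carries out exactly the recursion you describe, and its hypothesis $(\beth)$ is automatically satisfied with $G=\kappa$ for a $\sq_\chi$-coherent sequence. The lower bound, however, contains a genuine gap: the graph $G(\vec C)$ attached to an \emph{arbitrary} $\square(\lambda^+,{\sq_\chi})$-sequence need not have large chromatic number, so no argument that starts from a raw square sequence and tries to extract a thread from a hypothetical coloring can succeed. Concretely, given any witness $\vec C$, the sequence $\langle C_\alpha\cup\{0\}\mid\alpha<\lambda^+\rangle$ is again a witness, yet in the associated graph every neighborhood $N_\beta$ is empty (the requirement $\min(C_\alpha)>\sup(C_\beta\cap\alpha)\ge\min(C_\beta)=0$ is unsatisfiable), so the graph is edgeless. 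The substance of the paper's proof is precisely the step you elide: one must first \emph{replace} $\vec C$ by a post-processed sequence enjoying the capturing property of Definition~\ref{capturing}, and it is this property --- not the non-triviality clause of $\square$ --- that drives the lower bound. The paper obtains it by combining the derivation (from \cite{paper29}) of the proxy principle $\p(\lambda^+,2,{\sq_\chi},\ldots)$ out of $\gch+\square(\lambda^+,{\sq_\chi})$ with the postprocessing functions of Subsection~\ref{subsection31}, which crucially use $\diamondsuit(\lambda^+)$ (available from $\gch$). The lower bound itself is then Lemma~\ref{large_chromatic_number_lemma}: from a coloring $c$ with few colors one builds cofinal sets out of the color classes, finds a $\delta$ capturing them, and exhibits a monochromatic edge directly; no thread is ever extracted from $c$, and your elementary-submodel scheme has no purchase once the edge relation is degenerate.

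Two further discrepancies. For regular $\lambda$ the paper does not produce a single $C$-sequence graph of chromatic number $\ge\lambda$: it produces, for each regular $\theta<\lambda$, a separate $(\chi,>\theta)$-chromatic graph (from $\p(\lambda^+,2,{\sq_\chi},\theta,\{E^{\lambda^+}_\theta\},2,1)$ via Lemma~\ref{lemma21} and Corollary~\ref{incompactness_cor_1}) and takes the disjoint sum; your plan to get $\chr(G(\vec C))\ge\lambda$ from one graph is not what is proved and is not supported by the capturing instances actually available. For singular $\lambda$, the stronger conclusion $\chr=\lambda^+$ does not come from a pigeonhole refinement of the regular-case argument but from the fact that for singular strong limit $\lambda$ with $2^\lambda=\lambda^+$ one gets the full principle $\p(\lambda^+,2,{\sq_\chi},\lambda^+,\{\lambda^+\},2,2)$, whence every $\theta$-indexed family of cofinal sets with $\theta<\lambda^+$ is captured stationarily often and Lemma~\ref{large_chromatic_number_lemma} yields $\chr>\theta$ for all $\theta<\lambda^+$.
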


The preceding is an improvement in a certain direction upon results of the second author from \cite{paper12}, in which it is proved that,
for any infinite cardinal $\lambda$, $\ch_\lambda + \square_\lambda$ entails the existence of an $(\aleph_0, \mu)$-chromatic graph for all infinite $\mu\le\lambda$,
and if, additionally,  $\lambda$ is singular, then the existence of  an $(\aleph_0, \lambda^+)$-chromatic graph, as well.

In addition, it is a curious and a counterintuitive fact that the reflection of stationary sets actually helps in achieving a maximal degree of incompactness:
\begin{THMC} Suppose that $\lambda$ is an uncountable cardinal and that $\gch$ and $\square(\lambda^+)$ both hold.
  If $\refl(S)$ holds for some stationary $S\s\lambda^+$, then there exists an $(\aleph_0,\lambda^+)$-chromatic graph of size $\lambda^+$.
\end{THMC}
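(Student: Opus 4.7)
\emph{Plan.} The idea is to upgrade the graph supplied by Theorem~B to an $(\aleph_0,\lambda^+)$-chromatic graph by ruling out any chromatic coloring with $\lambda$ colors, using $\refl(S)$ as the essential extra ingredient. When $\lambda$ is singular, Theorem~B already yields an $(\aleph_0,\lambda^+)$-chromatic graph of size $\lambda^+$ without invoking $\refl(S)$, so I would immediately reduce to $\lambda$ regular (and uncountable). I would then apply Theorem~B to fix a $\square(\lambda^+)$-sequence $\vec C$ for which $G := G(\vec C)$ has size $\lambda^+$ and is $(\aleph_0,\ge\lambda)$-chromatic. Since $|V(G)|=\lambda^+$ forces $\chr(G)\in\{\lambda,\lambda^+\}$, it suffices to rule out $\chr(G) = \lambda$.

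Toward a contradiction, suppose $c : \lambda^+ \to \lambda$ is a chromatic coloring of $G$. Since $S$ is stationary and $\cf(\lambda^+) = \lambda^+ > \lambda$, partitioning $S$ along $c$ would produce $i_0 < \lambda$ for which $T := S \cap c^{-1}(i_0)$ is stationary in $\lambda^+$; by construction $T$ is $G$-independent. Applying $\refl(S)$ in its usual form (every stationary subset of $S$ reflects) would then yield $\delta < \lambda^+$ of uncountable cofinality with $T \cap \delta$ stationary in $\delta$. Since $|\delta|\le\lambda$, the $(\aleph_0,\ge\lambda)$-chromatic property of $G$ supplies a chromatic coloring $d : \delta \to \omega$ of $G\restriction\delta$; pigeonholing $T\cap\delta$ against $d$ and using $\cf(\delta)>\omega$ would produce $n_0<\omega$ and a set $T' := T\cap\delta\cap d^{-1}(n_0)$ that is stationary in $\delta$, $G$-independent, and simultaneously monochromatic under both $c$ and $d$.

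\emph{Main obstacle.} The crux of the proof is to contradict the existence of such a $T'$, and this is where $\refl(S)$ does its real work beyond Theorem~B. My strategy would be to exploit the fine edge-structure of $G(\vec C)$ around $\delta$ furnished by coherence: for $\bar\delta\in\acc(C_\delta)$ of sufficient order type one has $C_{\bar\delta}=C_\delta\cap\bar\delta$, and the edges of $G$ incident with vertices near $\bar\delta$ should be governed by minimal walks through $C_{\bar\delta}$, exactly as in the construction of $G(\vec C)$ used to prove Theorem~B. The aim is to show that the stationarity of $T'$ in $\delta$ forces, at some such $\bar\delta$, a pair from $T'$ to become adjacent in $G$, contradicting $G$-independence of $T'$. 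Pinning down this combinatorial incompatibility between the stationarity of $T'$ and the explicit edge-set of $G(\vec C)$ is the main technical challenge; the inputs $\gch$, $\square(\lambda^+)$ and coherence should feed in precisely as in the proof of Theorem~B, with $\refl(S)$ providing the extra reflection point at which the refined analysis is performed.
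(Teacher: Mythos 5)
Your reduction to $\lambda$ regular and to ruling out $\chr(\mathcal G)=\lambda$ is reasonable, but the core of the argument has a genuine gap, and the specific contradiction you aim for cannot materialize. After reflecting $T=S\cap c^{-1}(i_0)$ to some $\delta$ and pigeonholing against a chromatic coloring $d:\delta\rightarrow\omega$ of $\mathcal G\restriction\delta$, you obtain a set $T'$ that is stationary in $\delta$ and $d$-monochromatic; but any $d$-monochromatic set is automatically independent in $\mathcal G\restriction\delta$, so the stated aim --- ``to show that the stationarity of $T'$ in $\delta$ forces \dots\ a pair from $T'$ to become adjacent'' --- is self-defeating. Worse, such configurations always exist and carry no tension whatsoever: since $\mathcal G\restriction\delta$ is countably chromatic and $\cf(\delta)>\omega$, some color class of $d$ is already stationary in $\delta$, so stationary independent subsets of $\delta$ are unavoidable. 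More fundamentally, the graph supplied by Theorem~B(1) is only built to capture, for each fixed $\theta<\lambda$ separately, sequences of $\theta$-many cofinal sets (it is a disjoint union of graphs $\mathcal G_\theta$ for $\theta\in\reg(\lambda)$), and nothing in that construction excludes $\chr=\lambda$; no post hoc analysis of a $\lambda$-coloring of that particular graph can be expected to succeed.

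The paper uses $\refl(S)$ at an entirely different point: in building the $C$-sequence, not in analyzing a coloring. First, $\refl(S)$ forces $S\cap E^{\lambda^+}_{<\lambda}$ to be stationary, which together with $\gch+\square(\lambda^+)$ yields $\p(\lambda^+,2,{\sq},1,\{S\},2,1)$. Then, as in the proof of Theorem~\ref{improved_square_thm_1}(2), $\refl(S)$ shows that for every stationary $G\s\lambda^+$ the set $\{\alpha\mid\otp(\nacc(C_\alpha)\cap G)=\alpha\}$ is stationary: otherwise Fodor fixes the order type $\varepsilon$ of $\nacc(C_\alpha)\cap G$ on a stationary set, which reflects at some $\delta$ of uncountable cofinality, and two of its members lying in $\acc(C_\delta)$ give, by coherence, traces on $G$ one of which properly end-extends the other while both have order type $\varepsilon$ --- a contradiction. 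This is what allows the postprocessing machinery to upgrade to $\p(\lambda^+,2,{\sq},\lambda^+,\{\lambda^+\},2,2)$, i.e.\ to a single coherent $C$-sequence capturing $\lambda^+$-many cofinal sets simultaneously; Lemma~\ref{large_chromatic_number_lemma} then gives $\chr(G(\vec C))=\lambda^+$ outright, while Lemma~\ref{lemma1} keeps all smaller subgraphs countably chromatic. If you want to salvage your outline, the reflection must be applied to the order types of the traces $\nacc(C_\alpha)\cap G$, not to the color classes of a hypothetical $\lambda$-coloring.
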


Finally, we apply the techniques of this paper to address a question about possible chromatic spectra of graphs,
a topic whose study was initiated in \cite{rinot17}.  It is proved:
\begin{THMD} The following statement is equiconsistent with $\zfc$. $\gch$ holds, and for every infinite cardinal $\kappa$,
there exists a graph $\mathcal G$ satisfying:
\begin{itemize}
\item $\mathcal G$ has size and chromatic number $\kappa$;
\item for every infinite cardinal $\lambda<\kappa$, there exists a cofinality-preserving, $\gch$-preserving forcing extension in which $\chr(\mathcal G)=\lambda$.
\end{itemize}
\end{THMD}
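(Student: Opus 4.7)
The forward direction of the equiconsistency is immediate: the conclusion is formulated in the language of $\zfc$, so its consistency implies $\mathrm{Con}(\zfc)$. I focus on the converse. Starting from $\mathrm{Con}(\zfc)$, work inside G\"odel's $L$, where $\gch$ holds and, for every uncountable cardinal $\lambda$, Jensen's $\square_\lambda$-sequence witnesses $\square(\lambda^+)$ coherently; hence the hypotheses of Theorem~B are met at every uncountable $\lambda$.

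\textbf{Constructing $\mathcal G$.} For $\kappa = \aleph_0$ the requirement is vacuous, so take any countably chromatic countable graph. For $\kappa = \lambda^+$ uncountable, invoke Theorem~B in $L$ to produce an $(\aleph_0, \lambda^+)$-chromatic graph $\mathcal G_\kappa = G(\vec C^\kappa)$, where $\vec C^\kappa$ is a coherent $C$-sequence derived from the ambient $\square_\lambda$-sequence. For $\kappa$ an uncountable limit cardinal, take the disjoint union $\mathcal G_\kappa := \bigsqcup_{\mu^+ < \kappa} \mathcal G_{\mu^+}$; this graph has vertex set of size $\kappa$ and chromatic number $\sup_{\mu < \kappa} \mu^+ = \kappa$.

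\textbf{Forcing the spectrum.} For each pair of infinite cardinals $\lambda < \kappa$, I would define a poset $\mathbb P^{\kappa, \lambda}$ of bounded partial chromatic $\lambda$-colorings of $\mathcal G_\kappa$, with extensions respecting the coherence of $\vec C^\kappa$ (when $\kappa$ is a successor) and, in the limit case, taking an Easton-style product of the component forcings for the $\mathcal G_{\mu^+}$ with $\lambda < \mu^+ < \kappa$. Three properties need proof: (a) $\mathbb P^{\kappa, \lambda}$ preserves cofinalities and $\gch$, reducing to strategic $\lambda$-closure together with a $\lambda^+$-chain condition, leveraging $\square_\lambda$ and $L$-style $\diamondsuit$ guessing; (b) the generic yields a chromatic $\lambda$-coloring, hence forces $\chr(\mathcal G_\kappa) \le \lambda$; (c) the forcing adds no coloring with fewer than $\lambda$ colors, secured by planting inside $\mathcal G_\kappa$ an absolute subgraph of chromatic number at least $\lambda$ (e.g.\ a disjoint $K_\lambda$).

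\textbf{Main obstacle.} The delicate point is the combination of (a) and (b): each $\mathcal G_{\lambda^+}$ has chromatic number $\lambda^+$ \emph{only} by virtue of its full vertex set --- every proper subgraph has countable chromatic number --- so assembling a coherent $\lambda$-coloring via a forcing that is simultaneously cofinality- and $\gch$-preserving calls for careful use of the coherence of $\vec C^\kappa$ and of $L$-style guessing to design a winning strategy on $\mathbb P^{\kappa, \lambda}$. The bulk of the work would go into this forcing analysis, in the framework set up in \cite{rinot17}.
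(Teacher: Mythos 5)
There is a genuine gap, and it sits exactly where you flag your ``main obstacle.'' Your plan --- work in $L$, build the components $\mathcal G_{\lambda^+}$ from $\square_\lambda$-sequences via Theorem~B, and then force chromatic $\lambda$-colorings by posets of bounded partial colorings --- is essentially the strategy of \cite{rinot17}, which is already known to succeed only for intervals whose maximum lies below the first cardinal fixed point: the cofinality- and $\gch$-preservation of those coloring posets rests on a mutual-stationarity arrangement of the sets $G_\lambda$ that in turn needs $\card[\aleph_0,\theta)$ to split into finitely many progressive pieces, and this fails at and above the first fixed point. The paper explicitly records that it is \emph{open} whether the unrestricted statement of Theorem~D follows from $V=L$, so your Step~1 (``work inside $L$'') already commits you to a route nobody knows how to complete. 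Moreover, the one cheap way to kill $\chr(G(\vec C))$ for these graphs --- threading $\vec C$ and invoking Lemma~\ref{lemma1}(2) --- is unavailable in your setting, since any forcing that threads a genuine $\square_\lambda$-sequence collapses $\lambda^+$ and hence is not cofinality-preserving. Your proposal does not supply a replacement for either mechanism; it only names the difficulty.

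The paper's actual proof changes the setup precisely to make the threading route legal. One first forces, over any model of $\gch$, with a class-length Easton-support product $\mathbb P$ whose $k$-th factor is the initial-segment forcing $\mathbb S(k)$ adding a generic $\square(k)$-sequence $\vec C^k$ for each regular uncountable $k$; genericity (not $\diamondsuit$-guessing) yields that every $\vec A$ is captured, so $\chr(G(\vec C^k))=k$, and $\mathcal G_{\mu,\kappa}$ is the disjoint union of $K_\mu$ with these $G(\vec C^k)$ for $k\in[\mu,\kappa]$. To realize $\chr=\lambda$ one then forces with the quotient $\mathbb P^*_{\lambda^+,\kappa^+}/S_{\lambda^+,\kappa^+}$, which simultaneously threads $\vec C^k$ for all $k\ge\lambda^+$; by Lemma~\ref{lemma1}(2) each such component becomes countably chromatic, while distributivity keeps the lower components intact. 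The crucial point your approach cannot reproduce is that for \emph{generically added} $\square(k)$-sequences the threading forcing is cofinality- and $\gch$-preserving, because $\mathbb S^*(k)$ is $k$-directed closed and projects onto $\mathbb S(k)$, so the composed product $\mathbb P_{\lambda^+}\times\mathbb P^*_{\lambda^+,\kappa^+}\times\mathbb P^{\kappa^+}$ is again a harmless Easton product. If you want to salvage your write-up, you must either abandon $L$ in favour of this two-stage generic construction, or else prove something genuinely new about coloring posets for $\square_\lambda$-graphs beyond the first cardinal fixed point.
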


\subsection*{Organization of this paper}
Section~\ref{section2} is graph-theoretic in nature.
In Subsection~\ref{subsection21},
we first list various compactness and incompactness results for the chromatic numbers.
Then, we turn to generalize the results from \cite{paper12} concerning the $C$-sequence graph,
motivating the study of various $C$-sequences that is carried out in later sections.
In Subsection~\ref{subsection22}, we collect various compactness and incompactness results for the coloring numbers.
In addition, it is established that for every infinite cardinal $\mu$ and every graph $\mathcal G$,
if every strictly smaller subgraph $\mathcal G'$ of $\mathcal G$ satisfies $\col(\mathcal G')\le\mu$, then $\col(\mathcal G)\le\mu^{++}$.
We also provide a couple of sufficient conditions that allow one to reduce the bound $\mu^{++}$ down to $\mu^+$, which is the best one can hope for.

Section~\ref{improve_section} is set-theoretic in nature. It is dedicated to constructing $C$-sequences for which the corresponding $C$-sequence graphs
witness incompactness for the chromatic number with a very large gap.
Among other things, Subsection~\ref{subsection32} is concluded with the proofs of Theorems B and C.
In Subsection~\ref{forcing_subsection}, we analyze a notion of forcing for introducing
$C$-sequences for which the corresponding $C$-sequence graphs exhibit a maximal degree of incompactness for the chromatic number.

In Section~\ref{consistency_sect}, we combine the method of Subsection~\ref{forcing_subsection}
with various methods for producing models of compactness,
thus demonstrating that incompactness for the chromatic
number of graphs is compatible with a wide array of set-theoretic compactness principles.

In Section~\ref{section5}, we provide a proof of Theorem~D.

\subsection*{Notation}
For an infinite cardinal $\lambda$, write  $\ch_\lambda$ for the assertion that $2^\lambda=\lambda^+$.
Suppose that $C,D$ are sets of ordinals.
Write $\acc(C):=\{\alpha\in C\mid \sup (C\cap\alpha) = \alpha>0 \}$, $\nacc(C) := C \setminus \acc(C)$,
and $\acc^+(C) := \{\alpha<\sup(C)\mid \sup (C\cap\alpha) = \alpha>0 \}$.
Write $\cl(C):=C\cup\acc^+(C)$.
For any  $j < \otp(C)$, denote by $C(j)$ the unique element $\delta\in C$ for which $\otp(C\cap\delta)=j$.
For any ordinal $\sigma$, write
$\suc_\sigma(C) := \{ C(j+1)\mid j<\sigma\ \&\ j+1<\otp(C)\}$.
Write $D\sq C$ iff there exists some ordinal $\beta$ such that $D = C \cap \beta$.
Write $D\sqx C$ if either $D\sq C$ or $\cf(\sup(D))<\chi$.
Write $D \sq_\chi C$ if either
$D \sq C$ or ($\otp(C)< \chi$ and $\nacc(C)$ consists only of successor ordinals).
For an ordinal $\eta$ and an infinite, regular cardinal $\chi$, write
$E^\eta_\chi := \{\alpha < \eta\mid \cf(\alpha) = \chi\}$.

Suppose that $\kappa$ is a regular uncountable cardinal. Let $\reg(\kappa):=\{\chi\mid \aleph_0\le\cf(\chi)=\chi<\kappa\}$.
Denote by $\ns^+_\kappa$ the collection of all stationary subsets of $\kappa$;
whenever $V'$ is some class extending $V$, denote by $(\ns^+_\kappa)^V$ the collection of all stationary subsets of $\kappa$, as computed in $V$.
For $S\in\ns^+_\kappa$,
$\refl(S)$ is the assertion that every stationary subset of $S$ reflects;
$\refl^*(S)$ is the assertion that, for every $\kappa$-directed closed set-forcing $\mathbb{P}$,
$\Vdash_{\mathbb{P}} ``\refl(\check S)."$

\section{Compactness for chromatic and coloring numbers}\label{section2}
In this section, we outline a number of known graph theoretic results about compactness and incompactness
for chromatic and coloring numbers and then prove some combinatorial results that are behind Theorems A,B,C,D of the paper.
We begin by looking at chromatic numbers.

\subsection{Chromatic numbers}\label{subsection21}

Compactness and incompactness for the chromatic number of graphs have been the focus of a great deal of
work over the last half century. The following lists some of the notable results that have been achieved
through this work, providing some historical context and motivation for the questions considered in this
paper.

\begin{results}[Incompactness for the chromatic number]\hfill

\begin{itemize}
  \item (Erd\H{o}s-Hajnal, \cite{MR0263693}) If $2^{\aleph_0}=\aleph_1$, then there exists an $(\aleph_0,\aleph_1)$-chromatic graph of size $\aleph_2$.
\item (Galvin, \cite{MR0345859}) If $2^{\aleph_0}=2^{\aleph_1}<2^{\aleph_2}$, then there exists an $(\aleph_0,\aleph_2)$-chromatic graph of size $(2^{\aleph_1})^+$.
\item (Todorcevic, \cite{MR686495}) If $\kappa$ is a regular uncountable cardinal and there exists a nonreflecting stationary subset of $E^\kappa_{\omega}$,
then there exists an $(\aleph_0,\ge\aleph_1)$-chromatic graph of size $\kappa$.
\item (Baumgartner, \cite{MR736618}) It is consistent with $\gch$ that
there exists  an $(\aleph_0,\aleph_2)$-chromatic graph of size $\aleph_2$.
\item (Komj{\'a}th, \cite{MR941243}) It is consistent with $2^{\aleph_0}=\aleph_3$ that
there exists  an $(\aleph_0,\aleph_2)$-chromatic graph of size $\aleph_2$.
\item (Todorcevic, 1986 and, independently, Rinot, 2014 [both unpublished]) Martin's Axiom entails the existence of an $(\aleph_0,2^{\aleph_0})$-chromatic graph of size $2^{\aleph_0}$.
\item (Komj{\'a}th, \cite{MR941243}) It is consistent with $2^{\aleph_0}=\aleph_{\omega_1+1}$ that
there exists  an $(\aleph_0,\aleph_1)$-chromatic graph of size $\aleph_{\omega_1}$.
\item (Shelah, \cite{MR1117029}) It is consistent with $\gch$ that
there exists  an $(\aleph_0,\aleph_1)$-chromatic graph of size $\aleph_{\omega_1}$.
\item (Soukup, \cite{MR1066404}) For any cardinal $\kappa$, it is consistent that $2^{\aleph_0}\ge\kappa$
and there exists an $(\aleph_0,(2^{\aleph_0})^+)$-chromatic graph of size $(2^{\aleph_0})^+$.
\item (Shelah, \cite{MR1117029}) If $V=L$, then ($\gch$ holds, and) for every regular non-weakly compact cardinal $\kappa$,
there exists an $(\aleph_0,\kappa)$-chromatic graph of size $\kappa$.
\item (Shelah, \cite{MR3061483}) If $\mu < \kappa$ are regular cardinals, $\kappa^\mu = \kappa$, and
  there is a non-reflectioning stationary subset of $E^\kappa_\mu$, then there exists a $(\mu, \geq \mu^+)$-chromatic
  graph of size $\kappa$.
\item (Rinot, \cite{paper12}) If $\lambda$ is an infinite cardinal, $2^\lambda = \lambda^+$, and $\square_\lambda$
  holds, then there exists an $(\aleph_0, \mu)$-chromatic graph of size $\lambda^+$ for all infinite $\mu\le\lambda$. If, additionally,
  $\lambda$ is singular, then there exists an $(\aleph_0, \lambda^+)$-chromatic graph of size $\lambda^+$.
\end{itemize}
\end{results}

\begin{results}[Compactness for the chromatic number]\hfill
\begin{itemize}
  \item (de Bruijn-Erd\H{o}s, \cite{MR0046630}) If $\chi = \aleph_0$ or $\chi$ is strongly compact, $\theta < \chi$, and $\mathcal G$ is a graph
    such that every subgraph of size $<\chi$ has chromatic number at most $\theta$, then $\mathcal G$ has chromatic number at most $\theta$.
  \item (Foreman-Laver, \cite{MR925267}) Relative to a large cardinal hypothesis, it is consistent with $\gch$ that there does not
  exist an $(\aleph_0,\aleph_2)$-chromatic graph of size $\aleph_2$.
\item (Shelah, \cite{MR1117029}) Relative to a large cardinal hypothesis, it is consistent with $\gch$ that, whenever
  $1 \leq n < \omega$ and $\mathcal G$ is an $\aleph_{\omega+1}$-sized graph such that every subgraph of size $<\aleph_\omega$ has chromatic
  number at most $\aleph_n$, it follows that $\mathcal G$ has chromatic number at most $\aleph_n$.\footnote{The case $n=0$ remains open to this date.}
\item (Unger, \cite{unger_note}) Relative to a large cardinal hypothesis, it is consistent with $\gch$ that,
  whenever $1 \leq \alpha < \omega_1$ and $\mathcal G$ is an $\aleph_{\omega_1+1}$-sized graph such that every subgraph of size
  $<\aleph_{\omega_1}$ has chromatic number at most $\aleph_{\alpha+1}$, it follows that $\mathcal G$
  has chromatic number at most $\aleph_{\alpha+1}$.
  \end{itemize}
\end{results}

\begin{defn}\label{c-sequence} Let $\Gamma$ be a set of ordinals. A \emph{$C$-sequence} over $\Gamma$ is a sequence $\vec C=\langle C_\alpha\mid\alpha\in\Gamma\rangle$ such that, for all limit $\alpha\in\Gamma$,
  $C_\alpha$ is a club subset of $\alpha$. For any binary relation $\mathcal R$, the sequence $\vec C$ is said to be \emph{$\mathcal R$-coherent}, if, for all $\alpha\in\Gamma$ and
  all $\bar\alpha\in\acc(C_\alpha)$, we have $\bar\alpha\in\Gamma$ and $C_{\bar\alpha}\mathrel{\mathcal R}C_\alpha$.
  For any ordinal $\mu$, the sequence $\vec C$ is said to be \emph{$\mu$-bounded} if, for all $\alpha\in\Gamma$, we have $\otp(C_\alpha)\le\mu$.
\end{defn}

\begin{defn}[The $C$-sequence graph, \cite{paper12}]\label{c_graph_defn} To any $C$-sequence $\vec C=\langle C_\alpha\mid\alpha<\gamma\rangle$ and any subset $G\s\gamma$,
we attach a graph $G(\vec C):=(G,E)$, by letting:
\begin{itemize}
\item $E:=\{ \{\alpha,\beta\}\in[G]^2\mid \alpha\in N_\beta\}$, where for all $\beta<\gamma$:
\item $N_\beta:=\{\alpha\in C_\beta\cap G\mid \min(C_\alpha)>\sup(C_\beta\cap\alpha)\ge\min(C_\beta)\}$.
\end{itemize}
\end{defn}
\begin{remark} Note that $N^\in_{G(\vec C)}(\beta)$ of Definition~\ref{def11} coincides with $N_\beta$.
In particular, for any infinite cardinal $\mu$, if $\vec C$ is $\mu$-bounded, then $\col(G(\vec C))\le\mu^+$.
\end{remark}

\begin{remark}\label{remark26} One of the referees asked us to mention the Hajnal-M\'at\'e graphs, and to elaborate on the history of Definition~\ref{c_graph_defn}.

A \emph{Hajnal-M\'at\'e graph} \cite{MR0424569} is a graph $\mathcal G=(\omega_1,E)$ satisfying that for every $\beta<\omega_1$, $N_{\mathcal G}^\in(\beta)$ is either finite,
or a cofinal subset of $\beta$ of order-type $\omega$. So, in essence, such graphs $\mathcal G$ are derived from an $\omega$-bounded $C$-sequence over $\omega_1$.

By Theorem~8.1 of \cite{MR0424569}, $V=L$ entails the existence of a Hajnal-M\'at\'e graph which is uncountably chromatic.
Their idea is to use $\diamondsuit^+(\omega_1)$ (indeed, considerably weaker prediction principles suffice) to construct an $\omega$-bounded $C$-sequence $\langle C_\alpha\mid\alpha<\omega_1\rangle$
in such a way that for every function $c:\omega_1\rightarrow\omega$, there exists some $\beta<\omega_1$ such that  $c(\beta)\in c[C_\beta]$.

The $C$-sequence graphs are somewhat similar in the sense that they build on the same strategy for ensuring a high chromatic number for the graph.
However, the definition of the edge relation of the $C$-sequence graph is slightly more involved, as it is meant to ensure that, at the same time, smaller subgraphs will have a small chromatic number.
The definition was conceived in 2012, after Rinot noticed some similarity between the construction of \cite[\S1]{MR3061483} that just appeared in the arXiv, and Definition~1.3 of \cite{paper11} that was submitted for publication a year before.
Later on, in 2013, the $C$-sequence graphs from \cite{paper12} served as building blocks in Rinot's solution of the infinite weak Hedetniemi conjecture \cite{paper16}.
\end{remark}

Throughout this subsection, we fix infinite regular cardinals $\chi<\kappa$,
a $C$-sequence $\vec C$ over $\kappa$, and a cofinal subset $G$ of $\kappa$, satisfying the following two hypotheses:
\begin{itemize}
\item[($\aleph$)] For all $\alpha\in \kappa\setminus G$, we have $C_\alpha\cap G=\emptyset$;
\item[($\beth$)]  For all $\alpha\in G$ and $\bar\alpha\in\acc(C_\alpha)\cap\cof(\chi)$, we have $\bar\alpha\in G$ and $C_{\bar\alpha}=C_\alpha\cap\bar\alpha$.
\end{itemize}

Note that if $\vec C$ is $\sqx$-coherent, then we could have simply taken $G$ to be $\kappa$.
Now, let us study the corresponding graph $G(\vec C) = (G,E)$.

\begin{defn}  For any ordinal $\delta\le\kappa$,
we say that $c:\delta\rightarrow\chi$ is a \emph{suitable coloring}
if the following hold:
\begin{itemize}
\item $c$ is $E$-chromatic, that is, for all $\{\alpha,\beta\}\in E\cap[\delta]^2$, we have $c(\alpha)\neq c(\beta)$;
\item $|c[N_\gamma]|<\chi$ for all $\gamma<\kappa$.
\end{itemize}
\end{defn}

So, a suitable coloring is one that is easy to extend to a larger domain while keeping it chromatic. Indeed, this is the content of Lemma~\ref{lemma1} below.

\begin{defn} For all $\eta \leq \kappa$, write $G^\eta_\chi:=\{\gamma\in G\cap \eta \mid \cf(\gamma)=\chi\}$.
\end{defn}

\begin{lemma}\label{cohere}
For every $\delta<\kappa$ and every coloring $c:\delta\rightarrow\chi$, the following are equivalent:
\begin{enumerate}
\item $c$ is suitable;
\item $c$ is $E$-chromatic, and $|c[N_\gamma]|<\chi$ for all $\gamma\in G^{\delta+1}_\chi$.
\end{enumerate}
\end{lemma}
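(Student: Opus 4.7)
The direction $(1)\Rightarrow(2)$ is immediate, as (2) is simply the restriction of the defining condition of suitability to $\gamma\in G^{\delta+1}_\chi\subseteq\kappa$. The substance lies in the converse. Assume (2), and fix an arbitrary $\gamma<\kappa$; I aim to show $|c[N_\gamma]|=|c[N_\gamma\cap\delta]|<\chi$. If $\gamma\notin G$, then $(\aleph)$ forces $C_\gamma\cap G=\emptyset$, so $N_\gamma=\emptyset$. Assume therefore that $\gamma\in G$.

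The structural heart of the proof is the identity $N_{\bar\alpha}=N_\gamma\cap\bar\alpha$, valid for every $\bar\alpha\in\acc(C_\gamma)\cap\cof(\chi)$; I would establish it as a preliminary observation. Indeed, $(\beth)$ yields $\bar\alpha\in G$ and $C_{\bar\alpha}=C_\gamma\cap\bar\alpha$, whence the identity follows by unwinding the definition of $N$ (using $\min(C_{\bar\alpha})=\min(C_\gamma)$ and $\sup(C_{\bar\alpha}\cap\alpha)=\sup(C_\gamma\cap\alpha)$ for all $\alpha<\bar\alpha$). In particular, whenever such a $\bar\alpha$ satisfies $\bar\alpha\le\delta$, it lies in $G^{\delta+1}_\chi$, so hypothesis (2) gives $|c[N_{\bar\alpha}]|<\chi$.

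I would then split into cases on $\theta:=\otp(C_\gamma\cap\delta)$. If $\theta<\chi$, then $|N_\gamma\cap\delta|\le\theta<\chi$, finishing directly. If $\theta\ge\chi$, set $\bar\alpha^*:=\sup(\acc(C_\gamma)\cap\cof(\chi)\cap(\delta+1))$. The crucial numerical claim is that $\otp(C_\gamma\cap(\bar\alpha^*,\delta])<\chi$: otherwise, the first $\chi$ elements of this tail would have supremum $\sigma\le\delta$; either $\sigma<\gamma$, in which case $\sigma\in\acc(C_\gamma)\cap\cof(\chi)\cap(\delta+1)$ sits strictly above $\bar\alpha^*$ and contradicts maximality, or $\sigma=\gamma\le\delta$ with $\cf(\gamma)=\chi$, placing $\gamma$ itself in $G^{\delta+1}_\chi$ and finishing via (2). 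Consequently $|N_\gamma\cap(\bar\alpha^*,\delta]|<\chi$, and the problem reduces to bounding $|c[N_\gamma\cap\bar\alpha^*]|$.

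When $\bar\alpha^*\in\acc(C_\gamma)\cap\cof(\chi)$, the structural identity gives $N_\gamma\cap\bar\alpha^*=N_{\bar\alpha^*}$ and (2) finishes the job. The residual subtlety is when $\bar\alpha^*$ is only a proper sup of such points; this forces $\bar\alpha^*\in\acc(C_\gamma)$ with $\cf(\bar\alpha^*)\ne\chi$. I expect the main obstacle to be distinguishing the subcase $\cf(\bar\alpha^*)<\chi$, in which the monotone chain $\{c[N_{\bar\alpha_j}]:j<\cf(\bar\alpha^*)\}$ of $<\chi$-sized subsets of $\chi$ has union of size $<\chi$ by regularity of $\chi$, from the subcase $\cf(\bar\alpha^*)>\chi$, which must be absorbed by a further iteration of the same numerical argument to locate a genuine cofinality-$\chi$ accumulation point high enough inside $\bar\alpha^*$ to dominate the relevant portion of $N_\gamma\cap\bar\alpha^*$.
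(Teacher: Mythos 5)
Your argument is correct in substance, but it takes a considerably longer route than the paper's, and the one subcase you leave as a sketch (namely $\cf(\bar\alpha^*)>\chi$) is, once written out, essentially the paper's entire proof. The paper argues the contrapositive in a single uniform step: if some $\gamma^*<\kappa$ has $|c[N_{\gamma^*}]|=\chi$, then $\gamma^*\in G$ by $(\aleph)$; pick $I\subseteq N_{\gamma^*}\cap\delta$ of order type $\chi$ on which $c$ is injective, and set $\gamma:=\sup(I)$. Since $I\subseteq C_{\gamma^*}$ and $I\subseteq\delta$, this $\gamma$ lies in $(\acc(C_{\gamma^*})\cup\{\gamma^*\})\cap E^{\delta+1}_\chi$, so hypothesis $(\beth)$ puts it in $G^{\delta+1}_\chi$ with $C_\gamma=C_{\gamma^*}\cap\gamma$, whence $N_\gamma\cap I=N_{\gamma^*}\cap I=I$ and $|c[N_\gamma]|\ge|c[I]|=\chi$, contradicting (2). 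This one move --- the supremum of a $\chi$-sized injective witness is itself a cofinality-$\chi$ coherence point at which (2) applies --- is exactly what your final subcase needs, but it applies directly to an arbitrary $\gamma^*$ and thereby renders your preliminary case analysis (the split on $\otp(C_\gamma\cap\delta)$, the ordinal $\bar\alpha^*$, and the three subcases on how that supremum is attained) unnecessary. Your individual steps are sound: the identity $N_{\bar\alpha}=N_\gamma\cap\bar\alpha$ for $\bar\alpha\in\acc(C_\gamma)\cap\cof(\chi)$, the bound $\otp(C_\gamma\cap(\bar\alpha^*,\delta])<\chi$, and the regularity argument when $\cf(\bar\alpha^*)<\chi$ all check out, so what your approach buys is a more explicit structural decomposition of $N_\gamma$. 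What it costs is economy, plus a genuinely incomplete step: to close the $\cf(\bar\alpha^*)>\chi$ case you should observe that any $\chi$ distinct colors realized on $N_\gamma\cap\bar\alpha^*$ are already realized on $N_\gamma\cap\bar\alpha$ for some $\bar\alpha\in\acc(C_\gamma)\cap\cof(\chi)\cap\bar\alpha^*$ (since a $\chi$-sized set of witnesses is bounded below $\bar\alpha^*$ when $\cf(\bar\alpha^*)>\chi$, and such $\bar\alpha$ are cofinal in $\bar\alpha^*$), so that $|c[N_{\bar\alpha}]|\ge\chi$ contradicts (2).
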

\begin{proof} Towards a contradiction, suppose that $\delta<\kappa$,
$c:\delta\rightarrow\chi$ is an $E$-chromatic coloring,  $|c[N_\gamma]|<\chi$ for all $\gamma\in G^{\delta+1}_\chi$,
and yet there exists some $\gamma^*<\kappa$ such that $|c[N_{\gamma^*}]|=\chi$.
In particular, by hypothesis $(\aleph)$, we have $\gamma^*\in G$.

Pick a subset $I\s N_{\gamma^*}\cap \delta$ of order-type $\chi$ such that
$c\restriction I$ is injective. Put $\gamma:=\sup(I)$, so that $\gamma\in(\acc(C_{\gamma^*})\cup\{\gamma^*\})\cap E^{\delta+1}_\chi$.
By hypothesis $(\beth)$, then, $\gamma\in G^{\delta+1}_\chi$ and $C_\gamma=C_{\gamma^*}\cap\gamma$, so $N_\gamma\cap I=N_{\gamma^*}\cap I$.
Finally, by $\gamma\in G^{\delta+1}_\chi$, we have  $\chi>|c[N_\gamma]|\ge |c[N_{\gamma^*}\cap I]|=\chi$.
This is a contradiction.
\end{proof}

We thank D. Soukup for pointing out the following Lemma.

\begin{lemma}\label{triangle_free_lemma}
  $G(\vec{C})$ is triangle-free.
\end{lemma}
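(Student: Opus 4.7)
The plan is to argue directly by contradiction, unpacking Definition~\ref{c_graph_defn}. Suppose, towards a contradiction, that $\{\alpha,\beta,\gamma\}\in[G]^3$ forms a triangle in $G(\vec C)$, with $\alpha<\beta<\gamma$. Then by definition of the edge relation, all three of $\alpha\in N_\beta$, $\alpha\in N_\gamma$, and $\beta\in N_\gamma$ must hold. The goal is to extract two contradictory inequalities about $\min(C_\beta)$ from these three memberships.

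First, I would use $\alpha\in N_\beta$. This asserts in particular that $\sup(C_\beta\cap\alpha)\ge\min(C_\beta)$, which forces the set $C_\beta\cap\alpha$ to be nonempty; hence $\min(C_\beta)<\alpha$. Next, I would use the pair $\alpha\in N_\gamma$ and $\beta\in N_\gamma$ together: these give $\alpha,\beta\in C_\gamma$, and since $\alpha<\beta$, we obtain $\alpha\in C_\gamma\cap\beta$. Consequently $\sup(C_\gamma\cap\beta)\ge\alpha$. Finally, the condition $\beta\in N_\gamma$ also supplies $\min(C_\beta)>\sup(C_\gamma\cap\beta)$, whence $\min(C_\beta)>\alpha$. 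This contradicts the first inequality, completing the proof.

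The argument does not require hypotheses $(\aleph)$ or $(\beth)$, nor any coherence of $\vec C$; it uses only the clause $\min(C_\alpha)>\sup(C_\beta\cap\alpha)\ge\min(C_\beta)$ that appears inside the definition of each $N_\beta$. I do not anticipate any genuine obstacle here: the entire content is in noticing that the ``$\sup(C_\beta\cap\alpha)\ge\min(C_\beta)$'' half of the definition (applied at $\beta$) and the ``$\min(C_\beta)>\sup(C_\gamma\cap\beta)$'' half (applied at $\gamma$) are already incompatible as soon as a third neighbor $\alpha<\beta$ of $\gamma$ is available inside $C_\gamma$. The only minor subtlety worth flagging in writing is the convention that $\sup(\emptyset)=0<\min(C_\beta)$, which is precisely what makes the inequality $\sup(C_\beta\cap\alpha)\ge\min(C_\beta)$ imply $C_\beta\cap\alpha\ne\emptyset$ and hence $\min(C_\beta)<\alpha$.
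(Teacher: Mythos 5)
Your proof is correct and is essentially the paper's own argument: both hinge on the inequality $\min(C_\beta)>\sup(C_\gamma\cap\beta)$ supplied by $\beta\in N_\gamma$, the paper phrasing the conclusion as ``$C_\beta\cap C_\gamma=\emptyset$, so $\alpha\in C_\beta$ rules out $\alpha\in C_\gamma$'' where you phrase it as two incompatible bounds on $\min(C_\beta)$. One cosmetic remark: you do not need the nonemptiness of $C_\beta\cap\alpha$ (whose justification via $\sup(\emptyset)=0<\min(C_\beta)$ tacitly assumes $\min(C_\beta)>0$); the membership $\alpha\in N_\beta\subseteq C_\beta$ already gives $\min(C_\beta)\le\alpha$, which suffices against $\min(C_\beta)>\alpha$.
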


\begin{proof}
  Suppose $\alpha < \beta < \gamma$ are in $G$ and $\{\alpha, \beta\}, \{\beta, \gamma\} \in E$.
\begin{itemize}
\item[$\circ$]  By $\{\beta, \gamma\} \in E$, we have $\min(C_\beta) > \sup(C_\gamma \cap \beta)$, so that $C_\beta\cap C_\gamma=\emptyset$.

\item[$\circ$]  By $\{\alpha, \beta\} \in E$, we have $\alpha \in C_\beta$, so that $\alpha\notin C_\gamma$.

\item[$\circ$]  By $\alpha\notin C_\gamma$ and $\alpha<\gamma$, we have  $\{\alpha, \gamma\} \notin E$.\qedhere
\end{itemize}
\end{proof}

\begin{lemma}\label{lemma1}
\begin{enumerate}
\item For all $\delta<\kappa$, the following holds: for every $x\in[\chi]^\chi$, every $\varepsilon \leq \delta$,
and every suitable coloring $c:\varepsilon\rightarrow\chi$,
there exists a suitable coloring $c':\delta\rightarrow\chi$ extending $c$ such that $c'[\delta\setminus\varepsilon]\s x$;
\item If there is a club $D$ in $\kappa$ such that $D\cap\eta=C_\eta$ for all $\eta\in\acc(D)\cap G_\chi^\kappa$, then $\chr(G(\vec C))\le\chi$.
\end{enumerate}
\end{lemma}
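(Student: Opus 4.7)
The plan is to prove (1) by transfinite induction on $\delta\in[\varepsilon,\kappa)$ and then derive (2) by iterating (1) along the scaffolding provided by $D$. For the base case $\delta=\varepsilon$ of (1), take $c':=c$. In the successor case $\delta=\delta_0+1$, I would apply the inductive hypothesis to $\delta_0$ to extend $c$ to a suitable $c_0:\delta_0\to\chi$ with image in $x$ on the new part, and then choose $c'(\delta_0)\in x\setminus c_0[N_{\delta_0}]$; this set is nonempty because $|c_0[N_{\delta_0}]|<\chi=|x|$, and suitability of $c'$ follows from Lemma~\ref{cohere} since every $\gamma\in G^{\delta+1}_\chi$ satisfies $N_\gamma\subseteq\gamma\leq\delta_0$, so $c'[N_\gamma]=c_0[N_\gamma]$.

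For the limit case of (1), fix a continuous cofinal sequence $\langle\delta_i\mid i<\cf(\delta)\rangle$ with $\delta_0\geq\varepsilon$, inductively build suitable $c_i:\delta_i\to\chi$ by applying the IH at successor stages of the outer recursion and taking unions at limit stages, and set $c':=\bigcup_i c_i$. The coloring $c'$ is clearly $E$-chromatic. For $\gamma\in G^{\delta+1}_\chi$ with $\gamma<\delta$ the suitability bound is immediate, since $N_\gamma\subseteq\gamma<\delta_i$ for some $i$, whence $c'[N_\gamma]=c_i[N_\gamma]$. The delicate case is $\gamma=\delta$, which arises precisely when $\delta\in G$ and $\cf(\delta)=\chi$. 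To handle it, I plan to employ a reserved-color strategy: pre-select a single color $\nu\in x$, set $x^*:=x\setminus\{\nu\}\in[\chi]^\chi$, apply the IH at each successor stage of the outer recursion with target set $x^*$, and post-modify the resulting coloring by overwriting every $\alpha\in N_\delta\cap[\delta_i,\delta_{i+1})$ with the color $\nu$. By Lemma~\ref{triangle_free_lemma}, no edge of $G(\vec C)$ joins two elements of $N_\delta$, so after the overwrite the neighbors of any such $\alpha$ lying in $[\varepsilon,\delta)$ all carry colors in $x^*$ and hence differ from $\nu$; this preserves $E$-chromaticity on the extension. The net effect is $c'[N_\delta]\subseteq\{\nu\}\cup c[N_\delta\cap\varepsilon]$, whose cardinality is $<\chi$ by suitability of $c$.

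For (2), enumerate $D=\{\eta_i\mid i<\kappa\}$ increasingly and construct nested suitable colorings $c_i:\eta_i\to\chi$ by applying (1) across each interval $[\eta_i,\eta_{i+1})$ and unioning at outer limits. The coherence hypothesis $D\cap\eta=C_\eta$ at $\eta\in\acc(D)\cap G^\kappa_\chi$ forces $N_\eta\subseteq D\cap\eta$, so the reserved-color construction from the critical case of (1) can be synchronized with the global iteration: at each such $\eta$, the set $N_\eta$ is threaded by the scaffolding $D$, and the extension across $\eta$ can be arranged to enforce $|c[N_\eta]|<\chi$. The resulting $c:=\bigcup_i c_i:\kappa\to\chi$ is $E$-chromatic, witnessing $\chr(G(\vec C))\leq\chi$.

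The main obstacle I anticipate is the selection of the reserved color $\nu$ in the critical case of (1). For the overwrite to preserve $E$-chromaticity with respect to the given initial portion $c\restriction\varepsilon$, the color $\nu$ must satisfy $\nu\notin c[N_\alpha\cap\varepsilon]$ for every $\alpha\in N_\delta\cap[\varepsilon,\delta)$; equivalently, $\nu$ must avoid the union $\bigcup_{\alpha\in N_\delta\cap[\varepsilon,\delta)}c[N_\alpha\cap\varepsilon]$. Bounding the cardinality of this union by $<\chi$ is not immediate from the suitability of $c$ alone, and will likely require a strengthened inductive hypothesis controlling the colors on second-order neighborhoods of $\chi$-cofinal members of $G$. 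This is where I expect hypotheses $(\aleph)$ and $(\beth)$ on $\vec C$ and $G$ to play the decisive role, tying the $N_\alpha$ for $\alpha\in N_\delta$ back to the coherence structure of $\vec C$ and permitting such a bound to be maintained throughout the induction.
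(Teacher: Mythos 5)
Your overall strategy — induction on $\delta$, a reserved color for $N_\delta$ at limit stages, an overwrite justified by triangle-freeness — is the same as the paper's, and your successor case is fine. But the obstacle you flag at the end is a genuine gap in your limit case, and your proposed remedy (a strengthened inductive hypothesis controlling colors on second-order neighborhoods) is not the fix the argument needs. The actual resolution is much more elementary and comes straight from the definition of the edge relation: set $\epsilon:=\min(C_\delta\setminus\varepsilon)$ and overwrite only those $\alpha\in N_\delta$ with $\alpha>\epsilon$. For such $\alpha$, membership in $N_\delta$ gives $\min(C_\alpha)>\sup(C_\delta\cap\alpha)\ge\epsilon\ge\varepsilon$ (since $\epsilon\in C_\delta\cap\alpha$), so $N_\alpha\s C_\alpha\s(\epsilon,\alpha)$ — the overwritten vertices have \emph{no} downward neighbors in the frozen region $[0,\varepsilon)$ at all, and their downward neighbors in the new region either carry colors from $x\setminus\{\nu\}$ or are themselves overwritten, in which case Lemma~\ref{triangle_free_lemma} applies. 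The price is that at most one element of $N_\delta\cap[\varepsilon,\epsilon]$ (namely $\epsilon$ itself) escapes the overwrite, contributing one extra color, so one gets $c'[N_\delta]\s c[N_\delta\cap\varepsilon]\cup\{c'(\epsilon),\nu\}$ rather than your stated inclusion; this is still of size $<\chi$. Without this restriction your overwrite is simply not chromatic, and no strengthening of the inductive hypothesis about $c\restriction\varepsilon$ can help, since $c\restriction\varepsilon$ is handed to you as an arbitrary suitable coloring.

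A second, related gap: you run the inner recursion along an arbitrary continuous cofinal sequence $\langle\delta_i\mid i<\cf(\delta)\rangle$, but the recursion must run along $C_\delta$ itself (the paper indexes the partial colorings by $\eta\in C_\delta\cup\{\delta\}$). The reason is the verification of suitability at inner limit stages: if $\eta$ is a limit point of your sequence with $\eta\in G$ and $\cf(\eta)=\chi$, you must check $|c_\eta[N_\eta]|<\chi$, where $c_\eta$ is a union of $\chi$-many earlier colorings; the only way to control this is to identify $N_\eta$ with $N_\delta\cap\eta$ via hypothesis $(\beth)$, which requires $\eta\in\acc(C_\delta)$. For an arbitrary cofinal sequence the limit points need not lie in $\acc(C_\delta)$, and the union of $\chi$-many sets each of size $<\chi$ can have size $\chi$. (This is also where $(\aleph)$ and $(\beth)$ actually enter — not in bounding second-order neighborhoods.) Your sketch of Clause (2) is the right idea — replay the limit case with $D$ in place of $C_\delta$ and a single reserved color — but it inherits both issues above.
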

\begin{proof} (1) By induction on $\delta<\kappa$.

$\br$ The case $\delta=0$ is trivial. $\blacktriangleleft$

$\br$ Suppose that $\delta$ is an ordinal $<\kappa$ for which the claim holds.
Given $x\in[\chi]^\chi$ and a suitable coloring $c:\varepsilon\rightarrow\chi$ with $\varepsilon\le\delta+1$,
put $y:=c[N_\delta]$, so that $|y|<\chi$. Fix an arbitrary $\xi\in x\setminus y$.

If $\varepsilon=\delta+1$, then we are done by taking $c':=c$.
Thus, suppose that $\varepsilon\le\delta$
and appeal to the induction hypothesis with $x\setminus\{\xi\}$ and $c$ to find a suitable coloring
$c^*:\delta\rightarrow\chi$ extending $c$ with
$c^*[\delta\setminus\varepsilon]\s x\setminus\{\xi\}$. Finally, let $c':\delta+1\rightarrow\chi$ be the unique
extension of $c^*$ that satisfies $c'(\delta)=\xi$.

Evidently, $c'[(\delta+1)\setminus \varepsilon]\s x$.
We verify that $c'$ is suitable, using the criteria of Lemma~\ref{cohere}.

As $c'\restriction\delta=c^*$ and the latter is $E$-chromatic,
to show that $c'$ is $E$-chromatic it suffices to verify that, for all $\alpha\in N_\delta$, we have $c'(\alpha)\neq c'(\delta)$, i.e., $c'(\alpha)\neq\xi$.
Let $\alpha\in N_\delta$ be arbitrary. If $\alpha<\varepsilon$, then $c'(\alpha)=c^*(\alpha)=c(\alpha)\in c[N_\delta]= y$ and hence $c'(\alpha)\neq \xi$.
If $\alpha\ge\varepsilon$, then $c'(\alpha)=c^*(\alpha)\in c^*[\delta\setminus\varepsilon]\s x\setminus\{\xi\}$, and hence $c'(\alpha)\neq\xi$.

In addition, as $c'[N_\gamma]=c^*[N_\gamma]$ for all $\gamma\le\delta$ and $c^*$ is suitable,
we infer that $|c'[N_\gamma]|<\chi$ for all $\gamma\in G^{\delta+2}_\chi=G^{\delta+1}_\chi$. $\blacktriangleleft$

$\br$ Suppose that $\delta$ is a limit ordinal $<\kappa$ and that the claim holds for all $\eta<\delta$.
Given $x\in[\chi]^\chi$ and a suitable coloring $c:\varepsilon\rightarrow\chi$ with $\varepsilon\le \delta$,
put  $y:=c[N_\delta]$ and fix some $\xi\in x\setminus y$.

If $\varepsilon=\delta$, then we are done by taking $c':=c$.
Thus, suppose this is not the case, so that  $\epsilon:=\min(C_\delta\setminus \varepsilon)$ is $<\delta$.

We shall recursively construct a $\s$-increasing chain of suitable colorings $\{c_\eta:\eta\rightarrow\chi\mid \eta\in C_\delta\cup\{\delta\}\}$ satisfying all of the following for all $\eta\in C_\delta\cup\{\delta\}$:
\begin{enumerate}
\item[(i)] $c_\eta\restriction \varepsilon\s c$;
\item[(ii)] $c_\eta[\eta\setminus \varepsilon]\s x$;
\item[(iii)] $c_\eta[N_\delta\setminus(\epsilon+1)]\s\{\xi\}$;
\item[(iv)] $c_\eta^{-1}\{\xi\}\s N_\delta\cup(\epsilon+1)$.
\end{enumerate}

Of course, if we succeed, then $c':=c_\delta$ will be as sought. We proceed as follows.

\begin{itemize}
\item For $\eta\in C_\delta\cap\epsilon$, we  simply let $c_\eta:=c\restriction\eta$.
\item For $\eta=\epsilon$, we appeal to the induction hypothesis and find a suitable coloring
$c_\eta:\eta\rightarrow\chi$ extending $c$ such that $c_\eta[\eta\setminus\varepsilon]\s x$.\footnote{Of course, if $\epsilon=\varepsilon$, then $c_\eta=c$.}

\item For $\eta\in\nacc(C_\delta)$ above $\epsilon$, let $\varepsilon':=\sup(C_\delta\cap \eta)$,
so that $\varepsilon\le\epsilon\le\varepsilon'<\eta<\delta$. By the induction hypothesis, we may pick a suitable coloring
$d:\eta\rightarrow\chi$ extending $c_{\varepsilon'}$ and satisfying $d[\eta\setminus\varepsilon']\subseteq x\setminus\{\xi\}$.
Then, we define $c_\eta:\eta\rightarrow\chi$ by letting, for all $\beta<\eta$:
$$c_\eta(\beta):=\begin{cases}
    \xi &\text{if }\beta\in N_\delta\setminus(\epsilon+1);\\
    d(\beta) &\text{otherwise}.
\end{cases}$$

As $d$ extends $c_{\varepsilon'}$ and the latter is assumed to satisfy Clause~(iii) above, we get that $c_\eta$ extends $c_{\varepsilon'}$.
We also have $c_\eta[\eta\setminus \varepsilon']\s d[\eta\setminus\varepsilon']\cup\{\xi\}=x$,
so $c_\eta$ is seen to satisfy Clauses (i)--(iv) above.
Since $d$ is suitable,
and $c_\eta$ differs from $d$ by at most a single color, we have $|c_\eta[N_\gamma]|<\kappa$ for all $\gamma<\kappa$.
Thus, to prove that $c_\eta$ is suitable, it suffices to verify that it is $E$-chromatic.

Towards a contradiction, suppose that $\alpha<\beta<\eta$ are such that $\{\alpha,\beta\}\in E$ and yet $c_\eta(\alpha)=c_\eta(\beta)$.
Since $d$ is $E$-chromatic, the definition of $c_\eta$ makes it clear that $\beta\ge\epsilon+1$ and $c_\eta(\alpha)=c_\eta(\beta)=\xi$.

\begin{itemize}
\item[$\circ$] By $c_\eta(\beta)=\xi$ and $\beta\ge\epsilon+1$, Clause~(iv) implies that  $\beta\in N_\delta$.
\item[$\circ$] By $\beta\in N_\delta$ and $\epsilon\in C_\delta$, we have $\min(C_\beta) > \sup(C_\delta \cap \beta) \geq \epsilon$.
\item[$\circ$] By $\{\alpha, \beta\} \in E$, we have  $\alpha \in C_\beta$, so that $\alpha\ge\min(C_\beta)>\epsilon$.

\item[$\circ$] By $c_\eta(\alpha) = \xi$ and $\alpha\ge\epsilon+1$, Clause~(iv) implies that  $\alpha \in N_\delta$.
\end{itemize}
Altogether, we have established that $\{\alpha,\beta,\delta\}$ is a triangle, contradicting Lemma~\ref{triangle_free_lemma}.

\item For $\eta\in\acc(C_\delta)\cup\{\delta\}$ above $\epsilon$, let $c_\eta:=\bigcup_{\eta'\in C_\delta\cap\eta}c_{\eta'}$.
We now verify that $c_\eta$ is suitable, using the criteria of Lemma~\ref{cohere}.

As $c_\eta$ is the limit of a chain of $E$-chromatic colorings, it is $E$-chromatic. Next, let $\gamma\in G^{\eta+1}_\chi$ be arbitrary.
\begin{itemize}
\item[$\circ$] If $\gamma<\eta$, then, for $\eta':=\min(C_\delta\setminus\gamma)$, we have
that $|c_\eta[N_\gamma]|=|c_{\eta'}[N_\gamma]|<\chi$.

\item[$\circ$] If $\gamma=\eta$, then, by $\gamma\in(\acc(C_\delta)\cup\{\delta\})\cap G$ and hypothesis $(\aleph)$,
we infer that $\delta\in G$. Then, by $\cf(\gamma)=\chi$ and hypothesis $(\beth)$,
we have $N_\gamma=N_\delta\cap\eta$,
so that $$c_\eta[N_\gamma]=c_\eta[N_\delta]=c_\epsilon[N_\delta]\cup\bigcup\{c_{\eta'}[N_\delta\setminus\epsilon]\mid \epsilon\in \eta'\in C_\delta\cap\eta\}.$$
It then follows from Clause~(iii) above that $c_\eta[N_\gamma]\s c_\epsilon[N_\delta]\cup\{c_{\min(C_\delta\setminus(\epsilon+1))}(\epsilon),\xi\}$,
and hence  $|c_\eta[N_\gamma]|<\chi$. $\blacktriangleleft$
\end{itemize}
\end{itemize}

(2) Put $\epsilon:=\min(D)$ and $N:=\{\alpha\in D\mid \min(C_\alpha)>\sup(D\cap\alpha)\ge\epsilon\}$.
Just as in the proof of the case in which $\delta$ is a limit ordinal in Clause~(1),
we recursively construct a chain of suitable colorings $\{c_\eta:\eta\rightarrow\chi\mid \eta\in D\cup\{\kappa\}\}$ satisfying the following two requirements for all $\eta\in D\cup\{\kappa\}$:
\begin{itemize}
\item $c_\eta[N\setminus(\epsilon+1)]\s\{0\}$;
\item $c_\eta^{-1}\{0\}\s N\cup(\epsilon+1)$.
\end{itemize}

Then $c_\kappa\restriction G$ witnesses that $\chr(G(\vec C))\le\chi$.
\end{proof}

In Remark~\ref{remark26}, we outlined the strategy for ensuring that a Hajnal-M\'at\'e graph is uncountably chromatic.
For a $C$-sequence graph, we have the following variation.

\begin{defn}\label{capturing} We say that an ordinal $\delta<\kappa$ \emph{captures} a sequence $\langle A_i\mid i<\theta\rangle$ if the
  following two conditions hold:
\begin{itemize}
\item $\min(C_\delta)\ge\min(A_0)$;\footnote{This is not a typing error. We do mean $A_0$.}
\item for all $i<\min\{\delta,\theta\}$, there exists $\iota\in\otp(C_\delta)$ such that $C_\delta(\iota),C_\delta(\iota+1)\in A_i$.
\end{itemize}
\end{defn}

\begin{lemma} \label{large_chromatic_number_lemma} Suppose that $0<\theta<\kappa$,
and that any sequence $\langle A_i\mid i<\theta\rangle$ of cofinal subsets of $G$
is captured by some ordinal $\delta\in G\setminus\theta$.
Then $\chr(G(\vec C))>\theta$.
\end{lemma}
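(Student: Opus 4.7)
The strategy is proof by contradiction. Suppose $\chr(G(\vec C)) \leq \theta$ and fix a chromatic coloring $c: G \to \theta$. The plan is to construct a sequence $\langle A_i \mid i < \theta\rangle$ of cofinal subsets of $G$, tailored to $c$, so that any $\delta \in G \setminus \theta$ capturing the sequence forces a monochromatic edge in $G(\vec C)$.

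The central device is a \emph{spread} property of a cofinal $A \subseteq G$: namely, $\min(C_\alpha) > \sup(A \cap \alpha)$ for every $\alpha \in A$. This property is tailored to the definition of $N_\delta$: if $A_j \subseteq c^{-1}\{j\}$ is spread and $\iota$ is the capturing witness (so that $C_\delta(\iota), C_\delta(\iota+1) \in A_j$) for $j := c(\delta)$, then $\alpha := C_\delta(\iota+1)$ satisfies $\min(C_\alpha) > \sup(A_j \cap \alpha) \geq C_\delta(\iota) \geq \min(C_\delta)$, placing $\alpha \in N_\delta$; meanwhile $c(\alpha) = j = c(\delta)$. The edge $\{\delta,\alpha\}$ is then monochromatic, contradicting chromaticity.

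To produce cofinal spread $A_i \subseteq c^{-1}\{i\}$, I would run a greedy recursion, picking $\beta_\xi \in c^{-1}\{i\}$ with $\beta_\xi > \sup_{\eta < \xi}\beta_\eta$ and $\min(C_{\beta_\xi}) > \sup_{\eta < \xi}\beta_\eta$ whenever possible. Call $i$ \emph{good} if this succeeds for all $\xi < \kappa$ (so that $A_i := \{\beta_\xi : \xi < \kappa\}$ is cofinal, by strict monotonicity, and spread by construction), and \emph{bad} otherwise, witnessed by some $\gamma_i < \kappa$ beyond which no $\beta \in c^{-1}\{i\}$ has $\min(C_\beta) > \gamma_i$. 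Set $\gamma^* := \sup\{\gamma_i : i \text{ bad}\}$, which is $< \kappa$ by regularity of $\kappa$ and $\theta < \kappa$. For each good $i$, let $A_i$ be such a greedy sequence started with $\beta_0 \in c^{-1}\{i\}$ above $\gamma^*$; for each bad $i$, let $A_i := G \setminus (\gamma^*+1)$. Applying the capturing hypothesis yields $\delta \in G \setminus \theta$ with $\min(C_\delta) \geq \min(A_0) > \gamma^*$, and then a bad value $c(\delta) = i$ would give $\delta \in c^{-1}\{i\}$ with both $\delta$ and $\min(C_\delta)$ exceeding $\gamma_i$, directly contradicting the witness for $i$. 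Hence $c(\delta)$ must be good, and the argument of the previous paragraph supplies the monochromatic edge. The main obstacle I anticipate is precisely the potential failure of the greedy construction for some colors, which is neutralized by the good/bad dichotomy and the use of $\min(A_0)$ to push $\min(C_\delta)$ past $\gamma^*$.
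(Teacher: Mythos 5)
Your proof is correct and follows essentially the same route as the paper's: the same dichotomy on colors $i$ according to whether $\{\min(C_\beta) \mid c(\beta)=i\}$ is cofinal in $\kappa$, the same use of the first clause of capturing (via $\min(C_\delta)\ge\min(A_0)$) to rule out the bounded colors as the value $c(\delta)$, and the same monochromatic edge $\{C_\delta(\iota+1),\delta\}$ obtained from a pair of consecutive points of $C_\delta$ inside $A_{c(\delta)}$. The only cosmetic difference is in how the \emph{spread} property of the sets $A_i$ is arranged --- you use a direct greedy transfinite recursion for each color, whereas the paper takes a sparse cofinal subset of the range of a function $f_i$ interleaved with a club of closure points; the two devices serve the identical purpose.
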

\begin{proof}
Let $c:G\rightarrow\theta$ be an arbitrary coloring. We shall show that $c$ is not chromatic.

Let $i<\theta$ be arbitrary.
Put $H_i:=\{\alpha\in G\mid c(\alpha)=i\}$ and $M_i:=\{\min(C_\alpha)\mid \alpha\in H_i\}$.

$\br$ If $\sup(M_i)=\kappa$, then define $f_i:\kappa\rightarrow H_i$ by stipulating:
$$f_i(\eta):=\min\{\alpha\in H_i\mid \min(C_\alpha)>\eta\}.$$

$\br$ If $\sup(M_i)<\kappa$, then let $f_i:\kappa\rightarrow G\setminus \sup(M_i)$ be the order-preserving bijection.

\medskip

Consider the club $D:=\bigcap_{i<\theta}\{\beta<\kappa\mid f_i[\beta]\s \beta>0\}$.
For each $i<\theta$, let $A_i$ be some sparse enough cofinal subset of $\rng(f_i)$ such that the following two conditions hold:
\begin{enumerate}
\item $\min(A_i)\ge\min(D)$;
\item for every $\beta<\alpha$, both from $A_i$, we have $D\cap(\beta,\alpha)\neq\emptyset$.
\end{enumerate}

Now, fix some $\delta\in G\setminus\theta$ that captures $\langle A_i\mid i<\theta\rangle$.
Set $j:=c(\delta)$. Note that $\sup(M_j)=\kappa$,
because otherwise $$\sup(M_j)=f_j(0)<\min(D)\le\min(A_0)\le\min(C_\delta),$$ contradicting the fact that
$j=c(\delta)$ entails $\sup(M_j)\ge\min(C_\delta)$.

Pick $\iota\in\otp(C_\delta)$ such that $C_\delta(\iota),C_\delta(\iota+1)\in A_j$.
Denote $\beta:=C_\delta(\iota)$ and $\alpha:=C_\delta(\iota+1)$. Recalling Clause~(2) above,
let us fix some $\gamma\in D\cap(\beta,\alpha)$.
By $\alpha\in A_j$ and $\sup(M_j)=\kappa$, we have $\alpha\in\rng(f_j)$,
so let us fix $\eta<\kappa$ such that $f_j(\eta)=\alpha$.  Then $\min(C_\alpha)>\eta$.

By $f_j[\gamma]\s\gamma<\alpha=f_j(\eta)$, we have $\eta\ge\gamma$,
and hence $$\min(C_\alpha)>\eta\ge\gamma>\beta=\sup(C_\delta\cap\alpha)=C_\delta(\iota)\ge\min(C_\delta).$$
It follows that $\{\alpha,\delta\}\in E$. Recalling that $\alpha\in\rng(f_j)\s H_j$,
we conclude that $c(\alpha)=j=c(\delta)$,  which means that $c$ is not a chromatic coloring of $G(\vec C)$.
\end{proof}

Thus we have established that $\sqx$-coherent and capturing $C$-sequences give rise to graphs witnessing incompactness for the chromatic number.
In later sections, we shall address the existence of such $C$-sequences.

\subsection{Coloring numbers}\label{subsection22}

In this subsection, we discuss compactness and incompactness for the coloring number of graphs.
The primary new result is Theorem \ref{coloring_small_gap_lemma}, indicating that there is a
limit to the amount of incompactness that can be exhibited by the coloring number. We also
review some of the previously known results about obtaining instances of compactness and incompactness for
the coloring number, in particular in connection with general set-theoretic reflection principles
such as $\Delta$-reflection and Fodor-type reflection.
First, a basic observation.

\begin{lemma}[folklore] \label{neighborhood_lemma}
  Suppose $\mathcal G = (G, E)$ is a graph, $\mu$ is an infinite cardinal, and there are
  subsets $A, B \subseteq G$ such that $\mu \leq |A| < |B|$ and, for every
  $y \in B$, $|N_{\mathcal G}(y) \cap A| \geq \mu$. Then $\col(\mathcal G) > \mu$.
\end{lemma}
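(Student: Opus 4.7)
The plan is to proceed by contradiction using a straightforward double-counting argument. Suppose toward a contradiction that $\col(\mathcal G)\le\mu$, and fix a well-ordering $\lhd$ of $G$ witnessing this, so that $|N_{\mathcal G}^\lhd(x)|<\mu$ for every $x\in G$.

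The key observation is that, for each $y\in B$, the assumption forces the bulk of the neighborhood $N_{\mathcal G}(y)\cap A$ to lie \emph{above} $y$ in the ordering $\lhd$. Indeed, since $N_{\mathcal G}^\lhd(y)\cap A\s N_{\mathcal G}^\lhd(y)$, we have $|N_{\mathcal G}^\lhd(y)\cap A|<\mu$; combined with the hypothesis $|N_{\mathcal G}(y)\cap A|\ge\mu$ and the fact that $\mu$ is an infinite cardinal, this yields
\[
|\{a\in A\mid y\lhd a\ \&\ \{a,y\}\in E\}|\ge\mu
\]
for every $y\in B$. I would then count the cardinality of the set
\[
P:=\{(y,a)\in B\times A\mid y\lhd a\ \&\ \{a,y\}\in E\}
\]
in two ways. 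Summing over $y\in B$ gives $|P|\ge |B|\cdot\mu=|B|$, where the last equality uses $|B|>|A|\ge\mu$. Summing over $a\in A$ gives $|P|=\sum_{a\in A}|N_{\mathcal G}^\lhd(a)\cap B|\le|A|\cdot\mu=|A|$, using $|A|\ge\mu$ and the assumption on $\lhd$. Combining the two bounds yields $|B|\le|A|$, contradicting $|A|<|B|$.

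I do not anticipate any real obstacle: everything reduces to the single observation that a small $\lhd$-down-neighborhood forces a large $\lhd$-up-neighborhood, after which the double count is routine. The only point worth being a touch careful about is the cardinal arithmetic at the end, which relies on $|B|>|A|\ge\mu$ to guarantee that $|A|\cdot\mu=|A|$ and $|B|\cdot\mu=|B|$.
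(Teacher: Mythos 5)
Your proof is correct, and it is essentially the paper's argument recast as a double count: the paper instead selects a single witness $y \in B \setminus \bigcup_{x \in A} N^\lhd_{\mathcal G}(x)$ (possible because that union has size at most $|A|\cdot\mu = |A| < |B|$) and then an $x \in (N_{\mathcal G}(y)\cap A)\setminus N^\lhd_{\mathcal G}(y)$, yielding an edge $\{x,y\}$ with neither endpoint $\lhd$-below the other. The two cardinality estimates you sum over are exactly the ones underlying that selection, so the arguments coincide in substance.
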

\begin{proof}
  Suppose not, and let $\lhd$ be a well-ordering of $G$ such that, for all $x \in G$,
  $|N_{\mathcal G}^\lhd(x)| < \mu$. We can then find $y \in B \setminus \bigcup_{x \in A} N_{\mathcal G}^\lhd(x)$,
  and, in turn, $x \in (N_{\mathcal G}(y) \cap A) \setminus N_{\mathcal G}^\lhd(y)$. But then we have
  $\{x,y\} \in E$, $x \notin N_{\mathcal G}^\lhd(y)$, and $y \notin N_{\mathcal G}^\lhd(x)$, which is
  a contradiction.
\end{proof}

The next few lemmas deal with graphs of the form $\mathcal G = (\kappa, E)$.
For each $\alpha<\kappa$, we denote by $\mathcal G_\alpha$ the induced subgraph $(\alpha, E \cap [\alpha]^2)$ .

The following Lemma is essentially due to Shelah \cite{shelah_partition_calculus} and can be found in
its present form in \cite{komjath_colouring_number}.

\begin{lemma}[Shelah] \label{stat_coloring_lemma}
  Suppose $\mathcal G = (\kappa, E)$ is a graph over some regular uncountable cardinal $\kappa$.
  For an infinite cardinal $\mu<\kappa$, consider the set
  \[
    S_\mu(\mathcal G) := \{\alpha < \kappa \mid |N_{\mathcal G}(\beta) \cap \alpha| \geq \mu \text{ for some } \beta \geq \alpha\}.
  \]
  \begin{enumerate}
    \item If $S_\mu(\mathcal G)$ is stationary in $\kappa$, then $\col(\mathcal G) > \mu$.
    \item If $S_\mu(\mathcal G)$ is non-stationary in $\kappa$ and $\col(\mathcal G_\alpha) \leq \mu$ for every $\alpha < \kappa$,
      then $\col(\mathcal G) \leq \mu$.\qed
  \end{enumerate}
\end{lemma}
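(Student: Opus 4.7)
My plan is to prove the two clauses separately, each as a short manipulation of a well-ordering witnessing the coloring number; no square-like or pcf machinery is needed, and the regularity of $\kappa$ enters only through the existence of clubs and the fact that $<\mu$-sized subsets of $\kappa$ are bounded.

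For clause~(1) I would argue contrapositively. Fix a well-ordering $\lhd$ of $\kappa$ witnessing $\col(\mathcal G) \leq \mu$. For each $\gamma < \kappa$, the set $N_{\mathcal G}^\lhd(\gamma)$ has size $< \mu < \kappa$, so by the regularity of $\kappa$ the function $h(\gamma) := \sup(N_{\mathcal G}^\lhd(\gamma)) + 1$ takes values below $\kappa$. The club $C := \{\alpha < \kappa \mid h[\alpha] \subseteq \alpha\}$ should then be disjoint from $S_\mu(\mathcal G)$: given $\alpha \in C$ and $\beta \geq \alpha$, the portion of $N_{\mathcal G}(\beta) \cap \alpha$ lying $\lhd$-below $\beta$ is a subset of $N_{\mathcal G}^\lhd(\beta)$ and so has size $<\mu$. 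If $|N_{\mathcal G}(\beta) \cap \alpha| \geq \mu$ held, I could therefore pick $\gamma < \alpha$ with $\beta \lhd \gamma$ and $\beta \in N_{\mathcal G}(\gamma)$; this places $\beta$ in $N_{\mathcal G}^\lhd(\gamma)$, so that $\beta < h(\gamma) < \alpha$, contradicting $\beta \geq \alpha$.

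For clause~(2), I would start with a club $D \subseteq \kappa \setminus S_\mu(\mathcal G)$ and enumerate $\{0\} \cup D$ continuously as $\langle \alpha_\xi \mid \xi < \kappa\rangle$, partitioning $\kappa$ into the intervals $I_\xi := [\alpha_\xi, \alpha_{\xi+1})$. For each $\xi$, the subgraph of $\mathcal G$ induced on $I_\xi$ is a subgraph of $\mathcal G_{\alpha_{\xi+1}}$ and hence has coloring number $\leq \mu$, so I can pick a well-ordering $\lhd_\xi$ of $I_\xi$ such that for every $\gamma \in I_\xi$, $|\{\beta \in N_{\mathcal G}(\gamma) \cap I_\xi \mid \beta \lhd_\xi \gamma\}| < \mu$. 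Concatenating these block-orderings, $\beta \lhd \gamma$ iff $\beta$ lies in an earlier interval than $\gamma$, or both lie in the same $I_\xi$ and $\beta \lhd_\xi \gamma$, yields a well-ordering of $\kappa$. For $\gamma \in I_\xi$, I would then decompose $N_{\mathcal G}^\lhd(\gamma)$ as $(N_{\mathcal G}(\gamma) \cap \alpha_\xi) \cup \{\beta \in N_{\mathcal G}(\gamma) \cap I_\xi \mid \beta \lhd_\xi \gamma\}$: the second summand has size $<\mu$ by choice of $\lhd_\xi$, while the first has size $<\mu$ because $\alpha_\xi \notin S_\mu(\mathcal G)$, applied to the witness $\beta := \gamma \geq \alpha_\xi$. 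The union is $<\mu$, so $\lhd$ witnesses $\col(\mathcal G) \leq \mu$.

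The only step that requires any genuine attention is this last appeal to $\alpha_\xi \notin S_\mu(\mathcal G)$, which is precisely what forces the interval-spanning neighborhoods $N_{\mathcal G}(\gamma) \cap \alpha_\xi$ to be small for every $\gamma \in I_\xi$; everything else is routine bookkeeping.
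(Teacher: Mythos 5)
The paper omits the proof of this lemma entirely (it is stated with a \qed and attributed to Shelah/Komj\'ath), so there is nothing to compare against; your argument is the standard one and it is correct. Both halves check out: in (1) the closure points of $\gamma\mapsto\sup(N^\lhd_{\mathcal G}(\gamma))+1$ form a club avoiding $S_\mu(\mathcal G)$, and in (2) the block decomposition along a club disjoint from $S_\mu(\mathcal G)$ gives back-neighborhoods that split into a $<\mu$-piece from $\lhd_\xi$ and a $<\mu$-piece from $\alpha_\xi\notin S_\mu(\mathcal G)$, whose union still has size $<\mu$ since $\mu$ is infinite.
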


We now show that a graph can exhibit only a limited amount of incompactness with respect to the coloring number.
This is in sharp contrast to the situation for the chromatic number.

\begin{thm} \label{coloring_small_gap_lemma}
  Suppose that $\mu$ and $\kappa$ are infinite cardinals such that $\kappa$ is regular and is not
  the successor of a singular cardinal of cofinality $\cf(\mu)$.
  If $\mathcal G = (\kappa, E)$ is a graph and $\col(\mathcal G_\alpha) \leq \mu$ for every $\alpha < \kappa$, then $\col(\mathcal G) \leq \mu^+$.
\end{thm}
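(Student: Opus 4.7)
I plan to argue by contradiction. Assume $\col(\mathcal G) > \mu^+$. Applying Lemma~\ref{stat_coloring_lemma}(2) with $\mu^+$ in place of $\mu$, together with the hypothesis that $\col(\mathcal G_\alpha) \le \mu \le \mu^+$ for every $\alpha < \kappa$, we see that $S := S_{\mu^+}(\mathcal G)$ must be stationary in $\kappa$ (otherwise, the lemma would yield $\col(\mathcal G) \le \mu^+$, contrary to assumption). We may also assume $\kappa \ge \mu^{++}$, since otherwise $\col(\mathcal G) \le \kappa \le \mu^+$ is immediate.

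The strategy is to locate some $\gamma < \kappa$ together with sets $A, B \subseteq \gamma$ satisfying $|A| = \mu < \mu^+ = |B|$ and $A \subseteq N_{\mathcal G}(y)$ for every $y \in B$. Lemma~\ref{neighborhood_lemma} will then deliver $\col(\mathcal G_\gamma) > \mu$, contradicting the hypothesis. To this end, for each $\alpha \in S$ fix a witness $\beta(\alpha) \ge \alpha$ and select a bounded subset $A_\alpha \subseteq N_{\mathcal G}(\beta(\alpha)) \cap \alpha$ of cardinality $\mu$: such a bounded $A_\alpha$ exists, either trivially when $\cf(\alpha) > \mu$, or by decomposing $\alpha$ into $\cf(\alpha) \le \mu$ bounded pieces and pigeonholing $N_{\mathcal G}(\beta(\alpha)) \cap \alpha$ into one of them when $\cf(\alpha) \le \mu$. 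By Fodor's lemma applied to the regressive map $\alpha \mapsto \sup(A_\alpha)$, we obtain a stationary $T \subseteq S$ and an ordinal $\eta < \kappa$ with $A_\alpha \subseteq \eta$ for every $\alpha \in T$. If we can further refine $T$ to a stationary subset on which $A_\alpha$ takes a common value $A$, then selecting any $\mu^+$ of the corresponding $\beta(\alpha)$'s and bounding them by some $\gamma < \kappa$ (possible by regularity of $\kappa > \mu^+$) completes the contradiction.

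The refinement step reduces to a cardinal-arithmetic bound of the form $|\eta|^\mu < \kappa$, whose validity is precisely what the structural hypothesis on $\kappa$ is meant to supply. When $\kappa$ is inaccessible, the inequality is immediate. When $\kappa = \lambda^+$, the analysis splits by whether $\lambda$ is regular or singular, and in the singular case by the value of $\cf(\lambda)$; here Bukovsk\'y--Hausdorff-type identities play the decisive role, and the cofinality mismatch $\cf(\lambda) \neq \cf(\mu)$ is what collapses $\lambda^\mu$ back to $\lambda$. The excluded case---$\kappa = \lambda^+$ with $\lambda$ singular of cofinality $\cf(\mu)$---is precisely the Shelah singular-compactness scenario in which the coloring number is known to admit gaps larger than one, so the hypothesis is essential rather than technical. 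The main obstacle I anticipate is this cardinal-arithmetic case analysis, which in the regular-$\lambda$ subcase may require a more careful choice of $A_\alpha$, for instance by drawing it from a canonical family indexed by a well-ordering $\lhd_\alpha$ witnessing $\col(\mathcal G_\alpha) \le \mu$, so that $\{A_\alpha : \alpha \in T\}$ has small enough variability to permit the desired pigeonhole.
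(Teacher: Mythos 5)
Your setup (stationarity of $S_{\mu^+}(\mathcal G)$, choosing a bounded $\mu$-sized piece of each witnessing neighborhood, and pressing down to get a common bound $\eta$) matches the paper's proof exactly. The genuine gap is in your refinement step: you insist on stabilizing $A_\alpha$ to a single common set $A$, which requires counting $[\eta]^{\mu}$, i.e.\ the bound $|\eta|^\mu<\kappa$. That bound is \emph{not} supplied by the structural hypothesis on $\kappa$, and your claim that the cofinality mismatch ``collapses $\lambda^\mu$ back to $\lambda$'' is false: take $\kappa=\aleph_2$, $\mu=\aleph_0$, $\lambda=|\eta|=\aleph_1$; then $\cf(\lambda)\neq\cf(\mu)$ and $\kappa$ is the successor of a regular cardinal, so the theorem applies, yet $\aleph_1^{\aleph_0}=2^{\aleph_0}$ can exceed $\aleph_2$. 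The theorem is a ZFC theorem with no cardinal-arithmetic hypotheses, so no argument routed through $|\eta|^\mu<\kappa$ can succeed, and re-indexing the $A_\alpha$'s by well-orderings witnessing $\col(\mathcal G_\alpha)\le\mu$ does not reduce the number of candidates.

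The missing idea is that Lemma~\ref{neighborhood_lemma} does not require a common set \emph{contained in} all the neighborhoods: it only requires a single set $A$ with $\mu\le|A|<|B|$ such that each $y\in B$ satisfies $|N_{\mathcal G}(y)\cap A|\ge\mu$, and $|A|$ may be as large as anything below $|B|$. The paper exploits this as follows. Let $\varepsilon^*$ be your $\eta$ and $\lambda:=|\varepsilon^*|$. If $\kappa>\lambda^+$, no stabilization is needed at all: take $\mathcal E:=\varepsilon^*$ itself, collect $|\mathcal E|^+<\kappa$ many of the witnesses $\delta_\alpha$ below some $\gamma<\kappa$, and apply Lemma~\ref{neighborhood_lemma} inside $\mathcal G_\gamma$. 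If $\kappa=\lambda^+$, the hypothesis gives $\cf(\lambda)\neq\cf(\mu)$, and this mismatch is used not to compute $\lambda^\mu$ but to show that, under a fixed bijection $f:\lambda\leftrightarrow\varepsilon^*$, each neighborhood already meets some proper initial segment $f[i_\alpha]$ (with $i_\alpha<\lambda$) in $\mu$ points -- otherwise $\mu$ would be a supremum of an increasing $\cf(\lambda)$-sequence of cardinals below it. Now there are only $\lambda<\kappa$ candidates for $i_\alpha$, so an ordinary pigeonhole on the stationary set yields a fixed $\mathcal E:=f[i^*]$ of size $<\lambda$, and one concludes as before. You should replace your ``common value'' step with this two-case argument.
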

\begin{proof}
  Assume for sake of contradiction that $\col(\mathcal G_\alpha) \leq \mu$ for every $\alpha < \kappa$, but $\col(\mathcal G) > \mu^+$.
  Then $\kappa>\mu^+$, and by Lemma~\ref{stat_coloring_lemma}(2), $S_{\mu^+}(\mathcal G)$ is stationary in $\kappa$. For each $\alpha \in S_{\mu^+}(\mathcal G)$, fix $\delta_\alpha \geq \alpha$
  such that $|N_{\mathcal G}(\delta_\alpha) \cap \alpha| \geq \mu^+$, and then let $\varepsilon_\alpha < \alpha$ be least such that $|N_{\mathcal G}(\delta_\alpha) \cap \varepsilon_\alpha| = \mu$.

  By Fodor's Lemma, let us fix $\varepsilon^* < \kappa$ and a stationary $S \subseteq S_{\mu^+}(\mathcal G)$ such that, for all
  $\alpha \in S$, $\varepsilon_\alpha = \varepsilon^*$.

  \begin{claim}
    There exists $\mathcal E \subseteq \varepsilon^*$ such that:
    \begin{enumerate}
      \item $|\mathcal E|^+ < \kappa$;
      \item $\{ \alpha\in S\mid |N_{\mathcal G}(\delta_\alpha) \cap\mathcal E| = \mu\}$ is stationary.
    \end{enumerate}
  \end{claim}

  \begin{proof}
    Let $\lambda := |\varepsilon^*|$.

    $\br$ If $\kappa>\lambda^+$, then we simply let $\mathcal E := \varepsilon^*$.

    $\br$ If $\kappa=\lambda^+$, then, by assumption, we have $\cf(\lambda) \neq \cf(\mu)$.
    Fix a bijection $f:\lambda\leftrightarrow\varepsilon^*$.
    For every $\alpha \in S$,
    there is $i_\alpha < \lambda$ such that $|N_{\mathcal G}(\delta_\alpha) \cap f[i_\alpha]| = \mu$. Fix
    an $i^* < \lambda$ and a stationary $S' \subseteq S$ such that, for all $\alpha \in S'$,
    $i_\alpha = i^*$. Then $\mathcal E:= f[i^*]$ is as desired.
  \end{proof}

  Fix $\mathcal E$ as given by the preceding claim, so that $S' := \{\alpha \in S \mid |N_{\mathcal G}(\delta_\alpha) \cap\mathcal E| = \mu\}$ is stationary.
  In particular, $\Delta:=\{ \delta_\alpha\mid \alpha\in S'\}$ is cofinal in $\kappa$.
  By $\kappa>|\mathcal E|^+$, find $\gamma\in(\varepsilon^*,\kappa)$ such that $|\Delta\cap\gamma|\ge|\mathcal E|^+$.
  Now $\mathcal E\s\gamma$, $\Delta \cap \gamma \subseteq \gamma$, $\mu \leq |\mathcal E| < |\Delta \cap \gamma|$ and, for all $\delta \in \Delta \cap \gamma$,
  $|N_{\mathcal G_\gamma}(\delta) \cap\mathcal E| \geq \mu$. Thus, by Lemma~\ref{neighborhood_lemma}, $\col(\mathcal G_\gamma) > \mu$, contradicting
  our assumption.
\end{proof}

We remark that Theorem~\ref{coloring_small_gap_lemma} is consistently sharp:
\begin{lemma}[Shelah, {\cite[Lemma~3.1(1)]{shelah_partition_calculus}}]\label{nonreflecting_set_col} Suppose $\mu < \kappa$ are
infinite regular cardinals and there exists a non-reflecting stationary subset of $E^\kappa_\mu$. Then there exists a $(\mu,\mu^+)$-coloring graph of size $\kappa$.
\end{lemma}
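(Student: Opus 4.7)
Let $S \subseteq E^\kappa_\mu$ be the given non-reflecting stationary set. For each $\beta \in S$, fix a cofinal $A_\beta \subseteq \beta$ of order type $\mu$, and set $\mathcal G := (\kappa, E)$ with
$$E := \{\{\alpha,\beta\} \in [\kappa]^2 \mid \beta \in S \text{ and } \alpha \in A_\beta\}.$$
The plan is to verify (i) $\col(\mathcal G) \le \mu^+$, (ii) $\col(\mathcal G) > \mu$, and (iii) $\col(\mathcal G_\gamma) \le \mu$ for every $\gamma < \kappa$; the last yields the required subgraph condition, since any $X \s \kappa$ of size $<\kappa$ sits inside some such $\gamma$ by regularity of $\kappa$.

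For (i), the $\in$-ordering of $\kappa$ works: $N^\in_{\mathcal G}(\beta) = A_\beta$ has size $\mu$ whenever $\beta \in S$, and is empty otherwise (any witness $\alpha<\beta$ would require $\alpha \in S$ and $\beta \in A_\alpha \s \alpha$, which is absurd). For (ii), Lemma~\ref{stat_coloring_lemma}(1) applies: for each $\alpha \in S$, taking $\beta:=\alpha$ gives $|N_{\mathcal G}(\alpha)\cap\alpha| = |A_\alpha| = \mu$, so $S_\mu(\mathcal G) \supseteq S$ is stationary in $\kappa$.

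For (iii), I would induct on $\gamma$. The successor case is routine: if $\gamma = \gamma' + 1$ and $\gamma' \in S$, prepend $\gamma'$ to a good ordering of $\mathcal G_{\gamma'}$, which adds at most one element to each backward neighbourhood; if $\gamma' \notin S$, then $\gamma'$ has no predecessor-neighbour in $\mathcal G_\gamma$ and may be placed arbitrarily. For the limit case, using that $S$ consists of limit ordinals and does not reflect, extract a strictly increasing continuous cofinal sequence $\langle c_i \mid i < \cf(\gamma) \rangle$ in $\gamma$ with $c_0 = 0$ and $c_i \notin S$ for every $i$: non-reflection handles $\cf(\gamma) > \omega$ (take the continuous enumeration of a club disjoint from $S$), while for $\cf(\gamma) = \omega$ one simply picks a cofinal sequence of successor ordinals. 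Setting $I_i := [c_i, c_{i+1})$ and fixing, via the induction hypothesis, a good ordering $\lhd_i$ of $\mathcal G_{c_{i+1}}$, define $\lhd$ on $\gamma$ by processing the blocks in increasing order of $i$ and ordering each $I_i$ by the restriction of $\lhd_i$.

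It remains to bound $|N^\lhd_{\mathcal G_\gamma}(\alpha)|$ for $\alpha \in I_i$. The intra-block part is majorised by $|N^{\lhd_i}_{\mathcal G_{c_{i+1}}}(\alpha)| < \mu$. The inter-block part is $N_{\mathcal G}(\alpha) \cap c_i$, which is empty when $\alpha \notin S$ (any candidate $\beta < c_i \le \alpha$ in $S$ would need $\alpha \in A_\beta \s \beta$) and equals $A_\alpha \cap c_i$ when $\alpha \in S$. The crux --- and the step I expect to be the main obstacle to spot --- is that $c_i \notin S \ni \alpha$ together with $\alpha \in [c_i, c_{i+1})$ forces $c_i < \alpha$, so that $A_\alpha \cap c_i$ is a \emph{proper} initial segment of the order-$\mu$ sequence $A_\alpha$ and therefore has size strictly below $\mu$. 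Summing the two contributions gives $|N^\lhd_{\mathcal G_\gamma}(\alpha)| < \mu$ as required.
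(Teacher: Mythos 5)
Your proof is correct and follows essentially the same route as the paper: the same graph built from cofinal sets of order type $\mu$ indexed by the non-reflecting stationary set, the same interval decomposition at limit stages using a club disjoint from $S$, and the same key observation that $A_\alpha \cap c_i$ is a proper initial segment of $A_\alpha$ and hence of size $<\mu$. The only (immaterial) divergence is that for $\col(\mathcal G)>\mu$ you invoke Lemma~\ref{stat_coloring_lemma}(1), whereas the paper runs the Fodor-style closure-point argument directly.
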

\begin{proof} Let $\Gamma \subseteq E^\kappa_\mu$ be stationary and non-reflecting.
Fix a $\mu$-bounded $C$-sequence over $\Gamma$, $\vec C=\langle C_\alpha\mid \alpha\in\Gamma\rangle$,
and then derive a graph $\mathcal G := (\kappa, E)$ by letting $E := \{\{\alpha, \beta\} \in [\kappa]^2 \mid \beta \in \Gamma, \alpha \in C_\beta\}$.\footnote{Recall Definition~\ref{c-sequence} and Remark~\ref{remark26}.}

We first show that, for all $\gamma < \kappa$, $\col(\mathcal G_\gamma) \leq \mu$. We proceed by
induction on $\gamma$. Thus, suppose $\gamma < \kappa$ and, for all $\eta < \gamma$,
$\col(\mathcal G_\eta) \leq \mu$. If $\gamma = \gamma_0+1$, then fix a well-ordering $\lhd_0$ of $\gamma_0$ witnessing that
$\col(\mathcal G_{\gamma_0}) \leq \mu$, and let $\lhd := \lhd_0 \cup \{(\gamma_0, \alpha) \mid \alpha < \gamma_0\}$. Then
$\lhd$ is a well-ordering of $\gamma$ witnessing that $\col(\mathcal G_\gamma) \leq \mu$. We may thus assume that
$\gamma$ is a limit ordinal. Let $\nu := \cf(\gamma)$. As $\Gamma$ is non-reflecting, fix a club $D$ in $\gamma$
such that $\otp(D) = \nu$, $D \cap \Gamma = \emptyset$ and $0\in D$. For $i < \nu$, let $I_i$ denote the half-open interval
$[D(i), D(i+1))$, and let $\lhd_i$ be a well-ordering of $I_i$ witnessing that the graph $(I_i, E \cap [I_i]^2)$
has coloring number at most $\mu$. For each $\alpha < \gamma$, let $i_\alpha$ be the unique $i < \nu$ such that
$\alpha \in I_i$. Now define a well-ordering $\lhd$ of $\gamma$ by letting $\alpha \lhd \beta$ iff one of the
following two conditions holds:
\begin{itemize}
  \item $i_\alpha < i_\beta$;
  \item $i_\alpha = i_\beta = i$ and $\alpha \lhd_i \beta$.
\end{itemize}
$\lhd$ is easily seen to be a well-ordering of $\gamma$. To see that it witnesses $\col(\mathcal G_\gamma) \leq \mu$,
fix $\beta < \gamma$. Then $$N^\lhd_{\mathcal G_\gamma}(\beta) = \left(N^\in_{\mathcal G_\gamma}(\beta) \cap \bigcup_{i < i_\beta} I_i\right)
\cup N^{\lhd_{i_\beta}}_{(I_{i_\beta}, E \cap [I_{i_\beta}]^2)}(\beta).$$

The second component of this union has size less than $\mu$ by the fact that $\lhd_{i_\beta}$ witnesses $\col(I_{i_\beta}, E \cap [I_{i_\beta}]^2) \leq \mu$.
To deal with the first component, first notice that, if $\beta \in \Gamma$, then $\beta \notin D$, so $\sup(\bigcup_{i < i_\beta} I_i) < \beta$;
by the fact that $\vec{C}$ is $\mu$-bounded, it follows that the first component has size less than $\mu$. If $\beta \notin \Gamma$, then
the first component is empty. This finishes the proof that $\col(\mathcal G_\gamma) \leq \mu$.

We finally show that $\col(\mathcal G) = \mu^+$. Since $\vec C$ is $\mu$-bounded, we have $|N^\in_{\mathcal G}(\beta)| \leq \mu$ for all $\beta < \kappa$,
so that $\col(\mathcal G) \leq \mu^+$. Suppose for sake of contradiction that $\col(\mathcal G) < \mu^+$, and let
$\lhd$ be a well-ordering of $\kappa$ witnessing this. Define a function
$f:\kappa \rightarrow \kappa$ by stipulating $f(\alpha) := \sup(N^{\lhd}_{\mathcal G}(\alpha))$. As $\Gamma$ is stationary,
we can find $\beta \in \Gamma$ such that $f[\beta]\subseteq \beta$. In particular, for all $\alpha \in C_\beta$,
we have $\beta \notin N^{\lhd}_{\mathcal G}(\alpha)$, and hence $\alpha \lhd \beta$. But then $|N^{\lhd}_{\mathcal G}(\beta)| \ge |C_\beta| = \mu$,
which is a contradiction.
\end{proof}

\begin{cor} \label{small_gap_cor}
  Suppose that $\mu$ and $\kappa$ are infinite cardinals, with $\kappa$ regular. If $\mathcal G = (\kappa, E)$ is a graph, and, for every
  $\alpha < \kappa$, $\col(\mathcal G_\alpha) \leq \mu$, then $\col(\mathcal G) \leq \mu^{++}$.
\end{cor}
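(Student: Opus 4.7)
The plan is to derive the corollary directly from Theorem~\ref{coloring_small_gap_lemma} by a two-case analysis, choosing whether to apply the theorem with parameter $\mu$ or with parameter $\mu^+$. The point is that the exceptional hypothesis in Theorem~\ref{coloring_small_gap_lemma} (``$\kappa$ is the successor of a singular cardinal of cofinality $\cf(\mu)$'') can fail for at most one value among $\mu$ and $\mu^+$, since these two cardinals have different cofinalities.

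More precisely, I would argue as follows. If $\kappa$ is not the successor of a singular cardinal of cofinality $\cf(\mu)$, then Theorem~\ref{coloring_small_gap_lemma} applies directly with the cardinal $\mu$, yielding $\col(\mathcal G)\le\mu^+\le\mu^{++}$, and we are done. Otherwise, write $\kappa=\lambda^+$ for some singular cardinal $\lambda$ with $\cf(\lambda)=\cf(\mu)$. Since $\cf(\mu)\le\mu<\mu^+=\cf(\mu^+)$, the cofinality of $\lambda$ differs from $\cf(\mu^+)$, so $\kappa$ is \emph{not} the successor of a singular cardinal of cofinality $\cf(\mu^+)$. Meanwhile the assumption $\col(\mathcal G_\alpha)\le\mu$ for all $\alpha<\kappa$ trivially implies $\col(\mathcal G_\alpha)\le\mu^+$, so a second application of Theorem~\ref{coloring_small_gap_lemma}, this time with the cardinal $\mu^+$ in place of $\mu$, gives $\col(\mathcal G)\le(\mu^+)^+=\mu^{++}$, as required.

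No obstacle is expected here; the entire content of the corollary is the observation above, which is essentially a bookkeeping consequence of the theorem. The only point to double-check is the harmless edge case where $\mu^{++}\ge\kappa$, but this is automatic since one always has $\col(\mathcal G)\le\kappa$ for a graph $\mathcal G=(\kappa,E)$ with $\kappa$ regular, as witnessed by the natural ordering (whose proper initial segments have cardinality less than~$\kappa$).
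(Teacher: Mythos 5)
Your proposal is correct and is essentially the paper's own argument: apply Theorem~\ref{coloring_small_gap_lemma} with $\mu$ when its exceptional hypothesis is satisfied, and otherwise observe that the hypothesis cannot also fail for $\mu^+$ (since $\cf(\mu)\le\mu<\mu^+=\cf(\mu^+)$) and apply the theorem with $\mu^+$ instead. The paper merely phrases the trivial case as ``$\kappa\le\mu^+$'' up front rather than as a closing remark, which is an immaterial difference.
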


\begin{proof}
  If $\kappa \le \mu^+$ or if $\kappa$ is not the successor of a singular cardinal of cofinality $\cf(\mu)$, then
  $\col(\mathcal G) \leq \mu^+$ trivially or by Theorem~\ref{coloring_small_gap_lemma}, respectively. If
  $\kappa > \mu^+$ and $\kappa$ is the successor of a cardinal of cofinality $\cf(\mu)$, then
  apply Theorem~\ref{coloring_small_gap_lemma} with $\mu^+$ in place of $\mu$ to obtain
  $\col(\mathcal G) \leq \mu^{++}$.
\end{proof}

To the best of our knowledge, it is unknown whether Corollary \ref{small_gap_cor} is consistently sharp:
\begin{Q} Is it consistent that for some infinite cardinals $\mu<\kappa$, there exists a graph $\mathcal G=(\kappa,E)$ with $\col(\mathcal G)=\mu^{++}$, and yet $\col(\mathcal G_\alpha)\le\mu$ for all $\alpha<\kappa$?
\end{Q}
We can show that certain instances of Chang's Conjecture give us situations in which Corollary \ref{small_gap_cor} is not sharp.
For instance, we have the following.

\begin{lemma}\label{chang_coloring}
  Suppose that $\lambda$ is a singular cardinal of countable cofinality
  and $(\lambda^+, \lambda) \twoheadrightarrow (\aleph_1, \aleph_0)$ holds.
  \begin{enumerate}
    \item If $\mathcal{G} = (\lambda^+, E)$ and $\col(\mathcal{G}_\alpha) \le \aleph_0$ for all $\alpha < \lambda^+$,
      then $\col(\mathcal{G}) \le \aleph_1$.
    \item In some forcing extension, $(\lambda^+, \lambda) \twoheadrightarrow (\aleph_1, \aleph_0)$ remains valid
      and there exists a graph $\mathcal G = (\lambda^+, E)$ such that $\col(\mathcal G_\alpha) \leq \aleph_0$ for all $\alpha < \lambda^+$,
      and $\col(\mathcal{G}) = \aleph_1$.
  \end{enumerate}
\end{lemma}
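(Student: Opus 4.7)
Assume, for contradiction, that $\col(\mathcal G) > \aleph_1$; by Corollary~\ref{small_gap_cor} this forces $\col(\mathcal G) = \aleph_2$. My strategy is to run the proof of Theorem~\ref{coloring_small_gap_lemma} (with $\mu := \aleph_0$) through its single obstructed subcase, namely $\cf(\lambda) = \aleph_0 = \cf(\mu)$, using Chang's conjecture to locate a suitable ``thin'' set. By Lemma~\ref{stat_coloring_lemma}(2), $S_{\aleph_1}(\mathcal G)$ is stationary. For each $\alpha \in S_{\aleph_1}(\mathcal G)$, fix $\delta_\alpha \ge \alpha$ with $|N_{\mathcal G}(\delta_\alpha) \cap \alpha| \ge \aleph_1$ and let $\varepsilon_\alpha < \alpha$ be least with $|N_{\mathcal G}(\delta_\alpha) \cap \varepsilon_\alpha| = \aleph_0$. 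Fodor's lemma yields a stationary $S \s S_{\aleph_1}(\mathcal G)$ and $\varepsilon^* < \lambda^+$ with $\varepsilon_\alpha = \varepsilon^*$ on $S$; set $T := \{\delta_\alpha \mid \alpha \in S\}$, which, since each value in $T$ has at most $\lambda$ preimages under $\alpha \mapsto \delta_\alpha$, satisfies $|T| = \lambda^+$. If $|\varepsilon^*| < \lambda$, the proof of Theorem~\ref{coloring_small_gap_lemma} applies verbatim with $\mathcal E := \varepsilon^*$: for some $\gamma < \lambda^+$ with $|T \cap \gamma| \ge |\varepsilon^*|^+$, Lemma~\ref{neighborhood_lemma} forces $\col(\mathcal G_\gamma) > \aleph_0$, contradicting the hypothesis.

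\textbf{The critical subcase is $|\varepsilon^*| = \lambda$,} and here Chang's conjecture enters essentially. Fix a bijection $f\colon \lambda \leftrightarrow \varepsilon^*$ and an injection $\pi\colon \lambda^+ \rightarrow T$, and apply $(\lambda^+, \lambda) \twoheadrightarrow (\aleph_1, \aleph_0)$ to the structure $\mathcal A := (\lambda^+, <, E, \lambda, T, \varepsilon^*, f, \pi)$ to obtain $M \prec \mathcal A$ with $|M| = \aleph_1$ and $|M \cap \lambda| = \aleph_0$. Set $\mathcal E := M \cap \varepsilon^*$; as $f \in M$ restricts to a bijection $M \cap \lambda \leftrightarrow M \cap \varepsilon^*$, we have $|\mathcal E| = \aleph_0$. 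The key claim is that $|N_{\mathcal G}(\delta) \cap \mathcal E| = \aleph_0$ for every $\delta \in M \cap T$: the predicate $T$ ensures that any such $\delta$ satisfies $|N_{\mathcal G}(\delta) \cap \varepsilon^*| = \aleph_0$, so by elementarity $M$ contains some bijection $h\colon \omega \rightarrow N_{\mathcal G}(\delta) \cap \varepsilon^*$, and $\omega \s M$ forces $\rng(h) \s M \cap \varepsilon^* = \mathcal E$. The enumeration $\pi \in M$ yields $|M \cap T| \ge \aleph_1$, so taking $\gamma := \sup(M) + 1 < \lambda^+$, Lemma~\ref{neighborhood_lemma} applied to $\mathcal G_\gamma$ with $A := \mathcal E$ and $B := M \cap T$ delivers the contradiction $\col(\mathcal G_\gamma) > \aleph_0$. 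The main conceptual hurdle is recognizing Chang's conjecture as the precise tool that trades the ordinal $\varepsilon^*$ (of size $\lambda$, uncaught by $\omega$-cofinality pigeonholes) for the genuinely countable $\mathcal E$ without losing neighborhood mass.

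\textbf{Plan for part (2).} Starting from a ground model of $(\lambda^+, \lambda) \twoheadrightarrow (\aleph_1, \aleph_0)$, force with the standard $\sigma$-closed poset $\mathbb P$ whose conditions are bounded subsets $s \s E^{\lambda^+}_\omega$ such that $s \cap \alpha$ is non-stationary in $\alpha$ for every $\alpha \le \sup(s)$ of uncountable cofinality, ordered by end-extension. A standard density argument shows that the generic union $\Gamma$ is a non-reflecting stationary subset of $E^{\lambda^+}_\omega$, so Lemma~\ref{nonreflecting_set_col} delivers in the extension a graph $\mathcal G = (\lambda^+, E)$ with $\col(\mathcal G_\alpha) \le \aleph_0$ for every $\alpha < \lambda^+$ and $\col(\mathcal G) = \aleph_1$. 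The principal obstacle is verifying that $\mathbb P$ preserves $(\lambda^+, \lambda) \twoheadrightarrow (\aleph_1, \aleph_0)$; this is expected from the $\sigma$-closure of $\mathbb P$, which prevents new countable sequences and thereby allows any Skolem-hull witness for Chang's conjecture over a ground-model structure to persist as a witness over the corresponding $V[G]$-structure (by a standard reduction of $\mathbb P$-names of $V[G]$-structures to countable ground-model data), together with the fact that $\mathbb P$ preserves all cardinals up to $\lambda^+$.
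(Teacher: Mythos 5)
Your part (1) is correct and follows essentially the same route as the paper: stationarity of $S_{\aleph_1}(\mathcal G)$ via Lemma~\ref{stat_coloring_lemma}, Fodor to stabilize $\varepsilon_\alpha=\varepsilon^*$, a Chang-type elementary substructure $M$ to replace $\varepsilon^*$ by the countable set $M\cap\varepsilon^*$ while keeping infinite neighborhood traces (by elementarity, since each relevant neighborhood trace is exactly countable and $\omega\s M$), and then Lemma~\ref{neighborhood_lemma} applied inside $\mathcal G_\gamma$ for $\gamma$ just above $\sup(M\cap\lambda^+)$. Your use of an injection $\pi:\lambda^+\to T$ to certify $|M\cap T|=\aleph_1$ is a slightly more careful variant of the paper's ``$B=\Delta\cap M$ is cofinal in $\sup(M\cap\lambda^+)$'' step, and your explicit split on $|\varepsilon^*|<\lambda$ versus $|\varepsilon^*|=\lambda$ is harmless extra detail. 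No issues there.

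Part (2) has a genuine gap in the preservation argument. The forcing you describe (bounded approximations to a non-reflecting stationary subset of $E^{\lambda^+}_\omega$, ordered by end-extension) is indeed $\sigma$-closed, but $\sigma$-closure is \emph{not} enough to preserve a Chang's conjecture variant $(\lambda^+,\lambda)\twoheadrightarrow(\aleph_1,\aleph_0)$. The heuristic you give --- that no new countable sequences are added, so ground-model Skolem hulls survive --- does not address the real problem: a structure on $\lambda^+$ in the extension is coded by new functions $f:\lambda^+\to\lambda^+$, and these are not in the ground model no matter how closed the forcing is below $\lambda^+$. To transfer the instance of Chang's conjecture from $V$ to $V[G]$ one must, in $V$, decide $\dot f$ on an elementary chain of length $\lambda^+$ and find lower bounds along the way; this is exactly what the $\lambda^+$-\emph{strategic} closure of the poset provides (Definition~\ref{thegame}), and it is the hypothesis the paper invokes: ``$(\kappa_1,\lambda_1)\twoheadrightarrow(\kappa_0,\lambda_0)$ is preserved by $\kappa_1$-strategically closed forcing.'' Mere $\sigma$-closure genuinely fails here: for example, the countably closed forcing that adds a Kurepa tree destroys $(\aleph_2,\aleph_1)\twoheadrightarrow(\aleph_1,\aleph_0)$, since a Kurepa tree refutes that instance of Chang's conjecture; analogous phenomena occur at $\lambda^+$. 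So you should replace the appeal to $\sigma$-closure by a verification that your poset is $\lambda^+$-strategically closed (which also yields the cardinal preservation up to $\lambda^+$ that you asserted without proof) and then cite the preservation of Chang's conjecture under such forcing. With that repair, the rest of part (2) --- genericity of the non-reflecting stationary set and the appeal to Lemma~\ref{nonreflecting_set_col} --- matches the paper.
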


\begin{proof}
  (1) Suppose not, and let $\mathcal{G}$ be a counterexample.  By Lemma \ref{stat_coloring_lemma}(2),
  $S_{\aleph_1}(\mathcal G)$ is stationary in $\lambda^+$. As in the proof of Theorem~\ref{coloring_small_gap_lemma},
  this allows us to find $\varepsilon^* < \lambda^+$ and a cofinal set $\Delta \subseteq \lambda^+$
  such that, for all $\delta \in \Delta$, $|N_{\mathcal G}(\delta) \cap \varepsilon^*| \geq \aleph_0$.
  By $(\lambda^+, \lambda) \twoheadrightarrow (\aleph_1, \aleph_0)$, we can find
  an elementary substructure $M \prec (H(\lambda^{++}), \in, E, \varepsilon^*, \Delta)$
  such that $|M \cap \lambda^+| = \aleph_1$ and $|M \cap \lambda| = \aleph_0$.

  Let $B = \Delta \cap M$ and note that, by elementarity, $B$ is cofinal in $\sup(M \cap \lambda^+)$
  and hence $|B| = \aleph_1$. Also by elementarity, for every
  $\delta \in B$, $|N_{\mathcal G}(\delta) \cap M \cap \varepsilon^*| \geq \aleph_0$. Therefore, applying
  Lemma \ref{neighborhood_lemma} to $A := M \cap \varepsilon^*$ and $B$, we obtain
  $\col(\mathcal G_{\sup(M \cap \lambda^+)}) > \aleph_0$, contradicting our assumptions.

(2) Let $\mathbb P$ be the standard poset for adding a non-reflecting stationary subset of $E^{\lambda^+}_\omega$ (cf.~\cite[Example~6.5]{MR2768691}),
  and work in the forcing extension by $\mathbb{P}$.
  As $\mathbb{P}$ is $\lambda^+$-strategically closed and $(\kappa_1, \lambda_1) \twoheadrightarrow (\kappa_0, \lambda_0)$ is
  preserved by $\kappa_1$-strategically closed forcing,\footnote{Recall Definition~\ref{thegame}.} $(\lambda^+, \lambda) \twoheadrightarrow (\aleph_1, \aleph_0)$ holds.
  Now, appeal to Lemma~\ref{nonreflecting_set_col}.
\end{proof}

\begin{cor}
  Suppose that $(\aleph_{\omega+1}, \aleph_\omega) \twoheadrightarrow (\aleph_1, \aleph_0)$ holds.
  If $\mu$ is an infinite cardinal and $\mathcal G$ is a  graph of size $\leq \aleph_{\omega \cdot 2}$
  all of whose strictly smaller subgraphs have coloring number $\le\mu$,
  then $\col(\mathcal G) \leq \mu^+$.
\end{cor}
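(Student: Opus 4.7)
The plan is a short case analysis on $\kappa := |V(\mathcal{G})|$. If $\kappa \le \mu^+$, then enumerating $V(\mathcal{G})$ in order type at most $\mu^+$ trivially witnesses $\col(\mathcal{G}) \le \mu^+$, so I would assume $\kappa \ge \mu^{++}$ and identify $V(\mathcal{G})$ with $\kappa$.

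If $\kappa$ is singular, I would invoke Shelah's singular compactness for the coloring number (Fact~(6) in the introduction, with $\theta = \mu$ and $\chi = \kappa$) contrapositively: were $\col(\mathcal{G}) > \mu$, then $\mathcal{G}$ would have a strictly smaller subgraph of coloring number $> \mu$, contradicting the hypothesis; hence $\col(\mathcal{G}) \le \mu < \mu^+$. If $\kappa$ is regular and not the successor of a singular cardinal of cofinality $\cf(\mu)$, then Theorem~\ref{coloring_small_gap_lemma} applies verbatim and yields $\col(\mathcal{G}) \le \mu^+$.

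The remaining case, $\kappa$ regular and the successor of a singular cardinal of cofinality $\cf(\mu)$, is where the bound $\kappa \le \aleph_{\omega \cdot 2}$ does its work. The singular cardinals in the interval $[\aleph_0, \aleph_{\omega \cdot 2}]$ are precisely $\aleph_\omega$ and $\aleph_{\omega \cdot 2}$, and only the former has its successor still in the range, pinning down $\kappa = \aleph_{\omega+1}$ and $\cf(\mu) = \omega$. The requirement $\mu^+ < \kappa = \aleph_{\omega+1}$ further gives $\mu < \aleph_\omega$, so, being infinite with $\cf(\mu) = \omega$, $\mu$ must equal $\aleph_0$.

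This last case is tailor-made for Lemma~\ref{chang_coloring}(1) instantiated at $\lambda = \aleph_\omega$: the hypothesis $(\aleph_{\omega+1}, \aleph_\omega) \twoheadrightarrow (\aleph_1, \aleph_0)$ is in force, $\mathcal{G}$ lives on $\aleph_{\omega+1}$, and every proper initial subgraph $\mathcal{G}_\alpha$ has coloring number at most $\aleph_0$; the lemma then delivers $\col(\mathcal{G}) \le \aleph_1 = \mu^+$. There is no genuine obstacle in the argument --- its only real content, beyond bookkeeping, is the observation that the ceiling $\aleph_{\omega \cdot 2}$ has been chosen precisely so that the singular-compactness step disposes of every admissible size above $\aleph_{\omega+1}$, leaving $\aleph_{\omega+1}$ as the sole exceptional cardinal that the Chang-conjecture hypothesis is earmarked to handle.
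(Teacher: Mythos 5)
Your proof is correct and uses exactly the same three ingredients as the paper's: Shelah's singular compactness to dispose of singular $\kappa$, Theorem~\ref{coloring_small_gap_lemma} to reduce to $\kappa=\aleph_{\omega+1}$ and $\mu=\aleph_0$, and Lemma~\ref{chang_coloring}(1) with $\lambda=\aleph_\omega$ for that remaining case. The only difference is presentational (a direct case split rather than the paper's argument by contradiction), so this is essentially the same proof.
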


\begin{proof}
  Suppose not, and write $\kappa$ for the size of $\mathcal G$. Clearly, $\mu<\kappa$.
  By Shelah's compactness theorem for singular cardinals \cite{shelah_singular_compactness}, $\kappa$ must be regular.
  It then follows from Theorem~\ref{coloring_small_gap_lemma} that $\kappa=\aleph_{\omega+1}$ and $\mu=\aleph_0$,
  contradicting Lemma~\ref{chang_coloring}(1) with $\lambda=\aleph_\omega$.
\end{proof}

We next recall a strong reflection principle, introduced by Magidor and Shelah in \cite{MR1249391}.

\begin{defn}[Magidor-Shelah, \cite{MR1249391}]
  Suppose $\lambda \leq \kappa$ are infinite cardinals, with $\kappa$ regular. $\Delta_{\lambda, \kappa}$
  is the assertion that, for every stationary $S \subseteq E^\kappa_{<\lambda}$
  and every algebra $A$ on $\kappa$ with fewer than $\lambda$ operations, there is a subalgebra $A'$ of $A$ such that,
  letting $\eta = \otp(A')$, we have:
  \begin{enumerate}
    \item $\eta$ is a regular cardinal;
    \item $\eta < \lambda$;
    \item $S \cap A'$ is stationary in $\sup(A')$.
  \end{enumerate}
  $\Delta_\lambda$ is the assertion that, for all regular $\nu \geq \lambda$, $\Delta_{\lambda, \nu}$ holds.
\end{defn}

Instances of this reflection principle imply instances of compactness for the coloring number.
The following Proposition follows from the arguments of \cite[\S2]{shelah_singular_compactness}
and \cite[\S2]{MR1249391}. We provide a direct proof for completeness.

\begin{prop} \label{delta_reflection_coloring_prop}
  Suppose $\aleph_0 \leq \mu < \kappa = \cf(\kappa)$ and there is a cardinal $\lambda$ such that $\mu < \lambda \leq \kappa$
  and $\Delta_{\lambda, \kappa}$ holds. Then any $\kappa$-sized graph of coloring number $>\mu$ has a $(<\kappa)$-sized subgraph of coloring number $>\mu$.
\end{prop}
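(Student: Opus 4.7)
My plan is to proceed directly: assuming $\mathcal G$ is a $\kappa$-sized graph with $\col(\mathcal G)>\mu$, I will produce a $(<\kappa)$-sized subgraph with coloring number $>\mu$. After identifying the vertex set with $\kappa$, write $\mathcal G=(\kappa,E)$. First, if some $\alpha<\kappa$ already satisfies $\col(\mathcal G_\alpha)>\mu$, then $\mathcal G_\alpha$ is the desired subgraph, so I may assume $\col(\mathcal G_\alpha)\le\mu$ for every $\alpha<\kappa$. Lemma \ref{stat_coloring_lemma}(2) then forces $S_\mu(\mathcal G)$ to be stationary in $\kappa$. For each $\alpha\in S_\mu(\mathcal G)$, I fix $\delta_\alpha\ge\alpha$ witnessing $\alpha\in S_\mu(\mathcal G)$, together with an injective enumeration $\langle\gamma_{\alpha,i}\mid i<\mu\rangle$ of a $\mu$-sized subset of $N_{\mathcal G}(\delta_\alpha)\cap\alpha$.

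The first substantive step will be to verify that $S:=S_\mu(\mathcal G)\cap E^\kappa_{<\lambda}$ is stationary. If it were not, $S_\mu(\mathcal G)\cap E^\kappa_{\ge\lambda}$ would be stationary, and as $\cf(\alpha)\ge\lambda>\mu$ for each of its members, the set $\{\gamma_{\alpha,i}\mid i<\mu\}$ would be bounded below $\alpha$. Then $\alpha\mapsto\sup\{\gamma_{\alpha,i}\mid i<\mu\}+1$ would be regressive, and Fodor's lemma would deliver a stationary $T$ and an ordinal $\tau^*<\kappa$ with $\{\gamma_{\alpha,i}\mid i<\mu\}\subseteq\tau^*$ for all $\alpha\in T$. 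For any $\beta\in(\tau^*,\kappa)$, picking $\alpha\in T$ with $\alpha\ge\beta$ would place $\beta\in S_\mu(\mathcal G)$ (the same $\delta_\alpha$ witnesses it, since its $\mu$ neighbors lie below $\tau^*<\beta$), so $(\tau^*,\kappa)\subseteq S_\mu(\mathcal G)$, and $E^\kappa_\omega\cap(\tau^*,\kappa)$ would be a stationary subset of $S\subseteq E^\kappa_{<\lambda}$, contradicting the assumption.

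I would then equip $\kappa$ with an algebra $A$ whose unary operations are $\alpha\mapsto\delta_\alpha$ and $\alpha\mapsto\gamma_{\alpha,i}$ for $i<\mu$ (extended arbitrarily when $\alpha\notin S_\mu(\mathcal G)$); since $\mu<\lambda$, $A$ has fewer than $\lambda$ operations. Applying $\Delta_{\lambda,\kappa}$ to $S$ and $A$ secures a subalgebra $A'$ with $\eta:=\otp(A')$ a regular cardinal $<\lambda$ and with $S\cap A'$ stationary in $\sup(A')$. Let $\pi:A'\to\eta$ be the order-isomorphism, and let $\mathcal G':=(A',E\cap[A']^2)$, viewed as a graph on $\eta$ via $\pi$.

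For each $\alpha\in S\cap A'$, closure of $A'$ under the operations of $A$ provides $\delta_\alpha\in A'$ and $\{\gamma_{\alpha,i}\mid i<\mu\}\subseteq A'$; in $\mathcal G'$, the vertex $\pi(\delta_\alpha)\ge\pi(\alpha)$ has the $\mu$ distinct neighbors $\{\pi(\gamma_{\alpha,i})\mid i<\mu\}$ below $\pi(\alpha)$, certifying $\pi(\alpha)\in S_\mu(\mathcal G')$. A routine pullback-of-clubs argument, using that $\pi$ extends continuously to $\cl(A')$, then shows that $\pi[S\cap A']$ remains stationary in $\eta$, so $S_\mu(\mathcal G')$ is stationary in $\eta$. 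Lemma \ref{stat_coloring_lemma}(1) yields $\col(\mathcal G')>\mu$, and $|V(\mathcal G')|=\eta<\lambda\le\kappa$ completes the proof. The main obstacle will be the Fodor step in the second paragraph: the $\Delta$-reflection hypothesis applies only to stationary subsets of $E^\kappa_{<\lambda}$, so the bare stationarity of $S_\mu(\mathcal G)$ is not enough to invoke it, and the regressive pressing-down is needed to bring the action into the right cofinality class.
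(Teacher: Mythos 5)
Your proof is correct and follows essentially the same route as the paper: Shelah's stationary-set criterion, an algebra encoding the witnesses $\delta_\alpha$ and their $\mu$ neighbors, reflection via $\Delta_{\lambda,\kappa}$, and the order-isomorphism onto a regular $\eta<\lambda$. The only (valid) variation is in how you place the stationary set inside $E^\kappa_{<\lambda}$ — you argue by contradiction with Fodor's lemma, whereas the paper directly observes that $S_\mu(\mathcal G)\cap E^\kappa_{\cf(\mu)}$ is stationary by taking suprema of $\mu$-sized neighbor sets interleaved with a given club.
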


\begin{proof}
  Let $\mathcal G$ be a graph of size $\kappa$ such that, for every smaller subgraph $\mathcal G'$ of $\mathcal G$, $\col(\mathcal G') \leq \mu$.
  Suppose for sake of contradiction that $\col(\mathcal G) > \mu$. Without loss of generality, $V(\mathcal G) = \kappa$.
  Then, by Lemma \ref{stat_coloring_lemma},
  \[
    S_0 := \{\alpha < \kappa \mid     |N_{\mathcal G}(\beta) \cap \alpha| \geq \mu \text{ for some } \beta \geq \alpha\}
\]
  is stationary in $\kappa$. It is easily seen that this implies that $S := S_0 \cap E^\kappa_{\cf(\mu)}$
  is stationary in $\kappa$. Let $A$ be an algebra on $\kappa$ equipped with the following functions:
  \begin{itemize}
    \item a function $f$ on $\kappa$ such that, for each $\alpha \in S$, $f(\alpha)$ is the least
      $\delta \geq \alpha$ such that $|N_{\mathcal G}(\delta) \cap \alpha| \geq \mu$;
    \item for each $\zeta < \mu$, a function $g_\zeta$ on $\kappa$ such that, for each $\alpha \in S$,
      $g_\zeta(\alpha)$ is the unique element $\epsilon$ of $N_{\mathcal G}(f(\alpha))$ such that $\otp(N_{\mathcal G}(f(\alpha)) \cap \epsilon) = \zeta$;
    \item for each $\zeta < \mu$, a constant function $h_\zeta$ on $\kappa$ taking value $\zeta$.
  \end{itemize}
  Apply $\Delta_{\lambda, \kappa}$ to $A$ and $S$ to find a subalgebra $A'$ such that $\eta:=\otp(A')$ is a regular cardinal $< \lambda$
  and $S \cap A'$ is stationary in $\sup(A')$. Since $A'$ is closed under $h_\zeta$ for $\zeta < \mu$,
  we have $\mu \s A'$. Let $\pi:A' \rightarrow \eta$ be
  the unique order-preserving bijection, and let $\mathcal H := (\eta, F)$ be the graph on $\eta$ defined by
  $\{\alpha, \delta\} \in F$ iff $\{\pi^{-1}(\alpha), \pi^{-1}(\delta)\} \in E$. Note that, since
  $A' \cap S$ is stationary in $\sup(A')$, $T := \pi[S]$ is stationary in $\eta$. Also, since $A'$ is closed under
  $f$ and $g_\zeta$ for each $\zeta < \mu$ and $\mu \subseteq A'$, we have that, for all $\alpha \in T$, there is $\delta \geq \alpha$
  such that $|N_{\mathcal H}(\delta) \cap \alpha| \geq \mu$. Therefore, by Lemma \ref{stat_coloring_lemma}, $\col(\mathcal H) > \mu$.
  But $\pi$ witnesses that $\mathcal H$ and $\mathcal G \restriction A' := (A', E \cap [A']^2)$ are isomorphic graphs,
  so that $\col(\mathcal G \restriction A') > \mu$, contradicting the assumption that every smaller subgraph of $\mathcal G$
  has coloring number at most $\mu$.
\end{proof}

In \cite{frp}, Fuchino et al. introduce the following reflection principle.

\begin{defn}[Fuchino et al., \cite{frp}]
  Let $\kappa$ be a cardinal of uncountable cofinality. The \emph{Fodor-type Reflection Principle
  for $\kappa$} ($\frp(\kappa$)) is the assertion that, for every stationary $S \subseteq E^\kappa_\omega$
  and every function $g:S \rightarrow [\kappa]^{\leq \aleph_0}$, there is $I \in [\kappa]^{\aleph_1}$
  such that:
  \begin{enumerate}
    \item $\cf(I) = \omega_1$;
    \item $g(\alpha) \subseteq I$ for all $\alpha \in I \cap S$;
    \item for every regressive $f:S \cap I \rightarrow \kappa$ such that $f(\alpha) \in g(\alpha)$ for
      all $\alpha \in S \cap I$, there is $\xi < \kappa$ such that $f^{-1}``\{\xi\}$ is
      stationary in $\sup(I)$.
  \end{enumerate}
  For an uncountable cardinal $\lambda$, $\frp(<\lambda$) is the assertion that $\frp(\kappa$) holds for
  every regular, uncountable $\kappa < \lambda$.
\end{defn}

Note that $\frp(\aleph_1$) is trivially true, so the first interesting case is $\frp(\aleph_2$).
In \cite{frp_consistency}, Miyamoto shows that the consistency strength of $\frp(\aleph_2$) is
precisely that of a Mahlo cardinal. In particular, starting in a model with a Mahlo cardinal,
$\kappa$, he produces a forcing extension in which $\kappa = \aleph_2$ and $\gch$ and $\frp(\aleph_2$)
both hold.

As mentioned in the Introduction, in \cite{more_frp}, it is proven that instances of $\frp$ are in fact equivalent to
instances of compactness for countable coloring numbers.

\begin{fact}[Fuchino et al., {\cite[Theorem 3.1]{more_frp}}] \label{frp_thm}
  For any cardinal $\lambda \geq \aleph_2$, $\frp(<\lambda$) is equivalent to the assertion that,
  if $\mathcal G$ is a graph such that $|V(\mathcal G)| < \lambda$ and $\col(\mathcal G') \leq \aleph_0$ for every subgraph
  $\mathcal G'$ of $\mathcal G$ with $|V(\mathcal G')| \leq \aleph_1$, then $\col(\mathcal G) \leq \aleph_0$.
\end{fact}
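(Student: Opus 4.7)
The plan is to prove both implications separately, translating in each between the Fodor-type regression clause of $\frp$ and well-orderings of finite back-degree. In both directions the key object is the graph $\mathcal G=(\kappa,E)$ with edges $E:=\{\{\alpha,\gamma\}\mid \gamma\in S,\ \alpha\in g(\gamma)\}$ attached to a stationary $S\s E^\kappa_\omega$ and a function $g:S\to[\kappa]^{\le\aleph_0}$. For the forward direction, assuming $\frp(<\lambda)$, I would induct on $|V(\mathcal G)|$: the cases $|V(\mathcal G)|\le\aleph_1$ and singular $|V(\mathcal G)|$ are handled by triviality and Shelah's singular compactness respectively. For regular uncountable $\kappa=|V(\mathcal G)|$, identify $V(\mathcal G)$ with $\kappa$ and suppose $\col(\mathcal G)>\aleph_0$ for contradiction. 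Lemma~\ref{stat_coloring_lemma}(2) and the induction hypothesis give that $S_{\aleph_0}(\mathcal G)$ is stationary in $\kappa$; a cofinality-shrinking argument as in Proposition~\ref{delta_reflection_coloring_prop} refines this to a stationary $S\s E^\kappa_\omega$ together with, for each $\alpha\in S$, a witness $\beta_\alpha\ge\alpha$ and a countable cofinal $N_\alpha\s N_{\mathcal G}(\beta_\alpha)\cap\alpha$. Set $g(\alpha):=N_\alpha\cup\{\beta_\alpha\}$ and apply $\frp(\kappa)$ to obtain $I\in[\kappa]^{\aleph_1}$ with $\cf(I)=\omega_1$, $g(\alpha)\s I$ for $\alpha\in I\cap S$, and the Fodor-type regression clause. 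By the $\aleph_1$-subgraph hypothesis, $\col(\mathcal G\restriction I)\le\aleph_0$ is witnessed by some $\lhd$, and finite back-degree of $\beta_\alpha$ forces cofinitely many $\nu\in N_\alpha$ to satisfy $\beta_\alpha\lhd\nu$; selecting one such defines a regressive $f(\alpha)\in g(\alpha)$. The Fodor-type clause of $\frp$ then supplies $\xi<\kappa$ with $f^{-1}\{\xi\}$ stationary in $\sup(I)$, placing $\beta_\alpha$ in $N^\lhd_{\mathcal G\restriction I}(\xi)$ for each such $\alpha$; since the $\beta_\alpha$'s are unbounded in $\sup(I)$ and $\cf(\sup I)=\aleph_1$, this back-neighborhood has size $\ge\aleph_1$, contradicting the finite back-degree witnessed by $\lhd$.

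For the reverse direction, fix regular uncountable $\kappa<\lambda$ and suppose $(S,g)$ is a counterexample to $\frp(\kappa)$; WLOG $g(\alpha)\s\alpha$ and $\sup g(\alpha)=\alpha$. Build $\mathcal G$ as above. First, $\col(\mathcal G)>\aleph_0$: any putative witness $\lhd$ makes $\{\alpha\in g(\gamma):\alpha\lhd\gamma\}$ finite for each $\gamma\in S$, so its maximum defines a regressive function on $S$, and two successive Fodor applications (first to locate a stationary set on which this max is constantly $\xi$, then to select regressively an element of the cofinal-in-$\gamma$ set $g(\gamma)\cap(\xi,\gamma)$ of $\lhd$-upper neighbors) produce a single vertex $\alpha^*\in g(\gamma)$ with $\gamma\lhd\alpha^*$ on a stationary set of $\gamma$'s, giving $|N^\lhd_{\mathcal G}(\alpha^*)|\ge\aleph_1$, a contradiction. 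Second, every $\aleph_1$-sized subgraph $\mathcal G'\s\mathcal G$ has $\col(\mathcal G')\le\aleph_0$: enlarge $V(\mathcal G')$ to a $g$-closed $I\in[\kappa]^{\aleph_1}$ with $\cf(I)=\omega_1$; the failure of $\frp$ at this $I$ yields a regressive $f:S\cap I\to\kappa$ with $f(\alpha)\in g(\alpha)$ and each fiber $f^{-1}\{\xi\}$ non-stationary in $\sup(I)$. From $f$ construct a well-order $\lhd$ of $I$ witnessing $\col(\mathcal G\restriction I)\le\aleph_0$ by pulling back via a continuous cofinal $\omega_1$-sequence in $\sup(I)$, diagonally intersecting clubs in $\omega_1$ disjoint from the pulled-back fibers, and using the resulting partition of $I$ into countable blocks to enumerate $I$ so that each $\gamma\in S\cap I$ has only finitely many $g$-predecessors in earlier blocks; since $\col$ is monotone under subgraphs, this yields $\col(\mathcal G')\le\col(\mathcal G\restriction I)\le\aleph_0$.

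\emph{Main obstacle.} The technical heart is the diagonalization in the reverse direction, constructing $\lhd$ on $I$ from the regressive function $f$ so that each $\gamma\in S\cap I$ has only finitely many $\lhd$-predecessors in $g(\gamma)$. Because $\sup(I)$ has cofinality $\omega_1$ rather than a regular-cardinal cofinality, the diagonal intersection over the $\aleph_1$-many clubs $C_\xi$ (avoiding the fibers $f^{-1}\{\xi\}$) must be carried out against an $\omega_1$-cofinal sequence, and the resulting block partition must simultaneously respect the $g$-edge structure. The forward direction, while somewhat cleaner, still depends on the non-obvious choice of $g(\alpha)$ incorporating both $\beta_\alpha$ and a cofinal $N_\alpha$, and on the $\omega_1$-cofinality of $\sup(I)$ to convert the stationarity of $f^{-1}\{\xi\}$ into a large-back-neighborhood contradiction.
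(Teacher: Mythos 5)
The paper does not actually prove this statement: it is recorded as a \emph{Fact} and attributed to Fuchino--Sakai--Soukup--Usuba, so there is no in-paper proof to compare against. Judged on its own merits, your outline matches the standard argument from the cited source in both directions, and I do not see a fatal error. In the forward direction the skeleton (trivial cases, Shelah's singular compactness for singular $|V(\mathcal G)|$, Lemma~\ref{stat_coloring_lemma}(2) plus the induction hypothesis to get stationarity of $S_{\aleph_0}(\mathcal G)$, then $\frp$ applied to $g(\alpha):=N_\alpha\cup\{\beta_\alpha\}$) is right. Two small remarks: you cannot in general choose $N_\alpha$ \emph{cofinal} in $\alpha$ for the given $\alpha$ (the infinite set $N_{\mathcal G}(\beta_\alpha)\cap\alpha$ may be bounded below $\alpha$), but cofinality is never used, so simply drop it; and the final contradiction does go through because, as you say, the $\beta_\alpha$ for $\alpha$ in the stationary fibre $f^{-1}\{\xi\}$ are unbounded in $\sup(I)$ --- the reason being that $\beta_\alpha\geq\alpha$ while each $\beta_\alpha$ lies in $I$, so they take $\aleph_1$ many values and all land in $N^{\lhd}_{\mathcal G\restriction I}(\xi)$. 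That last inference deserves to be made explicit, since a priori distinct $\alpha$'s could share a witness $\beta_\alpha$.

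In the reverse direction, the first half (two applications of Fodor to produce a single $\alpha^*$ with $|N^{\lhd}_{\mathcal G}(\alpha^*)|\geq\aleph_1$) is correct, modulo a one-line justification of the normalization ``without loss of generality $g(\alpha)\subseteq\alpha$ is cofinal in $\alpha$'' (enlarging $g$ by a cofinal $\omega$-sequence preserves being a counterexample to $\frp$, since it shrinks the family of admissible $I$ and enlarges the family of admissible regressive $f$). The second half is where your sketch is thinnest, and I would reorganize it: the diagonal-intersection argument you describe in fact proves more than you use, namely that the failure of clause~(3) of $\frp$ at a $g$-closed $I$ with $\cf(I)=\omega_1$ is \emph{equivalent} to $S\cap I$ being non-stationary in $\sup(I)$ (a regressive $f$ with $f(\alpha)\in g(\alpha)\subseteq I\cap\alpha$ always has a stationary fibre when $S\cap I$ reflects, by pressing down along a filtration of $I$ into countable pieces). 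Once you have a club $C\subseteq\sup(I)$ disjoint from $S\cap I$, the well-ordering of $I$ with finite back-degrees is obtained by exactly the interval/block induction used in the paper's own Lemma~\ref{nonreflecting_set_col}: every $\gamma\in S\cap I$ lies strictly inside a block $[c_i,c_{i+1})$, so all but finitely many elements of the cofinal-in-$\gamma$ set $g(\gamma)$ lie in $\gamma$'s own block, and ordering each (inductively handled) block first makes back-degrees finite. You should be explicit that the induction must recurse into uncountable blocks rather than assuming the blocks are countable outright; with that addition the argument is complete.
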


In general, stationary reflection assumptions of the form $\refl(S)$ are not sufficient to imply
instances of compactness for the coloring number. For example, in \cite{more_frp}, Fuchino et al. produce
a model in which $\refl(S^{\aleph_2}_{\aleph_0})$ holds and yet $\frp(\aleph_2)$ fails. By Fact~\ref{frp_thm},
there is an $(\aleph_0, \ge \aleph_1)$-coloring graph of size $\aleph_2$ in this model.
Nevertheless, it is the case that Shelah's model from \cite{shelah_reflection} for
exhibiting the maximum possible extent of stationary reflection also exhibits the maximum possible extent of
compactness for the coloring number.
\begin{fact}[implicit in Shelah, \cite{shelah_reflection}]
  Suppose there is a proper class of supercompact cardinals. Then there is a class forcing extension in
  which \zfc\ holds and, for every infinite cardinal $\mu$ and every graph $\mathcal{G}$, if
  $\col(\mathcal{G}) > \mu$, then there is a subgraph $\mathcal{G}'$ of $\mathcal{G}$ such that
  $|V(\mathcal{G})| = \col(\mathcal{G}) = \mu^+$.
\end{fact}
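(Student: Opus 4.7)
The plan is to build the target class model via the Easton-style iteration introduced by Shelah in \cite{shelah_reflection}. Enumerating the supercompact cardinals of $V$ as $\langle \kappa_\alpha \mid \alpha \in \mathrm{Ord}\rangle$, define an Easton-support class iteration $\mathbb P = \langle \mathbb P_\alpha, \dot{\mathbb Q}_\alpha \mid \alpha \in \mathrm{Ord}\rangle$ in which each $\dot{\mathbb Q}_\alpha$ is a Laver-prepared Levy collapse turning $\kappa_\alpha$ into the successor of $\sup_{\beta<\alpha}\kappa_\beta$. Laver preparation guarantees that each $\kappa_\alpha$ retains its supercompactness through the stage-$\alpha$ forcing, and the standard $\zfc$-preservation machinery for class Easton iterations ensures that $\zfc$ holds in $V^{\mathbb P}$. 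The decisive feature of $V^{\mathbb P}$, which underlies Shelah's proof of maximal stationary reflection and is extracted explicitly by Magidor--Shelah \cite{MR1249391}, is that for every regular $\nu \geq \aleph_2$ of $V^{\mathbb P}$ and every infinite cardinal $\lambda \leq \nu$ the principle $\Delta_{\lambda, \nu}$ holds: $\nu$ arises by collapse from some $\kappa_\alpha$, and a lifted $\lambda$-supercompactness embedding of $\kappa_\alpha$ through the quotient forcing realises the reflection directly.

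Working in $V^{\mathbb P}$, fix an infinite cardinal $\mu$ and a graph $\mathcal G$ with $\col(\mathcal G) > \mu$. Let $\kappa^*$ be the least cardinality of a subgraph $\mathcal G^*$ of $\mathcal G$ with $\col(\mathcal G^*) > \mu$, and fix such a witness. Clearly $\kappa^* > \mu$. Suppose toward a contradiction that $\kappa^* > \mu^+$. If $\kappa^*$ is singular, then Shelah's singular compactness theorem \cite{shelah_singular_compactness} (recorded as item (6) of the Fact in the Introduction) produces a strictly smaller subgraph of coloring number $>\mu$, contradicting the minimality of $\kappa^*$. If $\kappa^*$ is regular, then $\kappa^* \geq \mu^{++} \geq \aleph_2$; since $\Delta_{\mu^+, \kappa^*}$ holds in $V^{\mathbb P}$, Proposition~\ref{delta_reflection_coloring_prop} delivers a subgraph of size $<\kappa^*$ with coloring number $>\mu$, again contradicting minimality. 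Hence $\kappa^* = \mu^+$, whence $|V(\mathcal G^*)| = \mu^+$, and the elementary inequality $\mu < \col(\mathcal G^*) \leq |V(\mathcal G^*)| = \mu^+$ forces $\col(\mathcal G^*) = \mu^+$, as required.

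The principal obstacle is the first step: verifying that the Laver-prepared class iteration simultaneously preserves $\zfc$ and transmits enough $\lambda$-supercompactness of each $\kappa_\alpha$ through its tail to witness $\Delta_{\lambda, \nu}$ at every regular $\nu \geq \aleph_2$ of the extension and every infinite $\lambda \leq \nu$. These are the standard but delicate ingredients of Shelah's class-forcing construction, and they are the content of the word ``implicit'' in the attribution. Once the reflection principles are secured, the combinatorial portion is the short minimality argument above, which simply packages Shelah's singular compactness together with Proposition~\ref{delta_reflection_coloring_prop}.
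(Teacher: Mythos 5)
The paper states this Fact without proof, attributing it to Shelah's model, so there is no written argument to compare against; I can only judge your proposal on its own terms. Your combinatorial skeleton --- take a minimal-size subgraph $\mathcal G^*$ with $\col(\mathcal G^*)>\mu$, rule out singular $\kappa^*$ by Shelah's singular compactness, rule out regular $\kappa^*>\mu^+$ by reflection, and conclude $\col(\mathcal G^*)=\mu^+$ from $\col(\mathcal G^*)\le|V(\mathcal G^*)|$ --- is certainly the intended shape, and it correctly packages the tools the paper sets up (item (6) of the introductory Fact and Proposition~\ref{delta_reflection_coloring_prop}).

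The gap is in the reflection step at regular $\kappa^*$. The claim that $\Delta_{\lambda,\nu}$ holds in $V^{\mathbb P}$ for every regular $\nu\ge\aleph_2$ and \emph{every} infinite $\lambda\le\nu$ is not merely unproved but refutable in \zfc: $\Delta_{\aleph_1,\nu}$ fails because the only infinite regular cardinal below $\aleph_1$ is $\omega$ and a subalgebra of order type $\omega$ can never meet a set of limit ordinals stationarily in its supremum; and $\Delta_{\tau^+,\tau^+}$ fails because a stationary $S\subseteq E^{\tau^+}_{\tau}$ cannot reflect to a subalgebra of regular order type $\le\tau$ (all accumulation points of such a subalgebra have cofinality $<\tau$). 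More to the point, the instance you actually invoke, $\Delta_{\mu^+,\kappa^*}$, is the wrong one whenever $\mu$ is regular: the proof of Proposition~\ref{delta_reflection_coloring_prop} reflects a stationary subset of $E^{\kappa^*}_{\cf(\mu)}=E^{\kappa^*}_{\mu}$ into a subalgebra of regular order type $\eta<\lambda=\mu^+$, but every accumulation point of such a subalgebra has cofinality $<\eta\le\mu$, so $S\cap A'$ is automatically non-stationary. One must therefore take $\lambda\ge\cf(\mu)^{++}$, and at $\kappa^*=\mu^{++}$ with $\mu$ regular (e.g.\ $\mu=\aleph_0$, $\kappa^*=\aleph_2$) there is no admissible $\lambda\le\kappa^*$ at all: $\Delta_{\lambda,\mu^{++}}$ fails in \zfc\ for every $\lambda\in(\mu,\mu^{++}]$ on the relevant stationary sets. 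These double-successor cases (and, more generally, the regular $\kappa^*$ below the first limit of collapsed supercompacts above $\mu$) are exactly where $\Delta$-reflection cannot help and where Shelah's model must be exploited differently: the Levy collapse of a supercompact to $\tau^+$ yields reflection to subalgebras whose order type is an \emph{ordinal} of cardinality and cofinality $\tau$ rather than a regular cardinal, i.e.\ the two-cardinal, Chang-style reflection underlying Fact~\ref{frp_thm} and the elementary-submodel argument of Lemma~\ref{chang_coloring}(1). You need that argument, generalized to arbitrary $\mu$, at those $\kappa^*$; as written, your proof does not cover them.
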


\section{Obtaining coherent and capturing $C$-sequences}\label{improve_section}

Throughout this section, $\kappa$ denotes a regular, uncountable cardinal.

\medskip

In  \cite{paper22} and \cite{paper23}, as an alternative foundation for constructing $\kappa$-Souslin trees,
Brodsky and Rinot introduce the parameterized proxy principle $\p^-(\kappa,\ldots)$.
As will soon be made clear, the main results of Subsection~\ref{subsection21} suggest that instances of this proxy principle give rise to incompactness graphs.
The goal of this section is to establish this connection.

\begin{defn}[special case of {\cite{paper22}}]\label{proxy}
Suppose that:
\begin{itemize}
\item $\mathcal R$ is a binary relation over $[\kappa]^{<\kappa}$;
\item $\theta$ is a cardinal such that $1 \leq \theta \leq \kappa$;
\item $\mathcal S$ is a nonempty collection of stationary subsets of $\kappa$;
\item $\sigma$ is an ordinal $\leq \kappa$.
\end{itemize}
The principle $\p^-(\kappa,2,\mathcal R,\theta,\mathcal S,2,\sigma)$
asserts the existence of a sequence $\vec C=\langle C_\alpha \mid \alpha < \kappa \rangle$
such that:
\begin{itemize}
\item for every limit ordinal $\alpha < \kappa$, $C_\alpha$ is a club subset of  $\alpha$;
\item for every limit ordinal $\alpha < \kappa$ and every $\bar\alpha \in \acc(C_\alpha)$, we have $C_{\bar\alpha} \mathrel{\mathcal R} C_\alpha$;
\item
for every sequence $\langle A_i \mid i < \theta \rangle$ of cofinal subsets of $\kappa$
and every $S \in \mathcal S$, there exist stationarily many $\alpha \in S$ such that for all $i<\min\{\alpha, \theta\}$:
$$\sup\{ \beta \in C_\alpha \mid \suc_\sigma (C_\alpha \setminus \beta) \subseteq A_i \} = \alpha.$$
\end{itemize}
Finally, $\p(\kappa,2, \mathcal R, \theta, \mathcal S,2,\sigma)$
asserts that  $\p^-(\kappa,2, \mathcal R, \theta, \mathcal S,2,\sigma)$
and $\diamondsuit(\kappa)$ both hold.
\end{defn}

Looking at Lemmas \ref{lemma1} and \ref{large_chromatic_number_lemma}, we see that if $\vec C$
witnesses the validity of $\p^-(\kappa,2,{\sqx},\theta,\{\kappa\},2,2)$, then the graph $G(\vec C)$ with $G:=\acc(\kappa)$
is very close to being $(\chi,\ge\theta^+)$-chromatic.
Specifically, by Lemma~\ref{lemma1}, we have $\chr(G(\vec C\restriction\delta))\le\chi$ for every $\delta<\kappa$,
so that $G(\vec C)$ is indeed $(\chi,\mu)$-chromatic for some cardinal $\mu$.
Now, to establish that $\mu\ge\theta^+$, we would like to take advantage of Lemma~\ref{large_chromatic_number_lemma},
however the first bullet of Definition~\ref{capturing} is not addressed by the proxy principle.
Nevertheless, in Theorem~\ref{thm37} below, we provide four scenarios in which this missing feature may be added.

Another worry is to derive instances of the proxy principle from simple combinatorial hypotheses (such as the conjunction of $\square$ and $\diamondsuit$) and via forcing.
The former approach is taken in Subsection~\ref{subsection32}, and the latter approach is taken in Subsection~\ref{forcing_subsection}.

The first subsection, Subsection~\ref{subsection31}, develops some of the machinery needed to establish the results of Subsection~\ref{subsection32}.
However, due to its technical nature, the reader may prefer to first read Subsection~\ref{subsection32} before digging into Subsection~\ref{subsection31}.

\subsection{Postprocessing functions}\label{subsection31}

The next two definitions are taken from \cite{paper29}.
\begin{defn}$\mathcal K(\kappa):=\{ x\in\mathcal P(\kappa)\mid x\text{ is a nonempty club subset of }\sup(x)\}$.
\end{defn}
\begin{defn}
A function $\Phi:\mathcal K(\kappa)\rightarrow\mathcal K(\kappa)$ is a \emph{postprocessing function}  if for every $x\in\mathcal K(\kappa)$:
\begin{itemize}
\item  $\Phi(x)$ is a club in $\sup(x)$;
\item $\acc(\Phi(x)) \s \acc(x)$;
\item for all $\bar\alpha\in\acc(\Phi(x))$, we have $\Phi(x)\cap\bar\alpha=\Phi(x\cap\bar\alpha)$.
\end{itemize}
The function $\Phi$ is said to be \emph{$\acc$-preserving} iff $\acc(\Phi(x))=\acc(x)$ for every $x$.
\end{defn}
By convention, for every postprocessing function $\Phi$ and every $x \in \mathcal{P}(\kappa) \setminus \mathcal{K}(\kappa)$,
we set $\Phi(x) = x$. The point is that, for various binary relations $\mathcal R$, if  $\langle C_\alpha\mid\alpha<\kappa\rangle$ is an $\mathcal R$-coherent $C$-sequence,
then so is $\langle \Phi(C_\alpha)\mid \alpha<\kappa\rangle$.
\begin{remark} Note that the composition of ($\acc$-preserving) postprocessing functions is again an ($\acc$-preserving) postprocessing function.
\end{remark}

\begin{example}[\cite{paper23}]\label{fact13} Suppose that $\xi<\kappa$ is an ordinal.
  Define $\Phi_\xi:\mathcal K(\kappa)\rightarrow\mathcal K(\kappa)$ by stipulating:
$$\Phi_\xi(x):=\begin{cases}x\setminus x(\xi)&\text{if }\otp(x)>\xi;\\x&\text{otherwise}.\end{cases}$$

Then $\Phi_\xi$ is a postprocessing function.\qed
\end{example}

\begin{example}[\cite{paper29}]\label{phiZ}  Suppose that $\mathfrak Z=\langle Z_{x,\beta}\mid x\in\mathcal K(\kappa), \beta\in\nacc(x)\rangle$ is
  a matrix of elements of $\mathcal P(\kappa)$.
For each $x\in\mathcal K(\kappa)$, define $g_{x,\mathfrak Z}:x\rightarrow\sup(x)$ by stipulating:
$$g_{x,\mathfrak Z}(\beta) :=
\begin{cases}
\beta   &\text{if } \beta \in \acc(x);  \\
\min((Z_{x,\beta}\cap\beta)\cup\{\beta\})           &\text{if } \beta = \min(x);\\
\min\left(((Z_{x,\beta}\cap\beta)\cup\{\beta\})\setminus(\sup(x\cap\beta)+1)\right) &\text{otherwise}.
\end{cases}$$

Then:
\begin{enumerate}
\item $g_{x,\mathfrak Z}$ is strictly increasing, continuous, and cofinal in $\sup(x)$;
\item $\acc(\rng(g_{x,\mathfrak Z}))=\acc(x)$ and $\nacc(\rng(g_{x,\mathfrak Z}))=g_{x,\mathfrak Z}[\nacc(x)]$;
\item if $Z_{x,\beta}=Z_{x\cap\bar\alpha,\beta}$ for all $x\in\mathcal K(\kappa)$, $\bar\alpha\in\acc(x)$ and  $\beta\in\nacc(x\cap\bar\alpha)$,
then $x\stackrel{\Phi_{\mathfrak Z}}{\mapsto}\rng(g_{x,\mathfrak Z})$ is an $\acc$-preserving postprocessing function.\qed
\end{enumerate}
\end{example}

\begin{lemma}\label{PhiB} Suppose that $B \subseteq \kappa$, and define $\Phi:\mathcal K(\kappa)\rightarrow\mathcal K(\kappa)$ by stipulating:
$$\Phi(x):=\begin{cases}\cl(\nacc(x)\cap B) &\text{if }\sup(\nacc(x)\cap B)=\sup(x);\\
x\setminus\sup(\nacc(x)\cap B) &\text{otherwise}.\end{cases}$$

Then $\Phi$ is a postprocessing function.
\end{lemma}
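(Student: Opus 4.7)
My plan is to verify the three defining properties of a postprocessing function directly, splitting according to the two clauses in the definition of $\Phi$. Fix $x \in \mathcal K(\kappa)$ and set $s := \sup(\nacc(x) \cap B)$. Call the situation Case~(a) when $s = \sup(x)$, so $\Phi(x) = \cl(\nacc(x) \cap B)$, and Case~(b) when $s < \sup(x)$, so $\Phi(x) = x \setminus s$. The clubness of $\Phi(x)$ in $\sup(x)$ is then immediate: in Case~(a) by construction of $\cl$ combined with $s = \sup(x)$, and in Case~(b) because $\Phi(x)$ is a tail of the club $x$. The inclusion $\acc(\Phi(x)) \subseteq \acc(x)$ is equally direct: in Case~(a), every accumulation point of $\cl(\nacc(x) \cap B)$ is a limit of points of $\nacc(x) \cap B \subseteq x$, and hence lies in $\acc(x)$; in Case~(b), $\Phi(x) \subseteq x$ makes the inclusion trivial.

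The main work is the coherence condition $\Phi(x) \cap \bar\alpha = \Phi(x \cap \bar\alpha)$ for $\bar\alpha \in \acc(\Phi(x))$. From the previous step, $\bar\alpha \in \acc(x)$, so $x \cap \bar\alpha \in \mathcal K(\kappa)$ and $\nacc(x \cap \bar\alpha) = \nacc(x) \cap \bar\alpha$. In Case~(a), the assumption that $\bar\alpha$ accumulates $\cl(\nacc(x) \cap B)$ yields $\sup((\nacc(x) \cap B) \cap \bar\alpha) = \bar\alpha$, which places $x \cap \bar\alpha$ in Case~(a) as well; the desired identity then reduces to $\cl(A) \cap \bar\alpha = \cl(A \cap \bar\alpha)$ for $A := \nacc(x) \cap B$, which I would verify by checking $\acc^+(A) \cap \bar\alpha = \acc^+(A \cap \bar\alpha)$ under the hypothesis $\sup(A \cap \bar\alpha) = \bar\alpha$. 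In Case~(b), $\bar\alpha \in \acc(x \setminus s)$ forces $\bar\alpha > s$ and hence $\nacc(x) \cap B \subseteq \bar\alpha$, so $\nacc(x \cap \bar\alpha) \cap B$ equals $\nacc(x) \cap B$, placing $x \cap \bar\alpha$ in Case~(b) too; the identity then collapses to the trivial $(x \setminus s) \cap \bar\alpha = (x \cap \bar\alpha) \setminus s$.

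The main obstacle, and essentially the only delicate point, is the $\cl$-commutation identity used in Case~(a). The convention adopted in the Notation subsection that $\acc^+(C) \subseteq \{\alpha < \sup(C)\}$ is exactly what makes it work: even though $\bar\alpha = \sup(A \cap \bar\alpha)$, this restriction keeps $\bar\alpha$ out of $\acc^+(A \cap \bar\alpha)$, so the two closures agree on every ordinal below $\bar\alpha$ and neither contains $\bar\alpha$ itself. Once this technicality is handled, the remaining verifications are a routine unfolding of definitions.
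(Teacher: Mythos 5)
Your proof is correct and follows essentially the same route as the paper's: both split into the two cases according to whether $\sup(\nacc(x)\cap B)=\sup(x)$, observe that membership of $\bar\alpha$ in $\acc(\Phi(x))$ forces $x\cap\bar\alpha$ into the same case, and then reduce coherence to $\cl(A)\cap\bar\alpha=\cl(A\cap\bar\alpha)$ in the first case and to the trivial tail identity in the second. Your extra remark on why $\acc^+$ being bounded below the supremum makes the $\cl$-commutation work is a correct elaboration of a step the paper leaves implicit.
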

\begin{proof} Let $x\in\mathcal K(\kappa)$ be arbitrary.
It is easy to see that $\Phi(x)$ is a club in $\sup(x)$, and $\acc(\Phi(x))\s\acc(x)$.
Next, suppose that $\bar\alpha\in\acc(\Phi(x))$. Put $\varepsilon:=\sup(\nacc(x)\cap B)$. There are two cases to consider:

$\br$ If $\varepsilon<\sup(x)$, then  $\sup(\nacc(x\cap\bar\alpha)\cap B)=\varepsilon<\bar\alpha$,
and hence $\Phi(x\cap\bar\alpha)=(x\cap\bar\alpha)\setminus\varepsilon=(x\setminus\varepsilon)\cap\bar\alpha=\Phi(x)\cap\bar\alpha$.

$\br$ If $\varepsilon=\sup(x)$, then $\sup(\nacc(x\cap\bar\alpha)\cap B)=\bar\alpha$,
and hence $\Phi(x\cap\bar\alpha)=\cl(\nacc(x\cap\bar\alpha)\cap B)=\cl(\nacc(x)\cap B)\cap\bar\alpha=\Phi(x)\cap\bar\alpha.$
\end{proof}

The next lemma provides a tool for transforming a witness to $\p^-(\kappa,2,\mathcal R,1,\ldots)$ into a witness to $\p^-(\kappa,2,\mathcal R,\kappa,\ldots)$.
This is of interest, because, by Lemma~\ref{large_chromatic_number_lemma}, having value $\kappa$ as the fourth parameter of the proxy principle is tied to having maximal degree of incompactness.

\begin{lemma}\label{Phi} Let $\rho\in\acc(\kappa)$, and suppose that  $\diamondsuit(\kappa)$ holds.
Then there exists a postprocessing function $\Phi^\rho:\mathcal K(\kappa)\rightarrow\mathcal K(\kappa)$ such that, for every sequence $\vec A=\langle A_i\mid i<\kappa\rangle$ of cofinal subsets of $\kappa$,
there exists some stationary subset $G\s \kappa$ that codes $\vec A$, as follows.
For every $x\in\mathcal K(\kappa)$:
\begin{enumerate}
\item if $\sup(\nacc(x)\cap G)=\sup(x)$, $\otp(x)\le\rho$, and $(\cf(\sup(x)))^+=\kappa$,
then for all $i<\sup(x)$:$$\sup(\nacc(\Phi^\rho(x))\cap A_i)=\sup(x);$$
\item if $\otp(\nacc(x)\cap G)=\sup(x)>\rho$,
then for all $i<\sup(x)$:$$\sup(\nacc(\Phi^\rho(x))\cap A_i)=\sup(x);$$
\item if $\otp(x)$ is a cardinal $\le\rho$ whose successor is $\kappa$, and $\nacc(x)\s G$, then, for all $\sigma<\otp(x)$ and all $i<\sup(x)$:
$$\sup\{ \beta\in x \mid \suc_\sigma (\Phi^\rho(x) \setminus \beta) \s A_i \} = \sup(x).$$
\end{enumerate}
\end{lemma}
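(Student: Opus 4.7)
The plan is to realize $\Phi^\rho$ as the postprocessing function $\Phi_{\mathfrak Z}$ of Example~\ref{phiZ} for a matrix $\mathfrak Z$ populated from a strengthened form of $\diamondsuit(\kappa)$. Standard coding upgrades $\diamondsuit(\kappa)$ to a sequence $\langle\langle A^\alpha_i\mid i<\alpha\rangle\mid\alpha<\kappa\rangle$, with each $A^\alpha_i\s\alpha$, such that for every sequence $\vec A=\langle A_i\mid i<\kappa\rangle$ of cofinal subsets of $\kappa$ the set
\[
G^0_{\vec A}:=\{\alpha\in\acc(\kappa)\mid\forall i<\alpha\,(A^\alpha_i=A_i\cap\alpha)\}
\]
is stationary in $\kappa$. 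Fix once and for all a bijection $\psi:\kappa\to\kappa\times\kappa$ that is \emph{nice} on a club $C_\psi\s\kappa$, in the sense that $\psi[\alpha]=\alpha\times\alpha$ for every $\alpha\in C_\psi$, and for each $\vec A$ set $G_{\vec A}:=G^0_{\vec A}\cap C_\psi$.

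Now define an index function $h$ by $h(x\cap\beta,\beta):=\psi_0(\otp(x\cap\beta))$ (or a suitable iterated variant; see below), which depends only on $\otp(x\cap\beta)$ and $\beta$ and satisfies $h(x\cap\beta,\beta)<\beta$ for $\beta$ sufficiently large, and set $Z_{x,\beta}:=A^\beta_{h(x\cap\beta,\beta)}$ for each $x\in\mathcal K(\kappa)$ and $\beta\in\nacc(x)$. Because $h(x\cap\beta,\beta)$ is invariant under $x\mapsto x\cap\bar\alpha$ for $\bar\alpha\in\acc(x)$ strictly above $\beta$, the matrix $\mathfrak Z$ satisfies the coherence requirement of Example~\ref{phiZ}, so the resulting $\Phi_{\mathfrak Z}$ is an $\acc$-preserving postprocessing function; set $\Phi^\rho:=\Phi_{\mathfrak Z}$. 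The central computation is: for $\beta\in\nacc(x)\cap G_{\vec A}$, diamond coherence gives $Z_{x,\beta}=A_{h(x\cap\beta,\beta)}\cap\beta$, so by the formula of Example~\ref{phiZ} the value $g_{x,\mathfrak Z}(\beta)$ lies in $A_{h(x\cap\beta,\beta)}$ whenever the interval $(\sup(x\cap\beta),\beta)$ meets that $A_i$ --- which, since each $A_i$ is cofinal in $\kappa$, is automatic at sufficiently sparse $\beta$.

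Verification of the three clauses then reduces to arranging that $h$ hits each $i<\sup(x)$ with the right density. For (1), the arithmetic $(\cf(\sup(x)))^+=\kappa$ together with $\otp(x)\le\rho$ forces $|\sup(x)|\le|\rho|$, so cycling $h$ via $\psi_0$ through the $\otp(x)$-many positions of $\nacc(x)$ reaches every $i<\sup(x)$ at cofinally many $\beta\in\nacc(x)\cap G_{\vec A}$. For (2), the hypothesis $\otp(\nacc(x)\cap G_{\vec A})=\sup(x)$ supplies a subset of $\nacc(x)\cap G_{\vec A}$ of order type $\sup(x)$, on which the same cycling delivers the required cofinal hitting of each $A_i$. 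For (3), the hypothesis $\nacc(x)\s G_{\vec A}$ makes every successor position of $x$ available for the matrix, so that runs of $\sigma$ consecutive $\beta$'s along which $h$ takes constant value $i$ lift to $\sigma$-consecutive successor-indexed elements of $\Phi^\rho(x)$ in $A_i$.

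The main obstacle is the fine design of $h$ required by clause~(3): for every $\sigma<\otp(x)$ and every $i<\sup(x)$, there must be cofinally many starting positions of a length-$\sigma$ constant-$i$ run of the function $\iota\mapsto h(x\cap x(\iota),x(\iota))$. This is handled by iterating $\psi$ to view the ordinal $\otp(x\cap\beta)$ as a multi-coordinate object and letting $h$ read off the slowest-varying coordinate; niceness of $\psi$ on $C_\psi$ guarantees that for each prescribed $\sigma$ the fiber of $h$ contains arbitrarily large blocks of consecutive positions. Coherence of $\mathfrak Z$ is preserved throughout because $h$ depends only on $\otp(x\cap\beta)$ and on $\beta$.
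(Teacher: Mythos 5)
Your overall architecture (a diamond sequence feeding a matrix $\mathfrak Z$, then invoking Example~\ref{phiZ}) matches the paper's, but the proposal has a genuine gap at its core: the index function $h$. You take $h(x\cap\beta,\beta)$ to be a function of the \emph{absolute position} $\otp(x\cap\beta)$ (and possibly $\beta$), fixed once and for all before $\vec A$ is given. In clauses (1) and (2) the hypothesis is only that $\nacc(x)\cap G$ is cofinal in $\sup(x)$ (resp.\ has full order type), so the positions $\beta$ at which the diamond guess is correct form an arbitrary, unforeseeable cofinal subset of $\nacc(x)$; a position-based $h$ chosen in advance cannot guarantee that every target index is attained cofinally often \emph{on that subset}, and your "cycling" argument breaks exactly here. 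The paper's $h_x(\gamma)$ instead counts the order type of the set of correct guessing points below $\gamma$ (the set $N_x$ filtered by $S_\beta=S_\gamma\cap\beta$), which by its Claim~\ref{c122} coincides on $G$ with $\otp(\nacc(x)\cap G\cap\gamma)$; this \emph{relative} counting is what makes the hitting argument work, and it is absent from your proposal. A second, independent obstruction concerns the range of indices: in clauses (1) and (3) one must reach every $i<\sup(x)$, and as $x$ varies these targets exhaust all of $\kappa=\lambda^+$ (where $\lambda=|\rho|$), whereas a single function of $\otp(x\cap\beta)<\rho$ can take at most $\lambda$ values. The paper resolves this with the coherent injections $\varphi_x:\sup(x)\rightarrow\otp(x)\times\lambda$ (from \cite[\S3]{paper29}, used when $\kappa=\lambda^+$) together with the re-indexing $B_\varepsilon:=A_{\pi_0(\varepsilon)}$, so that each $A_i$ is coded by cofinally many small codes; your proposal has no analogue of either layer, and without one clause (3) (and clause (1) in the successor case) cannot be established.

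Two smaller points. First, you set $\Phi^\rho:=\Phi_{\mathfrak Z}$, whereas the paper must compose with $\Phi_\rho$ of Example~\ref{fact13}: its $h_x$ is defined by cases according to whether $\otp(x)>\rho$ and is coherent only past the $\rho$-th element, which is forced on it precisely because clauses (1)/(3) and clause (2) require different counting regimes — a tension your uniform $h$ avoids only by failing to prove the clauses. Second, to conclude $g_{x,\mathfrak Z}(\beta)\in A_i\setminus(\sup(x\cap\beta)+1)$ one needs $\beta\in\acc^+(A_i)$; this is not "automatic at sufficiently sparse $\beta$" but must be built into $G$ via a diagonal intersection of the clubs $\acc^+(B_\varepsilon)$, as the paper does. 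That last point is easily repaired; the missing relative-counting and index-transfer machinery is not.
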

\begin{proof}
Fix a $\diamondsuit(\kappa)$-sequence, $\langle S_\beta\mid\beta<\kappa\rangle$.
Denote $\lambda:=|\rho|$.

$\br$ If $\kappa=\lambda^+$, then attach an injection $\varphi_x:\sup(x)\rightarrow\otp(x)\times\lambda$ to each $x\in\mathcal K(\kappa)$
in such a way that $\varphi_{x\cap\bar\alpha}\s\varphi_x$ for all $x\in\mathcal K(\kappa)$ and all $\bar\alpha\in\acc(x)$. This is indeed possible, as established in \cite[\S3]{paper29}.

$\br$ If $\kappa\neq\lambda^+$, then let $\varphi_x:\sup(x)\rightarrow\sup(x)\times\lambda$ be such that $\varphi_x(\beta)=(\beta,0)$ for all $\beta<\sup(x)$.

Next, fix a bijection $f:\kappa\leftrightarrow\kappa\times\kappa$ such that $f\restriction\lambda$ is a bijection from $\lambda$ to $\rho\times\lambda$.
Fix a surjection $g:\kappa\rightarrow\kappa$ such that $g\restriction\lambda$ satisfies that, for every $j,\sigma<\lambda$,
the set $\{ k<\lambda\mid g``(k,k+\sigma)=\{j\}\}$ is cofinal in $\lambda$.
Fix a bijection $\pi:\kappa\times\kappa\leftrightarrow\kappa$
and let $$E:=\{\alpha<\kappa\mid \pi[\alpha\times\alpha]=f^{-1}[\alpha\times\alpha]=g[\alpha]=\alpha\}.$$
Of course, every nonzero element of $E$ is an indecomposable ordinal.

Let $x\in \mathcal K(\kappa)$ be arbitrary. Put
$$N_x := \left\{ \beta\in\nacc(x)\cap E \mid  \text{for all } \varepsilon,\gamma<\beta,
\text{ there exists } \tau\in\beta\setminus\gamma \text{ with } \pi(\varepsilon,\tau)\in S_\beta\right\}.$$

Define $h_x:\nacc(x)\rightarrow\otp(x)$ by letting, for all $\gamma\in x$:
$$h_x(\gamma):=\begin{cases}
\otp\{\beta\in N_x\cap\gamma\mid S_\beta=S_\gamma\cap\beta, \otp(x\cap\beta)>\rho\} &\text{if }\otp(x)>\rho;\\
\otp\{\beta\in N_x\cap\gamma\mid S_\beta=S_\gamma\cap\beta\}\pmod\lambda &\text{otherwise}.
\end{cases}$$

Then, define $\phi_x:\nacc(x)\rightarrow\sup(x)$ by letting:
$$\phi_x(\beta):=\begin{cases}
\varepsilon &\text{if }\varepsilon<\beta\ \&\ \varphi_x(\varepsilon)=f(g(h_x(\beta)));\\
0 &\text{otherwise}.\end{cases}$$

As $\varphi_x$ is injective, $\phi_x$ is well-defined.
Define $\mathfrak Z=\langle Z_{x,\beta}\mid x\in\mathcal K(\kappa), \beta\in\nacc(x)\rangle$ by stipulating:
$$Z_{x,\beta}:=\{\tau<\beta\mid  \pi(\phi_x(\beta),\tau)\in S_\beta\}.$$

Let $\Phi_\rho$ be given by Example~\ref{fact13}.
Let $\Phi_{\mathfrak Z}$ be given by Example~\ref{phiZ}. Note that the definition of $h_x$ prevents
$\Phi_{\mathfrak Z}$ from being a postprocessing function. Nonetheless, we have the following Claim.

\begin{claim} $\Phi^\rho:=\Phi_\rho\circ\Phi_{\mathfrak Z}$ is a postprocessing function.
\end{claim}
\begin{proof} Let $x\in\mathcal K(\kappa)$ be arbitrary.
By Clauses (1) and (2) of Example~\ref{phiZ}, $\Phi_{\mathfrak Z}(x)$ is a club in $\sup(x)$ with $\acc(\Phi_{\mathfrak Z}(x))=\acc(x)$.
Consequently, $\Phi^\rho(x)$ is a club in $\sup(x)$ with $\acc(\Phi^\rho(x))\s\acc(x)$. Next, suppose that $\bar\alpha\in\acc(\Phi^\rho(x))$. There are two cases to consider:

$\br$ If $\otp(x)\le\rho$, then $\otp(\Phi_{\mathfrak Z}(x))\le\rho$,
so that $\Phi^\rho(x)=\Phi_{\mathfrak Z}(x)$ and $\Phi^\rho(x\cap\bar\alpha)=\Phi_{\mathfrak Z}(x\cap\bar\alpha)$.
Thus it suffices to prove that $g_{x,\mathfrak Z}(\gamma)\restriction\bar\alpha=g_{x\cap\bar\alpha,\mathfrak Z}$.

Clearly, $N_{x\cap\bar\alpha}=N_x\cap\bar\alpha$, and so, by $\otp(x\cap\bar\alpha)<\otp(x)\le\rho$, we have $h_{x\cap\bar\alpha}\s h_x$.
Consequently, $\phi_{x\cap\bar\alpha}\s \phi_x$ and $Z_{x\cap\bar\alpha,\beta}=Z_{x,\beta}$ for all $\beta\in\nacc(x\cap\bar\alpha)$.
It then immediately follows that $g_{x\cap\bar\alpha,\mathfrak Z}\s g_{x,\mathfrak Z}$.

$\br$ If $\otp(x)>\rho$,  then, by $\acc(\Phi_{\mathfrak Z}(x))=\acc(x)$, we have $\otp(\Phi_{\mathfrak Z}(x))>\rho$,
and hence $\Phi^\rho(x)=\Phi_{\mathfrak Z}(x)\setminus(\Phi_{\mathfrak Z}(x))(\rho)$. But $\rho$ is a nonzero limit ordinal,
so that $\Phi_{\mathfrak Z}(x)(\rho)=x(\rho)$ and hence $\Phi^\rho(x)=\Phi_{\mathfrak Z}(x)\setminus x(\rho)$.
By $\bar\alpha\in\acc(\Phi_{\mathfrak Z}(x)\setminus x(\rho))$, we have $\otp(x\cap\bar\alpha)>\rho$,
so a similar argument shows that $\Phi^\rho(x\cap\bar\alpha)=\Phi_{\mathfrak Z}(x\cap\bar\alpha)\setminus x(\rho)$.

Let $\zeta$ denote the unique element of $x$ satisfying $\otp(x\cap\zeta)=\rho+1$.
Then we have established that $\Phi^\rho(x)=\rng(g_{x,\mathfrak Z}\restriction(x\setminus\zeta))$
and $\Phi^\rho(x\cap\bar\alpha)=\rng(g_{x\cap\bar\alpha,\mathfrak Z}\restriction((x\cap\bar\alpha)\setminus\zeta))$.
Thus it suffices to prove that $g_{x,\mathfrak Z}(\gamma)\restriction [\zeta,\bar\alpha)=g_{x\cap\bar\alpha,\mathfrak Z}\restriction [\zeta,\bar\alpha)$.

Clearly, $N_{x\cap\bar\alpha}=N_x\cap\bar\alpha$, and so, by $\otp(x)>\otp(x\cap\bar\alpha)>\rho$, we have $h_{x\cap\bar\alpha}\s h_x$.
Consequently, $\phi_{x\cap\bar\alpha}\s \phi_x$ and $Z_{x\cap\bar\alpha,\beta}=Z_{x,\beta}$ for all $\beta\in\nacc(x\cap\bar\alpha)$.
It then immediately follows that $g_{x\cap\bar\alpha,\mathfrak Z}=g_{x,\mathfrak Z}\restriction\bar\alpha$.
\end{proof}

Next, suppose that $\langle A_i\mid i<\kappa\rangle$ is a sequence of cofinal subsets of $\kappa$.
Let $\pi_0:\kappa\rightarrow\kappa$ be such that $\pi_0(\varepsilon)=i_0$ iff $\pi(i_0,i_1)=\varepsilon$ for some  $i_1$.
For all $\varepsilon<\kappa$, let $B_\varepsilon:=A_{\pi_0(\varepsilon)}$.

Consider the club $D:=E\cap\bigdiagonal_{\varepsilon<\kappa}(\acc^+(B_\varepsilon))$, the set $S:=\{ \pi(\varepsilon,\tau)\mid \varepsilon<\kappa, \tau\in B_\varepsilon\}$,
and the stationary set $G:=\{\beta\in D\mid S\cap\beta= S_\beta\}$.

\begin{claim}\label{c122} For every $x\in\mathcal K(\kappa)$ and $\gamma\in G$, $\nacc(x)\cap G\cap\gamma=\{\beta\in N_x\cap\gamma\mid S_\gamma\cap\beta = S_\beta\}$.
\end{claim}
\begin{proof} Let $x\in\mathcal K(\kappa)$ be arbitrary. For all $\gamma\in G$, we have $S_\gamma=S\cap\gamma$.
Thus, let us prove that $\nacc(x)\cap G=\{\beta\in N_x\mid S\cap\beta = S_\beta\}$.

$(\subseteq)$: Let $\beta\in \nacc(x)\cap G$ be arbitrary. Then $\beta\in D\s E$ and $S\cap \beta=S_\beta$.
By $\beta\in D$, we also have  $\beta\in \bigcap_{\varepsilon<\beta}\acc^+(B_\varepsilon)$.
Thus, for all $\varepsilon, \gamma < \beta$, there is some $\tau\in B_\varepsilon \cap (\beta \setminus \gamma)$
such that $\pi(\varepsilon, \tau)\in S$ and (since $\beta\in E$) $\pi(\varepsilon, \tau) < \beta$,
giving $\pi(\varepsilon, \tau)\in S \cap \beta = S_\beta$.
Thus $\beta\in N_x$.

$(\supseteq)$: Suppose that $\beta\in N_x$ satisfies $S\cap\beta=S_\beta$.
By $\beta\in N_x \s \nacc(x) \cap E$,
it remains to show that $\beta\in \bigcap_{\varepsilon<\beta}(\acc^+(B_\varepsilon))$.
Consider any $\varepsilon, \gamma < \beta$.
Since $\beta\in N_x$,
we can fix $\tau\in\beta\setminus\gamma$ such that
$\pi(\varepsilon,\tau)\in S_\beta$. That is, $\pi(\varepsilon,\tau)\in S\cap \beta$
and $\tau\in B_\varepsilon \cap (\beta \setminus \gamma)$, as required.
\end{proof}

\begin{claim}\label{c143} Suppose that $x\in\mathcal K(\kappa)$, $\sup(\nacc(x)\cap G)=\sup(x)$, $\otp(x)\le\rho$, and $(\cf(\sup(x)))^+=\kappa$.
Then $\sup(\nacc(\Phi^\rho(x))\cap A_i)=\sup(x)$ for all $i<\sup(x)$.
\end{claim}
\begin{proof} Denote $\delta:=\sup(x)$. Since $\lambda$ is a cardinal $<\kappa$ and $(\cf(\delta))^+=\kappa$, we have $\lambda\le\cf(\delta)<\kappa$,
so that $\cf(\delta)=\lambda$. By $\otp(x)\le\rho$, we have $\Phi^\rho(x)=\Phi_{\mathfrak Z}(x)$, so that $\nacc(\Phi^\rho(x))=g_{x,\mathfrak Z}[\nacc(x)]$.

Let $i,\alpha<\delta$ be arbitrary. We shall find $\beta\in\nacc(x)$ such that $g_{x,\mathfrak Z}(\beta)\in A_i\setminus\alpha$.
By increasing $\alpha$, we may assume that  $\alpha>i$ and $\alpha\in\nacc(x)\cap G$.
 In particular,  $\pi[\alpha\times\alpha]=\alpha$, and we may find some $\varepsilon<\alpha$ such that $\pi_0(\varepsilon)=i$.
As $\rng(\varphi_x)\s\rho\times\lambda$ and $f\restriction\lambda$ is a bijection from $\lambda$ to $\rho\times\lambda$, $j:=f^{-1}(\varphi_x(\varepsilon))$ is an element of $\lambda$.

Let $M_x:=\nacc(x)\cap G$. By Claim~\ref{c122}, for all $\gamma\in M_x$, $h_x(\gamma) = \otp(M_x \cap \gamma) \pmod \lambda$.
By $\sup(M_x)=\delta$ and $\cf(\delta)=\lambda$, we know that $h_x[M_x\setminus(\alpha+1)]$ is co-bounded in $\lambda$.
By the choice of $g$, then, we may pick $k\in h_x[M_x\setminus(\alpha+1)]$ such that $g(k)=j$.
Pick $\beta\in M_x\setminus(\alpha+1)$ such that $h_x(\beta)=k$.
Then $f(g(h_x(\beta)))=\varphi_x(\varepsilon)$ and $\varepsilon<\alpha<\beta$ so that $\phi_x(\beta)=\varepsilon$,
and hence
$$Z_{x,\beta}=\{\tau<\beta\mid  \pi(\varepsilon,\tau)\in S\cap\beta\}=B_\varepsilon\cap \beta.$$

By $\varepsilon<\alpha<\beta$,$\alpha\in\nacc(x)$, and $\beta\in G\s \acc^+(B_\varepsilon)$, we have
$$g_{x,\mathfrak Z}(\beta)\in B_\varepsilon\setminus(\sup(x\cap\beta)+1)\s A_i\setminus(\alpha+1),$$
as sought.
\end{proof}
\begin{claim}  Suppose that $x\in\mathcal K(\kappa)$ and $\otp(\nacc(x)\cap G)=\sup(x)>\rho$.
Then $\sup(\nacc(\Phi^\rho(x))\cap A_i)=\sup(x)$ for all $i<\sup(x)$.
\end{claim}
\begin{proof} Denote $\delta:=\sup(x)$. As $\delta$ is an accumulation point of $E$,
we know that $\delta$ is indecomposable.
By $\otp(x)>\rho$, let $\zeta$ denote the unique element of $x$ satisfying $\otp(x\cap\zeta)=\rho+1$.
Then $\nacc(\Phi^\rho(x))=g_{x,\mathfrak Z}[\nacc(x\setminus\zeta)]$.

Let $i<\alpha<\delta$ be arbitrary.
We shall find $\beta\in\nacc(x\setminus \zeta)$ such that $g_{x,\mathfrak Z}(\beta)\in A_i\setminus\alpha$.
By increasing $\alpha$, we may assume that $\alpha\in\nacc(x)\cap G\setminus\zeta$.
In particular, $\alpha\in E$, and we may fix some $\varepsilon<\alpha$ such that $\pi_0(\varepsilon)=i$.
Put $j:=f^{-1}(\varphi_x(\varepsilon))$. By $\alpha\in G\s E$, we have $j<\alpha$.
By $\alpha\in E$, we may also fix $k<\alpha$ such that $g(k)=j$.

Let $M_x:=\nacc(x)\cap G\setminus\zeta$.
By $\otp(x\cap\zeta)=\rho+1$ and Claim~\ref{c122}, for all $\gamma\in M_x$, $h_x(\gamma) = \otp(M_x \cap \gamma)$.
As $\otp(\nacc(x)\cap G)=\delta$ and the latter is indecomposable, we have $\otp(M_x)=\delta>\alpha>k$,
and hence we may pick some $\beta\in M_x$ such that $h_x(\beta)=k$.
Then $f(g(h_x(\beta)))=\varphi_x(\varepsilon)$,  $\phi_x(\beta)=\varepsilon$,
and $Z_{x,\beta}=B_\varepsilon\cap\beta$.
Then $g_{x,\mathfrak Z}(\beta)\in B_\varepsilon\setminus(\sup(x\cap\beta)+1)\s A_i\setminus(\alpha+1)$,
as sought.
\end{proof}
\begin{claim} Suppose that $x\in\mathcal K(\kappa)$, $\otp(x)$ is a cardinal $\le\rho$ whose successor is $\kappa$, and $\nacc(x)\s G$.
Then, for all $i<\sup(x)$ and $\sigma<\otp(x)$, we have
$$\sup\{ \beta\in x \mid \suc_\sigma (\Phi^\rho(x) \setminus \beta) \s A_i \} = \sup(x).$$
\end{claim}
\begin{proof} Denote $\delta:=\sup(x)$. Clearly, $\otp(x)=\lambda$ and $\Phi^\rho(x)=\rng(g_{x,\mathfrak Z})$.

By $\nacc(x)\s G$, we get from Claim~\ref{c122} that $h_x:\nacc(x)\rightarrow\lambda$ is the order-preserving bijection.
Let $i<\alpha<\delta$ be arbitrary, and let $\sigma<\lambda$ be arbitrary.
By increasing $\alpha$, we may assume that $\alpha\in\nacc(x)$ and there exists $\varepsilon<\alpha$ such that $B_\varepsilon=A_i$.
As $\rng(\varphi_x)\s\rho\times\lambda$ and $f\restriction\lambda$ is a bijection from $\lambda$ to $\rho\times\lambda$, $j:=f^{-1}(\varphi_x(\varepsilon))$ is an element of $\lambda$.
By the choice of $g$, then, we may pick a large enough $k\in(\otp(x\cap\alpha),\lambda)$ such that $g``(k,k+\sigma+2)=\{j\}$.
Let $\beta\in x$ be such that $\otp(x\cap\beta)=k$.
Then $\suc_\sigma (\Phi^\rho(x) \setminus g_{x,\mathfrak Z}(\beta)))\s A_i$.
\end{proof}
This completes the proof.
\end{proof}

The next lemma provides a tool for transforming a witness to $\p^-(\kappa,2,\mathcal R,1,\allowbreak\{E^\kappa_\theta\},\ldots)$ into a witness to $\p^-(\kappa,2,\mathcal R,\theta,\ldots)$.

\begin{lemma}\label{Phitheta} Suppose that $\theta<\kappa$ are regular, infinite cardinals and $\diamondsuit(\kappa)$ holds.
Then there exists an $\acc$-preserving postprocessing function $\Phi:\mathcal K(\kappa)\rightarrow\mathcal K(\kappa)$ satisfying the following:
For every sequence $\langle A_i\mid i<\theta\rangle$ of cofinal subsets of $\kappa$,
there exists some stationary subset $G\s \kappa$ such that, for all $x\in\mathcal K(\kappa)$,
 if $\sup(\nacc(x)\cap G)=\sup(x)$ and $\cf(\sup(x))=\theta$,
then $\sup(\nacc(\Phi(x))\cap A_i)=\sup(x)$ for all $i<\theta$.
\end{lemma}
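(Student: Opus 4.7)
The plan is to mirror the construction in Lemma~\ref{Phi}, dropping the truncation $\Phi_\rho$ and working modulo $\theta$ rather than $\lambda$, so that the resulting function is $\acc$-preserving. Fix a $\diamondsuit(\kappa)$-sequence $\langle S_\beta\mid\beta<\kappa\rangle$, a bijection $\pi:\kappa\times\kappa\leftrightarrow\kappa$, and a surjection $g:\theta\rightarrow\theta$ such that $g^{-1}\{i\}$ is cofinal in $\theta$ for every $i<\theta$. Let $E$ denote the club of $\beta<\kappa$ with $\pi[\beta\times\beta]=\beta$.

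For each $x\in\mathcal K(\kappa)$, define $N_x$ verbatim as in the proof of Lemma~\ref{Phi}, set
\[
  h_x(\gamma):=\otp\{\beta\in N_x\cap\gamma\mid S_\beta=S_\gamma\cap\beta\}\pmod\theta,
\]
and then let $\phi_x(\beta):=g(h_x(\beta))$ and $Z_{x,\beta}:=\{\tau<\beta\mid\pi(\phi_x(\beta),\tau)\in S_\beta\}$. For every $\bar\alpha\in\acc(x)$, $N_{x\cap\bar\alpha}=N_x\cap\bar\alpha$, so $h_{x\cap\bar\alpha}=h_x\restriction\nacc(x\cap\bar\alpha)$, $\phi_{x\cap\bar\alpha}\s\phi_x$, and hence the matrix $\mathfrak Z=\langle Z_{x,\beta}\rangle$ satisfies the coherence hypothesis of Example~\ref{phiZ}(3). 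Let $\Phi:=\Phi_{\mathfrak Z}$; this is an $\acc$-preserving postprocessing function.

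Given $\vec A=\langle A_i\mid i<\theta\rangle$ with each $A_i$ cofinal in $\kappa$, encode it via $S:=\{\pi(i,\tau)\mid i<\theta,\ \tau\in A_i\}$ and set $D:=E\cap\bigcap_{i<\theta}\acc^+(A_i)$. By $\diamondsuit(\kappa)$, the set $G:=\{\beta\in D\mid S\cap\beta=S_\beta\}$ is stationary. A direct adaptation of Claim~\ref{c122} shows that, for every $x\in\mathcal K(\kappa)$ and $\gamma\in G$,
\[
  \nacc(x)\cap G\cap\gamma=\{\beta\in N_x\cap\gamma\mid S_\beta=S_\gamma\cap\beta\},
\]
so $h_x(\gamma)\equiv\otp(\nacc(x)\cap G\cap\gamma)\pmod\theta$ for every $\gamma\in G\cap\nacc(x)$, and consequently $Z_{x,\beta}=A_{g(h_x(\beta))}\cap\beta$ for each such $\beta$.

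To finish, assume $\cf(\sup(x))=\theta$ and $\sup(\nacc(x)\cap G)=\sup(x)$, and let $i<\theta$. Writing $N:=\nacc(x)\cap G$, the order-isomorphism from $N$ onto $\otp(N)$ conjugates $h_x\restriction N$ into $\alpha\mapsto\alpha\pmod\theta$. Since $\cf(\otp(N))=\theta$ and $\theta$ is regular, $\otp(N)$ must be a multiple of $\theta$, and hence the cofinality of $g^{-1}\{i\}$ in $\theta$ implies that $\{\beta\in N\mid g(h_x(\beta))=i\}$ is cofinal in $\sup(x)$. For each such $\beta$, $\beta\in D\s\acc^+(A_i)$ guarantees that $A_i\cap\beta\setminus(\sup(x\cap\beta)+1)\neq\emptyset$, whence $g_{x,\mathfrak Z}(\beta)\in A_i$; by Example~\ref{phiZ}(2), $g_{x,\mathfrak Z}(\beta)\in\nacc(\Phi(x))$. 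Therefore $\sup(\nacc(\Phi(x))\cap A_i)=\sup(x)$, as required. The main obstacle is ensuring coherence of $\mathfrak Z$ without any truncation that would violate $\acc$-preservation; counting only the \emph{good} predecessors modulo $\theta$, rather than mod $\lambda$ after the rank cut used in Lemma~\ref{Phi}, is precisely what accomplishes this.
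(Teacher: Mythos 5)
Your overall architecture is right and matches the paper's: keep $N_x$ and the mod-$\theta$ counting function $h_x$ from Lemma~\ref{Phi}, skip the truncation $\Phi_\rho$ so that Example~\ref{phiZ} yields an $\acc$-preserving $\Phi_{\mathfrak Z}$, and use the co-boundedness of $h_x[\nacc(x)\cap G]$ in $\theta$ (coming from $\cf(\sup(x))=\theta$) to hit each $A_i$ cofinally. However, there is a genuine gap in your encoding of $\vec A$. You set $S:=\{\pi(i,\tau)\mid i<\theta,\ \tau\in A_i\}$ and take $N_x$ \emph{verbatim} from Lemma~\ref{Phi}, where membership of $\beta$ in $N_x$ demands a witness $\tau\in\beta\setminus\gamma$ with $\pi(\varepsilon,\tau)\in S_\beta$ for \emph{every} $\varepsilon<\beta$. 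For $\beta\in G$ with $\beta>\theta$ we have $S_\beta=S\cap\beta$, and $S$ contains no element of the form $\pi(\varepsilon,\tau)$ with $\theta\le\varepsilon<\beta$; hence such $\beta$ fail the defining condition of $N_x$. Consequently your ``direct adaptation of Claim~\ref{c122}'' is false: the inclusion $\nacc(x)\cap G\cap\gamma\s\{\beta\in N_x\cap\gamma\mid S_\beta=S_\gamma\cap\beta\}$ breaks down above $\theta$, the identity $h_x(\gamma)\equiv\otp(\nacc(x)\cap G\cap\gamma)\pmod\theta$ is no longer justified, and with it the computation $Z_{x,\beta}=A_{g(h_x(\beta))}\cap\beta$ and the co-boundedness argument collapse.

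This is exactly the point the paper's proof is engineered around: it first inflates $\langle A_i\mid i<\theta\rangle$ to a $\kappa$-indexed sequence $\langle B_\varepsilon\mid\varepsilon<\kappa\rangle$ of cofinal sets (arranged so that each $A_i$ recurs cofinally among indices $\varepsilon<\theta$), encodes $S:=\{\pi(\varepsilon,\tau)\mid\varepsilon<\kappa,\ \tau\in B_\varepsilon\}$, and takes $D:=E\cap\bigdiagonal_{\varepsilon<\kappa}\acc^+(B_\varepsilon)$, so that every $\beta\in D$ has witnesses for all $\varepsilon<\beta$ and the analogue of Claim~\ref{c122} goes through. Your surjection $g$ with $g^{-1}\{i\}$ cofinal in $\theta$ correctly replaces the ``each $A_i$ recurs cofinally below $\theta$'' part of the padding, but it does not address the separate role the indices $\varepsilon\in[\theta,\kappa)$ play in making $G\s N_x$. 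The fix is either to pad $S$ with arbitrary cofinal sets on coordinates $\ge\theta$ (and use the diagonal intersection for $D$), or to redefine $N_x$ so that the quantifier runs over $\varepsilon<\min\{\theta,\beta\}$ only (rechecking that coherence of $h_x$ and $\mathfrak Z$ survives); as written, the proof does not close.
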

\begin{proof} We follow the proof of Lemma~\ref{Phi} as much as possible.
Fix a $\diamondsuit(\kappa)$-sequence, $\langle S_\beta\mid\beta<\kappa\rangle$.
Fix a bijection $\pi:\kappa\times\kappa\leftrightarrow\kappa$,
and let $E:=\{\alpha<\kappa\mid \pi[\alpha\times\alpha]=\alpha\}$.

Let $x\in \mathcal K(\kappa)$ be arbitrary. Put
$$N_x := \left\{ \beta\in\nacc(x)\cap E \mid  \text{for all } \varepsilon,\gamma<\beta,
\text{ there exists } \tau\in\beta\setminus\gamma \text{ with } \pi(\varepsilon,\tau)\in S_\beta\right\}.$$

Define $h_x:\nacc(x)\rightarrow\theta$ by letting for all $\gamma\in x$:
$$h_x(\gamma):=
\otp(\{\beta\in N_x\cap\gamma\mid S_\beta=S_\gamma\cap\beta\})\pmod\theta.$$

Define $\mathfrak Z=\langle Z_{x,\beta}\mid x\in\mathcal K(\kappa), \beta\in\nacc(x)\rangle$ by stipulating:
$$Z_{x,\beta}:=\{\tau<\beta\mid  \pi(h_x(\beta),\tau)\in S_\beta\}.$$

Let $\Phi_{\mathfrak Z}$ be the corresponding $\acc$-preserving postprocessing function given by Example~\ref{phiZ}.

Next, suppose that $\langle A_i\mid i<\theta\rangle$ is a sequence of cofinal subsets of $\kappa$.
Let $\langle B_\varepsilon\mid \varepsilon<\kappa\rangle$ be a sequence of cofinal subsets of $\kappa$
such that for all $i<\theta$, $\{\varepsilon<\theta\mid B_\varepsilon=A_i\}$ is cofinal in $\theta$.
Consider the club $D:=E\cap\bigdiagonal_{\varepsilon<\kappa}(\acc^+(B_\varepsilon))$, the set $S:=\{ \pi(\varepsilon,\tau)\mid \varepsilon<\kappa, \tau\in B_\varepsilon\}$,
and the stationary set $G:=\{\beta\in D\mid S\cap\beta= S_\beta\}$.

\begin{claim}\label{c271} Suppose that $x\in\mathcal K(\kappa)$, $\sup(\nacc(x)\cap G)=\sup(x)$, and $\cf(\sup(x))=\theta$. Then:
\begin{enumerate}
\item $\nacc(x)\cap G\cap\gamma=\{\beta\in N_x\cap\gamma\mid S_\gamma\cap\beta = S_\beta\}$ for every $\gamma\in G$;
\item $\sup(\nacc(\Phi_{\mathfrak Z}(x))\cap A_i)=\sup(x)$ for all $i<\theta$.
\end{enumerate}
\end{claim}
\begin{proof}  (1)  By the proof of Claim~\ref{c122}.

(2)  Denote $\delta:=\sup(x)$. Let $i<\theta$ and $\alpha<\delta$ be arbitrary.
As $\nacc(\Phi_{\mathfrak Z}(x))=g_{x,\mathfrak Z}[\nacc(x)]$,
we shall want to find $\beta\in\nacc(x)$ such that $g_{x,\mathfrak Z}(\beta)\in A_i\setminus\alpha$.

Let $M_x:=\nacc(x)\cap G$. By Clause~(1), for all $\gamma\in M_x$, $h_x(\gamma) = \otp(M_x \cap \gamma) \pmod \theta$.
By $\sup(M_x)=\delta$ and $\cf(\delta)=\theta$, we know that $h_x[M_x\setminus(\alpha+1)]$ is co-bounded in $\theta$.
Pick $\varepsilon\in h_x[M_x\setminus(\alpha+1)]$ such that $B_\varepsilon=A_i$.
Pick $\beta\in M_x\setminus(\alpha+1)$ such that $h_x(\beta)=\varepsilon$.
By $\beta\in E$, we have $$Z_{x,\beta}=\{\tau<\beta\mid  \pi(\varepsilon,\tau)\in S\cap\beta\}=B_\varepsilon\cap \beta.$$

By $\alpha\in\nacc(x)\cap\beta$ and $\beta\in G\s \acc^+(B_\varepsilon)$, we have
$$g_{x,\mathfrak Z}(\beta)\in B_\varepsilon\setminus(\sup(x\cap\beta)+1)\s A_i\setminus(\alpha+1),$$
as sought.
\end{proof}

Therefore the postprocessing function $\Phi_{\mathfrak Z}$ satisfies the needed requirements.
\end{proof}

\subsection{Combinatorial constructions}\label{subsection32}
In Theorem~\ref{improved_square_thm_1} below, we give a list of sufficient conditions for $\p^-(\kappa,2,{\sq},\kappa,\{\kappa\},2,2)$ to hold.
Later on, in Corollary~\ref{incompactness_cor_1}, we prove that $\p^-(\kappa,2,{\sq},\kappa,\{\kappa\},2,2)$ entails
the existence of a ($C$-sequence) graph of size $\kappa$ which is $(\aleph_0,\kappa)$-chromatic.

The idea is to use the postprocessing functions from the preceding subsection to turn simple instances of the proxy principle into more substantial ones.
The simplest instance of the proxy principle, being $\p^-(\kappa,2,{\sq},1,\{S\},2,1)$, is denoted by $\boxtimes^-(S)$:
\begin{defn}[\cite{paper22}] For a stationary subset $S\s\kappa$, $\boxtimes^-(S)$ asserts the existence of a $\sq$-coherent $C$-sequence,
$\langle C_\alpha\mid\alpha<\kappa\rangle$, such that for every cofinal subset $A\s\kappa$, there exists some $\alpha\in S$ for which $\sup(\nacc(C_\alpha)\cap A)=\alpha$.
\end{defn}

\begin{thm} \label{improved_square_thm_1} Suppose that $\kappa$ is a regular cardinal $\ge\aleph_2$, satisfying at least one of the following:
\begin{enumerate}
\item $\diamondsuit(\kappa)+\boxtimes^-(\reg(\kappa))$;
\item $\diamondsuit(\kappa)+\boxtimes^-(T)+\refl(T)$ for some stationary $T\s\kappa$;
\item $\diamondsuit(\kappa)+\boxtimes^-(E^\kappa_\lambda)$ and $\kappa=\lambda^+$;
\item $\diamondsuit(\kappa)+\boxtimes^-(\kappa)+\refl(E^\kappa_{<\lambda})$ and $\kappa=\lambda^+$;
\item $\ch_\lambda+\square(\kappa)+\refl(E^\kappa_{<\beth_\omega})$ and $\kappa = \lambda^+ >\beth_{\omega}$;
\item $\gch+\square(\kappa)+\refl(E^\kappa_{\omega})$ and $\kappa$ is a successor cardinal.
\end{enumerate}

Then $\p(\kappa,2,{\sq},\kappa,\{\kappa\},2,2)$ holds.
\end{thm}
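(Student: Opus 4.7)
The plan is to upgrade a $\sq$-coherent $C$-sequence produced by each hypothesis into a witness of the full proxy principle by means of the postprocessing machinery of Subsection~\ref{subsection31}. The $\diamondsuit(\kappa)$ half of $\p(\kappa,2,\sq,\kappa,\{\kappa\},2,2)$ is either explicit in cases (1)--(4), or follows from $\ch_\lambda$ for $\kappa=\lambda^+$ uncountable in cases (5)--(6); likewise, in cases (5) and (6) one shows that $\ch_\lambda+\square(\kappa)$ already yields a witness to $\boxtimes^-(\kappa)$ (via the machinery of \cite{paper22,paper23}), reducing these cases to an analogue of case~(4). Thus the substantive task is: given $\diamondsuit(\kappa)$, a $\sq$-coherent witness $\vec C^0=\langle C^0_\alpha\mid\alpha<\kappa\rangle$ to $\boxtimes^-(S)$ for a stationary $S\s\kappa$, and an appropriate reflection principle, derive $\p^-(\kappa,2,\sq,\kappa,\{\kappa\},2,2)$.

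Given $\vec C^0$, I would fix an ordinal $\rho<\kappa$ adapted to the case (for example, $\rho=\lambda$ when $\kappa=\lambda^+$, and $\rho$ cofinal in $\kappa$ when $\kappa$ is inaccessible) and let $\Phi^\rho$ be the postprocessing function supplied by Lemma~\ref{Phi}. Setting $\vec C:=\langle \Phi^\rho(C^0_\alpha)\mid\alpha<\kappa\rangle$ preserves $\sq$-coherence, by the opening remarks of Subsection~\ref{subsection31}. For any sequence $\vec A=\langle A_i\mid i<\kappa\rangle$ of cofinal subsets of $\kappa$, Lemma~\ref{Phi} supplies a stationary coding set $G\s\kappa$, and applying $\boxtimes^-(S)$ to $G$ produces stationarily many $\alpha\in S$ with $\sup(\nacc(C^0_\alpha)\cap G)=\alpha$. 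The aim is to verify that at such $\alpha$, one of the three clauses of Lemma~\ref{Phi} is triggered, delivering the conclusion $\sup\{\beta\in C_\alpha\mid \suc_2(C_\alpha\setminus\beta)\s A_i\}=\alpha$ for every $i<\alpha$.

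The case analysis proceeds as follows. In case~(3), every $\alpha\in E^\kappa_\lambda$ directly satisfies the cofinality hypothesis of clause~(1) of Lemma~\ref{Phi}. In case~(1), the returned $\alpha\in\reg(\kappa)$ is a regular cardinal with $\otp(C^0_\alpha)=\alpha$ indecomposable, so clause~(2) or~(3) of Lemma~\ref{Phi} applies. In cases~(2), (4), (5), and (6), the returned $\alpha$ need not a priori sit in a good cell, but the reflection hypothesis $\refl(T)$, resp.\ $\refl(E^\kappa_{<\lambda})$, $\refl(E^\kappa_{<\beth_\omega})$, $\refl(E^\kappa_\omega)$, produces a reflection point $\bar\alpha\in\acc(C^0_\alpha)$ whose cofinality lands in the target cell; $\sq$-coherence then propagates the coding behaviour from $\alpha$ down to $\bar\alpha$, since $C^0_{\bar\alpha}=C^0_\alpha\cap\bar\alpha$ and $G\cap\bar\alpha$ retains the coding information needed to trigger the appropriate clause of Lemma~\ref{Phi} (and, where the target cell is $E^\kappa_\theta$ for a fixed $\theta$, of Lemma~\ref{Phitheta}). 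The principal obstacle will be orchestrating, uniformly across all six cases, the matching of the $\alpha$'s returned by $\boxtimes^-$ (after a possible descent via reflection) to the specific configurations~(1)--(3) of Lemma~\ref{Phi}; the reflection cases in particular require a careful choice of $\rho$ so that the coding set $G$ survives the descent to $\bar\alpha$ and the resulting $\sq$-coherent postprocessing still inherits stationarily many witnesses.
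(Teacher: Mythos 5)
Your overall architecture (reduce to a few core cases, then upgrade a $\boxtimes^-$-witness via the postprocessing functions of Subsection~\ref{subsection31}, with Lemma~\ref{Phi} doing the heavy lifting) matches the paper, and your treatment of case (1) is essentially right: for $\alpha\in\reg(\kappa)$ with $\sup(\nacc(C_\alpha)\cap G)=\alpha$ one gets $\otp(\nacc(C_\alpha)\cap G)=\alpha$ for free, so clause (2) of Lemma~\ref{Phi} fires with $\rho=\omega$. But there are two genuine gaps. First, your use of the reflection hypotheses is not viable as described. You propose to take the $\alpha$ returned by $\boxtimes^-$ and ``descend'' to a reflection point $\bar\alpha\in\acc(C^0_\alpha)$ of suitable cofinality, relying on coherence to transfer the coding. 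However, $\refl(T)$ reflects stationary sets; it does not supply points of $\acc(C^0_\alpha)$ for a particular $\alpha$, and even if some $\bar\alpha\in\acc(C^0_\alpha)$ were handed to you, $\sup(\nacc(C^0_\alpha)\cap G)=\alpha$ does not imply $\sup(\nacc(C^0_{\bar\alpha})\cap G)=\bar\alpha$ --- for that you would need $\bar\alpha\in\acc^+(\nacc(C^0_\alpha)\cap G)$, which nothing guarantees. The paper uses reflection in the opposite direction: assuming $\{\alpha<\kappa\mid\otp(\nacc(C_\alpha)\cap G)=\alpha\}$ is non-stationary, Fodor yields a stationary $T'\s T$ on which $\otp(\nacc(C_\alpha)\cap G)$ is a constant $\varepsilon$; reflecting $T'$ to a $\delta$ of uncountable cofinality and picking $\alpha<\beta$ in $T'\cap\acc(C_\delta)$, coherence forces $\nacc(C_\beta)\cap G$ to properly end-extend $\nacc(C_\alpha)\cap G$, contradicting equality of the order types. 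Relatedly, case (4) is not handled by reflection-descent at all but by the partition property of $\boxtimes^-(\kappa)$ under $\diamondsuit(\kappa)$, giving $(4)\implies(2)\vee(3)$, and $(5)\vee(6)\implies(2)$ is imported from \cite{paper24}; your proposed reduction of (5) and (6) to ``an analogue of (4)'' would still have to land in one of these cases.

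Second, case (3) is substantially harder than you allow. Clause (1) of Lemma~\ref{Phi} requires $\otp(x)\le\rho$ in addition to $(\cf(\sup(x)))^+=\kappa$, and for $x=C^0_\alpha$ with $\alpha\in E^\kappa_\lambda$ the order type of $C^0_\alpha$ can be arbitrarily large, so fixing $\rho=\lambda$ in advance does not make that clause applicable. The paper must first run a dichotomy: either the order types $\otp(\nacc(C_\alpha)\cap G^0(S))$ are full on a stationary set for every cofinal $S$ (and one then argues as in case (1) after an application of $\Phi_{\mathfrak Z}$), or they are bounded on a club for some $S^0$, in which case one composes the postprocessing function of Lemma~\ref{PhiB} (collapsing $C_\alpha$ to $\cl(\nacc(C_\alpha)\cap B)$, of some fixed small order type $\rho$ found by Fodor), then $\Phi_{\mathfrak Z}$, and only then $\Phi^\rho$, at which point clause (1) of Lemma~\ref{Phi} becomes available. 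None of this appears in your outline, and without it the hypotheses of Lemma~\ref{Phi} are simply not met at the points that $\boxtimes^-(E^\kappa_\lambda)$ returns.
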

\begin{proof} First, let us simplify some things:
\begin{itemize}
\item By \cite{paper24}, $(5)\vee(6)\implies(2)$.
\item By the exact same proof as that of \cite[Lemma 4.12]{paper24},
if $\diamondsuit(\kappa)+\boxtimes^-(\kappa)$ holds,
then for every partition $\kappa=T_0\uplus T_1$,
there exists some $i<2$ such that $\boxtimes^-(T_i)$ holds. In particular, by taking $T_0=E^\kappa_{<\lambda}$ and $T_1=E^\kappa_{\lambda}$,
we see that $(4)\implies(2)\vee(3)$.
\item By \cite{paper23}, $\diamondsuit(\kappa)+\p^-(\kappa,2,{\sq},\kappa,\{\kappa\},2,1)$ is equivalent to $\p(\kappa,2,{\sq},\kappa,\{\kappa\},\allowbreak2,n)$ for every positive integer $n$.
\end{itemize}
Altogether, it suffices to prove that $(1)\vee(2)\vee(3)\implies\p^-(\kappa,2,{\sq},\kappa,\{\kappa\},2,1)$.

(1) Let $\vec C=\langle C_\alpha\mid\alpha<\kappa\rangle$ be a witness to $\boxtimes^-(\reg(\kappa))$.
Let $\Phi^\omega$ be given by Lemma~\ref{Phi}. Denote $C_\alpha^\bullet:=\Phi^\omega(C_\alpha)$.
To see that $\langle C_\alpha^\bullet\mid\alpha<\kappa\rangle$ witnesses $\p^-(\kappa,2,{\sq},\kappa,\{\kappa\},2,1)$,
let $\vec A=\langle A_i\mid i<\kappa\rangle$ be an arbitrary sequence of cofinal subsets of $\kappa$.
Let $G$ be the stationary set given by Lemma~\ref{Phi} for $\vec A$.

\begin{claim} $S:=\{\alpha<\kappa\mid \otp(\nacc(C_\alpha)\cap G)=\alpha\}$ is stationary.
\end{claim}
\begin{proof} As $\vec C$ witnesses $\boxtimes^-(\reg(\kappa))$,
we know that $T:=\{\alpha\in\reg(\kappa)\mid \sup(\nacc(C_\alpha)\cap G)=\alpha\}$ is stationary.
For all $\alpha\in T$, we have $\alpha\ge\otp(\nacc(C_\alpha)\cap G)\ge\cf(\alpha)=\alpha$ and hence $\alpha\in S$.
\end{proof}

Let $\alpha\in S\setminus(\omega+1)$ be arbitrary. Put $x:=C_\alpha$ and $\rho:=\omega$.
Then $\otp(\nacc(x)\cap G)=\sup(x)>\rho$, and hence $\sup(\nacc(\Phi^\rho(x))\cap A_i)=\alpha$ for all $i<\sup(x)$.
That is, $\{\alpha<\kappa\mid \forall i<\alpha[\sup(\nacc(C_\alpha^\bullet)\cap A_i)=\alpha]\}$ covers the stationary set $S\setminus(\omega+1)$.

(2) Let $\vec C=\langle C_\alpha\mid\alpha<\kappa\rangle$ be a witness to $\boxtimes^-(T)$. As made clear by the proof of the previous clause,
it suffices to prove the following.
\begin{claim} $\{\alpha<\kappa\mid \otp(\nacc(C_\alpha)\cap G)=\alpha\}$ is stationary for every stationary $G\s\kappa$.
\end{claim}
\begin{proof} Suppose that $G$ is a counterexample.
As $\vec C$ witnesses $\boxtimes^-(T)$, we altogether infer the existence of some $\varepsilon<\kappa$ such that the following set is stationary:
$$T':=\{\alpha\in T\mid \sup(\nacc(C_\alpha)\cap G)=\alpha\ \&\ \otp(\nacc(C_\alpha)\cap G)=\varepsilon\}.$$

By $\refl(T)$, pick $\delta\in E^\kappa_{>\omega}$ such that $T'\cap\delta$ is stationary. Fix $\alpha<\beta$ both from $T'\cap\acc(C_\delta)$.
Then $\nacc(C_\beta)\cap G$ is a proper end-extension of $\nacc(C_\alpha)\cap G$,
contradicting the fact that $\otp(\nacc(C_\beta)\cap G)=\varepsilon=\otp(\nacc(C_\alpha)\cap G)$.
\end{proof}

(3)  Let $\vec C=\langle C_\alpha\mid\alpha<\kappa\rangle$ be a witness to $\boxtimes^-(E^\kappa_{\lambda})$.
By $\diamondsuit(\kappa)$, fix a matrix $\langle S_\gamma^\rho\mid \rho<\kappa,\gamma<\kappa\rangle$ such that for every sequence $\langle S^\xi\mid \xi<\kappa\rangle$
of subsets of $\kappa$, the set $\{\gamma<\kappa\mid \forall \rho<\gamma(S^\rho_\gamma=S^\rho\cap\gamma)\}$ is stationary.
In particular, for every cofinal $S\s\kappa$ and $\rho<\kappa$, $G^\rho(S):=\{\gamma<\kappa\mid S_\gamma^\rho=S\cap\gamma\ \&\ \sup(S^\rho_\gamma)=\gamma\}$ is stationary. We distinguish two cases:

$\br$ Suppose that, for every cofinal $S\s\kappa$, the set $\{\alpha<\kappa\mid \otp(\nacc(C_\alpha)\cap G^0(S))=\alpha\}$ is stationary.
Define $\mathfrak Z=\langle Z_{x,\beta}\mid x\in\mathcal K(\kappa), \beta\in\nacc(x)\rangle$ by stipulating $Z_{x,\beta}:=S_\beta^0$.
Let $\Phi_{\mathfrak Z}$ be the corresponding postprocessing function given by Example~\ref{phiZ}.

As made clear by the proof of Clause~(1), it now suffices to prove the following.

\begin{claim}\label{213} $\{\alpha<\kappa\mid \otp(\nacc(\Phi_{\mathfrak Z}(C_\alpha))\cap G)=\alpha\}$ is stationary for every stationary $G\s\kappa$.
\end{claim}
\begin{proof} Let $G$ be an arbitrary stationary subset of $\kappa$.
In particular, $G$ is cofinal in $\kappa$,
so that $T:=\{\alpha<\kappa\mid \otp(\nacc(C_\alpha)\cap G^0(G))=\alpha\}$ is stationary.
Let $\alpha\in T$ and $\beta\in\nacc(C_\alpha)\cap G^0(G)$ be arbitrary.
Then $Z_{C_\alpha,\beta}=S^0_\beta=G\cap\beta$ and $\sup(Z_{C_\alpha,\beta})=\beta$,
so that $g_{C_\alpha,\mathfrak Z}(\beta)\in G\cap\beta$.
Consequently, $\alpha\ge\otp(\nacc(\Phi_{\mathfrak Z}(C_\alpha))\cap G)\ge\otp(\nacc(C_\alpha)\cap G^0(G)))=\alpha$.

Thus, we have established that $\{\alpha<\kappa\mid \otp(\nacc(\Phi_{\mathfrak Z}(C_\alpha))\cap G)=\alpha\}$ covers the stationary set $T$.
\end{proof}

$\br$ Suppose that there exists some cofinal $S^0\s\kappa$
and a club $E\s\kappa$ such that
$E\s \{\alpha<\kappa\mid \otp(\nacc(C_\alpha)\cap G^0(S^0))<\alpha\}$.
Let $\Phi_B$  be the postprocessing function given by Lemma~\ref{PhiB} for $B:=G^0(S^0)$.
Denote $C_\alpha^\circ:=\Phi_B(C_\alpha)$.

\begin{claim}\label{214} For some nonzero $\rho<\kappa$, for every cofinal $S\s\kappa$, the following set is stationary:
$$\{ \alpha\in E^\kappa_\lambda\mid \otp(C_\alpha^\circ)=\rho\ \&\ \sup(\nacc(C_\alpha^\circ)\cap G^\rho(S))=\alpha\}.$$
\end{claim}
\begin{proof} Suppose not. For each nonzero $\rho<\kappa$, pick a counterexample $S^\rho$.
As $T:=\{\gamma\in\bigdiagonal_{\rho<\kappa}\acc^+(S^\rho)\mid \forall\rho<\gamma(S^\rho_\gamma=S^\rho\cap\gamma)\}$ is stationary
and $\vec C$ witnesses $\boxtimes^-(E^\kappa_\lambda)$,
the set $R:=\{\alpha\in E^\kappa_\lambda\cap E\mid \sup(\nacc(C_\alpha)\cap T)=\alpha\}$ is stationary.

Let $\alpha\in R$ be arbitrary.
By $T\setminus 1\s G^0(S^0)=B$, we have $C_\alpha^\circ=\cl(\nacc(C_\alpha)\cap B)$.
Put $\rho_\alpha:=\otp(C_\alpha^\circ)$. By $\alpha\in E$, we have $\rho_\alpha<\alpha$.
But then, by $T\setminus(\rho_\alpha+1)\s B\cap G^{\rho_\alpha}(S^{\rho_\alpha})$, we have $\sup(\nacc(C_\alpha^\circ)\cap G^{\rho_\alpha}(S^{\rho_\alpha}))=\alpha$.
We can now fix a stationary $R' \subseteq R$ and $\rho < \kappa$ such that, for all $\alpha \in R'$,
$\rho_\alpha = \rho$. But then $\{ \alpha\in E^\kappa_\lambda\mid \otp(C_\alpha^\circ)=\rho\ \&\ \sup(\nacc(C_\alpha^\circ)\cap G^\rho(S^\rho))=\alpha\}$
covers the stationary set $R'$, contradicting the choice of $S^\rho$.
\end{proof}

Let $\rho$ be given by the preceding. Clearly, $\rho$ is a limit ordinal.
Define $\mathfrak Z=\langle Z_{x,\beta}\mid x\in\mathcal K(\kappa), \beta\in\nacc(x)\rangle$ by stipulating $Z_{x,\beta}:=S_\beta^\rho$,
and let $\Phi_{\mathfrak Z}$ be the corresponding postprocessing function given by Example~\ref{phiZ}.
Put $T:=\{ \alpha\in E^\kappa_\lambda\mid \otp(\Phi_{\mathfrak Z}(C_\alpha^\circ))\le\rho\}$.

\begin{claim} $\{\alpha\in T\mid \sup(\nacc(\Phi_{\mathfrak Z}(C^\circ_\alpha))\cap G)=\alpha\}$ is stationary for every stationary $G\s\kappa$.
\end{claim}
\begin{proof} Let $G$ be an arbitrary stationary subset of $\kappa$.
By Claim~\ref{214}, $T':=\{\alpha\in T\mid \sup(\nacc(C^\circ_\alpha)\cap G^\rho(G))=\alpha\}$ is stationary.
Let $\alpha\in T'$ and $\beta\in\nacc(C^\circ_\alpha)\cap G^\rho(G)$ be arbitrary.
Then $Z_{C_\alpha,\beta}=S^\rho_\beta=G\cap\beta$ and $\sup(Z_{C_\alpha,\beta})=\beta$,
so that $g_{C_\alpha,\mathfrak Z}(\beta)\in G\cap\beta$.
Consequently, $\sup(\nacc(\Phi_{\mathfrak Z}(C^\circ_\alpha))\cap G)=\alpha$.
\end{proof}

Let $\Phi^\rho$ be given by Lemma~\ref{Phi}. Denote $C_\alpha^\bullet:=\Phi^\rho(\Phi_{\mathfrak Z}(C_\alpha^\circ))$.
To see that $\langle C_\alpha^\bullet\mid\alpha<\kappa\rangle$ witnesses $\p^-(\kappa,2,{\sq},\kappa,\{\kappa\},2,1)$,
let $\vec A=\langle A_i\mid i<\kappa\rangle$ be an arbitrary sequence of cofinal subsets of $\kappa$.
Let $G$ be the stationary set given by Lemma~\ref{Phi} for $\vec A$.

Fix an arbitrary $\alpha\in T$ such that $\sup(\nacc(\Phi_{\mathfrak Z}(C^\circ_\alpha))\cap G)=\alpha$.
Write $x:=\Phi_{\mathfrak Z}(C^\circ_\alpha)$.
Then $\sup(\nacc(x)\cap G)=\sup(x)$, $\otp(x)\le\rho$, and $(\cf(\sup(x)))^+=\kappa$,
and hence the choice of $\Phi^\rho$ entails that  $\sup(\nacc(\Phi^\rho(x))\cap A_i)=\sup(x)$ for all $i<\sup(x)$.
That is, $\sup(\nacc(C_\alpha^\bullet)\cap A_i)=\alpha$ for all $i<\alpha$.
\end{proof}

The purpose of the next theorem is to make a connection between the proxy principle and the concept of \emph{capturing} from Definition~\ref{capturing}.
We remind the reader that the definition of the binary relations $\sqx$ and $\sq_\chi$ may be found in the Notation subsection of the paper's Introduction.

\begin{thm}\label{thm37} Suppose that $\chi<\kappa$ are infinite regular cardinals, and $\theta<\kappa$ is nonzero.
\begin{enumerate}
\item If $\p^-(\kappa,2,{\sqx},\kappa,\{E^\kappa_{\ge\chi}\},2,2)$ holds,
then there exists a $\sqx$-coherent $C$-sequence over $\kappa$
such that $S(\vec A)=\{\delta<\kappa\mid\delta\text{ captures }\vec A\}$ is stationary for every sequence $\vec A=\langle A_i\mid i<\kappa\rangle$ of cofinal subsets of $\kappa$.
\item If $\p^-(\kappa,2,{\sq_\chi},\kappa,\{\kappa\},2,2)$ holds,
then there exists a $\sq_\chi$-coherent $C$-sequence over $\kappa$
such that $S(\vec A)=\{\delta<\kappa\mid\delta\text{ captures }\vec A\}$ is stationary for every sequence $\vec A=\langle A_i\mid i<\kappa\rangle$ of cofinal subsets of $\kappa$.
\item If $\p(\kappa,2,{\sqx},\theta,\{E^\kappa_{\ge\chi}\},2,1)$ holds,
then there exists a $\sqx$-coherent $C$-sequence over $\kappa$
such that $S(\vec A)=\{\delta<\kappa\mid\delta\text{ captures }\vec A\}$ is stationary for every sequence $\vec A=\langle A_i\mid i<\theta\rangle$ of cofinal subsets of $\kappa$.
\item If  $\p(\kappa,2,{\sq_\chi},\theta,\{\kappa\},2,1)$ holds,
then there exists a $\sq_\chi$-coherent $C$-sequence over $\kappa$
such that $S(\vec A)=\{\delta<\kappa\mid\delta\text{ captures }\vec A\}$ is stationary for every sequence $\vec A=\langle A_i\mid i<\theta\rangle$ of cofinal subsets of $\kappa$.
\end{enumerate}
\end{thm}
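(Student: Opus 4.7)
My plan is to take a $C$-sequence $\vec C$ witnessing the relevant instance of the proxy principle (possibly composed with a postprocessing function from Subsection~\ref{subsection31}) and verify the two bullets of Definition~\ref{capturing} directly, applying the principle to the given $\vec A$ (or to a mild enrichment of it).

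The second bullet comes for free from the proxy principle instantiated with $\sigma=2$. Writing $\theta_0:=\kappa$ for clauses~(1)--(2) and $\theta_0:=\theta$ for clauses~(3)--(4), the principle yields stationarily many $\alpha$ such that, for every $i<\min\{\alpha,\theta_0\}$, the set $\{\beta\in C_\alpha\mid\suc_2(C_\alpha\setminus\beta)\s A_i\}$ is cofinal in $\alpha$. For any such witness $\beta$, setting $\iota:=\otp(C_\alpha\cap\beta)+1$ gives $C_\alpha(\iota),C_\alpha(\iota+1)\in A_i$, which is precisely the second bullet of capturing.

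The main obstacle is the first bullet, $\min(C_\delta)\ge\min(A_0)$, because a raw $\vec C$ from the proxy principle need not satisfy it, and the natural remedy---applying $\Phi_B$ from Lemma~\ref{PhiB} with $B:=A_0$---would force the postprocessing to depend on $\vec A$. To obtain a single fixed $\vec C$ that works for every $\vec A$, I would split according to whether $\diamondsuit(\kappa)$ is available. For clauses~(3) and~(4), where $\p$ already incorporates $\diamondsuit(\kappa)$, I would build predictions of $A_0$ into $\vec C$ via an $\acc$-preserving postprocessing function in the style of Lemmas~\ref{Phi} and~\ref{Phitheta}; when the $\diamondsuit$-guess agrees with $A_0$ on an initial segment---which happens stationarily often---the modified $C_\delta$ will be forced to start above $\min(A_0)$. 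For clauses~(1) and~(2), which only offer $\p^-$, I would instead exploit $\theta=\kappa$ by applying the proxy principle to an enriched sequence $\vec B=\langle B_i\mid i<\kappa\rangle$ in which $A_0$ and its tails are repeated at cofinally many indices; composing with a $\Phi_{A_0}$-style trimming synthesized from the resulting sup-capture clause for $A_0$ should push $\min(\tilde C_\delta)$ above $\min(A_0)$ on a stationary set, while the slack afforded by $\sqx$- and $\sq_\chi$-coherence (no constraint on $C_{\bar\alpha}$ when $\cf(\bar\alpha)<\chi$ or when $\otp(C_{\bar\alpha})<\chi$ respectively) is used to keep coherence intact.

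The hardest part will be pushing the first-bullet argument through for clauses~(1) and~(2), where no $\diamondsuit$-predictor is at our disposal; the idea is to simulate a predictor using the high redundancy of the $\theta=\kappa$ proxy principle together with the flexibility of $\sqx$ and $\sq_\chi$ at small cofinalities or small order types, and to organize the postprocessing carefully so that coherence of the resulting $\vec C$ survives.
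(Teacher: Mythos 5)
Your reading of the second bullet of Definition~\ref{capturing} is correct, and you have put your finger on the right difficulty: arranging $\min(C_\delta)\ge\min(A_0)$ with a $C$-sequence that is fixed \emph{before} $\vec A$ is given. But neither of your two proposed mechanisms actually closes this gap. For clauses~(1) and~(2), any ``$\Phi_{A_0}$-style trimming synthesized from the sup-capture clause for $A_0$'' is a modification of $\vec C$ that depends on $A_0$, hence on $\vec A$; enriching the input sequence $\vec B$ cannot help, because the proxy principle only tells you where a \emph{given} $C_\delta$ meets $A_0$ --- it does not let you raise $\min(C_\delta)$ after the fact. For clauses~(3) and~(4), a $\diamondsuit$-predictor built into the postprocessing function guesses $A_0$ (or $\min(A_0)$) correctly on some stationary set, but you would then need that set to meet the stationary set of $\delta$ produced by the proxy principle, and two stationary subsets of $\kappa$ need not intersect; nothing in your sketch forces this intersection.

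The paper handles the first bullet by a different, global argument: one considers \emph{all} shifts $C^\xi_\delta:=\Phi_\xi(C_\delta)$ of Example~\ref{fact13} simultaneously and shows that \emph{some} fixed $\xi<\kappa$ works. Assuming otherwise, for each $\xi$ one fixes a counterexample sequence $\langle A^\xi_i\mid i<\kappa\rangle$. Folding these into a single family to which the proxy principle can be applied is where $\theta=\kappa$ is exploited in clauses~(1)--(2), and where $\diamondsuit(\kappa)$ together with the matrix postprocessing of Example~\ref{phiZ} is used in clauses~(3)--(4) to compress the $\kappa\times\theta$ many sets $A^\xi_i$ into $\theta$ many derived stationary sets independent of $\xi$. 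The principle then yields a stationary $S$ on which the second bullet of capturing holds for every $\xi$, so the first bullet must fail for every $\xi$ on a club, i.e.\ $C_\delta(\xi)=\min(C^\xi_\delta)<\min(A^\xi_0)$. A diagonal intersection and a trichotomy then show that the clubs $C_\delta$ for $\delta$ in a suitable stationary set either have bounded order type, or must contain points they omit, or line up into a $\sq$-chain whose union $C$ is a thread; the last case is refuted by applying the proxy principle to $\acc(C)$, and this is exactly where the restriction to $E^\kappa_{\ge\chi}$ (or, in the $\sq_\chi$ case, to the support of $\vec C$) is needed to keep the chain coherent. None of this unthreadability machinery appears in your outline, and without it --- or some substitute --- the first bullet remains unproved.
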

\begin{proof} {\bf (1)} Let $\vec{C}=\langle C_\delta\mid\delta<\kappa\rangle$  be a witness to $\p^-(\kappa,2,{\sqx},\kappa,\{E^\kappa_{\ge\chi}\},2,2)$.
Denote $C_\delta^\xi:=\Phi_\xi(C_\delta)$, where $\Phi_\xi$ is the postprocessing function from Example~\ref{fact13}.
We claim that there exists some $\xi<\kappa$ such that $\langle C_\delta^\xi\mid\delta<\kappa\rangle$ is as sought.

Suppose not. Then for each $\xi<\kappa$, let us fix a sequence $\langle A^\xi_i\mid i<\kappa\rangle$ of cofinal subsets of $\kappa$
such that, for club many $\delta<\kappa$, at least one of the following two conditions fails:
\begin{itemize}
\item $\min(C_\delta^\xi)\ge\min(A^\xi_0)$;
\item for all $i<\delta$, there exists $\iota\in\otp(C^\xi_\delta)$  such that $C_\delta^\xi(\iota),C^\xi_\delta(\iota+1)\in A^\xi_i$.
\end{itemize}

Fix a bijection $\pi:\kappa\leftrightarrow\kappa\times\kappa$.
For each $j<\kappa$, put $A_j:=A_i^\xi$ iff $\pi(j):=(i,\xi)$.
As $\vec{C}$ witnesses $\p^-(\kappa,2,{\sq},\kappa,\{E^\kappa_{\geq\chi}\},2,2)$, the following set is stationary:
$$S:=\{\delta\in E^\kappa_{\ge\chi}\mid \pi[\delta]=\delta\times\delta\ \&\ \forall j<\delta[\sup\{\gamma\in C_\delta\mid \suc_2(C_\delta\setminus\gamma)\s A_j\}=\delta]\}.$$

Let $\xi<\kappa$ and $\delta\in S\setminus(\xi+1)$ be arbitrary.
For each $i<\delta$ there exists some $j<\delta$ such that $\pi(j)=(i,\xi)$ and some $\gamma\in C^\xi_\delta$ such that $\suc_2(C^\xi_\delta\setminus\gamma)\s A_j$.
Thus, we have established that, for every $\xi<\kappa$,
$$\{\delta<\kappa\mid
\forall i<\delta\exists\iota\in\otp(C^\xi_\delta)[C_\delta^\xi(\iota),C_\delta^\xi(\iota+1)\in A^\xi_i]\}$$ covers the stationary set $S\setminus(\xi+1)$.
So this must mean that, for some club $E_\xi\s\kappa$,
we have $\min(C_\delta^\xi)<\min(A_0^\xi)$ for every $\delta\in S\cap E_\xi$.

Let $E:=\bigdiagonal_{\xi<\kappa}E_\xi$. Following the proof of \cite[Claim~3.2.1]{paper18},
we consider the club $D:=\{\delta\in E\mid \forall\xi<\delta[\min(A_0^\xi)<\delta]\}$,
the set
$S':=\{\beta\in S\mid  \otp(C_\beta)=\beta\}$, and the set
$B:=\{ \beta\in\acc(D)\cap S'\mid \sup((D\cap\beta)\setminus C_\beta)=\beta\}$.
There are three cases to consider, each of which leads to a contradiction.

$\br$ If $B\neq\emptyset$, then let us pick $\beta\in B$ and $\alpha\in(D\cap\beta)\setminus C_\beta$.
For all $\xi<\alpha$, by $\beta\in S'\cap E_\xi$, we have $C_\beta(\xi)=\min(C_\beta^\xi)<\min(A_0^\xi)<\alpha$.
Since the map $\xi\mapsto C_\beta(\xi)$ is increasing and continuous, we then get that $C_\beta(\alpha)=\alpha$,
contradicting the fact that $\alpha\notin C_\beta$.

$\br$ If $S'$ is non-stationary, then, by Fodor's lemma,
there exists some $\varepsilon<\kappa$ such that $T_\varepsilon=\{\beta\in S\cap D\mid \otp(C_\beta)=\varepsilon\}$ is stationary.
Let  $\zeta:=\sup_{\xi<\varepsilon}\min(A_0^\xi)$.
Pick $\beta\in T_\varepsilon$ above $\zeta$.
Then $C_\beta(\xi)=\min(C_\beta^\xi)<\min(A_0^\xi)\le\zeta$ for all $\xi<\varepsilon$.
Consequently, $\beta=\sup(C_\beta)\le\zeta$, contradicting the fact that $\beta>\zeta$.

$\br$ If $B=\emptyset$ and $S'$ is stationary, then let us fix some $\varepsilon<\kappa$ such that $S_\varepsilon:=\{\beta\in S'\mid \sup((D\cap\beta)\setminus C_\beta)=\varepsilon\}$ is stationary.
For every pair of ordinals $\alpha<\beta$ both in $\acc(D\setminus\varepsilon)\cap S_\varepsilon$, we have $\alpha\in\acc(C_\beta)\cap E^\kappa_{\ge\chi}$ and hence $C_\alpha\sqsubseteq C_\beta$.
So $\{ C_\delta\mid\delta\in\acc(D\setminus\varepsilon)\cap S_\varepsilon\}$ is a $\sqsubseteq$-chain, converging to the club $C:=\bigcup\{ C_\delta\mid \delta\in\acc(D\setminus\varepsilon)\cap S_\varepsilon\}$.
Put $A:=\acc(C)$. As $\vec C$ witnesses $\p^-(\kappa,2,{\sqx},\kappa,\{E^\kappa_{\ge\chi}\},2,2)$,
we may pick some $\beta\in \acc(C)\cap E^\kappa_{\ge\chi}$ such that $\sup(\nacc(C_\beta)\cap A)=\beta$,
so that  $\nacc(C_\beta)\cap\acc(C\cap\beta)\neq\emptyset$ and  hence $C_\beta\neq C\cap\beta$.
On the other hand, by definition of $C$, we have  $\beta\in \acc(C_\delta)\cap E^\kappa_{\ge\chi}$ for some $\delta\in\acc(D\setminus\varepsilon)\cap S_\varepsilon$,
and then $C\cap\beta=(C\cap\delta)\cap\beta=C_\delta\cap\beta=C_\beta$. This is a contradiction.

{\bf (2)} Let $\vec{C}=\langle C_\delta\mid\delta<\kappa\rangle$  be a witness to $\p^-(\kappa,2,{\sq_\chi},\kappa,\{\kappa\},2,2)$.
Let $\Gamma:=\{\delta\in\acc(\kappa)\mid \forall\gamma\in\acc(C_\delta)[C_\delta\cap\gamma=C_\gamma]\}$ denote the so-called \emph{support} of $\vec C$ (cf.~\cite{paper29}).
For each $\xi<\kappa$, let $\Phi_\xi$ be the postprocessing function from Example~\ref{fact13}, and then put:
$$C_\delta^\xi:=\begin{cases}\Phi_\xi(C_\delta)&\text{if }\delta\in\Gamma;\\
C_\delta&\text{otherwise}.\end{cases}$$
It is not hard to see that $\langle C_\delta^\xi\mid \delta<\kappa\rangle$ is $\sq_\chi$-coherent.
As in the previous case, we claim that there exists some $\xi<\kappa$ such that $\langle C_\delta^\xi\mid\delta<\kappa\rangle$ is as sought.
The verification is nearly identical, and differs in a single point, as follows.
In the above proof, we identified a stationary subset $S$ of $E^\kappa_{\ge\chi}$ and
a club $D\s\kappa$ and derived a contradiction by inspecting the sets:
\begin{itemize}
\item $S':=\{\beta\in S\mid  \otp(C_\beta)=\beta\}$ and
\item $B:=\{ \beta\in\acc(D)\cap S'\mid \sup((D\cap\beta)\setminus C_\beta)=\beta\}$.
\end{itemize}

This time, $S$ will be the following subset of $\kappa$:
$$S:=\{\delta<\kappa\mid \pi[\delta]=\delta\times\delta\ \&\ \forall j<\delta[\sup\{\gamma\in C_\delta\mid \suc_2(C_\delta\setminus\gamma)\s A_j\}=\delta]\}.$$

Note, however, that by throwing one more set into the collection $\{A_j\mid j<\kappa\}$, we can arrange that $A_0=\acc(\kappa)$.
Consequently, for every nonzero $\delta\in S$, we have $\nacc(C_\delta)\cap\acc(\kappa)\neq\emptyset$, so that $\nacc(C_\delta)$ does not consist only of successor ordinals.
So, by $\sq_\chi$-coherence of $\vec C$, we infer that $S\s\Gamma$, which ensures that $C_\delta^\xi=\Phi_\xi(C_\delta)$ for all $\delta\in S$, exactly as in Clause~(1).

Next, looking at the three cases from the proof of Clause~(1), we see that the argument for the case ``$B=\emptyset$ and $S'$ is stationary'' is the only one to utilize the fact that $S\s E^\kappa_{\ge\chi}$.
Let us show that ``$S\s\Gamma$'' is a satisfying replacement.

By $B=\emptyset$, let us fix some $\varepsilon<\kappa$ such that $S_\varepsilon:=\{\beta\in S'\mid \sup((D\cap\beta)\setminus C_\beta)=\varepsilon\}$ is stationary.
For every pair of ordinals $\alpha<\beta$, both in $\acc(D\setminus\varepsilon)\cap S_\varepsilon$, we have $\alpha\in\acc(C_\beta)$ and $\beta\in\Gamma$,
and hence $C_\alpha\sqsubseteq C_\beta$.
So $\{ C_\delta\mid\delta\in\acc(D\setminus\gamma)\cap S_\varepsilon\}$ is a $\sqsubseteq$-chain, converging to the club $C:=\bigcup\{ C_\delta\mid \delta\in\acc(D\setminus\gamma)\cap S_\varepsilon\}$.
Put $A:=\acc(C)$. As $\vec C$ witnesses $\p^-(\kappa,2,{\sq_\chi},\kappa,\{\kappa\},2,2)$,
we may pick some $\beta\in \acc(C)$ such that $\sup(\nacc(C_\beta)\cap A)=\beta$,
so that $C_\beta\neq C\cap\beta$.
On the other hand, by definition of $C$, we have  $\beta\in \acc(C_\delta)$ for some $\delta\in\acc(D\setminus\varepsilon)\cap S_\varepsilon$,
and then, by $\delta\in\Gamma$, we have $C\cap\beta=(C\cap\delta)\cap\beta=C_\delta\cap\beta=C_\beta$. This is a contradiction.

{\bf (3)} By \cite{paper23}, $\diamondsuit(\kappa)+\p^-(\kappa,2,{\sqx},\theta,\{E^\kappa_{\ge\chi}\},2,1)$ is equivalent to $\p(\kappa,2,{\sqx},\theta,\allowbreak\{E^\kappa_{\ge\chi}\},2,n)$ for every positive integer $n$,
so let $\vec{C}=\langle C_\delta\mid\delta<\kappa\rangle$  be a witness to $\p^-(\kappa,2,{\sqx},\theta,\allowbreak\{E^\kappa_{\ge\chi}\},2,2)$.
By $\diamondsuit(\kappa)$, fix a matrix $\langle S_\gamma^\rho\mid \rho<\kappa,\gamma<\kappa\rangle$ such that for every sequence $\langle S^\xi\mid \xi<\kappa\rangle$
of subsets of $\kappa$, the set $\{\gamma<\kappa\mid \forall \rho<\gamma(S^\rho_\gamma=S^\rho\cap\gamma)\}$ is stationary.
For every $\xi<\kappa$, define $\mathfrak Z^\xi=\langle Z^\xi_{x,\beta}\mid x\in\mathcal K(\kappa), \beta\in\nacc(x)\rangle$ by stipulating
$Z^\xi_{x,\beta}:=S^\xi_\beta$, and let $\Phi_{\mathfrak Z^\xi}$ be the corresponding postprocessing function given by Example~\ref{phiZ}.
Let $\Phi_\xi$ be the postprocessing function from Example~\ref{fact13}.
Finally,  denote $C_\delta^\xi:=\Phi_{\mathfrak Z^\xi}(\Phi_\xi(C_\delta))$.
We claim that there exists some $\xi<\kappa$ such that $\langle C_\alpha^\xi\mid\alpha<\kappa\rangle$ is as sought.

Suppose not. Then, for each $\xi<\kappa$, let us fix a sequence $\langle A^\xi_i\mid i<\theta\rangle$ of cofinal subsets of $\kappa$
such that, for club many $\delta<\kappa$, at least one of the following fails:
\begin{itemize}
\item $\min(C_\delta^\xi)\ge\min(A^\xi_0)$;
\item for all $i<\theta$, there exists $\iota\in\otp(C^\xi_\delta)$ such that $C_\delta^\xi(\iota),C^\xi_\delta(\iota+1)\in A^\xi_i$.
\end{itemize}

Evidently,  for each $i<\theta$, the following set is stationary in $\kappa$:
$$T_i:=\{\gamma\in\acc(\kappa)\mid \forall\xi<\gamma[S^\xi_\gamma=A^\xi_i\cap\gamma\ \&\ \sup(S^\xi_\gamma)=\gamma]\}.$$

As $\vec{C}$ witnesses $\p^-(\kappa,2,{\sqx},\theta,\{E^\kappa_{\ge\chi}\},2,2)$, the following set is also stationary:
$$S:=\{\delta\in E^\kappa_{\ge\chi}\mid \forall i<\theta[\sup\{\gamma\in C_\delta\mid \suc_2(C_\delta\setminus\gamma)\s T_i\} = \delta]\}.$$

Let $\xi<\kappa$ and $\delta\in S\setminus(\xi+1)$ be arbitrary.
As $\Phi_\xi(C_\delta)$ is a final segment of $C_\delta$, for each $i<\theta$, let us pick $\gamma_i\in \Phi_\xi(C_\delta)$
such that $\suc_2(C_\delta\setminus\gamma_i)\s T_i$.
Set $\alpha_i:=\max(\suc_2(C_\delta\setminus\gamma_i))$ and $\tau_i:=\sup(C_\delta\cap\alpha_i)$.
Then $Z_{\Phi_\xi(C_\delta),\alpha_i}^\xi=S^\xi_{\alpha_i}=A_i^\xi\cap\alpha_i$ and $\sup(S^\xi_{\alpha_i})=\alpha_i$.
Likewise,  $Z_{\Phi_\xi(C_\delta),\tau_i}^\xi=S^\xi_{\tau_i}=A_i^\xi\cap\tau_i$ and $\sup(S^\xi_{\tau_i})=\tau_i$.
Then $g_{\Phi_\xi(C_\delta),\mathfrak Z^\xi}(\tau_i)$ and $g_{\Phi_\xi(C_\delta),\mathfrak Z^\xi}(\alpha_i)$ are two successive elements of  $C^\xi_\delta$ that belong to $A_i^\xi$.

Thus, we have established that, for every $\xi<\kappa$,
$$\{\delta<\kappa\mid
\forall i<\theta\exists\iota\in\otp(C_\delta^\xi)[C_\delta^\xi(\iota),C_\delta^\xi(\iota+1)\in A^\xi_i]\}$$ covers the stationary set $S\setminus(\xi+1)$.
So this must mean that for some club $E_\xi\s\kappa$,
we have  $\min(C_\delta^\xi)<\min(A_0^\xi)$ for every $\delta\in S\cap E_\xi$.
But, as seen in the proof of Clause~(1), this yields a contradiction.

{\bf (4)} By the proof of Clause~(3) with the same adjustment we gave in moving from Clause~(1) to Clause~(2).
\end{proof}

\begin{cor} \label{incompactness_cor_1}  For all infinite regular cardinals $\chi<\kappa$ and every cardinal $\theta<\kappa$:
\begin{enumerate}
  \item\label{c4} $\p(\kappa,2,{\sq_\chi},\theta,\{\kappa\},2,1)$ entails the existence of a $(\chi,>\theta)$-chromatic graph of size $\kappa$;
  \item\label{c2} $\p^-(\kappa,2,{\sq_\chi},\kappa, \{\kappa\},2,2)$ entails the existence of a $(\chi, \kappa)$-chromatic graph of size $\kappa$;
  \item\label{c1} $\p^-(\kappa,2,{\sqx},\kappa,\{E^\kappa_{\ge\chi}\},2,2)$ entails the existence of a $(\chi,\kappa)$-chromatic graph of size $\kappa$;
  \item\label{c3} $\p(\kappa,2,{\sqx},\theta,\{E^\kappa_{\ge\chi}\},2,1)$ entails the existence of a $(\chi,>\theta)$-chromatic graph of size $\kappa$.
\end{enumerate}
\end{cor}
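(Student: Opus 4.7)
My plan is to combine, for all four clauses, an application of Theorem~\ref{thm37} with Lemmas~\ref{lemma1}(1) and~\ref{large_chromatic_number_lemma}. Specifically, given the relevant instance of the proxy principle, I would invoke the corresponding clause of Theorem~\ref{thm37} to extract a $C$-sequence $\vec C=\langle C_\alpha\mid\alpha<\kappa\rangle$ which is $\sqx$-coherent (for clauses~(\ref{c1}),~(\ref{c3})) or $\sq_\chi$-coherent (for clauses~(\ref{c2}),~(\ref{c4})), and whose capturing sets $S(\vec A)$ are stationary, for $\kappa$-indexed sequences in clauses~(\ref{c1}),~(\ref{c2}) and for $\theta$-indexed sequences in clauses~(\ref{c3}),~(\ref{c4}). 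I would then set $G:=\kappa$ and form the graph $G(\vec C)$ of Definition~\ref{c_graph_defn}, which plainly has size $\kappa$.

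Before invoking the graph-theoretic lemmas of Subsection~\ref{subsection21}, I would verify the standing hypotheses $(\aleph)$ and $(\beth)$ for this choice of $G$. Hypothesis~$(\aleph)$ is vacuous, since $G=\kappa$. For~$(\beth)$, fix $\bar\alpha\in\acc(C_\alpha)$ with $\cf(\bar\alpha)=\chi$. In the $\sqx$ case, the alternative $\cf(\sup(C_{\bar\alpha}))<\chi$ is ruled out, forcing $C_{\bar\alpha}=C_\alpha\cap\bar\alpha$. In the $\sq_\chi$ case, $\otp(C_\alpha\cap\bar\alpha)\ge\chi$ rules out the alternative $\otp(C_\alpha)<\chi$, again forcing $C_{\bar\alpha}=C_\alpha\cap\bar\alpha$.

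With $(\aleph)$ and $(\beth)$ in hand, Lemma~\ref{lemma1}(1), applied to the empty coloring (taking $\varepsilon:=0$) and to any $x\in[\chi]^\chi$, yields, for every $\delta<\kappa$, a suitable (hence chromatic) $\chi$-coloring of the induced subgraph of $G(\vec C)$ on $\delta$. By regularity of $\kappa$, every subgraph of $G(\vec C)$ of size $<\kappa$ is a subgraph of such an induced subgraph and so inherits a chromatic $\chi$-coloring. For the lower bound on $\chr(G(\vec C))$, Lemma~\ref{large_chromatic_number_lemma} converts the stationarity of $S(\vec A)$ (which in particular contains ordinals in $G\setminus\theta'$ for each $\theta'<\kappa$) into the inequality $\chr(G(\vec C))>\theta'$. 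In clauses~(\ref{c3}),~(\ref{c4}) this is applied with $\theta'=\theta$ to produce $\chr(G(\vec C))>\theta$. In clauses~(\ref{c1}),~(\ref{c2}), where the capturing is available for $\kappa$-indexed sequences, I would run this argument for every $\theta'<\kappa$, padding each $\theta'$-sequence $\vec A$ to a $\kappa$-sequence by repeating $A_0$ so that a $\delta$ capturing the long version still captures $\vec A$, and thereby conclude $\chr(G(\vec C))=\kappa$.

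I do not expect any substantial obstacle: all the heavy lifting has been done in Theorem~\ref{thm37} and Lemmas~\ref{lemma1} and~\ref{large_chromatic_number_lemma}, and what remains is essentially bookkeeping. The mildly delicate point is the uniform verification of hypothesis~$(\beth)$ across the two coherence flavours, carried out via the cofinality/order-type dichotomy indicated above.
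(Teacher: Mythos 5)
Your proposal is correct and follows essentially the same route as the paper, whose proof of this corollary consists precisely of citing Theorem~\ref{thm37} (with the same clause-by-clause correspondence you use) together with Lemmas~\ref{lemma1} and~\ref{large_chromatic_number_lemma}. The only cosmetic difference is that the paper's surrounding discussion works with $G:=\acc(\kappa)$ rather than $G:=\kappa$; your verification of hypotheses $(\aleph)$ and $(\beth)$ via the cofinality/order-type dichotomy, and the padding of $\theta'$-sequences to $\kappa$-sequences in the two clauses with fourth parameter $\kappa$, are exactly the routine details the paper leaves implicit.
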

\begin{proof} The results follow from Lemmas \ref{lemma1} and \ref{large_chromatic_number_lemma}, using the appropriate $\vec C$, as follows.

(1) follows from Theorem~\ref{thm37}(4),
(2) from Theorem~\ref{thm37}(2),
(2) from Theorem~\ref{thm37}(1),
and (4) from Theorem~\ref{thm37}(3).
\end{proof}

The proof of Theorem B(1) goes through the following.
\begin{lemma}\label{lemma21} Suppose that $\chi,\theta<\kappa$ are infinite, regular cardinals and $\mathcal R\in\{{\sqx},{\sq_\chi}\}$.

If $\p(\kappa,2,\mathcal R,1,\{E^\kappa_\theta\},2,1)$ holds, then so does $\p(\kappa,2,\mathcal R,\theta,\{E^\kappa_\theta\},2,1)$.
\end{lemma}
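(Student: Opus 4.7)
The plan is to apply the $\acc$-preserving postprocessing function $\Phi$ supplied by Lemma~\ref{Phitheta} to a witness $\vec{C} = \langle C_\alpha \mid \alpha < \kappa \rangle$ of $\p^-(\kappa, 2, \mathcal{R}, 1, \{E^\kappa_\theta\}, 2, 1)$; the $\diamondsuit(\kappa)$ clause built into $\p$ makes Lemma~\ref{Phitheta} available. Setting $\vec{C}^\bullet := \langle \Phi(C_\alpha) \mid \alpha < \kappa \rangle$, I will show $\vec{C}^\bullet$ witnesses $\p^-(\kappa, 2, \mathcal{R}, \theta, \{E^\kappa_\theta\}, 2, 1)$, which combined with $\diamondsuit(\kappa)$ yields the target principle.

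The first step is to check that $\vec{C}^\bullet$ inherits $\mathcal{R}$-coherence from $\vec{C}$. Given any $\bar\alpha \in \acc(C^\bullet_\alpha)$, $\acc$-preservation places $\bar\alpha$ in $\acc(C_\alpha)$, so that $C_{\bar\alpha} \mathrel{\mathcal{R}} C_\alpha$. When $C_{\bar\alpha} = C_\alpha \cap \bar\alpha$, the postprocessing identity gives $C^\bullet_\alpha \cap \bar\alpha = \Phi(C_\alpha \cap \bar\alpha) = C^\bullet_{\bar\alpha}$, so $C^\bullet_{\bar\alpha} \sq C^\bullet_\alpha$; otherwise the escape clause of $\mathcal{R}$ (either $\cf(\bar\alpha) < \chi$ for ${\sqx}$, or $\otp(C_\alpha) < \chi$ with $\nacc(C_\alpha)$ consisting only of successor ordinals for ${\sq_\chi}$) transfers, using in the ${\sq_\chi}$ case that the specific $\Phi$ supplied by Lemma~\ref{Phitheta} is built from the map $g_{x,\mathfrak{Z}}$ of Example~\ref{phiZ}, which is a strictly increasing, continuous cofinal map and hence preserves the order type of the domain.

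For the proxy clause, fix any sequence $\vec{A} = \langle A_i \mid i < \theta \rangle$ of cofinal subsets of $\kappa$, and let $G \s \kappa$ be the stationary set furnished by Lemma~\ref{Phitheta} for $\vec{A}$. Applying the $\theta = 1$ proxy property of $\vec{C}$ to the single cofinal set $G$ yields stationarily many $\alpha \in E^\kappa_\theta$ with $\sup\{\beta \in C_\alpha \mid \suc_1(C_\alpha \setminus \beta) \s G\} = \alpha$, equivalently (unpacking $\suc_1$), $\sup(\nacc(C_\alpha) \cap G) = \alpha$. Since $\cf(\alpha) = \theta$ for any such $\alpha$, Lemma~\ref{Phitheta} then delivers $\sup(\nacc(C^\bullet_\alpha) \cap A_i) = \alpha$ for every $i < \theta$, i.e., $\sup\{\beta \in C^\bullet_\alpha \mid \suc_1(C^\bullet_\alpha \setminus \beta) \s A_i\} = \alpha$ for all $i < \theta$, precisely as demanded by $\p^-(\kappa, 2, \mathcal{R}, \theta, \{E^\kappa_\theta\}, 2, 1)$ at $\alpha$. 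The only real point of care is the ${\sq_\chi}$ coherence verification; everything else is direct bookkeeping on top of Lemma~\ref{Phitheta}.
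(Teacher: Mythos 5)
Your overall strategy (postprocess a $\theta=1$ witness via Lemma~\ref{Phitheta}) is the paper's strategy, and your treatment of the hitting clause and of the ${\sqx}$ case is fine. But there is a genuine gap in the ${\sq_\chi}$ coherence verification. You claim that when $C_{\bar\alpha}\neq C_\alpha\cap\bar\alpha$, the escape clause of ${\sq_\chi}$ ``transfers'' to the postprocessed sequence because $g_{x,\mathfrak Z}$ preserves order type. Order type is indeed preserved, but the escape clause of ${\sq_\chi}$ has a second conjunct: $\nacc(C_\alpha)$ must consist only of successor ordinals. This property is \emph{not} preserved by the $\Phi_{\mathfrak Z}$ underlying Lemma~\ref{Phitheta}: for $\beta\in\nacc(x)$ the new point $g_{x,\mathfrak Z}(\beta)$ is the least suitable element of the guessed set $Z_{x,\beta}$, which can perfectly well be a limit ordinal. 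So in the escape case you have neither $\Phi(C_{\bar\alpha})\sq\Phi(C_\alpha)$ (the original clubs need not cohere exactly, and $\Phi$ is only locally determined) nor the escape clause for the images, and ${\sq_\chi}$-coherence of $\langle\Phi(C_\alpha)\mid\alpha<\kappa\rangle$ genuinely fails in general.

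The paper repairs this with two moves you are missing. First, it does not apply $\Phi$ uniformly: it sets $C^\bullet_\alpha:=C_\alpha$ at every $\alpha$ where some $\bar\alpha\in\acc(C_\alpha)$ has $C_{\bar\alpha}\neq C_\alpha\cap\bar\alpha$ (there the escape clause is a property of $C_\alpha$ alone and is trivially preserved by doing nothing), and sets $C^\bullet_\alpha:=\Phi(C_\alpha)$ only where coherence at $\alpha$ is exact, where the postprocessing identity $\Phi(C_\alpha\cap\bar\alpha)=\Phi(C_\alpha)\cap\bar\alpha$ applies. Second, to make sure the stationarily many $\alpha$ witnessing the hitting property land in the ``apply $\Phi$'' case, it feeds the $\theta=1$ proxy clause the set $\acc(\kappa)\cap G$ rather than $G$: then $\nacc(C_\alpha)$ meets $\acc(\kappa)$, so the escape clause cannot hold at $\alpha$, forcing $C_{\bar\alpha}=C_\alpha\cap\bar\alpha$ for all $\bar\alpha\in\acc(C_\alpha)$ and hence $C^\bullet_\alpha=\Phi(C_\alpha)$. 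Without both adjustments your argument does not go through for $\mathcal R={\sq_\chi}$.
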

\begin{proof}
Let $\vec C=\langle C_\alpha\mid\alpha<\kappa\rangle$ be a witness to $\p^-(\kappa,2,\mathcal R,1,\{E^\kappa_\theta\},2,1)$.
Without loss of generality, we may assume that $C_{\alpha+1}=\{\alpha\}$ for all $\alpha<\kappa$.
Let $\Phi$ be given by Lemma~\ref{Phitheta}. For all $\alpha<\kappa$, put:
$$C_\alpha^\bullet:=\begin{cases}
C_\alpha&\text{if }\mathcal R=\sq_\chi\ \&\ \exists\bar\alpha\in\acc(C_\alpha)[C_{\bar\alpha}\neq C_\alpha\cap\bar\alpha];\\
\Phi(C_\alpha)&\text{otherwise}.
\end{cases}$$
It is not hard to see that $\langle C_\alpha^\bullet\mid\alpha<\kappa\rangle$ witnesses $\p^-(\kappa,2,\mathcal R,\theta,\{E^\kappa_\theta\},2,n)$ for $n=0$.
We claim that this is also the case for $n=1$.

To see this, let $\vec A=\langle A_i\mid i<\theta\rangle$ be a sequence of cofinal subsets of $\kappa$.
Let $G$ be the stationary set given by Lemma~\ref{Phitheta} for $\vec A$.
Then $S:=\{ \alpha\in E^\kappa_\theta\mid \sup(\nacc(C_\alpha)\cap( \acc(\kappa)\cap G))=\alpha\}$ is stationary.
Let $\alpha\in S$ be arbitrary. Then, letting $x:=C_\alpha$,
by $\sup(\nacc(x)\cap G)=\sup(x)$ and $\cf(\sup(x))=\theta$, the choice of $\Phi$ entails
$\sup(\nacc(\Phi(x))\cap A_i)=\sup(x)$ for all $i<\theta$.

$\br$ If $\mathcal R$ is $\sqx$, then, by definition of $C_\alpha^\bullet$, we have $C_\alpha^\bullet=\Phi(C_\alpha)$,
 so that $\sup(\nacc(C_\alpha^\bullet)\cap A_i)=\alpha$ for all $i<\theta$, as sought.

$\br$ If $\mathcal R$ is $\sq_\chi$, then, by $\alpha\in S$, we know that $\nacc(C_\alpha)\cap\acc(\kappa)\neq\emptyset$,
and hence, for every $\bar\alpha\in\acc(C_\alpha)$, by $C_{\bar\alpha}\sq_\chi C_\alpha$, we infer that $C_{\bar\alpha}\sq C_\alpha$,
so that $C_\alpha^\bullet=\Phi(C_\alpha)$.
\end{proof}

We are now ready to prove Theorems B and C:

\begin{cor} \label{incompactness_cor_2} Suppose $\aleph_0\le\cf(\chi)=\chi<\lambda$ are cardinals, and $\gch$ and $\square(\lambda^+,{\sq_\chi})$ both hold.
\begin{enumerate}
\item If $\lambda$ is regular, then there exists a $(\chi,\ge\lambda)$-chromatic graph of size $\lambda^+$;
\item If $\lambda$ is singular, then there exists a $(\chi,\lambda^+)$-chromatic graph of size $\lambda^+$;
\item If $\refl(S)$ holds for some stationary $S\s E^{\lambda^+}_{\ge\chi}$, then there exists a $(\chi,\lambda^+)$-chromatic graph of size $\lambda^+$.
\end{enumerate}
\end{cor}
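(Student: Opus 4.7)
The unified strategy is to derive, in each clause, an appropriate instance of the proxy principle $\p$ or $\p^-$ from the hypotheses, and then to invoke Corollary~\ref{incompactness_cor_1}. The common setup: $\gch$ yields $\diamondsuit(\lambda^+)$ (since $\lambda>\aleph_0$ and $2^\lambda=\lambda^+$), while $\square(\lambda^+,{\sq_\chi})$ supplies a $\sq_\chi$-coherent non-threadable $C$-sequence $\vec C=\langle C_\alpha\mid\alpha<\lambda^+\rangle$. The postprocessing apparatus of Subsection~\ref{subsection31}, which preserves both $\sq_\chi$- and $\sqx$-coherence in the same way it preserves $\sq$-coherence, will be applied to $\vec C$ throughout.

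For clause (1), with $\lambda$ regular: a standard diamond-style argument combining $\diamondsuit(\lambda^+)$ with the non-threadability of $\vec C$ yields $\p^-(\lambda^+,2,{\sq_\chi},1,\{E^{\lambda^+}_\theta\},2,1)$ for each regular $\theta<\lambda$; by $\diamondsuit(\lambda^+)$, this is in fact $\p(\lambda^+,2,{\sq_\chi},1,\{E^{\lambda^+}_\theta\},2,1)$. Lemma~\ref{lemma21} upgrades this to $\p(\lambda^+,2,{\sq_\chi},\theta,\{E^{\lambda^+}_\theta\},2,1)$, which implies $\p(\lambda^+,2,{\sq_\chi},\theta,\{\lambda^+\},2,1)$, and Corollary~\ref{incompactness_cor_1}(\ref{c4}) then produces a $(\chi,>\theta)$-chromatic graph of size $\lambda^+$. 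Choosing $\theta$ as large as possible below $\lambda$ (for instance, $\theta=\mu$ when $\lambda=\mu^+$) secures chromatic number $\ge\lambda$. For clause (2), with $\lambda$ singular: fix a regular $\lambda_0\in[\chi,\lambda)$ (which exists since $\chi<\lambda$ and $\lambda$ is singular), derive $\boxtimes^-(E^{\lambda^+}_{\lambda_0})$ from $\diamondsuit(\lambda^+)$ together with the non-threadability of $\vec C$, and then run the proof of Theorem~\ref{improved_square_thm_1}(3) with $\lambda_0$ in the role of $\lambda$ there and with $\sq_\chi$ replacing $\sq$. This produces $\p^-(\lambda^+,2,{\sq_\chi},\lambda^+,\{\lambda^+\},2,2)$, and Corollary~\ref{incompactness_cor_1}(\ref{c2}) delivers the required $(\chi,\lambda^+)$-chromatic graph.

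For clause (3), reflection of $S\s E^{\lambda^+}_{\ge\chi}$: observe that at ordinals of cofinality $\ge\chi$, a $\sq_\chi$-coherent $C$-sequence automatically behaves $\sqx$-coherently (since then $\otp(C_\alpha)\ge\chi$ and $C_\alpha$ accumulates limit points). From $\diamondsuit(\lambda^+)$ and the non-threadability of $\vec C$, derive $\boxtimes^-(S)$, and then run the argument of Theorem~\ref{improved_square_thm_1}(2) with $T=S$, using $\refl(S)$ exactly as in the Claim there, to obtain $\p^-(\lambda^+,2,{\sqx},\lambda^+,\{E^{\lambda^+}_{\ge\chi}\},2,2)$ (the capturing ordinals landing in $S\s E^{\lambda^+}_{\ge\chi}$). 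Corollary~\ref{incompactness_cor_1}(\ref{c1}) then furnishes the $(\chi,\lambda^+)$-chromatic graph. The principal obstacle is verifying that the postprocessing functions of Subsection~\ref{subsection31}, together with the case-analytic threading arguments from the proofs of Theorem~\ref{improved_square_thm_1}(2),(3), carry over faithfully to the $\sq_\chi$- and $\sqx$-coherent settings, and that the singularity of $\lambda$ (resp., the reflection of $S$) is invoked at precisely the step where the maximum fourth parameter $\lambda^+$ is attained; these verifications are largely bookkeeping once the template of Theorem~\ref{improved_square_thm_1} is fixed.
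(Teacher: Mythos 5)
Your overall template --- derive instances of the proxy principle from the hypotheses and feed them into Corollary~\ref{incompactness_cor_1} --- is indeed the paper's strategy, but two of your three clauses have genuine problems.

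The serious gap is in clause (2). You propose to fix a regular $\lambda_0\in[\chi,\lambda)$, derive $\boxtimes^-(E^{\lambda^+}_{\lambda_0})$, and rerun the proof of Theorem~\ref{improved_square_thm_1}(3) with $\lambda_0$ in the role of the $\lambda$ there. That proof is not portable to this setting: its hypothesis $\kappa=\lambda^+$ (for the $\lambda$ indexing the cofinality of the points of the stationary set) is used essentially. In its second case one collapses $C_\alpha$ to a club $C_\alpha^\circ$ of fixed order type $\rho$ and then applies Lemma~\ref{Phi}(1), whose hypothesis $(\cf(\sup(x)))^+=\kappa$ forces $\cf(\sup(x))$ to equal $|\rho|$ and to be the immediate cardinal predecessor of $\kappa$; this is exactly what makes the coherent injections $\varphi_x:\sup(x)\rightarrow\otp(x)\times|\rho|$ available and lets a ladder of length $\rho$ guess $\kappa$-many sets $A_i$. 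When $\lambda$ is singular, no $E^{\lambda^+}_{\lambda_0}$ with $\lambda_0$ regular satisfies $\lambda_0^+=\lambda^+$, so Lemma~\ref{Phi} simply does not apply and the fourth parameter cannot be pushed up to $\lambda^+$ along this route. The paper sidesteps the issue entirely by citing a dedicated result of \cite{paper29}: for $\lambda$ a singular strong limit (automatic under $\gch$) with $2^\lambda=\lambda^+$, the principle $\square(\lambda^+,{\sq_\chi})$ already yields $\p(\lambda^+,2,{\sq_\chi},\lambda^+,\{\lambda^+\},2,2)$ outright; the singular case rests on genuinely different technology, not a rerun of the successor-of-regular template.

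Two smaller points. In clause (1), ``choosing $\theta$ as large as possible below $\lambda$'' only works when $\lambda$ is a successor; if $\lambda$ is a regular limit cardinal there is no largest $\theta<\lambda$, and a single $(\chi,>\theta)$-chromatic graph only has chromatic number $>\theta$. The paper instead takes the disjoint sum of the graphs $\mathcal G_\theta$ over all $\theta\in\reg(\lambda)$, which is $(\chi,\ge\lambda)$-chromatic. In clause (3), the argument of Theorem~\ref{improved_square_thm_1}(2) produces, for every stationary $G$, stationarily many $\alpha$ with $\otp(\nacc(C_\alpha)\cap G)=\alpha$, with no control over $\cf(\alpha)$; so it delivers fifth parameter $\{\lambda^+\}$, not $\{E^{\lambda^+}_{\ge\chi}\}$, and you should conclude via Corollary~\ref{incompactness_cor_1}(\ref{c2}) in the $\sq_\chi$ setting rather than via (\ref{c1}) in the $\sqx$ setting. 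The paper also first reduces to $\lambda$ regular (the singular case being absorbed by clause (2)) and uses $\refl(S)$ to see that $S\cap E^{\lambda^+}_{<\lambda}$ is stationary before invoking \cite{paper29} for $\p(\lambda^+,2,{\sq_\chi},1,\{S\},2,1)$; your derivation of $\boxtimes^-(S)$ for an arbitrary stationary $S$ glosses over this reduction.
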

\begin{proof} (1) By \cite{paper29}, $\gch+\square(\lambda^+,{\sq_\chi})$ entails $\p(\lambda^+,2,{\sq_\chi},1,\{E^{\lambda^+}_\theta\mid \theta\in\reg(\lambda)\},\allowbreak2,1)$.
For each $\theta\in\reg(\lambda)$,
by Lemma~\ref{lemma21}, we infer that $\p(\lambda^+,2,{\sq_\chi},\theta,\{E^{\lambda^+}_\theta\},2,1)$ holds,
and so by Corollary~\ref{incompactness_cor_1}(\ref{c4}), we may fix a $(\chi,>\theta)$-chromatic graph $\mathcal G_\theta$ of size $\lambda^+$.
Let $\mathcal G$ be the disjoint sum of the graphs $\{\mathcal G_\theta\mid \theta\in\reg(\lambda)\}$. Then $\mathcal G$ is $(\chi,\ge\lambda)$-chromatic of size $\lambda^+$.

(2) By \cite{paper29}, for every singular strong limit cardinal $\lambda$ such that $2^\lambda=\lambda^+$, $\square(\lambda^+,{\sq_\chi})$
entails $\p(\lambda^+,2,{\sq_\chi},\lambda^+,\{\lambda^+\},2,2)$.
It now follows from Corollary~\ref{incompactness_cor_1}(\ref{c2}) that there exists a $(\chi,\lambda^+)$-chromatic graph of size $\lambda^+$.

(3) Recalling Clause~(2), we may assume that $\lambda$ is regular. By $\refl(S)$, we know that $S\cap E^{\lambda^+}_{<\lambda}$ is stationary.
As $\lambda$ is regular and $S\cap E^{\lambda^+}_{\ge\chi}\cap E^{\lambda^+}_{<\lambda}$ is stationary,
we get from \cite{paper29} that $\gch+\square(\lambda^+,{\sq_\chi})$ entails $\p(\lambda^+,2,{\sq_\chi},1,\{S\},2,1)$.
Then by the proof of Theorem~\ref{improved_square_thm_1}(2), we obtain
$\p(\lambda^+,2,{\sq_\chi},\lambda^+,\{\lambda^+\},2,2)$.
So, by Corollary~\ref{incompactness_cor_1}(\ref{c2}), there exists a $(\chi,\lambda^+)$-chromatic graph of size $\lambda^+$.
\end{proof}

\subsection{Forcing constructions} \label{forcing_subsection}

In this subsection, we show that, for all infinite regular cardinals $\chi<\kappa$, there exists a forcing poset $\mathbb{P}$
for introducing $\p^-(\kappa,2,{\sq_\chi},\kappa,\allowbreak(\ns^+_\kappa)^V,2,\sigma)$ such that $\mathbb{P}$ is $\chi$-directed closed and $\kappa$-strategically closed
and therefore preserves all cardinalities and cofinalities $\leq \kappa$.
If, additionally, $\kappa^{<\kappa} = \kappa$, then $\mathbb{P}$ has the $\kappa^+$-c.c. and
thus preserves all cardinalities and cofinalities.

\begin{defn}\label{theposet}
  Let $\chi < \kappa$ be infinite, regular cardinals. $\mathbb{P}(\kappa, \chi)$ is the forcing poset consisting
  of all conditions of the form $p=\emptyset$ or $p = \langle C^p_\alpha \mid \alpha \leq \gamma^p \rangle$, where
  $\gamma^p < \kappa$ is a limit ordinal and $\langle C^p_\alpha \mid \alpha \leq \gamma^p \rangle$
  is a $\sq_\chi$-coherent $C$-sequence over $\gamma^p+1$,
satisfying $C^p_{\alpha+1}=\{\alpha\}$ for all $\alpha<\gamma^p$.

 For $p,q \in \mathbb{P}(\kappa, \chi)$, we let $q \leq p$ iff $q \supseteq p$.
\end{defn}

For the rest of this subsection, fix infinite, regular cardinals $\chi < \kappa$ and let $\mathbb{P} := \mathbb{P}(\kappa, \chi)$.

\begin{lemma}
  $\mathbb{P}$ is $\chi$-directed closed.
\end{lemma}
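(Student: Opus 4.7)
Given a directed family $D = \{p_i \mid i < \delta\} \subseteq \mathbb P$ with $\delta < \chi$, the plan is to show that $q := \bigcup_{i < \delta} p_i$, possibly augmented at the top, is a lower bound. First I would note that, by directedness, for any $\alpha$ with $\alpha \leq \gamma^{p_i}$ and $\alpha \leq \gamma^{p_j}$, fixing a common extension $p_k \in D$ of $p_i, p_j$ forces $C^{p_i}_\alpha = C^{p_k}_\alpha = C^{p_j}_\alpha$. Therefore the function $\alpha \mapsto C^q_\alpha$ is well-defined on $\bigcup_i(\gamma^{p_i}+1)$, it inherits $\sq_\chi$-coherence pointwise, and it still satisfies $C^q_{\alpha+1} = \{\alpha\}$.

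Let $\gamma := \sup_{i<\delta} \gamma^{p_i}$. If the supremum is attained, then some $p_j \in D$ is itself a lower bound and we are done. Otherwise, $\gamma$ is a limit of limit ordinals (hence itself a limit ordinal) with $\cf(\gamma) \leq \delta < \chi$. It remains to define $C^q_\gamma$ so that $\langle C^q_\alpha \mid \alpha \leq \gamma\rangle \in \mathbb P$ and extends every $p_i$. Set $\gamma^q := \gamma$ and choose a continuous, strictly increasing sequence $\langle \alpha_\xi \mid \xi < \cf(\gamma)\rangle$ cofinal in $\gamma$ such that $\alpha_0$ and every $\alpha_{\xi+1}$ is a successor ordinal (which is possible since successor ordinals are cofinal in $\gamma$); then let $C^q_\gamma := \{\alpha_\xi \mid \xi < \cf(\gamma)\}$.

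By construction, $C^q_\gamma$ is a club in $\gamma$ with $\otp(C^q_\gamma) = \cf(\gamma) < \chi$ and $\nacc(C^q_\gamma)$ consisting exclusively of successor ordinals. Consequently, for every $\bar\alpha \in \acc(C^q_\gamma)$, the second clause in the definition of $\sq_\chi$ is automatically satisfied, yielding $C^q_{\bar\alpha} \sq_\chi C^q_\gamma$ irrespective of what $C^q_{\bar\alpha}$ actually is. Combined with the $\sq_\chi$-coherence at every $\alpha < \gamma$ inherited from the $p_i$'s, this shows that $q \in \mathbb P$, and clearly $q \supseteq p_i$, i.e., $q \leq p_i$, for every $i < \delta$. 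The only step with any content is the choice of $C^q_\gamma$, but the escape clause in the definition of $\sq_\chi$ tailored for order types below $\chi$ is designed precisely for this situation, so no genuine obstacle arises.
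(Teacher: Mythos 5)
Your proof is correct and follows essentially the same route as the paper's: form the union of the given conditions and cap it at the supremum $\gamma$ with a club of order type $\cf(\gamma)<\chi$, relying on the escape clause of $\sq_\chi$ to get coherence at the top for free. The only differences are cosmetic: the paper first observes that $\mathbb{P}$ is tree-like and thereby reduces $\chi$-directed closure to plain $\chi$-closure, whereas you handle directedness by hand, and you are in fact slightly more explicit than the paper in arranging that $\nacc(C^q_\gamma)$ consist of successor ordinals --- a detail that is genuinely needed for the second disjunct of $\sq_\chi$ to apply when $\cf(\gamma)>\omega$.
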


\begin{proof}
  Note that $\mathbb{P}$ is tree-like, i.e., if $p,q,r \in \mathbb{P}$ and $r \leq p,q$, then $p$ and $q$ are
  comparable in $\mathbb{P}$. Therefore, it suffices to show that $\mathbb{P}$ is $\chi$-closed. To this end,
  fix a limit ordinal $\eta < \chi$, and let $\langle p_\xi \mid \xi < \eta \rangle$ be a strictly decreasing
  sequence of conditions in $\mathbb{P}$. Define a condition $q$ extending $\langle p_\xi \mid \xi < \eta \rangle$
  by letting $\gamma := \sup\{\gamma^{p_\xi} \mid \xi < \eta\}$, fixing a club $D$ in $\gamma$ of order type
  $\cf(\eta)$, and letting $q$ be the unique extension of $\bigcup_{\xi < \eta} p_\xi$ such that
  $\gamma^q = \gamma$ and $C^q_\gamma = D$. $q$ is easily verifed to be a lower bound for $\langle p_\xi
  \mid \xi < \eta \rangle$.
\end{proof}

\begin{defn}\label{thegame} A forcing poset $\mathbb P$ is said to be \emph{$\alpha$-strategically} closed if \textrm{II} has a winning strategy for $\Game_\alpha(\mathbb P)$,
which is the following two-player game of perfect information:

The two players, named \textrm{I} and \textrm{II}, respectively, take turns to play conditions from $\mathbb P$ for $\alpha$ many moves, with
\textrm{I} playing at odd stages and \textrm{II} at even stages (including all limit stages).
\textrm{II} must play $\one_{\mathbb P}$ at move zero.
Let $p_\beta$ be the condition played at move $\beta$;
the player who plays $p_\beta$ loses immediately unless $p_\beta\le p_\gamma$ for all $\gamma<\beta$.
If neither player loses at any stage $\beta<\alpha$,  then \textrm{II} wins.
\end{defn}

\begin{lemma} \label{strat_closed_lemma}
  $\mathbb{P}$ is $\kappa$-strategically closed.
\end{lemma}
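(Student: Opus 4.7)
The plan is to exhibit an explicit winning strategy for Player II in $\Game_\kappa(\mathbb{P})$. Player II's strategy will maintain, for every even move $\beta>0$, the invariant that the top club $E_\beta := C^{p_\beta}_{\gamma^{p_\beta}}$ satisfies $E_\xi = E_\beta \cap \gamma^{p_\xi}$ for every even $\xi < \beta$, and that $\nacc(E_\beta)$ consists entirely of successor ordinals. This invariant yields full $\sq$-coherence of each $p_\beta$ at its top index, which is stronger than $\sq_\chi$-coherence; the coherence at positions $\alpha < \gamma^{p_\beta}$ is inherited from earlier plays.

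At a successor even stage $\beta = \beta' + 2$, following Player I's choice of $p_{\beta'+1}$, Player II takes $\gamma^{p_\beta} := \gamma^{p_{\beta'+1}} + \omega$ and defines
\[
E_\beta := E_{\beta'} \cup \{\gamma^{p_{\beta'}}\} \cup \{\gamma^{p_{\beta'+1}} + 2n + 1 \mid n < \omega\},
\]
omitting the middle summand when $\beta' = 0$. Since $\gamma^{p_{\beta'+1}}$ is a limit ordinal, every ordinal in the new interval $(\gamma^{p_{\beta'+1}}, \gamma^{p_\beta})$ is a successor, so the remaining values $C^{p_\beta}_\alpha$ are forced to equal $\{\alpha - 1\}$ by Definition~\ref{theposet}. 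At an even limit stage $\beta$, Player II sets $\gamma^{p_\beta} := \sup_{\xi < \beta} \gamma^{p_\xi}$, which is a limit ordinal below $\kappa$ by the regularity of $\kappa$, and defines $E_\beta := \bigcup\{E_\xi \mid \xi < \beta,\ \xi \text{ even}\}$.

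The main obstacle is the verification of the invariant at even limit stages. I plan to prove by induction that $\acc(E_\beta) = \{\gamma^{p_\xi} \mid 0 < \xi < \beta,\ \xi \text{ even}\}$, by noting that at each successor even move the single limit ordinal $\gamma^{p_{\beta'}}$ is the only potential new accumulation point added to $E_\beta$, while Player I's $\gamma^{p_{\beta'+1}}$ is deliberately kept isolated from $E_\beta$ by the $\omega$-offset and is therefore excluded from becoming an accumulation point of any subsequent club in the chain. Closedness of $E_\beta$ in $\gamma^{p_\beta}$ at limit stages follows because each such $\gamma^{p_\xi}$ is already in $E_{\xi+2} \subseteq E_\beta$. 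Once this is established, $\sq$-coherence at each $\bar\alpha = \gamma^{p_\xi} \in \acc(E_\beta)$ reduces to the identity $C^{p_\beta}_{\bar\alpha} = E_\xi = E_\beta \cap \bar\alpha$, immediate from the inductive hypothesis, and the verification that $p_\beta$ extends each $p_\xi$ for $\xi<\beta$ is then routine.
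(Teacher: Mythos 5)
Your argument is correct, and it is precisely the standard strategy that the paper invokes by citing \cite[Proposition~33]{lh_covering}: Player II threads her own top clubs into a $\sq$-increasing chain and uses the $+\omega$ jump past $\gamma^{p_{\beta'+1}}$ so that Player I's top club never becomes an accumulation point of any later $E_\xi$, which makes the limit-stage coherence check reduce to the identity $C^{p_\beta}_{\gamma^{p_\xi}}=E_\xi=E_\beta\cap\gamma^{p_\xi}$ exactly as you say. The only cosmetic remark is that the second half of your invariant (that $\nacc(E_\beta)$ consists of successor ordinals) is never used, since you maintain full $\sq$-coherence at every $\bar\alpha\in\acc(E_\beta)$, which already implies $\sq_\chi$-coherence.
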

\begin{proof} The proof is identical to that of  \cite[Proposition 33]{lh_covering}.
\end{proof}

\begin{lemma} \label{extension_lemma}
  Suppose $p \in \mathbb{P}$, $\dot{A}$ is a $\mathbb{P}$-name for a cofinal subset of $\kappa$,
  and $\sigma < \kappa$. Then there is $q \leq p$ such that:
  \begin{enumerate}
    \item $C^p_{\gamma^p} \sq C^q_{\gamma^q}$;
    \item $q \Vdash_{\mathbb{P}}``\suc_\sigma(C^q_{\gamma^q} \setminus \gamma^p) \subseteq \dot{A}."$
  \end{enumerate}
\end{lemma}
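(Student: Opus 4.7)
The plan is to construct, by recursion on $i \leq \sigma$, a $\leq$-descending sequence $\langle p_i \mid i \leq \sigma\rangle$ in $\mathbb{P}$ with $p_0 := p$, together with ordinals $\beta_k > \gamma^{p_{k-1}}$ for each successor $k$ with $0 < k \leq \sigma$, maintaining the invariants that $C^p_{\gamma^p} \sq C^{p_i}_{\gamma^{p_i}}$, $p_i \forces \beta_k \in \dot A$ for every successor $k \leq i$, and the increasing enumeration of $C^{p_i}_{\gamma^{p_i}} \setminus \gamma^p$ has $\beta_k$ at position $k$ for every successor $k \leq i$. Setting $q := p_\sigma$ will then witness the lemma, since $\suc_\sigma(C^q_{\gamma^q} \setminus \gamma^p)$ picks out exactly the elements at successor positions $\leq \sigma$, all of which are $\beta_k$'s forced into $\dot A$.

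At the successor stage $i+1$, given $p_i$, I would use that $\dot A$ is forced cofinal in $\kappa$ to find $r \leq p_i$ and an ordinal $\beta_{i+1} > \gamma^{p_i}$ with $r \forces \beta_{i+1} \in \dot A$. Then I define $p_{i+1}$ with $\gamma^{p_{i+1}}$ some limit ordinal in $(\max(\gamma^r, \beta_{i+1}), \kappa)$, with $C^{p_{i+1}}_\alpha := C^r_\alpha$ for $\alpha \leq \gamma^r$, with $C^{p_{i+1}}_\alpha$ filled in $\sq_\chi$-coherently (singletons at successors, small clubs at limits) for intermediate $\alpha \in (\gamma^r, \gamma^{p_{i+1}})$, and with top club
$$
C^{p_{i+1}}_{\gamma^{p_{i+1}}} := C^p_{\gamma^p} \cup \{\gamma^p\} \cup \{\beta_k : 0 < k \leq i+1,\, k \text{ successor}\} \cup \{\gamma^{p_j} : 0 < j \leq i,\, j \text{ limit}\} \cup D,
$$
where $D \s (\beta_{i+1}, \gamma^{p_{i+1}})$ is a cofinal set of successor ordinals of order-type $\omega$.

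At the limit stage $i$, I set $\gamma^{p_i} := \sup_{j < i}\gamma^{p_j}$ (which is $< \kappa$ since $i < \sigma < \kappa = \cf(\kappa)$), define $C^{p_i}_\alpha$ for $\alpha < \gamma^{p_i}$ by coherence across the earlier $p_j$'s, and set
$$
C^{p_i}_{\gamma^{p_i}} := C^p_{\gamma^p} \cup \{\gamma^p\} \cup \{\beta_k : 0 < k < i,\, k \text{ successor}\} \cup \{\gamma^{p_j} : 0 < j < i,\, j \text{ limit}\}.
$$

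The main obstacle will be verifying $\sq_\chi$-coherence of the resulting $C$-sequence at the successor step, where the top club is defined ``freshly'' rather than as a natural extension of $C^{p_i}_{\gamma^{p_i}}$. The key observation is that the accumulation points of $C^{p_{i+1}}_{\gamma^{p_{i+1}}}$ lying in $[\gamma^p, \gamma^{p_{i+1}})$ are precisely $\{\gamma^p\}\cup\{\gamma^{p_j}: 0 < j \leq i,\, j \text{ limit}\}$: each $\beta_k$ is isolated in the club (its immediate predecessor is strictly below), and $\gamma^{p_i}$ for successor $i$ is isolated since $\beta_i$ sits just below it with nothing in the club in $(\beta_i, \gamma^{p_i})$. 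At each genuine accumulation point $\bar\alpha$, the equation $C^{p_{i+1}}_{\gamma^{p_{i+1}}} \cap \bar\alpha = C^{p_{i+1}}_{\bar\alpha}$ can be read off the construction, yielding $\sq$-coherence. The enumeration invariant then propagates inductively: at successor $k$, $\beta_k$ is the next element of the top club above all prior $\beta$'s and $\gamma^{p_j}$'s; at limit $j$, $\gamma^{p_j} = \sup\{\beta_k : k < j\}$ occupies position $j$.
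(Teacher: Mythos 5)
Your construction is essentially the paper's proof: a decreasing $\sigma$-length chain of conditions in which the ordinals forced into $\dot{A}$ are installed as precisely the successor-indexed elements of the final top club, with the earlier limit-stage tops occupying the limit positions. The only difference is bookkeeping: the paper leaves the top club unspecified at successor stages and commits to it only at stages of the form $\eta+\omega$ (appending $\{\gamma_\eta\}\cup\{\delta_{\eta+n}\mid n<\omega\}$ to the previous limit-stage club), whereas you commit at every stage and discard the auxiliary $\omega$-tail $D$ at the next step, which buys nothing but does require the extra (and correct) coherence check you perform for the freshly defined successor-stage top clubs.
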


\begin{proof}
  By increasing $\sigma$ if necessary, we may assume that $\sigma$ is a limit ordinal. We will recursively construct
  a strictly decreasing sequence $\langle p_\xi \mid \xi \leq \sigma \rangle$ of conditions in $\mathbb{P}$ and a strictly
  increasing sequence of ordinals $\langle \delta_\xi \mid \xi < \sigma \rangle$ such that the following
  will hold, where, for $\xi < \sigma$, we denote $\gamma^{p_\xi}$ by $\gamma_\xi$:
  \begin{itemize}
    \item $p_0 = p$;
    \item for all $\xi < \sigma$, $\gamma_\xi < \delta_\xi < \gamma_{\xi+1}$ and $p_{\xi+1} \Vdash ``\delta_\xi
      \in \dot{A};"$
    \item if $\eta \leq \xi \leq \sigma$ are limit ordinals, then $C^p_{\gamma_0} \sq C^{p_\eta}_{\gamma_\eta}
      \sq C^{p_\xi}_{\gamma_\xi}$ and $\suc_\xi(C^{p_\xi}_{\gamma_\xi} \setminus \gamma_0) = \{\delta_\eta \mid \eta < \xi\}$.
  \end{itemize}
  There are three cases to deal with in the recursion.

  $\br$ If $\xi = \eta + 1 < \sigma$ and $p_\eta$, $\langle \delta_\epsilon \mid \epsilon < \eta \rangle$ have been defined, find $p^* \leq p_\eta$ and $\delta$ such that:
  \begin{itemize}
    \item $\gamma_\eta < \delta < \gamma^{p^*}$;
    \item $p^* \Vdash_{\mathbb{P}}``\delta \in \dot{A}."$
  \end{itemize}
  Let $p_\xi := p^*$ and $\delta_\eta := \delta$.

  $\br$ If $\xi = \eta + \omega \leq \sigma$ and $\langle (p_\epsilon, \delta_\epsilon) \mid \epsilon < \xi \rangle$ has been defined,
  then let $p_\xi$ be the unique condition extending $\langle p_\epsilon \mid \epsilon < \xi \rangle$ such that:
  \begin{itemize}
    \item $\gamma^{p_\xi} = \gamma_\xi = \sup\{\gamma_\epsilon \mid \epsilon < \xi\}$;
    \item $C^{p_{\xi}}_{\gamma_\xi} = C^{p_\eta}_{\gamma_\eta} \cup \{\gamma_\eta\} \cup \{\delta_{\eta + n} \mid n < \omega\}$.
  \end{itemize}
  It is easily verified that $p_\xi$ satisfies all of our requirements.

  $\br$ Finally, if $\xi \leq \sigma$ is a limit of limit ordinals and $\langle (p_\epsilon, \delta_\epsilon)\mid \epsilon < \xi \rangle$
  has been defined, then let $p_\xi$ be the unique condition extending $\langle p_\epsilon \mid \epsilon < \xi \rangle$ such that:
  \begin{itemize}
    \item $\gamma^{p_\xi} = \gamma_\xi = \sup\{\gamma_\epsilon \mid \epsilon < \xi\}$;
    \item $C^{p_\xi}_{\gamma_\xi} = \bigcup_{\eta \in \acc(\xi)} C^{p_\eta}_{\gamma_\eta}$.
  \end{itemize}
  It is easily verified, using our inductive hypotheses, that $p_\xi$ is in $\mathbb{P}$ and satisfies our requirements.

  At the end of the construction, let $q := p_\sigma$. By the requirements satisfied by the construction, $q$ is as desired
  in the statement of the Lemma.
\end{proof}

It is straightforward to show that,
for all $\alpha < \kappa$, the set of $p \in \mathbb{P}$ such that $\alpha \leq \gamma^p$ is a dense,
open subset of $\mathbb{P}$.
Consequently, if $g$ is $(V,\mathbb P)$-generic, then $\vec{C} := \bigcup g = \langle C_\alpha \mid \alpha < \kappa \rangle$ is a $\sq_\chi$-coherent
$C$-sequence over $\kappa$.

\begin{thm}\label{genericity_lemma}
  In $V[g]$, $\vec{C}$ witnesses $\p^{-}(\kappa, 2, {\sq_\chi}, \kappa, (\ns^+_\kappa)^V, 2, \sigma)$ simultaneously for every $\sigma < \kappa$.
\end{thm}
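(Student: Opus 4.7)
The first two bullets of $\p^-(\kappa,2,{\sq_\chi},\kappa,(\ns^+_\kappa)^V,2,\sigma)$ are already established in the discussion preceding the theorem: $\vec C$ is a $\sq_\chi$-coherent $C$-sequence over $\kappa$. So the task is the third bullet. Fix $\sigma<\kappa$, a sequence $\langle \dot A_i\mid i<\kappa\rangle$ of $\mathbb P$-names for cofinal subsets of $\kappa$, a $\mathbb P$-name $\dot E$ for a club in $\kappa$, a set $S\in(\ns^+_\kappa)^V$, and a condition $p\in\mathbb P$. The plan is to produce $q\le p$ and $\alpha\in S$ such that $q\Vdash\alpha\in\dot E$ and $q$ forces the $\suc_\sigma$-capturing condition for $C^q_\alpha$; varying $\dot E$ will then yield stationarily many such $\alpha$ in $V[g]$.

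I will work in $V$. Fix a sufficiently large regular $\theta$, and use the $V$-stationarity of $S$ to select an elementary submodel $M\prec H_\theta$ of cardinality $<\kappa$ with $\{\mathbb P,p,\dot E,\langle\dot A_i\mid i<\kappa\rangle,S,\sigma\}\subseteq M$ and $\alpha:=\sup(M\cap\kappa)\in S$. Set $\mu:=\cf(\alpha)$, and fix a strictly increasing cofinal sequence $\langle \beta_\zeta\mid\zeta<\mu\rangle$ in $M\cap\alpha$ with limit $\alpha$, together with a map $\pi:\mu\to\alpha$ whose fibres are all cofinal in $\mu$. A preliminary density argument will replace $p$ by a stronger condition with $\otp(C^p_{\gamma^p})\ge\chi$; this technicality will convert the $\sq_\chi$-coherence check at limit stages into an ordinary $\sq$-coherence check.

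I will then build a descending sequence $\langle p_\xi\mid\xi\le\mu\rangle$ recursively. Starting with $p_0:=p$, at a successor $\xi=\eta+1$ I apply Lemma~\ref{extension_lemma} to $(p_\eta,\dot A_{\pi(\eta)},\sigma)$ and iterate the lemma to obtain $p_\xi\in M$ satisfying $C^{p_\eta}_{\gamma^{p_\eta}}\sq C^{p_\xi}_{\gamma^{p_\xi}}$, $\gamma^{p_\xi}>\beta_\eta$, some $\tau\in(\gamma^{p_\eta},\gamma^{p_\xi})$ forced into $\dot E$, and $p_\xi\Vdash\suc_\sigma(C^{p_\xi}_{\gamma^{p_\xi}}\setminus\gamma^{p_\eta})\subseteq\dot A_{\pi(\eta)}$. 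At a limit $\xi\le\mu$, set $\gamma^{p_\xi}:=\sup_{\eta<\xi}\gamma^{p_\eta}$ and $C^{p_\xi}_{\gamma^{p_\xi}}:=\bigcup_{\eta<\xi}C^{p_\eta}_{\gamma^{p_\eta}}$, and let $p_\xi$ be the unique extension of $\bigcup_{\eta<\xi}p_\eta$ realizing these values. The union is a club in $\gamma^{p_\xi}$ by the $\sq$-chain property of its terms together with cofinality of the $\gamma^{p_\eta}$'s; and $\sq_\chi$-coherence at $\gamma^{p_\xi}$ follows because any $\bar\alpha\in\acc(C^{p_\xi}_{\gamma^{p_\xi}})$ lies in some $C^{p_\eta}_{\gamma^{p_\eta}}$ of order-type $\ge\chi$, whence $\sq_\chi$-coherence of $p_\eta$ yields $C^{p_\eta}_{\bar\alpha}=C^{p_\eta}_{\gamma^{p_\eta}}\cap\bar\alpha$, which equals $C^{p_\xi}_{\gamma^{p_\xi}}\cap\bar\alpha$ by the $\sq$-chain.

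Finally, setting $q:=p_\mu$, I will have $\gamma^q=\alpha$ and $C^q_\alpha=\bigcup_{\xi<\mu}C^{p_\xi}_{\gamma^{p_\xi}}$. The cofinal forced witnesses $\tau\in\dot E$ coming from the successor stages yield $q\Vdash\alpha\in\dot E$; and for each $i<\alpha$, the cofinally many $\eta<\mu$ with $\pi(\eta)=i$ each contribute $\gamma^{p_\eta}\in\acc(C^{p_{\eta+1}}_{\gamma^{p_{\eta+1}}})\subseteq C^q_\alpha$ with $q\Vdash \suc_\sigma(C^q_\alpha\setminus\gamma^{p_\eta})=\suc_\sigma(C^{p_{\eta+1}}_{\gamma^{p_{\eta+1}}}\setminus\gamma^{p_\eta})\subseteq\dot A_i$, proving the capturing condition at $\alpha$. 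The hardest step will be the $\sq_\chi$-coherence check at the limits of the recursion, which is why I plan to arrange $\otp(C^p_{\gamma^p})\ge\chi$ at the outset: without this, one must grapple with the degenerate second clause of $\sq_\chi$, for which the natural union need not preserve coherence.
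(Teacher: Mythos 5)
Your overall strategy---a decreasing chain of conditions whose top clubs form a $\sq$-chain, built by repeated use of Lemma~\ref{extension_lemma} and arranged to terminate at a point of $S$ while forcing that point into $\dot E$---is essentially the paper's, and your handling of the $\sq_\chi$-coherence check at limit stages (first densely arranging $\otp(C^p_{\gamma^p})\ge\chi$ so that the degenerate disjunct of $\sq_\chi$ never applies to the top clubs) is sound. The gap is in the bookkeeping of the names $\dot A_i$. With fourth parameter $\kappa$, the proxy principle demands, at the chosen $\alpha\in S$, the capturing clause for \emph{every} $i<\min\{\alpha,\kappa\}=\alpha$. Your chain has length $\mu=\cf(\alpha)$ and handles one name per successor step, so it can attend to at most $\cf(\alpha)$ many indices; and the auxiliary object you invoke---a map $\pi:\mu\rightarrow\alpha$ that is surjective (as your final paragraph requires, quantifying over all $i<\alpha$) with every fibre cofinal in $\mu$---does not exist once $\cf(\alpha)<|\alpha|$. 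Since $S$ is an arbitrary $V$-stationary set, it may consist entirely of such ordinals (e.g.\ $S\subseteq E^\kappa_\omega\setminus\omega_1$), so the construction as written cannot deliver the required $\alpha$ there.

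The paper sidesteps this by dropping the elementary submodel and building the chain to length $\kappa$: fix a partition $\langle B_i\mid i<\kappa\rangle$ of $\kappa$ into cofinal sets, arrange $\langle\gamma_\xi\mid\xi<\kappa\rangle$ to be increasing and continuous, at stage $\xi$ apply Lemma~\ref{extension_lemma} to $\dot A_{i_\xi}$ where $\xi\in B_{i_\xi}$, and force one new ordinal $\epsilon_\xi\in(\gamma_\xi,\gamma_{\xi+1})$ into $\dot E$ at each step. The set of $\beta$ with $\gamma_\beta=\beta=\sup\{\epsilon_\eta\mid\eta<\beta\}$ and $\beta\in\bigcap_{i<\beta}\acc^+(B_i)$ is club, and any $\beta$ in its intersection with $S$ works: for each $i<\beta$ there are cofinally many $\xi\in B_i\cap\beta$, each contributing a tail-segment witness. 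Your argument is repaired the same way: index the chain by all ordinals below $\kappa$ (or below $\alpha$, after choosing $M$ so that $M\cap\kappa=\alpha$ and running $\alpha$ many steps inside $M$) so that each $i<\alpha$ recurs cofinally often below $\alpha$, rather than tying the length of the chain to $\cf(\alpha)$.
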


\begin{proof}
  Work in $V$, and fix $\mathbb{P}$-names $\langle \dot{A}_i \mid i < \kappa \rangle$ for cofinal subsets
  of $\kappa$, a $\mathbb{P}$-name $\dot{D}$ for a club in $\kappa$, a stationary set $S \subseteq \kappa$, an ordinal $\sigma < \kappa$, and a
  condition $p \in \mathbb{P}$. We will find $q \leq p$
  and $\beta \in S$ such that $q \Vdash_{\mathbb{P}} ``\beta \in \dot{D}\text{ and, for all }i < \beta,~\sup\{\alpha \in C^q_\beta \mid \suc_\sigma(C^q_\beta
  \setminus \alpha) \subseteq \dot{A}_i\} = \beta."$

  Fix a partition $\langle B_i \mid i < \kappa \rangle$ of $\kappa$ into pairwise disjoint, cofinal subsets. For $\alpha < \kappa$,
  let $i_\alpha$ denote the unique $i < \kappa$ such that $\alpha \in B_i$. Using Lemma \ref{extension_lemma}, it is straightforward
  to build a strictly decreasing sequence $\langle p_\alpha \mid \alpha < \kappa \rangle$ of conditions and a
  strictly increasing sequence $\langle \epsilon_\alpha \mid \alpha < \kappa \rangle$ such that the following
  hold, where, for $\alpha < \kappa$, we denote $\gamma^{p_\alpha}$ by $\gamma_\alpha$:
  \begin{itemize}
    \item $p_0 = p$;
    \item for all $\alpha < \beta < \kappa$, $C^{p_\alpha}_{\gamma_\alpha} \sq C^{p_\beta}_{\gamma_\beta}$;
    \item $\langle \gamma_\alpha \mid \alpha < \kappa \rangle$ is increasing and continuous;
    \item for all $\alpha < \kappa$, $p_{\alpha+1} \Vdash_{\mathbb{P}} ``\suc_\sigma(C^{p_{\alpha+1}}_{\gamma_{\alpha+1}}
      \setminus \gamma_\alpha) \subseteq \dot{A}_{i_\alpha}"$;
    \item for all $\alpha < \kappa$, $\gamma_\alpha < \epsilon_\alpha < \gamma_{\alpha+1}$ and $p_{\alpha+1} \Vdash_{\mathbb{P}}
      ``\epsilon_\alpha \in \dot{D}."$
  \end{itemize}
  Let $E$ be the set of $\alpha$ in $\acc(\kappa)$ such that:
  \begin{itemize}
    \item $\alpha \in \bigcap_{i < \alpha} \acc^+(B_i)$;
    \item $\alpha = \gamma_\alpha = \sup\{\epsilon_\eta \mid \eta < \alpha\}$.
  \end{itemize}
  $E$ is club in $\kappa$, so we can fix $\beta \in E \cap S$. We claim that $q_\beta$ and $\beta$ are as desired. Indeed,
  $q_\beta \Vdash_{\mathbb{P}}``\{\epsilon_\alpha \mid \alpha < \beta\} \subseteq \dot{D}"$ and $\beta =
  \sup\{\epsilon_\alpha \mid \alpha < \beta\}$, so, as $\dot{D}$ is forced to be club, we have
  $q_\beta \Vdash_{\mathbb{P}}``\beta \in \dot{D}."$ Also, if $i < \beta$ and $\eta < \beta$, fix
  $\alpha \in B_i$ such that $\eta < \alpha \leq \gamma_\alpha < \beta$. By construction,
  $q_\beta \Vdash ``\suc_\sigma(C^{q_\beta}_\beta \setminus \gamma_\alpha) \subseteq \dot{A}_i."$
  Therefore, $q_\beta \Vdash_\mathbb{P}``\text{for all }i < \beta,~\sup\{\alpha \in C^{q_\beta}_\beta \mid \suc_\sigma(C^{q_\beta}_\beta
  \setminus \alpha) \subseteq \dot{A}_i\} = \beta."$
\end{proof}

We will sometimes want to do further forcing over $V[g]$ to eliminate certain instances
of incompactness. We describe this forcing here. First, in
$V[g]$, let $\mathbb{T}$ be the forcing to add a thread through $\vec{C}$. More precisely, conditions in
$\mathbb{T}$ are the clubs $C_\alpha$ for $\alpha < \kappa$, and $\mathbb{T}$ is ordered
by end-extension, i.e., for $\alpha < \beta < \kappa$, $C_\beta \leq_{\mathbb T} C_\alpha$ iff
$C_\alpha \sq C_\beta$.

Also in $V[g]$, we define a forcing iteration $\langle \mathbb{Q}_\eta, \dot{\mathbb{R}}_\xi
\mid \eta \leq \kappa^+, \xi < \kappa^+ \rangle$, taken with supports of size $<\kappa$,
so that, for each $\xi < \kappa^+$,
there is a $\mathbb{Q}_\xi$-name $\dot{S}_\xi$ for a subset of $\kappa$ such that:
\begin{itemize}
  \item $\Vdash_{\mathbb{Q}_\xi \times \mathbb{T}}``\dot{S}_\xi$ is non-stationary$;"$
  \item $\Vdash_{\mathbb{Q}_\xi}``\dot{\mathbb{R}}_\xi$ is the forcing to shoot a club
    through $\kappa$, disjoint from $\dot{S}_\xi$, by closed initial segments$."$
\end{itemize}
Let $\mathbb{Q} = \mathbb{Q}_{\kappa^+}$. A straightforward $\Delta$-system argument, together
with the assumption that $2^\kappa = \kappa^+$, yields the fact that $\mathbb{Q}$ has the
$\kappa^+$-c.c. Therefore, by employing an appropriate bookkeeping device, we can choose the
names $\langle \dot{S}_\xi \mid \xi < \kappa^+ \rangle$ in such a way so that, in
$V[g]^\mathbb{Q}$, if $S \subseteq \kappa$ is stationary, then $\not\Vdash_\mathbb{T}``S$
is non-stationary$."$

The following Lemma is proven in Section 3 of \cite{hayut_lh}.

\begin{lemma} \label{directed_closed_lemma}
  In $V$, $\mathbb{P} * (\dot{\mathbb{Q}} \times \dot{\mathbb{T}})$ has a dense $\kappa$-directed closed subset. \qed
\end{lemma}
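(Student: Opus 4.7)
The plan is to exhibit an explicit dense $\kappa$-directed closed subset $\mathbb{D} \subseteq \mathbb{P} * (\dot{\mathbb{Q}} \times \dot{\mathbb{T}})$ via the standard thread-pinning trick, adapted so as to simultaneously normalize the club-shooting iteration $\dot{\mathbb{Q}}$.

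Declare $(p, (\dot q, \dot t)) \in \mathbb{D}$ iff $p \neq \emptyset$, $p \Vdash \dot t = \check C^p_{\gamma^p}$, and there exist $F \in [\kappa^+]^{<\kappa}$ together with, for each $\xi \in F$, a closed bounded $c_\xi \subseteq \kappa$ with $\max(c_\xi) = \gamma^p$, such that $p$ forces $\dot q$ to have support $F$ and $\dot q(\xi) = \check c_\xi$ for every $\xi \in F$. Density of $\mathbb{D}$ follows from Lemmas~\ref{strat_closed_lemma} and~\ref{extension_lemma}: starting from any condition, use $\kappa$-strategic closure of $\mathbb{P}$ to decide $\dot t$ and to commit to a support $F$ and to values $\dot q(\xi)$ for $\xi \in F$, then invoke Lemma~\ref{extension_lemma} to push $\gamma^p$ above the maxima of the committed sets and to align it with the pinned value of $\dot t$.

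For $\kappa$-directed closure, fix a directed family $\langle (p_\alpha, (\dot q_\alpha, \dot t_\alpha)) \mid \alpha < \nu \rangle \subseteq \mathbb{D}$ with $\nu < \kappa$. Because each $p_\alpha$ pins its thread to the top of its $\mathbb{P}$-part, the end-extension ordering in $\dot{\mathbb{T}}$ together with directedness ensures that the tops $\{C^{p_\alpha}_{\gamma^{p_\alpha}}\}_\alpha$ form a genuine $\sq$-chain (the degenerate case of $\sq_\chi$-coherence being ruled out by the requirement that the pinned clubs $\sq$-extend one another as conditions in $\dot{\mathbb{T}}$). Set $\gamma^* := \sup_\alpha \gamma^{p_\alpha}$ and $C^* := \bigcup_\alpha C^{p_\alpha}_{\gamma^{p_\alpha}}$, a club in $\gamma^*$, and extend $\bigcup_\alpha p_\alpha$ to a $\mathbb{P}$-condition $p$ by declaring $\gamma^p := \gamma^*$ and $C^p_{\gamma^p} := C^*$. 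Set $F := \bigcup_\alpha F_\alpha \in [\kappa^+]^{<\kappa}$, and for $\eta \in F$ put $c^p_\eta := \bigcup\{c^{p_\alpha}_\eta \mid \alpha < \nu,~\eta \in F_\alpha\} \cup \{\gamma^*\}$. With $\dot t := \check C^*$, this produces a candidate lower bound $(p, (\dot q, \dot t)) \in \mathbb{D}$.

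The main obstacle is to verify, for each $\eta \in F$, that $p * \dot q \restriction \eta$ forces $c^p_\eta$ to be a legal $\dot{\mathbb{R}}_\eta$-condition, i.e., that $\gamma^* \notin \dot S_\eta$. This is where the thread component earns its keep: since $\dot S_\eta$ is $\mathbb{Q}_\eta \times \mathbb{T}$-forced to be nonstationary, one fixes a $(\mathbb{Q}_\eta \times \mathbb{T})$-name $\dot E_\eta$ for a club in $\kappa$ disjoint from $\dot S_\eta$. At the density step producing each $p_\alpha$, one absorbs the additional requirement that $\gamma^{p_\alpha}$ be forced into $\dot E_\eta$ for every $\eta \in F_\alpha$ --- feasible because $|F_\alpha| < \kappa$ and $\mathbb{P}$ is $\kappa$-strategically closed, and because pinning $\dot t_\alpha$ to $\check C^{p_\alpha}_{\gamma^{p_\alpha}}$ fixes the generic thread segment used to evaluate $\dot E_\eta$ up to $\gamma^{p_\alpha}$. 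The supremum $\gamma^*$ then lies in the closure of each $\dot E_\eta$, and hence off each $\dot S_\eta$, yielding a valid $\mathbb{D}$-lower bound and completing the proof.
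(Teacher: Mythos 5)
The paper does not actually prove this lemma; it is imported wholesale from Section 3 of \cite{hayut_lh}, so I am measuring your sketch against the standard argument given there. Your overall strategy is that argument: pass to the dense set of conditions whose thread coordinate is pinned to the top of the $\mathbb{P}$-part and whose $\dot{\mathbb{Q}}$-part is decided with every coordinate topping out at $\gamma^p$, and use fixed $(\mathbb{Q}_\eta \times \mathbb{T})$-names $\dot E_\eta$ for clubs disjoint from $\dot S_\eta$ (available precisely because $\dot S_\eta$ is forced non-stationary by $\mathbb{Q}_\eta \times \mathbb{T}$) to legitimize the new top point when closing off a directed family. You have correctly located the crux --- adding $\gamma^*$ to each $c_\eta$ requires knowing $\gamma^* \notin \dot S_\eta$ --- and the correct mechanism for resolving it.

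As written, however, the directed-closure step has a genuine circularity. The set $\mathbb{D}$ you define in your second paragraph makes no mention of $\dot E_\eta$, and for that $\mathbb{D}$ the closure argument cannot be run: a directed family $\langle d_\alpha \mid \alpha < \nu\rangle \subseteq \mathbb{D}$ is \emph{given}, so there is no ``density step producing each $p_\alpha$'' at which to ``absorb'' the requirement that $\gamma^{p_\alpha}$ be forced into $\dot E_\eta$. Without that requirement nothing prevents $\gamma^*$ from being forced into $\dot S_\eta$, and then no condition of $\dot{\mathbb{R}}_\eta$ end-extends all of the $c^{p_\alpha}_\eta$, since any closed set containing them all must contain their supremum $\gamma^*$. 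The repair is exactly the mechanism you describe, but it must be part of the \emph{definition} of $\mathbb{D}$: require additionally that $(p, \dot q\restriction\eta, \dot t)$ forces $\gamma^p \in \dot E_\eta$ for every $\eta$ in the support. This shifts the real work to the density of the strengthened set, which needs an $\omega$-length fusion forcing successive elements of each $\dot E_\eta$ below the next top (so that the limit top is an accumulation point, hence a member, of each $\dot E_\eta$, which by induction along the support legitimizes closing off every coordinate); your appeal to strategic closure and Lemma~\ref{extension_lemma} does not by itself produce this. Two further points deserve a sentence each: the claim that $\gamma^*$ lies in the closure of $\dot E_\eta$ uses that $\{\gamma^{p_\alpha} \mid \eta \in F_\alpha\}$ is cofinal in $\gamma^*$, which holds because supports only grow under extension and the family is directed; and your parenthetical about the degenerate clause of $\sq_\chi$ addresses only the comparability of the tops, whereas for $\chi > \aleph_0$ one must also check coherence of the new top $C^*$ with $C_{\bar\alpha}$ at those $\bar\alpha \in \acc(C^*)$ where the given conditions invoked the degenerate clause (for the paper's actual applications $\chi = \aleph_0$, where $\sq_{\aleph_0}$-coherence reduces to $\sq$-coherence, so this is harmless there).
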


Let $\mathbb{U}$ be the dense $\kappa$-directed closed subset identified by Lemma \ref{directed_closed_lemma}.
A salient feature of $\mathbb{U}$ is that, for all $(p, \dot{q}, \dot{t}) \in \mathbb{U}$,
$p \Vdash_{\mathbb{P}}``\dot{t} = C^p_{\gamma^p}."$ Let $h$ be $\mathbb{Q}$-generic over $V[g]$.

\begin{thm}\label{iteration_lemma}
  In $V[g*h]$, $\vec{C}$ witnesses $\p^-(\kappa, 2,{\sq_\chi}, \kappa, \ns^+_\kappa, 2, n)$ simultaneously for
  every positive $n < \omega$.
\end{thm}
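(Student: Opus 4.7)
The plan is to replicate the argument of Theorem~\ref{genericity_lemma} inside the forcing $\mathbb{U}$ from Lemma~\ref{directed_closed_lemma}, taking advantage both of its $\kappa$-directed closure in $V$ and of its density inside $\mathbb{P}*(\dot{\mathbb{Q}}\times\dot{\mathbb{T}})$. The upgrade from $(\ns^+_\kappa)^V$ to $\ns^+_\kappa$ as computed in $V[g*h]$ is precisely what the bookkeeping in the construction of $\mathbb{Q}$ is designed to enable: in $V[g*h]$, no $\mathbb{P}*\dot{\mathbb{Q}}$-name for a stationary subset of $\kappa$ is forced by $\dot{\mathbb{T}}$ to be non-stationary.

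Fix $n<\omega$, a condition $(p_0,\dot{q}_0)\in\mathbb{P}*\dot{\mathbb{Q}}$, and $\mathbb{P}*\dot{\mathbb{Q}}$-names $\dot{S}$, $\langle\dot{A}_i\mid i<\kappa\rangle$, $\dot{D}$ for a stationary subset of $\kappa$, cofinal subsets of $\kappa$, and a club in $\kappa$, respectively; we seek $\beta<\kappa$ and an extension $(p',\dot{q}')\le(p_0,\dot{q}_0)$ forcing $\beta\in\dot{S}\cap\dot{D}$ together with $\sup\{\delta\in C_\beta\mid\suc_n(C_\beta\setminus\delta)\s\dot{A}_i\}=\beta$ for every $i<\beta$. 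By the design of $\mathbb{Q}$, there is an extension $(p_1,\dot{q}_1,\dot{t}^*)\le(p_0,\dot{q}_0,\one_{\dot{\mathbb{T}}})$ in $\mathbb{P}*(\dot{\mathbb{Q}}\times\dot{\mathbb{T}})$ forcing $\dot{S}$ to remain stationary, and by density of $\mathbb{U}$ we may assume $u_0:=(p_1,\dot{q}_1,\dot{t}^*)\in\mathbb{U}$.

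Working in $V$, fix a partition $\langle B_i\mid i<\kappa\rangle$ of $\kappa$ into pairwise disjoint cofinal subsets and, using the $\kappa$-directed closure of $\mathbb{U}$, recursively build a strictly decreasing sequence $\langle u_\alpha=(p_\alpha,\dot{q}_\alpha,\dot{t}_\alpha)\mid\alpha<\kappa\rangle$ in $\mathbb{U}$ starting from $u_0$, together with ordinals $\gamma_\alpha=\gamma^{p_\alpha}$ and $\epsilon_\alpha\in(\gamma_\alpha,\gamma_{\alpha+1})$, such that, letting $i_\alpha$ denote the unique $i$ with $\alpha\in B_i$, we have $C^{p_\alpha}_{\gamma_\alpha}\sq C^{p_\beta}_{\gamma_\beta}$ for $\alpha<\beta<\kappa$ and
\begin{itemize}
\item $u_{\alpha+1}\Vdash_{\mathbb{U}}\suc_n(C^{p_{\alpha+1}}_{\gamma_{\alpha+1}}\setminus\gamma_\alpha)\s\dot{A}_{i_\alpha}$;
\item $u_{\alpha+1}\Vdash_{\mathbb{U}}\epsilon_\alpha\in\dot{D}$.
\end{itemize}
This step is a $\mathbb{U}$-flavored fusion of Lemma~\ref{extension_lemma} and the construction in Theorem~\ref{genericity_lemma}. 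The salient feature of $\mathbb{U}$ recorded after Lemma~\ref{directed_closed_lemma}, namely that the $\mathbb{T}$-coordinate $\dot{t}$ of any $u\in\mathbb{U}$ is forced to equal $C^p_{\gamma^p}$, lets us carry out the combinatorial extension on the $\mathbb{P}$-coordinate exactly as in Lemma~\ref{extension_lemma} and have the $\mathbb{T}$-coordinate follow automatically, while the $\mathbb{Q}$-coordinate is dealt with by the directed closure at limit stages.

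Finally, let $E$ be the club of $\alpha\in\acc(\kappa)$ satisfying $\alpha\in\bigcap_{i<\alpha}\acc^+(B_i)$ and $\alpha=\gamma_\alpha=\sup\{\epsilon_\eta\mid\eta<\alpha\}$. As $u_0$ forces $\dot{S}$ to be stationary and $E\in V$, we may pick $\beta\in E$ together with an extension $u_*\le u_\beta$ in $\mathbb{U}$ forcing $\beta\in\dot{S}$. By construction, $u_*$ forces $\beta\in\dot{D}$ and, for every $i<\beta$ and $\eta<\beta$, any $\alpha\in B_i\cap(\eta,\beta)$ yields $u_*\Vdash\suc_n(C_\beta\setminus\gamma_\alpha)\s\dot{A}_i$, so $u_*\Vdash\sup\{\delta\in C_\beta\mid\suc_n(C_\beta\setminus\delta)\s\dot{A}_i\}=\beta$. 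Projecting $u_*$ to $\mathbb{P}*\dot{\mathbb{Q}}$ produces the sought extension of $(p_0,\dot{q}_0)$. The main obstacle is the $\mathbb{U}$-analogue of Lemma~\ref{extension_lemma}, whose verification hinges on the determinism of the $\mathbb{T}$-coordinate by the $\mathbb{P}$-coordinate inside $\mathbb{U}$; everything else reduces to the already-established combinatorics of the $\mathbb{P}$-coordinate.
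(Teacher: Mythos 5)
There is a genuine gap at the final step. Your construction produces a strictly decreasing $\kappa$-sequence $\langle u_\beta\mid\beta<\kappa\rangle$ in $\mathbb{U}$ and a club $E\in V$ such that, for each $\beta\in E$, the condition $u_\beta$ forces $\beta\in\dot D$ together with the capturing property at $\beta$; you then assert that some $\beta\in E$ admits $u_*\le u_\beta$ forcing $\beta\in\dot S$. This does not follow. What ``$u_0\Vdash\dot S$ is stationary'' gives you is a single $u'\le u_0$ and some $\beta'\in E$ with $u'\Vdash\beta'\in\dot S$; but $u'$ need not be compatible with $u_{\beta'}$, the stage-$\beta'$ condition of your construction, and the full sequence has length $\kappa$, so you cannot pass to a common lower bound. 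It is entirely possible that for every $\beta\in E$ the condition $u_\beta$ already forces $\beta\notin\dot S$ while still forcing $\dot S$ to be stationary: each $u_\beta$ only constrains $\dot S\cap E$ up to roughly $\beta$, and no single generic filter contains cofinally many of the $u_\beta$. This is exactly the obstruction that makes the upgrade from $(\ns^+_\kappa)^V$ (Theorem~\ref{genericity_lemma}) to $\ns^+_\kappa$ as computed in $V[g*h]$ nontrivial; running the Theorem~\ref{genericity_lemma} density argument verbatim against a \emph{name} for a stationary set is precisely what fails.

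The paper's proof circumvents this by arguing by contradiction inside $V[g*h]$: if the conclusion fails for some stationary $S$, some positive $n$, and some $\langle A_i\mid i<\kappa\rangle$, two applications of Fodor's Lemma fix a single index $i^*$ and a single bound $\alpha^*$ with $\sup\{\alpha\in C_\beta\mid\suc_n(C_\beta\setminus\alpha)\subseteq A_{i^*}\}=\alpha^*$ for all $\beta\in S$. One then passes to a condition in $\mathbb{P}*(\dot{\mathbb{Q}}\times\dot{\mathbb{T}})$ forcing $\dot S$ to remain stationary and performs a single finite extension of the $\mathbb{P}$-part whose top club end-extends $C^{p_1}_{\gamma^{p_1}}$ (with $\gamma^{p_1}>\alpha^*$) and contains $n$ consecutive points of $\dot A_{i^*}$ above $\gamma^{p_1}$. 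Every point of the generic thread above the new top then has capturing supremum $\ge\gamma^{p_1}>\alpha^*$, and since $\dot S$ is forced to meet the tail of the thread, this contradicts the normalized failure. The crucial point is that one never needs to force a specific, construction-dependent ordinal into $\dot S$; one only needs $\dot S$ to meet a tail of the thread along which \emph{every} point is good relative to the fixed $i^*$ and $\alpha^*$. Your argument needs an idea of this kind to close the gap.
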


\begin{proof}
  Suppose not. Then we can find a sequence $\langle A_i \mid i < \kappa \rangle$ of cofinal
  subsets of $\kappa$, a stationary set $S \subseteq \kappa$, and a positive  $n < \omega$
  such that, for all $\beta \in S$, there is $i_\beta < \beta$ such that
  $\sup\{\alpha \in C_\beta \mid \suc_n(C_\beta \setminus \alpha) \subseteq A_i\} < \beta$. By two
  applications of Fodor's Lemma, we may in fact assume that there are fixed $i^*, \alpha^* < \kappa$
  such that, for all $\beta \in S$, $i_\beta = i^*$ and $\sup\{\alpha \in C_\beta \mid
  \suc_n(C_\beta \setminus \alpha) \subseteq A_i\} = \alpha^*$. Fix $(p_0,\dot{q}_0) \in g*h$ and
  $\mathbb{P} * \dot{\mathbb{Q}}$-names $\dot{S}$ and $\dot{A}_{i^*}$ such that that $(p_0, \dot{q}_0)$
  forces the following:
  \begin{itemize}
    \item $\dot{S} \subseteq \kappa$ is stationary;
    \item $\dot{A}_{i^*} \subseteq \kappa$ is cofinal;
    \item for all $\beta \in \dot{S}$, $\sup\{\alpha \in \dot{C}_\beta \mid \suc_n(\dot{C}_\beta \setminus \alpha) \subseteq \dot{A}_{i^*}\} = \alpha^*$.
  \end{itemize}
  Work now in $V$. By our definition of $\mathbb{Q}$,
  we can find $(p_1, \dot{q}_1, \dot{t}_1) \in \mathbb{P} * (\dot{\mathbb{Q}} \times \dot{\mathbb{T}})$
  such that $(p_1, \dot{q}_1) \leq (p_0, \dot{q}_0)$ and $(p_1, \dot{q}_1, \dot{t}_1) \Vdash_{\mathbb{P} * (\dot{\mathbb{Q}} \times \dot{\mathbb{T}})}
  ``\dot{S}$ is stationary$."$ Without loss of generality, $p_1 \Vdash_{\mathbb{P}}``\dot{t}_1 = C^{p_1}_{\gamma^{p_1}}"$ and
  $\gamma^{p_1} > \alpha^*$.

  Find $(p_2, \dot{q}_2) \leq (p_1, \dot{q}_1)$ and ordinals $\{\xi_m \mid m < n\}$ such that:
  \begin{itemize}
    \item $\gamma^{p_1} < \xi_0 < \xi_1 < \ldots < \xi_{n-1} < \gamma^{p_2}$;
    \item $(p_2, \dot{q}_2) \Vdash ``\{\xi_m \mid m < n\} \subseteq \dot{A}_{i^*}."$
  \end{itemize}
  Let $p_3$ be the unique extension of $p_2$ such that $\gamma^{p_3} = \gamma^{p_2} + \omega$ and
  \[C^{p_3}_{\gamma^{p_3}} := C^{p_1}_{\gamma^{p_1}} \cup \{\gamma^{p_1}\} \cup \{\xi_m \mid m < n\}
  \cup \{\gamma^{p_2} + \ell \mid \ell < \omega\}.\]
  Let $\dot{q}_3 = \dot{q}_2$, and let $\dot{t}_3$ be a $\mathbb{P}$-name forced by $p_3$ to be equal
  to $C^{p_3}_{\gamma^{p_3}}$.

  Let $\dot{T}$ be the canonical name for the club in $\kappa$ introduced by $\dot{\mathbb{T}}$.
  Then $$(p_3, \dot{q}_3, \dot{t}_3) \Vdash_{\mathbb{P} * (\dot{\mathbb{Q}} \times \dot{\mathbb{T}})}
  ``\text{for all }\beta \in \dot{T} \setminus \gamma^{p_3},~\sup\{\alpha \in \dot{C}_\beta \mid
  \suc_n(\dot{C}_\beta \setminus \alpha) \subseteq \dot{A}_{i^*}\} \geq \gamma^{p_1} > \alpha^*."$$
  Moreover, $(p_3, \dot{q}_3,\dot{t}_3) \Vdash_{\mathbb{P} * (\dot{\mathbb{Q}} \times \dot{\mathbb{T}})} ``\dot{S}\text{ is
  stationary},"$ so $(p_3, \dot{q}_3, \dot{t}_3) \Vdash_{\mathbb{P} * (\dot{\mathbb{Q}} \times \dot{\mathbb{T}})}
  ``\dot{S} \cap (\dot{T} \setminus \gamma^{p_3}) \neq \emptyset."$ This contradicts the fact that
  $(p_3, \dot{q}_3) \leq (p_0, \dot{q}_0)$ and $(p_0, \dot{q}_0) \Vdash_{\mathbb{P} * \dot{\mathbb{Q}}}
  ``\text{for all }\beta \in \dot{S},~\sup\{\alpha \in \dot{C}_\beta \mid \suc_n(\dot{C}_\beta \setminus \alpha) \subseteq \dot{A}_{i^*}\} = \alpha^*."$
\end{proof}

\section{Consistency results} \label{consistency_sect}
In this section, we produce a number of models illustrating that incompactness for the chromatic
number of graphs is compatible with a wide array of set-theoretic compactness principles. We first
deal with stationary reflection.

The following Theorem assumes the consistency of the indestructible-reflection principle $\refl^*$.
We remark that, from suitable large cardinal hypotheses, one can force various instances of $\refl^*(S)$.
For example, in \cite{hayut_lh}, it is shown how to arrange $\refl^*(E^{\aleph_2}_{\aleph_0})$,
$\refl^*(\aleph_{\omega+1})$, and $\refl^*(\kappa)$ in a model in which $\kappa$ is the least
inaccessible cardinal. Similar techniques will work at other cardinals. We now show how instances
of $\refl^*(S)$ can be used to obtain instances of incompactness for chromatic numbers together with
stationary reflection.

\begin{thm} \label{refl_thm}
  Suppose $\kappa \geq \aleph_2$ is a regular cardinal, $S \subseteq \kappa$ is stationary,
  and $\refl^*(S)$ holds.
  Then there is a forcing extension preserving all cardinalities and
  cofinalities $\leq \kappa$ in which $S$ remains stationary and $\refl(S)$ and
  $\p^-(\kappa, 2,{\sq}, \kappa, \ns^+_\kappa, 2, 2)$ both hold.
\end{thm}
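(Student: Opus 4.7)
The plan is to force with $\mathbb{P}(\kappa, \omega) * \dot{\mathbb{Q}}$, where $\mathbb{Q}$ is the iteration defined just before Lemma~\ref{directed_closed_lemma} with the bookkeeping described there, and to denote the generic filter by $g * h$. Let $\vec C$ denote the resulting $\sq$-coherent $C$-sequence (note that $\sq_\omega$ coincides with $\sq$). Four things must be verified in $V[g*h]$: (a) $\p^-(\kappa, 2, \sq, \kappa, \ns^+_\kappa, 2, 2)$ holds; (b) all cardinalities and cofinalities $\le \kappa$ are preserved; (c) $S$ remains stationary; and (d) $\refl(S)$ holds.

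Clause (a) is immediate from Theorem~\ref{iteration_lemma} (take $\chi = \omega$ and $n = 2$). For (b) and (c), Lemma~\ref{directed_closed_lemma} supplies a dense $\kappa$-directed closed subset $\mathbb{U}$ of the three-step poset $\mathbb{P}(\kappa,\omega) * (\dot{\mathbb{Q}} \times \dot{\mathbb{T}})$. Any generic filter for this larger poset extends $g*h$ to some $g*h*t$, and $V[g*h*t]$ equals the $\mathbb{U}$-generic extension of $V$. Since $\mathbb{U}$ is $\kappa$-directed closed, $V[g*h*t]$ preserves cardinalities and cofinalities $\le \kappa$ and keeps every $V$-stationary subset of $\kappa$ stationary. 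These features transfer down to the intermediate model $V[g*h]$: any $V[g*h]$-club witnessing failure of preservation would remain a club in $V[g*h*t]$, giving a contradiction.

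For clause (d), fix a stationary $S' \subseteq S$ in $V[g*h]$. The bookkeeping choice of $\langle \dot{S}_\xi \mid \xi < \kappa^+ \rangle$ ensures that no stationary subset of $\kappa$ from $V[g*h]$ is forced by $\mathbb{T}$ to be non-stationary, so one may fix a $(V[g*h], \mathbb{T})$-generic $t'$ such that $S'$ remains stationary in $V[g*h*t']$. As $V[g*h*t']$ arises from $V$ through the $\kappa$-directed closed forcing $\mathbb{U}$, the hypothesis $\refl^*(S)$ yields $\refl(S)$ in $V[g*h*t']$; hence $S'$ reflects at some $\beta < \kappa$ there. Since every club of $\beta$ in $V[g*h]$ is still a club of $\beta$ in $V[g*h*t']$, the set $S' \cap \beta$ is already stationary in $\beta$ in $V[g*h]$, completing the reflection argument.

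The main technical obstacle will be justifying the bookkeeping claim used in (d), namely that every stationary set in $V[g*h]$ admits a stationarity-preserving $\mathbb{T}$-generic. This rests on the $\kappa^+$-c.c.\ of $\mathbb{Q}$ (ensured by a $\Delta$-system argument under $2^\kappa = \kappa^+$ in $V[g]$, which may be arranged by a routine cardinal-preserving preparation absorbable into the $\kappa$-directed closed framework) together with a standard enumeration of all potential $\mathbb{Q}$-names for stationary sets that $\mathbb{T}$ might try to kill, along the lines spelled out in \cite[\S3]{hayut_lh}.
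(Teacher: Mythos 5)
Your proposal is correct and follows essentially the same route as the paper's proof: the same two-step iteration $\mathbb{P}(\kappa,\aleph_0)*\dot{\mathbb{Q}}$, the same appeal to Theorem~\ref{iteration_lemma} for the proxy principle, the same use of the dense $\kappa$-directed closed subset of $\mathbb{P}*(\dot{\mathbb{Q}}\times\dot{\mathbb{T}})$ for preservation, and the same ``pass to the threaded extension, apply $\refl^*(S)$, pull reflection back down'' argument. Your closing remark about needing $2^\kappa=\kappa^+$ for the $\kappa^+$-c.c.\ of $\mathbb{Q}$ is a legitimate (and easily arranged) point that the paper handles in the setup of Subsection~\ref{forcing_subsection} rather than in the proof itself.
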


\begin{proof}
  Let $\mathbb{P}$ be $\mathbb{P}(\kappa, \aleph_0)$ of Definition~\ref{theposet}, i.e. the standard forcing to add a
  $\square(\kappa)$-sequence by initial segments. Let $g$ be $\mathbb{P}$-generic over $V$. In
  $V[g]$, let $\vec{C} := \bigcup g = \langle C_\alpha \mid \alpha < \kappa \rangle$, and let
  $\mathbb{T}$ and $\mathbb{Q}$ be as in Subsection \ref{forcing_subsection}, i.e. $\mathbb{T}$ is the
  forcing to thread $\vec{C}$, and $\mathbb{Q}$ is an iteration to destroy the stationarity of
  subsets of $\kappa$ that are forced to be non-stationary by $\mathbb{T}$.

  Let $h$ be $\mathbb{Q}$-generic over $V[g]$. We claim that $V[g*h]$ is the desired model.
  By Lemma \ref{directed_closed_lemma}, $\mathbb{P} * (\dot{\mathbb{Q}} \times \dot{\mathbb{T}})$
  has a $\kappa$-directed closed subset in $V$. Therefore, in a further extension of $V[g*h]$
  we easily have that all $V$-cardinalities and cofinalities $\leq \kappa$ are preserved and
  $S$ is stationary in $\kappa$. Since these are clearly downward absolute, they hold in $V[g*h]$
  as well. Also, by Theorem~\ref{iteration_lemma}, $\vec{C}$ witnesses $\p^-(\kappa, 2,{\sq}, \kappa, \ns^+_\kappa, 2, 2)$
  in $V[g*h]$. It thus remains to show that $\refl(S)$ holds in $V[g*h]$.

  To this end, fix a stationary $T \subseteq S$ in $V[g*h]$. By construction of $\mathbb{Q}$,
  there is $t \in \mathbb{T}$ such that $t \Vdash_{\mathbb{T}}``T$ is stationary$."$ Let
  $k$ be $\mathbb{T}$-generic over $V[g*h]$ with $t \in k$. Since $\refl^*(S)$ holds in $V$
  and $\mathbb{P} * (\dot{\mathbb{Q}} \times \dot{\mathbb{T}})$ has a dense $\kappa$-directed
  closed subset, $\refl(S)$ holds in $V[g*h*k]$. Moreover, since $t \in k$, $T$ is stationary
  in $V[g*h*k]$. Therefore, $T$ reflects in $V[g*h*k]$. Since this is downward absolute, it
  holds in $V[g*h]$ as well.
\end{proof}

We now turn to $\Delta$-reflection, both at successors of singular cardinals and at inaccessible cardinals.
The following Corollary follows easily from our work thus far and a result of Fontanella and Hayut.

\begin{cor} \label{fontanella_hayut_cor}
  If $\zfc$ is consistent with the existence of infinitely many supercompact cardinals, then
  $\zfc$ is consistent with $\Delta_{\aleph_{\omega^2}, \aleph_{\omega^2+1}}$ together with
  $\p(\aleph_{\omega^2+1},2,\allowbreak{\sq},\aleph_{\omega^2+1},\{\aleph_{\omega^2+1}\},2,2)$.
\end{cor}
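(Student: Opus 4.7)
The strategy is to begin in the model $V$ of $\zfc+\gch+\Delta_{\aleph_{\omega^2},\aleph_{\omega^2+1}}$ produced by Fontanella and Hayut from $\omega$-many supercompact cardinals, and to force over $V$ with $\mathbb{P} := \mathbb{P}(\aleph_{\omega^2+1},\aleph_0)$ of Definition~\ref{theposet}. Let $g$ be $\mathbb{P}$-generic; I claim that $V[g]$ is as required.

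For the combinatorial content, Lemma~\ref{strat_closed_lemma} yields that $\mathbb{P}$ is $\aleph_{\omega^2+1}$-strategically closed, so no bounded subset of $\aleph_{\omega^2+1}$ is added and all cofinalities $\le \aleph_{\omega^2+1}$ are preserved. Under $\gch$, $\mathbb{P}$ has size $\aleph_{\omega^2+1}$ and the $\aleph_{\omega^2+2}$-c.c.\ by a standard $\Delta$-system argument, so all cardinals are preserved. By Theorem~\ref{genericity_lemma}, the generic $\vec{C} := \bigcup g$ witnesses $\p^-(\aleph_{\omega^2+1},2,{\sq},\aleph_{\omega^2+1},(\ns^+_{\aleph_{\omega^2+1}})^V,2,2)$ in $V[g]$; specializing $\mathcal{S} = \{\aleph_{\omega^2+1}\}$ yields the desired unconditional instance of the proxy principle. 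Shelah's $\diamondsuit$ theorem applied to $\gch$ at the uncountable cardinal $\aleph_{\omega^2}$ produces $\diamondsuit(\aleph_{\omega^2+1})$ in $V$, and strategic closure of $\mathbb{P}$ preserves this; so in $V[g]$ we obtain the full principle $\p(\aleph_{\omega^2+1},2,{\sq},\aleph_{\omega^2+1},\{\aleph_{\omega^2+1}\},2,2)$.

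It remains to verify that $\Delta_{\aleph_{\omega^2},\aleph_{\omega^2+1}}$ persists to $V[g]$. Fix in $V[g]$ an algebra $A$ on $\aleph_{\omega^2+1}$ with $\mu<\aleph_{\omega^2}$ operations and a stationary $S \subseteq E^{\aleph_{\omega^2+1}}_{<\aleph_{\omega^2}}$; since every $V$-club in $\aleph_{\omega^2+1}$ remains a club in $V[g]$, $S$ is stationary in $V$ as well. Using Player-II's winning strategy for $\Game_{\aleph_{\omega^2+1}}(\mathbb{P})$ below any given condition $p$, one constructs a continuous increasing chain $\langle A_\xi \mid \xi<\aleph_{\omega^2+1}\rangle$ of ground-model subalgebras of $A$ of size $<\aleph_{\omega^2+1}$ whose union $A^*$ lies in $V$ and has size $\aleph_{\omega^2+1}$; this is the main obstacle, requiring one to interleave plays of $\Game_{\aleph_{\omega^2+1}}(\mathbb{P})$ that decide bounded pieces of each operation of $A$ with closure steps producing subalgebras inside $V$. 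Applying $\Delta_{\aleph_{\omega^2},\aleph_{\omega^2+1}}$ in $V$ to $A^*$ and $S$ then yields a subalgebra $A'$ with $\eta := \otp(A')$ a regular cardinal $<\aleph_{\omega^2}$ and $S\cap A'$ stationary in $\sup(A')$. Since $\sup(A')<\aleph_{\omega^2+1}$ and $\mathbb{P}$ adds no new bounded subsets of $\aleph_{\omega^2+1}$, this stationarity transfers to $V[g]$, where $A'$ remains a subalgebra of $A$, completing the argument.
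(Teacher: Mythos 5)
There is a genuine gap, and it sits exactly where you flag ``the main obstacle.'' Your plan forces with $\mathbb{P}(\aleph_{\omega^2+1},\aleph_0)$ over the Fontanella--Hayut model and then tries to pull $\Delta_{\aleph_{\omega^2},\aleph_{\omega^2+1}}$ back up into $V[g]$ by a strategic-closure argument. But the stationary set $S$ and the algebra $A$ you must reflect live in $V[g]$ and are objects of size $\aleph_{\omega^2+1}$, so neither need belong to $V$ (your parenthetical ``$S$ is stationary in $V$ as well'' already presupposes $S\in V$, which fails in general). The sets $S^*$ and $A^*$ you extract by deciding bounded pieces along a decreasing $\kappa$-chain of conditions do lie in $V$, but no single condition forces them to coincide with $\dot S$ and $\dot A$ (the chain has no lower bound), and there is no argument that $S^*$ is stationary in $V$ --- a generic $\square(\kappa)$-sequence is precisely the kind of object that produces new stationary sets with bad reflection behaviour. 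This is not a presentational issue: the paper's own Theorems~\ref{refl_thm} and \ref{inaccessible_thm_1} show that even plain $\refl(S)$ and $\Delta_\kappa$ are recovered after forcing with $\mathbb{P}$ only by composing with the iteration $\mathbb{Q}$ of Subsection~\ref{forcing_subsection} and by assuming an \emph{indestructible} form of reflection (or indestructible supercompactness) in the ground model. Neither ingredient is available here: the Fontanella--Hayut model gives no indestructibility statement for $\Delta_{\aleph_{\omega^2},\aleph_{\omega^2+1}}$.

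The fix is to notice that no forcing is needed at all. The Fontanella--Hayut model already satisfies $\square(\aleph_{\omega^2+1})$ together with $\Delta_{\aleph_{\omega^2},\aleph_{\omega^2+1}}$, as well as $\ch_{\aleph_{\omega^2}}$ and $\aleph_{\omega^2+1}>\beth_\omega$. Since $\Delta_{\aleph_{\omega^2},\aleph_{\omega^2+1}}$ implies $\refl(\aleph_{\omega^2+1})$, Theorem~\ref{improved_square_thm_1}(5) applies directly in that model and yields $\p(\aleph_{\omega^2+1},2,{\sq},\aleph_{\omega^2+1},\{\aleph_{\omega^2+1}\},2,2)$; the reflection hypothesis is used combinatorially (to upgrade $\boxtimes^-$ via postprocessing functions), not preserved through a forcing. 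Your remaining steps (strategic closure, Theorem~\ref{genericity_lemma}, and the preservation of $\diamondsuit(\kappa)$) are fine as far as they go, but they are solving the wrong problem.
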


\begin{proof}
  In \cite{fontanella_hayut}, starting in a model with infinitely many supercompact cardinals, Fontanella and Hayut
  produce a model in which $\Delta_{\aleph_{\omega^2}, \aleph_{\omega^2+1}}$
  and $\square(\aleph_{\omega^2 + 1})$ both hold. In their model,
  $\ch_{\aleph_{\omega^2}}$ holds and $\aleph_{\omega^2+1}>\beth_\omega$.
  Moreover,  $\Delta_{\aleph_{\omega^2}, \aleph_{\omega^2+1}}$ implies $\refl(\aleph_{\omega^2+1})$, and so,
  by Theorem \ref{improved_square_thm_1}(5),
  $\p(\aleph_{\omega^2+1},2,\allowbreak{\sq},\aleph_{\omega^2+1},\{\aleph_{\omega^2+1}\},2,2)$ holds.
\end{proof}

The following Lemma is standard. We provide a proof for completeness.

\begin{lemma} \label{supercompact_lemma}
  Suppose $\kappa$ is a supercompact cardinal. Then $\Delta_\kappa$ holds.
\end{lemma}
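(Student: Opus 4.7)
The plan is to apply the standard reflection argument using a $\nu$-supercompactness embedding, taking the image $j[\nu]$ as the desired subalgebra. Fix a regular cardinal $\nu\ge\kappa$, a stationary set $S\s E^\nu_{<\kappa}$, and an algebra $A$ on $\nu$ whose operations are indexed by some $\theta<\kappa$. Since $\kappa$ is supercompact, I fix an elementary embedding $j:V\to M$ with $\text{crit}(j)=\kappa$, $j(\kappa)>\nu$, and $M^\nu\s M$. I will verify in $M$ that $A^*:=j[\nu]$ witnesses the conclusion of $\Delta_{j(\kappa),j(\nu)}$ for $j(A)$ and $j(S)$; by elementarity of $j$ this will yield $\Delta_{\kappa,\nu}$ in $V$, and varying $\nu$ gives $\Delta_\kappa$.

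Two of the three required properties of $A^*$ are routine. Since $\theta<\kappa=\text{crit}(j)$, the operations of $j(A)$ in $M$ are exactly $\langle j(f_i)\mid i<\theta\rangle$, and $A^*$ is closed under each $j(f_i)$ via $j(f_i)(j(\alpha_1),\ldots,j(\alpha_n))=j(f_i(\alpha_1,\ldots,\alpha_n))\in j[\nu]$, so $A^*$ is a subalgebra of $j(A)$. Furthermore, $\eta:=\otp(A^*)=\nu$ is a regular cardinal in $M$ (by $M^\nu\s M$) satisfying $\nu<j(\kappa)$.

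The substantive step is to show $j(S)\cap A^*=j[S]$ is stationary in $\gamma:=\sup(j[\nu])$ in $M$. The closure $M^\nu\s M$ first yields $\cf^M(\gamma)=\nu$, so any club of $\gamma$ in $M$ contains a continuous increasing cofinal enumeration $\langle c_\xi\mid\xi<\nu\rangle$ in $M$. Given such an enumeration of an arbitrary club $C$, I will work in $V$ and define $f,g:\nu\to\nu$ by letting $f(\alpha)$ be least with $c_\alpha<j(f(\alpha))$, and $g(\alpha)$ least with $c_{g(\alpha)}\ge j(\alpha)$; both are well-defined since both $j[\nu]$ and $\{c_\xi\mid\xi<\nu\}$ are cofinal in $\gamma$. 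Letting $E$ denote the club of $\alpha<\nu$ closed under $f$ and $g$, a direct calculation shows that for every limit $\alpha\in E$, $\sup(j[\alpha])\le\sup_{\xi<\alpha}c_\xi\le j(\alpha)$. When additionally $\cf(\alpha)<\kappa$, continuity of $j$ at $\alpha$ gives $\sup(j[\alpha])=j(\alpha)$, and continuity of the enumeration then forces $c_\alpha=j(\alpha)$, so $j(\alpha)\in C$. Since $S\s E^\nu_{<\kappa}$ is stationary and $E$ is a club in $\nu$, picking any $\alpha\in S\cap E$ gives $j(\alpha)\in C\cap j[S]$.

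The principal obstacle is the delicate interaction between the continuous enumeration $\langle c_\xi\rangle$ (which lives in $M$) and the embedding $j$ (which is external to $M$); the argument hinges on the fact that continuity of $j$ at ordinals of cofinality $<\kappa$ pins $c_\alpha$ to precisely $j(\alpha)$, which is why the hypothesis $S\s E^\nu_{<\kappa}$ is essential to the definition of $\Delta_{\kappa,\nu}$.
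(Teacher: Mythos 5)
Your proof is correct and follows essentially the same route as the paper's: fix a $\nu$-supercompactness embedding $j:V\to M$, take $j[\nu]$ as the subalgebra of $j(A)$, verify the three clauses in $M$, and pull back by elementarity. The only difference is that you spell out the verification that $j[S]$ is stationary in $\sup(j[\nu])$ (via the clubs of closure points of $f$ and $g$ and continuity of $j$ at ordinals of cofinality $<\kappa$), a standard step the paper asserts without proof; your argument for it is sound.
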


\begin{proof}
  Let $\nu \geq \kappa$ be an arbitrary regular cardinal. We prove $\Delta_{\kappa, \nu}$. To this end, fix
  a stationary $S \subseteq E^\nu_{<\kappa}$ and an algebra $A$ on $\nu$ with fewer than $\kappa$ operations.
  Fix an elementary embedding $j:V \rightarrow M$ witnessing that $\kappa$ is $\nu$-supercompact. Then, in $M$, the following statements hold:
  \begin{itemize}
    \item $j``\nu$ is a subalgebra of $j(A)$;
    \item $\otp(j``\nu) = \nu$, and $\nu < j(\kappa)$ is a regular cardinal;
    \item $j(S) \cap j``\nu = j``S$ is stationary in $\sup(j``\nu)$.
  \end{itemize}
  Therefore, by elementarity, in $V$, there is a subalgebra $A'$ of $A$ such that
  $\eta := \otp(A')$ is a regular cardinal $< \kappa$ and
  $S \cap A'$ is stationary in $\sup(A')$, as required by $\Delta_{\kappa, \nu}$.
\end{proof}

\begin{thm} \label{inaccessible_thm_1}
  Suppose that $\kappa$ is a supercompact cardinal. Then there is a forcing extension in which
  $\kappa$ remains an inaccessible cardinal and $\Delta_\kappa$ and
  $\p^-(\kappa, 2,{\sq}, \kappa, \ns^+_\kappa,\allowbreak2, 2)$ both hold.
\end{thm}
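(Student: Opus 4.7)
The plan is to follow the template of Theorem~\ref{refl_thm}, replacing the role of $\refl^*(S)$ by the Laver-indestructibility of supercompactness. First, by applying Laver's preparation forcing, we may assume without loss of generality that $\kappa$ is indestructibly supercompact, i.e., remains supercompact in every forcing extension by a $\kappa$-directed closed poset. Then we force with $\mathbb{P} := \mathbb{P}(\kappa, \aleph_0)$ to add a $\square(\kappa)$-sequence $\vec{C} := \bigcup g$, and follow this with the iteration $\mathbb{Q}$ constructed in Subsection~\ref{forcing_subsection}, yielding a generic $g*h$. By Theorem~\ref{iteration_lemma}, $\vec{C}$ witnesses $\p^-(\kappa, 2, {\sq}, \kappa, \ns^+_\kappa, 2, n)$ for every positive $n < \omega$ in $V[g*h]$, giving in particular the desired instance with $n=2$.

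It remains to check that $\kappa$ is still inaccessible and that $\Delta_\kappa$ holds in $V[g*h]$. Let $\mathbb{T}$ be the thread forcing as in Subsection~\ref{forcing_subsection}, and let $k$ be $\mathbb{T}$-generic over $V[g*h]$. By Lemma~\ref{directed_closed_lemma}, $V[g*h*k]$ is obtained from $V$ by forcing with a poset possessing a dense $\kappa$-directed closed subset, so by indestructibility $\kappa$ remains supercompact in $V[g*h*k]$, whence inaccessible both there and in the intermediate model $V[g*h]$. Furthermore, by Lemma~\ref{supercompact_lemma}, $\Delta_\kappa$ holds in $V[g*h*k]$.

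To transfer $\Delta_\kappa$ down to $V[g*h]$, fix, in $V[g*h]$, a regular cardinal $\nu \geq \kappa$, an algebra $A$ on $\nu$ with fewer than $\kappa$ operations, and a stationary $S \subseteq E^\nu_{<\kappa}$. Applying $\Delta_{\kappa, \nu}$ in $V[g*h*k]$, pick a subalgebra $A' \subseteq A$ such that $\eta := \otp(A')$ is a regular cardinal $<\kappa$ and $S \cap A'$ is stationary in $\sup(A')$. Since the two-step iteration $\mathbb{P} * (\dot{\mathbb{Q}} \times \dot{\mathbb{T}})$ has a dense $\kappa$-directed closed subset in $V$, it is $\kappa$-distributive over $V$, and hence $\mathbb{T}$ is $\kappa$-distributive over $V[g*h]$. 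Consequently, any enumeration of $A'$ in order type $\eta < \kappa$ that lives in $V[g*h*k]$ already lives in $V[g*h]$, so $A' \in V[g*h]$. The closure of $A'$ under the operations of $A$, as well as the regularity of $\eta$, are absolute between the two models, and stationarity of $S \cap A'$ in $\sup(A')$ is preserved downward. Hence $A'$ witnesses the required instance of $\Delta_{\kappa, \nu}$ in $V[g*h]$.

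The main obstacle will be precisely this last downward absoluteness step: one needs $A'$ itself, not merely its order type, to be present in $V[g*h]$, and for that the $\kappa$-distributivity of the thread forcing $\mathbb{T}$ over $V[g*h]$ (deduced from Lemma~\ref{directed_closed_lemma}) is essential. All other ingredients---Laver indestructibility, Theorem~\ref{iteration_lemma}, and Lemma~\ref{supercompact_lemma}---are used off the shelf.
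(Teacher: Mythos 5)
Your overall architecture is the paper's: Laver preparation, then $\mathbb{P}(\kappa,\aleph_0)$ followed by the iteration $\mathbb{Q}$, with Theorem~\ref{iteration_lemma} supplying the proxy principle and a further $\mathbb{T}$-generic extension used to recover supercompactness and pull $\Delta_\kappa$ back down via the $\kappa$-distributivity of $\mathbb{T}$. However, there is a genuine gap at the point where you apply $\Delta_{\kappa,\nu}$ in $V[g*h*k]$ to the set $S$. The principle $\Delta_{\kappa,\nu}$ only says something about sets that are \emph{stationary}, and you never verify that $S$, which is stationary in $V[g*h]$, remains stationary in $V[g*h*k]$. This is not automatic: threading a nontrivial $\square(\kappa)$-sequence is in general not stationary-set preserving, and indeed the entire reason the iteration $\mathbb{Q}$ is interposed is to arrange that, in $V[g*h]$, no stationary subset of $\kappa$ is forced non-stationary by $\mathbb{T}$. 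Your quantifier order makes the gap unrepairable as written: you fix the generic $k$ once, before fixing $S$, whereas the correct argument must choose $k$ \emph{depending on} $S$ --- namely, through a condition $t\in\mathbb{T}$ with $t\Vdash_{\mathbb{T}}``S$ is stationary$"$, whose existence (in the case $\nu=\kappa$) is exactly the defining property of $\mathbb{Q}$, and (in the case $\nu>\kappa$) follows from $|\mathbb{T}|=\kappa<\nu$. For a fixed $k$ there is no reason every stationary $S\in V[g*h]$ survives into $V[g*h*k]$.

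Two smaller remarks. First, the preparation should also arrange $\ch_\kappa$ (as the paper does), since the $\kappa^+$-c.c.\ of $\mathbb{Q}$ and the bookkeeping that gives $\mathbb{Q}$ its stationarity-protecting property rely on $2^\kappa=\kappa^+$ in $V[g]$. Second, your downward-absoluteness step for $A'$ is correct and matches the paper: since $\otp(A')<\kappa$ and $\mathbb{T}$ is $\kappa$-distributive over $V[g*h]$ (via Lemma~\ref{directed_closed_lemma}), the set $A'$ itself lies in $V[g*h]$, and its relevant properties transfer down. Once the stationarity-preservation step is inserted with the correct order of quantifiers, the proof coincides with the paper's.
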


\begin{proof}
  By standard arguments \cite{MR0472529}, we may assume that the supercompactness of $\kappa$ is indestructible under $\kappa$-directed closed
  forcing and $\ch_\kappa$ holds. Let $\mathbb{P}$ be $\mathbb{P}(\kappa, \aleph_0)$ of Definition~\ref{theposet},
  let $g$ be $\mathbb{P}$-generic over $V$, and let $\vec{C} := \bigcup g$. Let
  $\mathbb{T}$ and $\mathbb{Q}$ be as in Subsection \ref{forcing_subsection}.

  Let $h$ be $\mathbb{Q}$-generic over $V[g]$. We claim that $V[g*h]$ is the desired model. Clearly,
  $\kappa$ remains inaccessible in $V[g*h]$. Moreover, by Theorem~\ref{iteration_lemma},
  $\vec{C}$ witnesses $\p^-(\kappa, 2,{\sq_\chi}, \kappa, \ns^+_\kappa, 2, 2)$
  in $V[g*h]$. It thus remains
  to show that $\Delta_\kappa$ holds in $V[g*h]$. To this end, fix a regular cardinal $\nu \geq \kappa$,
  a stationary $S \subseteq E^\nu_{<\kappa}$, and an algebra $A$ on $\nu$
  with fewer than $\kappa$ operations.

  If $\nu = \kappa$, then, by our construction of $\mathbb{Q}$, we can find $t \in \mathbb{T}$
  such that $t \Vdash_{\mathbb{T}}``S$ is stationary in $\nu."$ If $\nu > \kappa$, then, as
  $|\mathbb{T}| = \kappa$, we have $\Vdash_{\mathbb{T}}``S$ is stationary in $\nu."$ In either case,
  we can find a $\mathbb{T}$-generic filter $k$ over $V[g*h]$ such that $S$ remains stationary in
  $\nu$ in $V[g*h*k]$. By Lemma \ref{directed_closed_lemma}, $\mathbb{P} * (\dot{\mathbb{Q}} \times \dot{\mathbb{T}})$ has a dense $\kappa$-directed
  closed subset in $V$, so, as $\kappa$ is indestructibly supercompact in $V$, we have that $\kappa$ is again
  supercompact in $V[g*h*k]$. Therefore, by Lemma \ref{supercompact_lemma}, $\Delta_\kappa$
  holds in $V[g*h*k]$, so, applying it to $S$ and $A$, we find a subalgebra $A'$ of $A$
  such that $\eta:=\otp(A')$ is a regular cardinal $ < \kappa$
  and $S \cap A'$ is stationary in $\sup(A')$. However, by Lemma \ref{directed_closed_lemma},
  $\mathbb{T}$ is $\kappa$-distributive in $V[g*h]$, so we in fact have $A' \in V[g*h]$.
  All of its relevant properties are easily seen to be downward absolute from $V[g*h*k]$ to
  $V[g*h]$, so we have verified that $\Delta_\kappa$ holds in $V[g*h]$.
\end{proof}

\begin{cor} \label{frp_cor}
  Suppose $\zfc$ is consistent with the existence of a Mahlo cardinal. Then $\zfc$ is consistent with
  $\frp(\aleph_2)$ together with $\p(\aleph_2,2,{\sq},\aleph_2,\allowbreak\{\aleph_2\},2,2)$.
\end{cor}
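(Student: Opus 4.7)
The plan closely follows the blueprints of Theorem~\ref{refl_thm} and Theorem~\ref{inaccessible_thm_1}, substituting $\frp(\aleph_2)$ for stationary reflection. Starting from a model $V_0$ with a Mahlo cardinal $\kappa$, I would first apply Miyamoto's iteration from \cite{frp_consistency} (preceded, if necessary, by a Laver-style preparatory forcing for Mahlo cardinals) to pass to an intermediate model $V$ in which $\kappa=\aleph_2$, $\gch$ holds, and $\frp(\aleph_2)$ holds indestructibly under every $\aleph_2$-directed closed set-forcing.

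In $V$, I would then force with $\mathbb{P}(\aleph_2,\aleph_0) * (\dot{\mathbb{Q}}\times\dot{\mathbb{T}})$ from Subsection~\ref{forcing_subsection}. Let $g$ be $\mathbb{P}(\aleph_2,\aleph_0)$-generic over $V$ and $h$ be $\mathbb{Q}$-generic over $V[g]$, and set $\vec{C}:=\bigcup g$. By Theorem~\ref{iteration_lemma}, $\vec{C}$ witnesses $\p^-(\aleph_2,2,{\sq},\aleph_2,\ns^+_{\aleph_2},2,2)$ in $V[g*h]$. Since the two-step forcing has size $\aleph_2$ and satisfies the $\aleph_3$-chain condition in $V$, $\gch$ persists into $V[g*h]$, so by Shelah's theorem $\diamondsuit(\aleph_2)$ holds as well; combined with $\p^-$, this yields the full $\p(\aleph_2,2,{\sq},\aleph_2,\{\aleph_2\},2,2)$ in $V[g*h]$.

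For the preservation of $\frp(\aleph_2)$, I would appeal to Lemma~\ref{directed_closed_lemma}: $\mathbb{P}(\aleph_2,\aleph_0) * (\dot{\mathbb{Q}}\times\dot{\mathbb{T}})$ has a dense $\aleph_2$-directed closed subset in $V$, so the indestructibility assumption gives $\frp(\aleph_2)$ in the further extension $V[g*h*k]$, where $k$ is $\mathbb{T}$-generic over $V[g*h]$. Paralleling the downward-absoluteness argument of Theorem~\ref{refl_thm}, one then lifts this back to $V[g*h]$: via Fact~\ref{frp_thm}, a putative failure of $\frp(\aleph_2)$ in $V[g*h]$ amounts to a graph $\mathcal G$ of size $\le\aleph_2$ whose $\le\aleph_1$-sized subgraphs all have coloring number $\le\aleph_0$ while $\col(\mathcal G)>\aleph_0$, and one argues that this failure persists into $V[g*h*k]$.

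The principal obstacle is precisely this persistence step: unlike stationarity of a subset of $\kappa$, the assertion $\col(\mathcal G)>\aleph_0$ is not automatically downward absolute from $V[g*h*k]$ to $V[g*h]$, since a priori a new well-ordering of $V(\mathcal G)$ with countable $\lhd$-neighborhoods might appear in $V[g*h*k]$. The crux of the proof is therefore to show that the quotient forcing from $V[g*h]$ up to $V[g*h*k]$ --- governed by the thread $\mathbb{T}$ over the generic $\square(\aleph_2)$-sequence --- has enough distributivity in $V[g*h]$ to prevent the introduction of any such new well-ordering, so that any counterexample in $V[g*h]$ survives into $V[g*h*k]$ and contradicts $\frp(\aleph_2)$ there.
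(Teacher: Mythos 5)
Your route is genuinely different from the paper's, and it contains two real gaps that the paper's argument avoids entirely. The paper does no forcing beyond Miyamoto's: working over $L$ with $\kappa$ the \emph{least} Mahlo cardinal, Miyamoto's extension already satisfies $\kappa=\aleph_2$, $\gch$ and $\frp(\aleph_2)$; since $\kappa$ is not weakly compact in $L$, $\square(\aleph_2)$ holds there as well; and $\frp(\aleph_2)$ implies $\refl(E^{\aleph_2}_{\aleph_0})$, so Theorem~\ref{improved_square_thm_1}(6) yields $\p(\aleph_2,2,{\sq},\aleph_2,\{\aleph_2\},2,2)$ outright. No preservation of $\frp$ under further forcing is ever needed.

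The first gap in your plan is the hypothesis that $\frp(\aleph_2)$ can be made indestructible under $\aleph_2$-directed closed set-forcing starting from just a Mahlo cardinal. Such a poset is merely $\aleph_2$-distributive and can add new stationary subsets of $E^{\aleph_2}_{\omega}$ and new ladder systems $g$, so $\frp(\aleph_2)$ is not automatically preserved; the paper provides no analogue of $\refl^*(S)$ for $\frp$, a ``Laver-style preparation'' is not a standard device at a Mahlo cardinal, and it is not clear that indestructible $\frp(\aleph_2)$ costs only a Mahlo --- so your argument may fail to deliver the stated consistency strength. The second gap is the one you flag yourself, but your proposed repair is wrong: the quotient $\mathbb{T}$ is $\aleph_2$-distributive yet still adds new subsets of $\aleph_2$ (the thread itself), hence possibly new well-orderings of $V(\mathcal G)$, so no amount of distributivity can ``prevent the introduction of any such new well-ordering.'' The correct mechanism would instead go through Lemma~\ref{stat_coloring_lemma}: a counterexample $\mathcal G=(\aleph_2,E)$ in $V[g*h]$ has $S_{\aleph_0}(\mathcal G)$ stationary, the iteration $\mathbb{Q}$ was designed precisely so that some $t\in\mathbb{T}$ forces this stationary set to survive, and stationarity of $S_{\aleph_0}(\mathcal G)$ in $V[g*h*k]$ implies $\col(\mathcal G)>\aleph_0$ there regardless of which new well-orderings appear. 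Even with that repair, the first gap remains, so you should adopt the paper's purely combinatorial derivation instead.
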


\begin{proof}
  Let $\kappa$ be the least Mahlo cardinal in $L$. Force over $L$ with Miyamoto's forcing from
  \cite{frp_consistency} to obtain a forcing extension $V[g]$ in which $\kappa = \aleph_2$ and $\gch$ and $\frp(\aleph_2$)
  both hold. Since $\kappa$ is not weakly compact in $L$, $\square(\aleph_2)$ holds in $V[g]$. By $\frp(\aleph_2)$, every stationary subset of $E^{\aleph_2}_{\aleph_0}$ reflects,
  and then, by Theorem~\ref{improved_square_thm_1}(6), $\p(\aleph_2,2,{\sq},\aleph_2,\{\aleph_2\},2,2)$ holds.
\end{proof}

Corollary \ref{fontanella_hayut_cor}, Theorem \ref{inaccessible_thm_1}, and Corollary \ref{frp_cor}, together with Proposition \ref{delta_reflection_coloring_prop}
and Fact~\ref{frp_thm}, show that, for many values of $\kappa$,
a maximal degree of incompactness for the chromatic number of graphs of size $\kappa$
is compatible with a maximal degree of compactness for the coloring number of graphs of size $\kappa$.

We now turn our attention to other prominent set-theoretic compactness principles.

\begin{cor} \label{chang_cor}
  Suppose $\kappa_0, \kappa_1, \lambda_0, \lambda_1$ are infinite cardinals, $\kappa_1$ is regular,
  and the Chang's Conjecture variant $(\kappa_1, \lambda_1) \twoheadrightarrow (\kappa_0, \lambda_0)$
  holds. Then there is a forcing extension preserving all cardinalities and cofinalities $\leq \kappa_1$
  in which $(\kappa_1, \lambda_1) \twoheadrightarrow (\kappa_0, \lambda_0)$ remains valid and there is a sequence $\vec{C}$ witnessing
  $\p^{-}(\kappa_1, 2, {\sq}, \kappa_1, (\ns^+_{\kappa_1})^V, 2, \sigma)$ simultaneously for all $\sigma<\kappa_1$.
\end{cor}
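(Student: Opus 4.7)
The plan is to take $\mathbb{P}:=\mathbb{P}(\kappa_1,\aleph_0)$ from Definition~\ref{theposet}, let $g$ be $\mathbb{P}$-generic over $V$, set $\vec{C}:=\bigcup g$, and argue that $V[g]$ is the desired model.

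First, preservation of cardinals and cofinalities $\le\kappa_1$ is immediate from Lemma~\ref{strat_closed_lemma}, which asserts that $\mathbb{P}$ is $\kappa_1$-strategically closed. Second, the Chang's Conjecture variant survives by the folklore fact (recorded in the footnote to Lemma~\ref{chang_coloring}(2)) that $(\kappa_1,\lambda_1)\twoheadrightarrow(\kappa_0,\lambda_0)$ is preserved by $\kappa_1$-strategically closed forcing. The standard proof picks in $V$ an elementary submodel $M\prec H(\theta)$ with $|M\cap\kappa_1|=\kappa_0$ and $|M\cap\lambda_1|=\lambda_0$, containing all relevant parameters, and uses a winning strategy for player~II in $\Game_{\kappa_1}(\mathbb{P})$ to diagonalize through the dense subsets of $\mathbb{P}$ that lie in $M$, producing a master condition forcing $M$ to remain an elementary submodel of the relevant structure in the extension; this is enough to transfer the Chang property to $V[g]$.

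Finally, the proxy principle is delivered by Theorem~\ref{genericity_lemma}, which provides that $\vec{C}$ witnesses $\p^-(\kappa_1,2,{\sq_{\aleph_0}},\kappa_1,(\ns^+_{\kappa_1})^V,2,\sigma)$ simultaneously for every $\sigma<\kappa_1$. To promote $\sq_{\aleph_0}$-coherence to the full $\sq$-coherence demanded by the Corollary, I observe that whenever $\bar\alpha\in\acc(C_\alpha)$, the ordinal $\bar\alpha$ is a nonzero limit point of $C_\alpha$ and hence $\otp(C_\alpha)>\omega$; this rules out the second disjunct in the definition of $\sq_{\aleph_0}$ and therefore forces $C_{\bar\alpha}=C_\alpha\cap\bar\alpha$. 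The only nontrivial ingredient is the preservation of the Chang transfer, but that is a textbook application of strategic closure and elementarity.
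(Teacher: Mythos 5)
Your proof is correct and follows essentially the same route as the paper's: the same poset $\mathbb{P}(\kappa_1,\aleph_0)$, preservation of cardinalities, cofinalities, and the Chang transfer via $\kappa_1$-strategic closure, and Theorem~\ref{genericity_lemma} for the proxy principle. Your extra observation that $\sq_{\aleph_0}$-coherence already yields $\sq$-coherence (since $\bar\alpha\in\acc(C_\alpha)$ forces $\otp(C_\alpha)\ge\omega$, ruling out the second disjunct) is a detail the paper leaves implicit, and it is correct.
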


\begin{proof}
  Let $\mathbb{P}$ be $\mathbb{P}(\kappa_1, \aleph_0)$ of Definition~\ref{theposet}, let $g$ be $\mathbb{P}$-generic over $V$, and let
  $\vec{C} := \bigcup g$. As $\mathbb{P}$ is $\kappa_1$-strategically
  closed and $(\kappa_1, \lambda_1) \twoheadrightarrow (\kappa_0, \lambda_0)$ is preserved by $\kappa_1$-strategically closed forcing, we immediately have that, in $V[g]$,
  all cardinalities and cofinalities $\leq \kappa_1$ are preserved and $(\kappa_1, \lambda_1) \twoheadrightarrow (\kappa_0, \lambda_0)$
  holds. In addition, in $V[g]$, by Theorem~\ref{genericity_lemma}, $\vec{C}$ witnesses
  $\p^{-}(\kappa_1, 2, {\sq}, \kappa_1, (\ns^+_{\kappa_1})^V,\allowbreak 2, \sigma)$ for all $\sigma<\kappa_1$.
\end{proof}

\begin{cor} \label{supercompact_cor}
  Suppose $\lambda < \kappa$ are regular cardinals, with $\lambda$ indestructibly supercompact. Then there is a forcing
  extension preserving all cardinalities and cofinalities $\leq \kappa$ in which $\lambda$ remains supercompact
  and there is a sequence $\vec{C}$ witnessing $\p^{-}(\kappa, 2, {\sq_\lambda}, \kappa, (\ns^+_\kappa)^V, 2, \sigma)$ simultaneously
  for all $\sigma < \kappa$.
\end{cor}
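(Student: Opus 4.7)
The plan is to apply the forcing construction $\mathbb{P}(\kappa, \lambda)$ from Definition~\ref{theposet} and verify that all the needed preservation properties follow from the general lemmas of Subsection~\ref{forcing_subsection}. First I would let $\mathbb{P} := \mathbb{P}(\kappa, \lambda)$, let $g$ be $\mathbb{P}$-generic over $V$, and set $\vec{C} := \bigcup g$. By construction (and by density), $\vec{C} = \langle C_\alpha \mid \alpha < \kappa \rangle$ is a $\sq_\lambda$-coherent $C$-sequence over $\kappa$ in $V[g]$.

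The preservation of $\lambda$'s supercompactness comes from the fact that $\mathbb{P}(\kappa, \lambda)$ is $\lambda$-directed closed, which is exactly the content of the first lemma of Subsection~\ref{forcing_subsection} (stated there for general $\chi = \lambda$). Since $\lambda$ is indestructibly supercompact, it remains supercompact in $V[g]$. For preservation of cardinalities and cofinalities $\leq \kappa$, I would invoke Lemma~\ref{strat_closed_lemma}, which gives that $\mathbb{P}$ is $\kappa$-strategically closed; this immediately implies preservation of all cardinals and cofinalities up to and including $\kappa$.

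Finally, the proxy principle itself is exactly the conclusion of Theorem~\ref{genericity_lemma} applied to $\mathbb{P}(\kappa, \lambda)$: in $V[g]$, the generic $\vec{C}$ witnesses $\p^{-}(\kappa, 2, {\sq_\lambda}, \kappa, (\ns^+_\kappa)^V, 2, \sigma)$ simultaneously for every $\sigma < \kappa$. So no further forcing over $V[g]$ is needed --- unlike the situation of Theorem~\ref{refl_thm} and Theorem~\ref{inaccessible_thm_1}, where the auxiliary iteration $\mathbb{Q}$ was required to upgrade the stationary set parameter from $(\ns^+_\kappa)^V$ to the full $\ns^+_\kappa$ of the extension, here we are content to have the proxy principle hold only for ground-model stationary sets.

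There is no real obstacle: the statement is essentially a packaging of Theorem~\ref{genericity_lemma} together with the observation that $\mathbb{P}(\kappa, \chi)$ is $\chi$-directed closed, instantiated at $\chi = \lambda$. The only point worth noting is that we rely on indestructibility of $\lambda$ precisely because $\mathbb{P}(\kappa, \lambda)$ is $\lambda$-directed closed but in general not more, so one cannot hope to preserve supercompactness of $\lambda$ via a generic absoluteness argument through a further extension (as was done in Theorem~\ref{inaccessible_thm_1} via the thread-plus-iteration combination); indestructibility is used directly.
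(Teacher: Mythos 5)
Your proposal is correct and follows essentially the same route as the paper's own proof: take $\mathbb{P}(\kappa,\lambda)$, use $\lambda$-directed closure plus indestructibility to preserve supercompactness, use $\kappa$-strategic closure for preservation of cardinalities and cofinalities $\leq\kappa$, and quote Theorem~\ref{genericity_lemma} for the proxy principle. Your added remarks about why the auxiliary iteration $\mathbb{Q}$ is unnecessary here (since the stationary-set parameter is only $(\ns^+_\kappa)^V$) are accurate and consistent with the paper.
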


\begin{proof}
  Let $\mathbb{P}$ be $\mathbb{P}(\kappa, \lambda)$ of Definition~\ref{theposet}, let $g$ by $\mathbb{P}$-generic over $V$, and let
  $\vec{C} := \bigcup g$. As $\mathbb{P}$ is $\lambda$-directed closed, $\lambda$
  remains supercompact in $V[g]$. Moreover, in $V[g]$, by Theorem~\ref{genericity_lemma},
  $\vec{C}$ witnesses $\p^{-}(\kappa, 2, {\sq_\lambda}, \kappa, (\ns^+_\kappa)^V, 2, \sigma)$
  for all $\sigma < \kappa$.
\end{proof}

We remark that, by arguments of Cummings and Magidor from \cite[$\S3$]{MR2811288}, we can in fact perform a class-length iteration that
preserves the supercompactness
of $\lambda$ while forcing the statement that, for all regular $\kappa > \lambda$, there is a
sequence $\vec{C}$ witnessing $\p^{-}(\kappa, 2, {\sq_\lambda}, \kappa, (\ns^+_\kappa)^V, 2, \sigma)$
simultaneously for all $\sigma < \kappa$.

\begin{cor}\label{aleph_2_cor}
  Suppose that $\zfc$ is consistent with the existence of a supercompact cardinal. Then $\zfc$ is consistent
  with each of the following:
  \begin{enumerate}
    \item Martin's Maximum together with the statement that, for all regular $\kappa > \aleph_2$,
      there is a sequence $\vec{C}$ witnessing $\p^{-}(\kappa, 2, {\sq_{\aleph_2}}, \kappa, (\ns^+_\kappa)^V, 2, \sigma)$
      simultaneously for all $\sigma < \kappa$.
    \item Rado's Conjecture together with the statement that, for all regular $\kappa > \aleph_2$,
      there is a sequence $\vec{C}$ witnessing $\p^{-}(\kappa, 2, {\sq_{\aleph_2}}, \kappa, (\ns^+_\kappa)^V, 2, \sigma)$
      simultaneously for all $\sigma < \kappa$.
  \end{enumerate}
\end{cor}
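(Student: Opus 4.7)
The plan is to combine the standard consistency constructions for Martin's Maximum and Rado's Conjecture from a supercompact cardinal with a class-length iteration of the posets $\mathbb P(\kappa,\aleph_2) * (\dot{\mathbb Q}_\kappa \times \dot{\mathbb T}_\kappa)$ from Subsection~\ref{forcing_subsection}. The crucial enabling fact is that, by Lemma~\ref{directed_closed_lemma}, each such poset has a dense $\aleph_2$-directed closed subset, and that both MM and RC are preserved under $\aleph_2$-directed closed forcing --- by Larson in the case of MM and by Todorcevic in the case of RC.

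For item~(1), I would begin in a ground model $V$ containing a supercompact cardinal $\lambda$ and apply the Foreman--Magidor--Shelah iteration to obtain an intermediate model $W$ in which $\lambda = \aleph_2$ and MM holds. For item~(2), I would instead apply the L\'evy collapse of $\lambda$ to $\aleph_2$ to obtain, by Todorcevic's theorem, an intermediate model $W'$ satisfying RC. In either case, one then performs over the intermediate model a class-length Easton-support iteration $\mathbb{S}$, forcing at each regular $\kappa > \aleph_2$ with $\mathbb P(\kappa,\aleph_2) * (\dot{\mathbb Q}_\kappa \times \dot{\mathbb T}_\kappa)$, organized in the Cummings--Magidor fashion of~\cite{MR2811288}. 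The $\aleph_2$-directed closure at each stage, combined with the Easton-support bookkeeping, ensures that all cardinalities and cofinalities are preserved and that MM (resp.\ RC) survives to the final extension. At each regular $\kappa > \aleph_2$, Theorem~\ref{iteration_lemma} applied at the $\kappa$-th stage guarantees that the generic $\vec C$-sequence produced by $\mathbb P(\kappa,\aleph_2)$ witnesses $\p^-(\kappa, 2, \sq_{\aleph_2}, \kappa, (\ns^+_\kappa)^V, 2, \sigma)$ for all $\sigma < \kappa$ in the two-step extension by $\mathbb P(\kappa,\aleph_2) * \dot{\mathbb Q}_\kappa$; the tail of the iteration above $\kappa$ is sufficiently closed (its subsequent stages being $\kappa^+$-directed closed on dense subsets) to preserve both the relevant stationary subsets of $\kappa$ and the cofinal sequences witnessing the proxy principle, so the witness persists in the final model.

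The main obstacle lies in confirming that the Cummings--Magidor class-iteration framework, originally developed to preserve the supercompactness of a fixed $\lambda$ above which the iteration takes place, adapts cleanly to the preservation of MM or RC --- neither of which is a local property of a single cardinal. The argument reduces to a reflection-style observation: any purported counterexample to MM or RC in the final extension would have to manifest in a set-sized initial segment of the iteration, contradicting the known preservation of MM (resp.\ RC) under each set-sized $\aleph_2$-directed closed stage together with the standard Easton factorization lemmas for tails of the iteration. This strategy does not rely on $\aleph_2$ carrying any large-cardinal property in the final model, only on the uniform degree of closure of the iterands, which is precisely the $\aleph_2$-directed closure supplied by Lemma~\ref{directed_closed_lemma}.
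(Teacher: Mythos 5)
There is a genuine gap, and it stems from your choice of iterands. You propose to iterate $\mathbb{P}(\kappa,\aleph_2) * (\dot{\mathbb{Q}}_\kappa \times \dot{\mathbb{T}}_\kappa)$, but $\dot{\mathbb{T}}_\kappa$ adds a thread $D$ through the very sequence $\vec{C}$ that is supposed to be the witness; once $C_\alpha = D \cap \alpha$ for club-many $\alpha$, the hitting clause of $\p^-$ fails (take $A_0 := \acc(D)$), so no model containing the $\mathbb{T}_\kappa$-generic can satisfy the conclusion. If you instead read your construction as iterating only $\mathbb{P}(\kappa,\aleph_2) * \dot{\mathbb{Q}}_\kappa$ (as your appeal to Theorem~\ref{iteration_lemma} suggests), two new problems appear. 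First, Theorem~\ref{iteration_lemma} yields the principle only for finite $\sigma = n$, and with fifth parameter $\ns^+_\kappa$ as computed in the extension --- not $(\ns^+_\kappa)^V$ for all $\sigma < \kappa$ as you assert; indeed, with $(\ns^+_\kappa)^V$ the principle would actually \emph{fail} after forcing with $\mathbb{Q}_\kappa$, since $\mathbb{Q}_\kappa$ is designed to destroy the stationarity of certain ground-model stationary sets. Second, the preservation of $\mm$ and of Rado's Conjecture would then be unjustified: it is $\mathbb{P} * (\dot{\mathbb{Q}} \times \dot{\mathbb{T}})$, not $\mathbb{P} * \dot{\mathbb{Q}}$, that has a dense $\kappa$-directed closed subset by Lemma~\ref{directed_closed_lemma}, and neither $\mm$ nor Rado's Conjecture can be pulled back from the further $\mathbb{T}$-extension, as these principles are not downward absolute.

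The fix is to drop $\mathbb{Q}_\kappa$ and $\mathbb{T}_\kappa$ entirely. Because the corollary only asserts the proxy principle relative to $(\ns^+_\kappa)^V$, Theorem~\ref{genericity_lemma} applied to the plain poset $\mathbb{P}(\kappa,\aleph_2)$ already gives exactly the desired statement, simultaneously for all $\sigma < \kappa$, and $\mathbb{P}(\kappa,\aleph_2)$ is genuinely $\aleph_2$-directed closed, so the preservation theorems for $\mm$ (resp.\ Rado's Conjecture after $\coll(\aleph_1,{<}\lambda)$) apply directly to each iterand and, via the Cummings--Magidor factorization as in the remarks following Corollary~\ref{supercompact_cor}, to the class-length iteration. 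The $\mathbb{Q} \times \mathbb{T}$ machinery is needed only in results such as Theorems~\ref{refl_thm} and~\ref{inaccessible_thm_1}, where one wants the principle relative to all stationary sets of the \emph{final} model; importing it here both overshoots and, as explained above, breaks the argument. The rest of your outline (FMS iteration for $\mm$, the L\'evy collapse for Rado's Conjecture, Easton-style bookkeeping) matches the paper's intent.
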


\begin{proof}
  Starting in a model with a supercompact cardinal, one can force Martin's Maximum as in \cite{MR924672}. Martin's Maximum
  is preserved by $\aleph_2$-directed closed set forcing, so, by arguments from the proof of Corollary
  \ref{supercompact_cor} and the following remarks, we can force over the model of Martin's Maximum
  with a class-length iteration that preserves Martin's Maximum and forces that, for all regular $\kappa > \aleph_2$,
  there is a sequence $\vec{C}$ witnessing $\p^{-}(\kappa, 2, {\sq_{\aleph_2}}, \kappa, (\ns^+_\kappa)^V, 2, \sigma)$
  simultaneously for all $\sigma < \kappa$.

  The argument for Rado's Conjectures is similar, exploiting the theorem from \cite{MR686495} stating that, if $\lambda$ is
  supercompact, then Rado's Conjecture holds after forcing with $\coll(\aleph_1, < \lambda)$.
  Moreover, by standard arguments, in the resulting forcing extension, Rado's Conjecture is preserved by $\aleph_2$-directed closed set forcing.
  Now proceed as in the previous paragraph.
\end{proof}

The following results of Todorcevic show that Corollary \ref{aleph_2_cor} is sharp.

\begin{thm}[Todorcevic, {\cite[Theorem~1]{MR763902}}, {\cite[Theorem~10]{MR1261218}}]
  Assume the Proper Forcing Axiom or Rado's Conjecture, let $\kappa \geq \aleph_2$ be regular, and let
  $E^\kappa_{\aleph_1} \subseteq \Gamma \subseteq \acc(\kappa)$. Suppose
  $\langle C_\alpha \mid \alpha \in \Gamma \rangle$ is such that:
  \begin{enumerate}
    \item for all $\alpha \in \Gamma$, $C_\alpha$ is club in $\alpha$;
    \item for all $\beta \in \Gamma$ and all $\alpha \in \acc(C_\beta)$, we have
      $\alpha \in \Gamma$ and $C_\alpha \sq C_\beta$.
  \end{enumerate}
  Then there is a club $D \subseteq \kappa$ such that, for all $\alpha \in \acc(D)$,
  we have $\alpha \in \Gamma$ and $C_\alpha \sq D$.
\end{thm}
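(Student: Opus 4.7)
My plan is to prove each of the two hypotheses (PFA and Rado's Conjecture) separately, following the approach of Todorcevic in \cite{MR763902} and \cite{MR1261218}. In both cases, the idea is to reduce the problem to a well-studied object (a threading forcing or a tree) and then apply the compactness principle to extract a thread through $\vec C$, which after closing off yields the required club $D$.

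For the \textbf{PFA} case, I would consider the natural threading forcing $\mathbb{T}$ whose conditions are closed bounded subsets $d \subseteq \kappa$ such that for every $\alpha \in \acc(d)$, $\alpha \in \Gamma$ and $C_\alpha \sq d$, ordered by end-extension. First I would verify that $\mathbb T$ is proper. Given a countable elementary $M \prec H(\theta)$ containing $\vec C$ and a condition $d_0 \in M$, I would build an $(M,\mathbb T)$-generic extension by recursively meeting an enumeration of the dense sets of $M$ along a cofinal $\omega$-sequence $\langle \delta_n \mid n<\omega\rangle$ in $M\cap\kappa$, using $\sq$-coherence of $\vec C$ to guarantee that at each stage the amalgamation remains a condition; the sup $\delta_M$ itself has cofinality $\omega$ and need not lie in $\Gamma$, so one does \emph{not} add $\delta_M$ as a new accumulation point of the generic condition. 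Having established that $\mathbb T$ is proper (and, with a further argument, stationary-set preserving), one applies PFA to a family of $\aleph_1$-many dense sets ensuring cofinality in $\kappa$; here, for $\kappa>\aleph_2$ one additionally uses that $\mathbb T$ has strong closure above $\aleph_2$ (conditions of length in $E^\kappa_{\aleph_1}$ can be extended continuously), so $\aleph_1$-many dense sets suffice to reach cofinally high into $\kappa$.

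For the \textbf{Rado's Conjecture} case, I would associate to the putative counterexample $\vec C$ a tree $T$ of height $\omega_1$ whose nodes at level $\xi$ encode closed bounded coherent "partial threads" of order type $\xi$, ordered by end-extension. Nonexistence of a club $D$ as in the conclusion translates into the statement that $T$ has no cofinal branch; combined with the coherence hypothesis this ensures $T$ is not special (a specialisation of $T$ would allow one to colour $\kappa$ in a way incompatible with the abundance of $\sq$-extensions available from the hypothesis $E^\kappa_{\aleph_1}\subseteq\Gamma$). Rado's Conjecture then produces an $\aleph_1$-sized subtree $T'\subseteq T$ which is also not special; a countable elementary submodel capturing $T'$ together with a reflection point in $E^\kappa_{\aleph_1}$ yields a branch through $T'$, and hence the desired thread.

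The main obstacle in both cases is the mismatch between the countable calibre of the available compactness principles (countable elementary submodels for PFA, $\aleph_1$-sized subtrees for RC) and the potentially much larger cardinal $\kappa$, coupled with the fact that the hypothesis only guarantees $C_\alpha$ at points of cofinality $\aleph_1$ (not $\aleph_0$). The crucial ingredient that makes everything work is the $\sq$-coherence of $\vec C$ combined with $E^\kappa_{\aleph_1}\subseteq\Gamma$: any cofinal $\omega$-sequence below a reflection point $\delta\in E^\kappa_{\aleph_1}$ can be glued along $C_\delta$, giving the compatibility needed to push generic or branch-seeking arguments from $\aleph_1$ up to arbitrary regular $\kappa\ge\aleph_2$.
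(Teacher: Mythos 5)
First, a point of comparison: the paper offers no proof of this statement at all --- it is quoted as a known theorem of Todorcevic, with citations to \cite{MR763902} and \cite{MR1261218} --- so there is no in-paper argument to measure your sketch against, and it must stand on its own.

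On its own terms, the PFA half of your sketch has two concrete gaps. First, properness of the naive threading poset $\mathbb{T}$: at the limit stage of your $(M,\mathbb{T})$-generic construction you are handed a strictly decreasing $\omega$-chain of conditions whose maxima increase to some ordinal $\gamma$ of countable cofinality. If you close off the union by adding $\gamma$, then $\gamma$ becomes an accumulation point of the resulting set, so being a condition requires $\gamma\in\Gamma$ and $C_\gamma=\bigcup_n d_n$, neither of which is guaranteed (the hypothesis only places $E^\kappa_{\aleph_1}$ inside $\Gamma$); if you do not add $\gamma$, the union is not closed and hence is not a condition at all. So the chain need not have a lower bound, and your proposed remedy of ``not adding $\delta_M$ as a new accumulation point'' does not yield a condition. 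Second, and more fundamentally: any directed family of $\aleph_1$ many conditions ordered by end-extension is a chain of length at most $\omega_1$, so its union has supremum of cofinality at most $\aleph_1$; consequently no choice of $\aleph_1$ many dense sets can force the generic object to be unbounded in a regular $\kappa\geq\aleph_2$. Your appeal to ``strong closure above $\aleph_2$'' cannot repair this, because the obstruction is the cardinality of the dense-set family, not the closure of the poset. The known arguments circumvent both problems by first composing with a countably closed poset that changes the cofinality of $\kappa$ to $\omega_1$ (so that the two-step iteration acquires a $\sigma$-closed dense subset, much as in Lemma \ref{directed_closed_lemma} of this paper, and an $\aleph_1$-generated filter can then be cofinal) and by deriving a contradiction from the assumed nonexistence of $D$ rather than reading $D$ off the filter directly. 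The Rado's Conjecture half of your sketch suffers from the analogous calibration problem --- an $\aleph_1$-sized nonspecial subtree and a branch through it live below some $\delta<\kappa$ and do not by themselves produce a club in $\kappa$ --- and the asserted equivalence between ``no thread exists'' and ``the tree of partial threads is nonspecial'' is stated rather than proved. As written, neither half closes.
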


\section{Realizing all closed intervals}\label{section5}

Recall the following definition.
\begin{defn}[The chromatic spectrum of a graph, \cite{rinot17}] For a class $\mathcal{P}$ of forcing notions
and a graph $\mathcal G$, let $$\chr_{\mathcal{P}}(\mathcal G) := \{\kappa \mid \exists \mathbb{P}
\in \mathcal{P} (\forces_\mathbb{P} ``\chr(\mathcal G) = \kappa")\}.$$
\end{defn}

In \cite{rinot17},
the second author proves that, if $V = L$ and $\mathcal{P}$ is the class of cofinality-preserving
and $\gch$-preserving forcing posets, then any closed interval of infinite cardinals whose
maximum is below the first cardinal fixed point can be realized as $\chr_{\mathcal{P}}(\mathcal G)$
for some graph $\mathcal G$. The proof uses the $C$-sequence graph $G_\lambda(\vec{C_\lambda})$ as a building block,
where $\vec C_\lambda$ is a $\square_\lambda$-sequence and $G_\lambda$ is some stationary subset of $E^{\lambda^+}_{\cf(\lambda)}$,
chosen in such a way that the $G_\lambda$'s (for different values of $\lambda$) satisfy some sort of \emph{mutual stationarity} condition,
made possible by the fact that, for every infinite cardinal $\theta$ below the first cardinal fixed point,
$[\aleph_0,\theta)$ may be partitioned into finitely many
progressive sets.\footnote{That is, $\card[\aleph_0,\theta)=\mathfrak a_0\uplus\ldots\uplus\mathfrak a_m$, with $\min(\mathfrak a_i)>|\mathfrak a_i|$ for all $i\le m$.}

The forcing notions from \cite{rinot17} witnessing the chromatic spectra are full-support products of posets that build upon Clause~(1) of Lemma~\ref{lemma1}.
Note that Clause~(2) of Lemma~\ref{lemma1} is irrelevant for $\square_\lambda$-sequences, as any forcing to introduce such a threading club $D$ will necessarily collapse the cardinal $\lambda^+$.

\medskip

In this section, we produce a forcing extension satisfying the
same statement about the chromatic spectrum of a graph, but without the restriction that the interval be below the first cardinal fixed
point. More precisely, we
will produce a class forcing extension satisfying \gch\ in which every closed
interval of infinite cardinals is realizable as $\chr_{\mathcal{P}}(G)$ for some
graph $\mathcal G$, where $\mathcal{P}$ is again the class of cofinality-preserving, $\gch$-preserving forcing posets.
Of course, we shall use the $C$-sequence graph as a building block,
but this time, $\vec C$ will be a generic $\square(\kappa)$-sequence, $G$ will simply be $\acc(\kappa)$,
and the witnessing notion of forcing will be an Easton-support product of posets building upon Clause~(2) of Lemma~\ref{lemma1}.

It remains open whether such an unrestricted result follows from $V=L$.

\medskip

Recall the following basic definition.

\begin{defn}
  Suppose $\mathbb{P}$ and $\mathbb{Q}$ are forcing posets. A map $\pi:\mathbb{Q}
  \rightarrow \mathbb{P}$ is a \emph{projection} if:
  \begin{itemize}
    \item $\pi$ is order-preserving, i.e. for all $q_0, q_1 \in \mathbb{Q}$,
      if $q_1 \leq_\mathbb{Q} q_0$, then $\pi(q_1) \leq_\mathbb{P} \pi(q_0)$;
    \item $\pi(\one_\mathbb{Q}) = \one_\mathbb{P}$;
    \item for all $q \in \mathbb{Q}$ and all $p \leq_\mathbb{P} \pi(q)$, there
      is $q' \leq_\mathbb{Q} q$ such that $\pi(q') \leq_\mathbb{P} p$.
  \end{itemize}
  If $\pi:\mathbb{Q} \rightarrow \mathbb{P}$ is a projection and $H$ is $\mathbb{P}$-generic
  over $V$, then let $\mathbb{Q} / H$ denote the poset whose set of conditions is
  $\{q \in \mathbb{Q} \mid \pi(q) \in H\}$ and whose order is inherited from $\mathbb{Q}$.
  Note that, if $\pi:\mathbb{Q} \rightarrow \mathbb{P}$ is a projection and $\dot{H}$
  is the canonical $\mathbb{P}$-name for the generic filter, then $\mathbb{Q}$ is isomorphic
  to a dense subset of $\mathbb{P} * \mathbb{Q}/\dot{H}$ via the map $q \mapsto (\pi(q), \check{q})$.
\end{defn}

If $\kappa$ is a regular, uncountable cardinal, let $\mathbb{S}(\kappa)$ denote the forcing poset
$\mathbb{P}(\kappa, \aleph_0)$ of Definition~\ref{theposet}, i.e. $\mathbb{S}(\kappa)$ is the
standard forcing to add a $\square(\kappa)$-sequence by initial segments.
Let $\mathbb{S}^*(\kappa)$ denote the forcing poset with the same set of conditions as
$\mathbb{S}(\kappa)$ but with an ordering given by $t \leq_{\mathbb{S}^*(\kappa)} s$ iff $t \supseteq s$
and $C^s_{\gamma^s} \sq C^t_{\gamma^t}$.

\begin{prop} \label{projection_prop}
  The identity map $\id:\mathbb{S}^*(\kappa) \rightarrow \mathbb{S}(\kappa)$ is a projection.
\end{prop}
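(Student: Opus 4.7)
The plan is to verify the three defining properties of a projection in order. Order-preservation is immediate: if $t\leq_{\mathbb{S}^*(\kappa)}s$, then by definition $t\supseteq s$, which is precisely the order on $\mathbb{S}(\kappa)$. Likewise, $\id$ sends the shared maximum condition $\emptyset$ to itself.

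The content lies in the third clause. Given $s\in\mathbb{S}^*(\kappa)$ and $p\leq_{\mathbb{S}(\kappa)}s$, I will produce $t\leq_{\mathbb{S}^*(\kappa)}s$ with $t\leq_{\mathbb{S}(\kappa)}p$. If $s=\emptyset$ or $\gamma^s=\gamma^p$ (which, together with $p\supseteq s$, forces $p=s$), I simply set $t:=p$. Otherwise $\gamma^s<\gamma^p$, and I will build $t$ by extending $p$ one step further to $\gamma^t:=\gamma^p+\omega$: let $t$ agree with $p$ on indices $\alpha\leq\gamma^p$, set $C^t_{\gamma^p+(n+1)}:=\{\gamma^p+n\}$ for each $n<\omega$, and at the top put
$$C^t_{\gamma^t}:=C^s_{\gamma^s}\cup\{\gamma^s\}\cup\{\gamma^p+n\mid 1\leq n<\omega\}.$$
This construction manifestly forces $C^s_{\gamma^s}=C^t_{\gamma^t}\cap\gamma^s\sq C^t_{\gamma^t}$ and $t\supseteq p$, which will give both $t\leq_{\mathbb{S}^*(\kappa)}s$ and $t\leq_{\mathbb{S}(\kappa)}p$ as soon as $t$ is shown to be a legitimate condition.

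The bulk of the argument is thus to confirm that $t$ is $\sq_{\aleph_0}$-coherent. Coherence at indices $\leq\gamma^p$ is inherited from $p$, and at the freshly added successor indices above $\gamma^p$ there is nothing to check. For the new top, the key observation will be that since $\gamma^s<\gamma^p$ and the tail $\{\gamma^p+n\mid 1\leq n<\omega\}$ consists entirely of successor ordinals, one has $\acc(C^t_{\gamma^t})=\acc(C^s_{\gamma^s})\cup\{\gamma^s\}$. Coherence at each $\bar\alpha\in\acc(C^s_{\gamma^s})$ will then reduce to $C^s_{\bar\alpha}\sq C^s_{\gamma^s}$, guaranteed by the coherence of $s$; coherence at $\bar\alpha=\gamma^s$ will reduce to the identity $C^p_{\gamma^s}=C^s_{\gamma^s}=C^t_{\gamma^t}\cap\gamma^s$, which follows from $p\supseteq s$. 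The only subtle point, and the whole reason for padding the extension with $\omega$ many successors, is the need to avoid generating any new limit accumulation points of $C^t_{\gamma^t}$ beyond those already governed by $s$, which would otherwise trigger a coherence obligation that $p$ was never designed to satisfy.
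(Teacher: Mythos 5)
Your proof is correct and follows essentially the same route as the paper's: the only substantive clause is density of $\leq_{\mathbb{S}^*(\kappa)}$-extensions below a given $\leq_{\mathbb{S}(\kappa)}$-extension, handled by padding $p$ with $\omega$ many successor steps up to $\gamma^p+\omega$ and placing on top the club $C^s_{\gamma^s}\cup\{\gamma^s\}\cup\{\gamma^p+n\mid n<\omega\}$, exactly the paper's construction (the paper includes $\gamma^p$ itself in the tail, which is immaterial since it is not an accumulation point of the new club). Your explicit treatment of the degenerate cases $s=\emptyset$ and $\gamma^s=\gamma^p$, and of why no new coherence obligations arise at the top, only spells out what the paper leaves as "easily verified."
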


\begin{proof}
Clearly,  $\id$ is order-preserving, and $\id(\one_{\mathbb{S}^*(\kappa)}) = \one_{\mathbb{S}(\kappa)} = \emptyset$.
  Fix $s_0, s_1 \in \mathbb{S}(\kappa)$ with $s_1 \leq_{\mathbb{S}(\kappa)} s_0$. We must produce $s_2$ such that
  $s_2 \leq_{\mathbb{S}^*(\kappa)} s_0$ and $s_2 \leq_{\mathbb{S}(\kappa)} s_1$.

  For $i < 2$, let $\gamma_i = \gamma^{s_i}$, and let $\gamma_2 := \gamma_1 + \omega$. Let
  $C := C^{s_0}_{\gamma_0} \cup \{\gamma_0\} \cup \{\gamma_1 + n \mid n < \omega\}$, and
  define $s_2 \in \mathbb{S}^*(\kappa)$ with $\gamma^{s_2} := \gamma_2$ by letting
  $C^{s_2}_{\gamma_2} := C$ and $C^{s_2}_{\delta} := C^{s_1}_{\delta}$ for all limit $\delta < \gamma_2$.
  It is easily verified that $s_2$ is as desired, so $\id$ is indeed a projection.
\end{proof}

Unlike $\mathbb S(\kappa)$, which is merely $\omega_1$-directed closed, we have:
\begin{prop} \label{closure_prop}
  $\mathbb{S}^*(\kappa)$ is $\kappa$-directed closed.
\end{prop}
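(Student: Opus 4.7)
The plan is to fix a directed set $D \subseteq \mathbb{S}^*(\kappa)$ with $|D| < \kappa$ and produce a lower bound in $\mathbb{S}^*(\kappa)$. The first and most crucial step is to observe that, in the order of $\mathbb{S}^*(\kappa)$, $D$ is automatically a chain. Given $s, t \in D$, pick any common lower bound $r \in D$; from the definition of $\leq_{\mathbb{S}^*(\kappa)}$ one has $C^s_{\gamma^s}, C^t_{\gamma^t} \sq C^r_{\gamma^r}$, which forces one of the two top-clubs to be an initial segment of the other, and hence $\gamma^s$ and $\gamma^t$ to be comparable (since each top-club is cofinal in its corresponding $\gamma$). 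Moreover, $s, t \subseteq r$ as sets of pairs, so $s$ and $t$ agree on the intersection of their domains, and the one with the smaller top index is contained in the other as a function. Thus $s$ and $t$ are comparable in $\leq_{\mathbb{S}^*(\kappa)}$.

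With the chain structure in hand, set $\gamma := \sup\{\gamma^s \mid s \in D\}$. If this supremum is attained by some $s^* \in D$, then $s^*$ is itself a lower bound of $D$. Otherwise $\gamma$ is a limit ordinal strictly below $\kappa$ (using regularity of $\kappa$ together with $|D| < \kappa$), and I would define a candidate lower bound $r$ by setting $\gamma^r := \gamma$, taking $r \restriction \gamma := \bigcup D$, and letting $C^r_\gamma := \bigcup\{C^s_{\gamma^s} \mid s \in D\}$.

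The remaining verifications are essentially bookkeeping. The chain structure delivers the key identity $C^r_\gamma \cap \gamma^s = C^s_{\gamma^s}$ for each $s \in D$: one containment is obvious, and for the other, any element of $C^r_\gamma \cap \gamma^s$ lies in some $C^{s'}_{\gamma^{s'}}$, which by the chain property either sits $\sq$-inside $C^s_{\gamma^s}$ or contains it as an initial segment past the point in question. This identity in one stroke supplies the threading clause $C^s_{\gamma^s} \sq C^r_\gamma$ for all $s \in D$ and reduces closure of $C^r_\gamma$ in $\gamma$ to closure of each $C^s_{\gamma^s}$ in $\gamma^s$ (cofinality being clear). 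The $\sq$-coherence of $r$ at its top level follows similarly: for $\bar\alpha \in \acc(C^r_\gamma)$, choose $s \in D$ with $\bar\alpha < \gamma^s$ so that $\bar\alpha \in \acc(C^s_{\gamma^s})$, and then compute $C^r_{\bar\alpha} = C^s_{\bar\alpha} = C^s_{\gamma^s} \cap \bar\alpha = C^r_\gamma \cap \bar\alpha$ using the $\sq$-coherence of $s$.

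I do not anticipate any serious obstacle here; the construction is essentially canonical. The one point worth highlighting is the role played by the threading clause in $\leq_{\mathbb{S}^*(\kappa)}$ (which is what distinguishes it from $\mathbb{S}(\kappa)$, the latter being only $\omega_1$-closed in general): it is precisely this clause that upgrades directedness to linearity of $D$ and guarantees that the union of the top-clubs $C^s_{\gamma^s}$ coheres into a new top-club having each of them as an initial segment.
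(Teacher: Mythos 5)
Your proof is correct and follows essentially the same route as the paper's: the paper first notes that $\mathbb{S}^*(\kappa)$ is tree-like (which is exactly your observation that a directed subset is automatically a chain, via the threading clause on top-clubs) and thereby reduces to $\kappa$-closure, then defines the lower bound by taking $\gamma^s:=\sup$ of the top indices and $C^s_\gamma$ to be the union of the top-clubs, verifying clubness and coherence just as you do.
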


\begin{proof}
  Write $\mathbb{S}^* := \mathbb{S}^*(\kappa)$. First note that $\mathbb{S}^*$ is tree-like, so it suffices to show that
  it is $\kappa$-closed. To this end, fix a limit ordinal $\delta < \kappa$, and let
  $\langle s_\eta \mid \eta < \delta \rangle$ be a strictly decreasing sequence of conditions from
  $\mathbb{S}^*$. Let $\gamma := \sup\{\gamma^{s_\eta} \mid \eta < \delta\}$. As $\kappa$ is regular,
  we have $\gamma < \kappa$. We will define a lower bound $s \in \mathbb{S}^*$ with $\gamma^s := \gamma$.
  To specify $s$, it is enough to set $C^s_\gamma := \bigcup_{\eta < \delta} C^{s_\eta}_{\gamma^{s_\eta}}$.
  Note that, by the definition of $\mathbb{S}^*$, we have that, for all $\eta < \xi < \delta$,
  $C^{s_\eta}_{\gamma^{s_\eta}} \sq C^{s_\xi}_{\gamma^{s_\xi}}$. This implies that $C^s_\gamma$ is in fact a
  club in $\gamma$ and, for all $\alpha \in \acc(C^s_\gamma)$, $C^s_\alpha \sq C^s_\gamma$. Therefore,
  $s$ is a condition in $\mathbb{S}^*$ and is a lower bound for $\langle s_\eta \mid \eta < \delta \rangle$.
\end{proof}

\begin{remarks}\begin{enumerate}
\item  If $\kappa^{<\kappa} = \kappa$, then $\mathbb{S}^*(\kappa)$ is a $\kappa$-directed closed forcing poset of
  size $\kappa$ and therefore forcing equivalent to the forcing to add a Cohen subset of $\kappa$.

\item  Suppose that $S$ is $\mathbb{S}(\kappa)$-generic over $V$, and let $\vec{C}:=\bigcup S =
  \langle C_\alpha \mid \alpha < \kappa \rangle$ be the $\square(\kappa)$-sequence
  added by $S$. In $V[S]$, $\mathbb{S}^*(\kappa)/S$ adds a thread through $\vec{C}$: if
  $T$ is $\mathbb{S}^*(\kappa)/S$-generic over $V[S]$, then $D := \bigcup_{t \in T} C_{\gamma^t}$
  is a club in $\kappa$ and, for all $\alpha \in \acc(D)$, $C_\alpha \sq D$.
\end{enumerate}
\end{remarks}

Recall that a set $X$ of ordinals is an \emph{Easton set} if, for every infinite, regular cardinal $\kappa$,
$|X \cap \kappa| < \kappa$. Let $\mathbb{P}$ be the class-length Easton support product forcing where, for all ordinals $i$, the $i^{\mathrm{th}}$
factor is $\mathbb{S}(i)$ if $i$ is a regular, uncountable cardinal and trivial forcing otherwise. Throughout
our discussion, we will disregard coordinates on which trivial forcing is being done. Conditions
of $\mathbb{P}$ are therefore all functions $p$ such that:
\begin{itemize}
  \item $\dom(p)$ is an Easton set of regular, uncountable cardinals;
  \item for all $i \in \dom(p)$, we have $p(i) \in \mathbb{S}(i)$.
\end{itemize}
For $p,q \in \mathbb{P}$, we let $q \leq p$ iff $\dom(q) \supseteq \dom(p)$ and, for all
$i \in \dom(p)$, $q(i) \leq_{\mathbb{S}(i)} p(i)$.
For ordinals $i < j$, let $\mathbb{P}_{i,j}$ denote the poset whose conditions are all $p \in \mathbb{P}$ such that
$\dom(p) \subseteq [i,j)$ and whose order is inherited from $\mathbb{P}$.
For an ordinal $i>0$, let $\mathbb{P}_i$ denote $\mathbb{P}_{0,i}$, and let $\mathbb{P}^i$ denote the class of $p \in \mathbb{P}$
such that $\dom(p) \cap i = \emptyset$. We therefore have, for all $i < j$,
$\mathbb{P} \cong \mathbb{P}_i \times \mathbb{P}_{i,j} \times \mathbb{P}^j$.

Assume \gch\ for the remainder of the section. The next proposition plays the role of Lemma~3.7 of \cite{rinot17}.

\begin{prop} \label{easton_prop}
  Suppose $i$ is a regular, uncountable cardinal and $j > i^+$. Then:
  \begin{enumerate}
    \item $\mathbb{P}_{i^+}$ has the $i^+$-c.c.;
    \item $\mathbb{P}_{i^+,j}$ is $i^+$-strategically closed;
    \item $\forces_{\mathbb P_{i^+}}``\mathbb{P}_{i^+,j}\text{ is }i^+\text{-distributive}."$
  \end{enumerate}
\end{prop}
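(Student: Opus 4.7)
My plan is to address the three clauses in turn, each via standard Easton-product techniques, with the strategic closure of the individual factors $\mathbb S(\kappa)$ (Lemma~\ref{strat_closed_lemma}) as the principal input.

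For clause (1), I would run a $\Delta$-system argument using $\gch$. Since $|\mathbb S(\kappa)| = \kappa$ for each regular uncountable $\kappa$, and Easton support ensures every condition in $\mathbb P_{i^+}$ has domain of size $\leq i$, the hypotheses of the $\Delta$-system lemma are met, and any $i^+$ conditions admit a refinement of size $i^+$ whose domains form a $\Delta$-system with some root $r$ of size $\leq i$. Bounded regularity of $i^+$ places $r$ below $i^+$, so that the restrictions to $r$ lie in a smaller Easton product $\mathbb P_r$; tree-likeness of each factor $\mathbb S(\kappa)$ --- two conditions are compatible iff one is an initial segment of the other --- allows an inductive coordinate-by-coordinate thinning to yield two conditions whose restrictions to $r$ are compatible. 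As the remaining supports are pairwise disjoint, these conditions are compatible in $\mathbb P_{i^+}$.

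For clause (2), I would exhibit a winning strategy for player \textup{II} in $\Game_{i^+}(\mathbb P_{i^+, j})$ by playing coordinate-wise using the strategies from Lemma~\ref{strat_closed_lemma} at each factor (each of which is in particular $i^+$-strategically closed). The only substantive point is closure at limit stages $\sigma < i^+$: the accumulated domain is a union of $\sigma$-many Easton subsets $\{D_\alpha \mid \alpha < \sigma\}$ of $[i^+, j)$, and for any regular $\mu \in [i^+, j)$ one has $\left|\bigcup_{\alpha < \sigma} D_\alpha \cap \mu\right| \leq \sigma \cdot \sup_\alpha |D_\alpha \cap \mu| < \mu$ since $\sigma < i^+ \leq \mu$; hence the coordinate-wise lower bound remains a valid condition in $\mathbb P_{i^+, j}$.

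For clause (3), I would invoke the strategic-closure form of Easton's lemma. Given a $\mathbb P_{i^+}$-generic $G$ over $V$, an ordinal $\gamma < i^+$, a condition $q \in \mathbb P_{i^+, j}$, and a $\mathbb P_{i^+, j}$-name $\dot f \in V[G]$ for a function $f : \gamma \to \mathrm{Ord}$, I lift $\dot f$ to a $\mathbb P_{i^+} \times \mathbb P_{i^+, j}$-name in $V$. Working in $V$, I build a descending sequence in $\mathbb P_{i^+, j}$ below $q$ of length $\gamma$ by having player \textup{II} supply lower bounds at limit stages via her winning strategy from clause (2), and having player \textup{I} at each successor stage $\alpha < \gamma$ extend to a condition that decides $\dot f(\alpha)$ against every element of a maximal antichain of $\mathbb P_{i^+}$ --- such an antichain has size $<i^+$ by clause (1), so the required bookkeeping is absorbed by the strategic closure. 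The bottom of this sequence forces $\dot f$ to coincide with a name constructible in $V[G]$, establishing $i^+$-distributivity. The main obstacle I expect is precisely this orchestration in clause (3): marshalling player \textup{I}'s moves so that the $<i^+$ antichain-decisions are integrated coherently through successor and limit stages without derailing the winning strategy of player \textup{II}.
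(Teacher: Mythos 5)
Your proposal is correct, and clauses (2) and (3) follow essentially the paper's route: the paper's proof of (2) is exactly your coordinate-wise strategy built from the strategies of Lemma~\ref{strat_closed_lemma}, with the same observation that a union of fewer than $i^+ \leq \mu$ Easton sets is Easton; and for (3) the paper simply cites the strategic-closure version of Easton's Lemma together with clauses (1) and (2), whereas you sketch that lemma's proof --- a legitimate expansion, and the antichain bookkeeping you worry about is the standard one. The real divergence is in clause (1): where you run a $\Delta$-system argument, the paper just counts. Under $\gch$, an Easton set of regular uncountable cardinals below $i^+$ is (up to the single possible point $i$) a subset of $\reg(i)$ of size $<i$, so there are only $i^{<i}=i$ many possible domains, and each product $\prod_{k\in d}\mathbb{S}(k)$ has size at most $i^{|d|}\le i$; hence $|\mathbb{P}_{i^+}|=i$ and the $i^+$-c.c.\ is immediate. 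This is strictly less work and the size computation is reused later (e.g.\ in the $\gch$-preservation argument). Your $\Delta$-system route does work, but note one small hazard in the final step: ``inductive coordinate-by-coordinate thinning'' over a root $r$ of infinite size runs into trouble at limit stages of the induction (a decreasing $|r|$-sequence of $i^+$-sized families need not have an $i^+$-sized intersection when $|r|\ge\omega$); the clean fix is to pigeonhole on the at most $i^{|r|}\le i$ possible restrictions to the root all at once --- at which point you have essentially rediscovered the paper's counting argument.
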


\begin{proof}
  (1) Note that, as $i^{<i} = i$, the number of Easton subsets of
  $i^+ \cap \reg$ is $i$. For each such Easton subset $d \subseteq i^+ \cap \reg$
  and each $k \in d$, we have $|\mathbb{S}(k)| = k \leq i$, so
  $|\prod_{k \in d} \mathbb{S}(k)| = i$. It follows that $|\mathbb{P}_{i^+}| = i$.
  In particular, $\mathbb{P}_{i^+}$ has the $i^+$-c.c.

  (2) By Lemma \ref{strat_closed_lemma}, we know that,
  for every regular, uncountable $k \in [i^+, j)$, $\mathbb{S}(k)$ is
  $i^+$-strategically closed. Fix a winning strategy $\sigma_k$ for \textrm{II} in
  the game $\Game_{i^+}(\mathbb{S}(k))$.\footnote{Recall Definition~\ref{thegame}.}
  We describe a winning strategy $\sigma$ for \textrm{II} in the game $\Game_{i^+}(\mathbb{P}_{i^+,j})$.
  We will inductively arrange that, if $\langle p_\xi \mid \xi < i^+ \rangle$
  is a run of the game in which \textrm{II} plays according to $\sigma$ and $k \in \bigcup_{\xi < i^+} \dom(p_\xi)$,
  then, letting $\xi_k < i^+$ be least such that $k \in \dom(p_{\xi_k})$, we have that $\xi_k$ is an odd ordinal and
  $\langle \emptyset \rangle ^\frown \langle p_\xi(k) \mid \xi_k \leq \xi < i^+ \rangle$ is
  a run of $\Game_{i^+}(\mathbb{S}(k))$ in which \textrm{II} plays according to $\sigma_k$.

  Suppose that $\eta < i^+$ is an even ordinal and $\langle p_\xi \mid \xi < \eta \rangle$ is a partial
  run of the game in which \textrm{II} has played according to $\sigma$. Let $X := \bigcup_{\xi < \eta} \dom(p_\xi)$.
  Since $\dom(p_\xi)$ is an Easton subset of $[i^+, j)$ for all $\xi < \eta$ and, for all
  regular $k \in [i^+, j)$, we have $\eta < i^+ \leq k$, it follows that $X$ is an Easton subset of $[i^+, j)$.
  For all $k \in X$, let $\xi_k < \eta$ be least such that $k \in \dom(p_{\xi_k})$.
  Define a condition $p$ by letting $\dom(p) := X$ and, for every regular, uncountable $k \in X$,
  letting $p(k) := \sigma_k(\langle \emptyset \rangle ^\frown \langle p_\xi(k) \mid \xi_k \leq \xi < \eta \rangle)$.
  By our inductive assumptions about $\sigma$, this is well-defined. Let $\sigma(\langle p_\xi \mid \xi < \eta \rangle) := p$.
  It is easily verified that this maintains our inductive assumptions and defines a winning strategy for
  \textrm{II} in $\Game_{i^+}(\mathbb{P}_{i,j})$.

  (3) By Clauses (1),(2), and the strategic closure version of Easton's Lemma (cf. \cite[Remark~5.17]{MR2768691}).
\end{proof}

By Clause~(3) of Proposition \ref{easton_prop}, $V^{\mathbb{P}}$ is a model of \zfc.
We next argue that $V^{\mathbb{P}}$ has the same cofinalities (and hence cardinalities)
as $V$. It suffices to show that $\cf(\kappa) > \mu$ in $V^{\mathbb{P}}$ for all $V$-regular cardinals $\mu < \kappa$.
Fix such $\mu$ and $\kappa$. By Proposition~\ref{easton_prop}(1),
$\mathbb{P}_{\mu^+}$ has the $\mu^+$-c.c., so, as $\mu^+ \leq \kappa$, $\kappa$
remains regular in $V^{\mathbb{P}_{\mu^+}}$. By Proposition~\ref{easton_prop}(3),
for all $\lambda > \mu^+$, $\mathbb{P}_{\mu^+, \lambda}$ is $\mu^+$-distributive in $V^{\mathbb{P}_{\mu^+}}$ and
thus cannot add any new functions from $\mu$ to $\kappa$. Therefore, $\cf(\kappa) > \mu$
in $V^{\mathbb{P}_\lambda}$ for all $\lambda$, and hence in $V^{\mathbb{P}}$ as well.

We next argue that \gch\ holds in $V^{\mathbb{P}}$. To do this, we must show that,
for every infinite cardinal $\kappa$, $\kappa^{\cf{\kappa}} = \kappa^+$ in $V^{\mathbb{P}}$.
Fix such a $\kappa$. By the arguments of the previous paragraph,
${^{\cf(\kappa)}}\kappa \cap V^{\mathbb{P}} = {^{\cf(\kappa)}}\kappa \cap V^{\mathbb{P}_{\cf(\kappa)^+}}$.
A nice $\mathbb{P}_{\cf(\kappa)^+}$-name for an element of ${^{\cf(\kappa)}}\kappa$
consists of a function from $\cf(\kappa) \times \kappa$ to the set of antichains of $\mathbb{P}_{\cf(\kappa)^+}$.
Since $\mathbb{P}_{\cf(\kappa)^+}$ has the
$\cf(\kappa)^+$-c.c. and $|\mathbb{P}_{\cf(\kappa)^+}| = \cf(\kappa)$, there are only
$\cf(\kappa)^+$ possible antichains of $\mathbb{P}_{\cf(\kappa)^+}$ and hence only
$(\cf(\kappa)^+)^\kappa = \kappa^+$ nice $\mathbb{P}_{\cf(\kappa)^+}$-names for elements of
${^{\cf(\kappa)}}\kappa$. Therefore, $\kappa^{\cf(\kappa)} = \kappa^+$ in $V^{\mathbb{P}_{\cf(\kappa)^+}}$
and hence in $V^{\mathbb{P}}$.

\medskip

For ordinals $i < j$, let $\mathbb{P}^*_{i,j}$ be the poset with the same conditions
as $\mathbb{P}_{i,j}$ but ordered on regular, uncountable coordinates $k \in [i,j)$
by $\leq_{\mathbb{S}^*(k)}$ rather than by $\leq_{\mathbb{S}(k)}$. The following is immediate
from Lemmas \ref{projection_prop} and \ref{closure_prop}

\begin{lemma}
  Suppose $i < j$.
  \begin{enumerate}
    \item The identity map $\id:\mathbb{P}^*_{i,j} \rightarrow \mathbb{P}_{i,j}$ is a projection;
    \item If $\ell$ is the least regular cardinal in the interval $[i,j)$,
        then $\mathbb{P}^*_{i,j}$ is $\ell$-directed closed. \qed
  \end{enumerate}
\end{lemma}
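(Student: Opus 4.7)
The plan is to apply Propositions \ref{projection_prop} and \ref{closure_prop} coordinate-by-coordinate, with the Easton-support machinery handling everything globally.

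For Clause~(1), I will verify the three axioms of a projection. Order-preservation and the clause $\one_{\mathbb{P}^*_{i,j}} \mapsto \one_{\mathbb{P}_{i,j}}$ are immediate from the fact that, on each nontrivial coordinate $k$, these properties hold for $\id:\mathbb{S}^*(k) \to \mathbb{S}(k)$ by Proposition \ref{projection_prop}. The substantive axiom is the lifting property: given $q \in \mathbb{P}^*_{i,j}$ and $p \leq_{\mathbb{P}_{i,j}} q$, I would construct $q' \leq_{\mathbb{P}^*_{i,j}} q$ with $q' \leq_{\mathbb{P}_{i,j}} p$ by setting $\dom(q') := \dom(p)$ and, for each $k \in \dom(p)$, invoking the coordinate-wise lifting guaranteed by Proposition \ref{projection_prop} (treating $q(k)$ as $\one_{\mathbb{S}(k)}$ on those $k \in \dom(p) \setminus \dom(q)$) to choose $q'(k) \leq_{\mathbb{S}^*(k)} q(k)$ with $q'(k) \leq_{\mathbb{S}(k)} p(k)$. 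Since $\dom(q') = \dom(p)$ is already an Easton set, $q'$ is a legitimate condition in $\mathbb{P}^*_{i,j}$.

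For Clause~(2), given $\delta < \ell$ and a directed family $\{p_\xi \mid \xi < \delta\} \subseteq \mathbb{P}^*_{i,j}$, the plan is to define a lower bound $q$ with $\dom(q) := D := \bigcup_{\xi < \delta} \dom(p_\xi)$ and, for each $k \in D$, take $q(k)$ to be a lower bound in $\mathbb{S}^*(k)$ of the directed family $\{p_\xi(k) \mid \xi < \delta,\ k \in \dom(p_\xi)\}$. Such a lower bound exists because $\mathbb{S}^*(k)$ is $k$-directed closed by Proposition \ref{closure_prop}, and the family has size at most $|\delta| < \ell \leq k$. The only bookkeeping point is that $D$ must be an Easton set: for regular $k < \ell$ we have $D \cap k = \emptyset$ (since all coordinates in $\mathbb{P}^*_{i,j}$ lie in $[i,j) \subseteq [\ell,\infty)$), and for regular $k \geq \ell$ the set $D \cap k$ is a union of fewer than $\ell \leq k$ sets each of size $< k$, hence of size $< k$.

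No real obstacle arises; both clauses reduce to combining the coordinate-wise statements of Propositions \ref{projection_prop} and \ref{closure_prop} with the standard observation that Easton sets are closed under unions of size less than the smallest regular cardinal in their support.
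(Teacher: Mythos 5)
Your proposal is correct and is exactly the argument the paper has in mind: the paper states the lemma as immediate from Propositions \ref{projection_prop} and \ref{closure_prop}, and your coordinate-by-coordinate verification (together with the routine check that the relevant domains remain Easton sets, using that every nontrivial coordinate is a regular uncountable cardinal $\geq \ell$) is precisely the omitted routine work. No gaps.
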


Let $S$ be $\mathbb{P}$-generic over $V$. For ordinals $i < j$, let $S_i$, $S_{i,j}$, and $S^i$
denote the generic filters for $\mathbb{P}_i$, $\mathbb{P}_{i,j}$, and $\mathbb{P}^i$, respectively,
induced by $S$. In $V[S]$, let $\mathcal{P}$ be the class of all
cofinality-preserving, $\gch$-preserving forcing posets.

\begin{thm}
  In $V[S]$, for every pair of infinite cardinals $\mu \leq \kappa$, there exists a graph $G_{\mu,\kappa}$
  such that $\chr_\mathcal{P}(G_{\mu, \kappa})$ is the set of cardinals $\lambda$ such that
  $\mu \leq \lambda \leq \kappa$.
\end{thm}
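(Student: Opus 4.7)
The plan is to define $G_{\mu,\kappa}$ as the disjoint union of a complete graph $K_\mu$ together with $C$-sequence graphs derived from the generic $\square$-sequences added by $\mathbb{P}$. For each regular uncountable $\lambda\in(\mu,\kappa]$, let $\vec{C}_\lambda:=\bigcup(S\cap\mathbb{S}(\lambda))$ denote the generic $\square(\lambda)$-sequence and let $\mathcal{H}_\lambda:=G(\vec{C}_\lambda^\bullet)$ (on vertex set $\lambda$), where $\vec{C}_\lambda^\bullet$ is the $\sq$-coherent modification of $\vec{C}_\lambda$ obtained via the construction of Theorem~\ref{thm37}(2). Set
\[
G_{\mu,\kappa}:=K_\mu\sqcup\bigsqcup_{\lambda\in\reg\cap(\mu,\kappa]}\mathcal{H}_\lambda.
\]
By Theorem~\ref{genericity_lemma}, $\vec{C}_\lambda$ witnesses $\p^-(\lambda,2,{\sq},\lambda,(\ns_\lambda^+)^V,2,\sigma)$ in $V[S\cap\mathbb{S}(\lambda)]$; using the factorization $\mathbb{P}\cong\mathbb{P}_\lambda\times\mathbb{S}(\lambda)\times\mathbb{P}^{\lambda^+}$ and Proposition~\ref{easton_prop}, this transfers to $V[S]$, and Corollary~\ref{incompactness_cor_1}(2) then yields $\chr(\mathcal{H}_\lambda)=\lambda$ in $V[S]$. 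Moreover, any thread through $\vec{C}_\lambda$ induces by truncation a thread through $\vec{C}_\lambda^\bullet$ of the form required by Lemma~\ref{lemma1}(2). Since $\chr(K_\mu)=\mu$ and $|V(G_{\mu,\kappa})|=\kappa$, the disjoint union has chromatic number exactly $\kappa$ in $V[S]$ (using density of regulars below any uncountable $\kappa$, whether singular or regular).

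To realize a value $\lambda\in[\mu,\kappa]\cap\card$, define in $V[S]$ the Easton-support product
\[
\mathbb{Q}_\lambda:=\prod^{\mathrm{Easton}}_{i\in\reg\cap(\lambda,\kappa]}\mathbb{S}^*(i)/(S\cap\mathbb{S}(i)),
\]
which introduces a thread through each $\vec{C}_i$ for $i\in\reg\cap(\lambda,\kappa]$. By Proposition~\ref{projection_prop} applied coordinate-wise, $\mathbb{P}*\dot{\mathbb{Q}}_\lambda$ is forcing-equivalent to the Easton-support product $\mathbb{P}'$ whose $i$-th factor is $\mathbb{S}^*(i)$ for $i\in\reg\cap(\lambda,\kappa]$ and $\mathbb{S}(i)$ elsewhere. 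By Proposition~\ref{closure_prop} plus the nice-name counting and chain-condition analysis applied to $\mathbb{P}$ itself, $\mathbb{P}'$ preserves cofinalities and $\gch$ over $V$; hence $\mathbb{Q}_\lambda\in\mathcal{P}$.

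In $V[S][\mathbb{Q}_\lambda]$, for each $i\in\reg\cap(\lambda,\kappa]$, the new thread through $\vec{C}_i$ yields via Lemma~\ref{lemma1}(2) that $\chr(\mathcal{H}_i)\le\aleph_0$. For each $i\in\reg\cap(\mu,\lambda]$, $\mathbb{Q}_\lambda$ is $\lambda^+$-directed closed and therefore $\lambda^+$-distributive, so it adds no new function $i\to\nu$ with $\nu<i$, and $\chr(\mathcal{H}_i)=i$ is preserved. Consequently, $\chr(G_{\mu,\kappa})=\max(\mu,\sup\{i:i\in\reg\cap(\mu,\lambda]\})=\lambda$, uniformly handling $\lambda=\mu$ (empty supremum), $\lambda>\mu$ regular (supremum attained at $i=\lambda$), and $\lambda>\mu$ singular (supremum via unbounded regulars). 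Conversely, any $\mathbb{Q}\in\mathcal{P}$ preserves cardinals, so $\chr(G_{\mu,\kappa})\ge\chr(K_\mu)=\mu$ and $\chr(G_{\mu,\kappa})\le|V(G_{\mu,\kappa})|=\kappa$; this gives $\chr_\mathcal{P}(G_{\mu,\kappa})\subseteq[\mu,\kappa]\cap\card$, and combined with the above the equality is established.

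The main obstacle is the preservation statement in the first step: showing that the capturing property of $\vec{C}_\lambda^\bullet$, guaranteed by Theorem~\ref{genericity_lemma} in $V[S\cap\mathbb{S}(\lambda)]$, survives passage to $V[S]$. The tail factor $\mathbb{P}^{\lambda^+}$ is $\lambda^+$-strategically closed and hence adds no new subsets of $\lambda$, causing no difficulty; the genuine issue is the initial segment $\mathbb{P}_\lambda$, which has only $\lambda^+$-c.c.\ and can introduce new cofinal subsets of $\lambda$. One must argue, using the $\lambda^+$-c.c.\ together with nice-name approximations of such new sequences, that the relevant capturing set remains stationary—an argument parallel in spirit to the preservation component of Theorem~\ref{iteration_lemma}.
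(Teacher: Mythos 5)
Your overall architecture matches the paper's: the graph is the disjoint union of $K_\mu$ with $C$-sequence graphs built from the generic $\square(k)$-sequences, the chromatic number is lowered by forcing with a quotient of the $\mathbb{S}^*$-product that threads the sequences at coordinates above $\lambda$, and it is held fixed at coordinates $\leq\lambda$ by a distributivity argument. However, the step you yourself flag as ``the main obstacle'' is precisely the heart of the matter, and you have not closed it. Theorem~\ref{genericity_lemma} gives the proxy principle only for sequences of cofinal subsets of $\lambda$ living in $V[S\cap\mathbb{S}(\lambda)]$, whereas $\mathbb{P}_\lambda$ genuinely adds new cofinal subsets of $\lambda$ (and new $\theta$-sequences of them), so for such new sequences the capturing set could a priori be empty; this is not a matter of ``preserving stationarity'' of a set you already have. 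Your proposed repair via ``nice-name approximations and the $\lambda^+$-c.c.'' is not obviously the right tool. The paper sidesteps the transfer problem entirely: it proves the capturing property (Definition~\ref{capturing}) \emph{directly in} $V[S_{k^+}]$ by a density argument --- if some $\vec A$ there were never captured, the sets $\Omega_\eta$ would cover $\acc(k)$, one of them would remain stationary after forcing with the threading quotient $\mathbb{P}^*_{k^+}/S_{k^+}$ (which preserves cofinalities), and a single condition extending the thread through two consecutive points of $A_\eta$ kills that stationarity --- and then invokes Lemma~\ref{large_chromatic_number_lemma} plus the $k^+$-distributivity of the tail (Proposition~\ref{easton_prop}(3)) to get $\chr(\mathcal G_k)=k$ in $V[S]$. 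A cleaner repair of your route, if you insist on it, would be to apply Theorem~\ref{genericity_lemma} with $V[S_\lambda]$ as the ground model (legitimate since $\mathbb{P}$ is a product), rather than a chain-condition argument; but as written the gap stands. Note also that the paper works with $G(\vec C^k)$ itself and never needs the post-processed sequence of Theorem~\ref{thm37}(2), since the genericity argument delivers capturing (including the $\min$-condition) outright.

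A secondary inaccuracy: you assert that $\mathbb{Q}_\lambda$ is $\lambda^+$-directed closed \emph{in $V[S]$}. Quotients of directed-closed posets by generic filters for projections need not be directed closed, and nothing in Propositions~\ref{projection_prop} or~\ref{closure_prop} gives this. What you actually need --- and what the paper proves --- is only that the quotient adds no new functions from $i$ to $i$ for regular $i\leq\lambda$; this follows from the forcing equivalence of $\mathbb{P}*\dot{\mathbb{Q}}(\lambda)$ with $\mathbb{P}_{\lambda^+}\times\mathbb{P}^*_{\lambda^+,\kappa^+}\times\mathbb{P}^{\kappa^+}$ together with the strategic-closure version of Easton's Lemma, yielding $\lambda^+$-distributivity (respectively $k^+$-distributivity over $V[S_{k^+}]$ in the singular case). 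Your final bookkeeping of the three cases $\lambda=\mu$, $\lambda>\mu$ regular, $\lambda>\mu$ singular is fine once these two points are repaired.
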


\begin{proof}
  If $\mu = \kappa$, then we can simply take $G_{\mu, \kappa} := K_\mu$, where
  $K_\mu$ denotes the complete graph on $\mu$ vertices. Thus, assume that $\mu < \kappa$.

  Work in $V[S]$. Let $k$ be an arbitrary regular, uncountable cardinal.
  Let $\vec{C}^k := \bigcup_{p \in S} p(k) = \langle C^k_\alpha \mid \alpha < k \rangle $
  be the generic $\square(k)$-sequence added by the $k^\mathrm{th}$ coordinate  of $\mathbb{P}$,
  and consider the corresponding graph  $\mathcal G_k := G(\vec{C}^k)$ of Definition~\ref{c_graph_defn}, using $G := \acc(k)$.

  \begin{claim}
    In $V[S_{k^+}]$, for every nonzero $\theta < k$ and every sequence $\vec{A} = \langle A_\eta \mid \eta < \theta \rangle$
    of cofinal subsets of $k$, there is a limit ordinal $\delta < k$ such that $\delta$ captures $\vec{A}$
    with respect to $\vec{C}^k$.\footnote{Recall Definition~\ref{capturing}.}
  \end{claim}

  \begin{proof}
    Fix such a $\theta$ and $\vec{A}$, and suppose the claim fails for $\vec{A}$. For each $\eta < \theta$, let
     $$\Omega_\eta:=\acc(k)\setminus \{\delta\in\acc(k)\mid \min(C^k_\delta)\ge \min(A_0)\ \&\ \exists\iota < \otp(C^k_\delta)[C^k_\delta(\iota), C^k_\delta(\iota+1) \in A_\eta]\}.$$ By our assumption, $\bigcup_{\eta < \theta} \Omega_\eta
    = \acc(k)$.

    In $V$, $\mathbb{P}^*_{k^+}$ is isomorphic to a dense subset of $\mathbb{P}_{k^+} * \mathbb{P}^*_{k^+}/\dot{S}_{\mathbb{P}_{k^+}}$,
    where $\dot{S}_{\mathbb{P}_{k^+}}$ is the canonical name for the $\mathbb{P}_{k^+}$-generic filter.
    By the arguments used to prove the analogous fact about $\mathbb{P}_{k^+}$, forcing with $\mathbb{P}^*_{k^+}$
    over $V$ preserves cofinalities and \gch. Therefore, forcing with $\mathbb{Q} := \mathbb{P}^*_{k^+}/S_{k^+}$
    over $V[S_{k^+}]$ preserves cofinalities and \gch. In particular, in $V[S_{k^+}]^{\mathbb{Q}}$,
    $k$ remains a regular cardinal, and it follows that there is $\eta < \theta$ such that $\Omega_\eta$ is stationary
    in $V[S_{k^+}]^{\mathbb{Q}}$.

    Let $\zeta := \sup\{\min(A_\eta) \mid \eta < \theta\}$. Since $k$ is regular, we have $\zeta < k$. By genericity,
    there is $\beta_0 \in \acc(k)$ such that $\min(C^k_{\beta_0}) > \zeta$. Let $q_0 \in \mathbb{Q}$ be such that
    $\dom(q_0) = \{k\}$ and $q_0(k) = \langle C^k_\alpha \mid \alpha \leq \beta_0 \rangle$.

    Find $q \leq q_0$ and $\eta < \theta$ such that $q \Vdash_{\mathbb{Q}}``\Omega_\eta$ is stationary in $k."$
    Let $\beta < k$ be such that $q(k) = \langle C^k_\alpha \mid \alpha \leq \beta \rangle$.
    Let $\xi_0 := \min(A_\eta \setminus (\beta + 1))$ and $\xi_1 := \min(A_\eta \setminus (\xi_0 + 1))$. By genericity,
    there is $\gamma < k$ such that $(C^k_\beta) \cup \{\beta, \xi_0, \xi_1\} \sq C^k_\gamma$. Define $q^* \leq_{\mathbb{Q}} q$
    by letting $\dom(q^*) := \dom(q)$, $q^*(i) := q(i)$ for all $i \in \dom(q) \setminus \{k\}$, and $q^*(k) :=
    \langle C^k_\alpha \mid \alpha \leq \gamma \rangle$. Let $R$ be $\mathbb{Q}$-generic over $V[S_k]$ with $q^* \in R$.

    Let $D := \bigcup_{r \in R}C^k_{\gamma^{r(k)}}$. Then $D$ is a thread through $\vec{C}^k$ and $\gamma \in \acc(D)$.
    Therefore, if $\iota = \otp(C^k_\gamma \cap \xi_0)$, then, for every $\delta \in \acc(D) \setminus \gamma$,
    we have $\min(C^k_\delta) = \min(C^k_{\beta_0}) > \min(A_\eta)$,
    $C^k_\delta(\iota) = \xi_0$, and $C^k_\delta(\iota+1) = \xi_1$. In particular, $\delta \notin \Omega_\eta$. Hence,
    $\Omega_\eta$ is non-stationary in $V[S_{k^+} * R]$, contradicting the fact that $q^* \in R$ and
    $q^* \leq q \Vdash_{\mathbb{Q}} ``\Omega_\eta$ is stationary$."$
  \end{proof}

  It then follows from Lemma \ref{large_chromatic_number_lemma} that  $V[S_{k^+}]\models\chr(\mathcal G_k) = k$.
  By $V[S]=V[S_{k^+}][S^{k^+}]$  and Proposition~\ref{easton_prop}(3), moreover,  $V[S]\models\chr(\mathcal G_k) = k$.

  Let $\mathcal G_{\mu, \kappa}$ be the disjoint graph union of $K_\mu$ and
  $\mathcal G_k$ for all regular, uncountable $k \in [\mu, \kappa]$. Then, in $V[S]$,
  $\chr(\mathcal G_{\mu,\kappa}) = \kappa$, and, as $K_\mu$ is a subgraph of $G_{\mu, \kappa}$, we know that,
  in any outer model of $V[S]$ with the same cardinals, $\chr(\mathcal G_{\mu, \kappa}) \geq \mu$. We thus
  must show that, for every cardinal $\lambda \in [\mu, \kappa)$, there is a cofinality-preserving,
  \gch-preserving poset $\mathbb{Q}(\lambda)$ such that, $V[S]^{\mathbb{Q}(\lambda)}\models\chr(\mathcal G_{\mu, \kappa}) = \lambda$.

  To this end, fix such a $\lambda$, and let $\mathbb{Q}(\lambda) := \mathbb{P}^*_{\lambda^+, \kappa^+}/S_{\lambda^+, \kappa^+}$.
  In $V$, let $\dot{\mathbb{Q}}(\lambda)$ be the canonical $\mathbb{P_{\lambda^+, \kappa^+}}$-name for $\mathbb{Q}(\lambda)$.
  Then $\mathbb{P}^*_{\lambda^+, \kappa^+}$ is isomorphic to a dense subset of $\mathbb{P}_{\lambda^+, \kappa^+} * \dot{\mathbb{Q}}(\lambda)$,
  so $\mathbb{P} * \dot{\mathbb{Q}}(\lambda) \cong \mathbb{P}_{\lambda^+} \times (\mathbb{P}_{\lambda^+, \kappa^+} * \dot{\mathbb{Q}}(\lambda))
  \times \mathbb{P}^{\kappa^+}$ is forcing equivalent to $\mathbb{P}_{\lambda^+} \times \mathbb{P}^*_{\lambda^+, \kappa^+} \times \mathbb{P}^{\kappa^+}$.
  This is itself a class-length Easton product, and standard arguments just like those for $\mathbb{P}$ show that forcing with
  $\mathbb{P}_{\lambda^+} \times \mathbb{P}^*_{\lambda^+, \kappa^+} \times \mathbb{P}^{\kappa^+}$ over $V$ preserves
  cofinalities and \gch. Therefore, $\mathbb{Q}(\lambda)$ preserves cofinalities
  and \gch\ over $V[S]$.

  We now show that, in $V[S]$, $\forces_{\mathbb{Q}(\lambda)}``\chr(\mathcal G_{\mu, \kappa}) = \lambda."$ First note that,
  for all regular, uncountable $k \in [\lambda^+, \kappa]$, forcing with $\mathbb{Q}(\lambda)$ adds a thread
  through $\vec{C}^k$, so that, by Lemma~\ref{lemma1}(2), $\forces_{\mathbb{Q}(\lambda)}``\chr(\mathcal G_k) \leq \aleph_0."$
  Consequently, $\Vdash_{\mathbb{Q}(\lambda)}``\chr(\mathcal G_{\mu, \kappa}) \leq \lambda."$

  To show the reverse inequality,   we consider three cases:

  $\br$ Suppose $\lambda = \mu$. As $K_\mu$ is a subgraph of $\mathcal G_{\mu, \kappa}$, we immediately obtain
  $\Vdash_{\mathbb{Q}(\lambda)}``\chr(\mathcal G_{\mu, \kappa}) \geq \lambda."$

  $\br$ Suppose $\lambda > \mu$ and $\lambda$ is a regular cardinal. It suffices to show that
  $\Vdash_{\mathbb{Q}(\lambda)}``\chr(\mathcal G_\lambda) = \lambda."$ To see this, it is enough
  to verify that $\mathbb{Q}(\lambda)$ does not add any new functions from $\lambda$ to $\lambda$.
  By the strategic closure version of Easton's Lemma,
  we have that, for all $j > \kappa^+$, in $V[S_{\lambda^+}]$, $\mathbb{P}^*_{\lambda^+, \kappa^+} \times \mathbb{P}_{\kappa^+, j}$
  is $\lambda^+$-distributive and hence does not add any new functions from $\lambda$ to $\lambda$.
  Since $\mathbb{P} * \dot{\mathbb{Q}}(\lambda)$ is forcing equivalent to $\mathbb{P}_{\lambda^+} \times \mathbb{P}^*_{\lambda^+, \kappa^+} \times \mathbb{P}^{\kappa^+}$,
  this implies that ${^\lambda}\lambda \cap V^{\mathbb{P} * \dot{\mathbb{Q}}(\lambda)} \subseteq V^{\mathbb{P}_{\lambda^+}}$.
  In particular, forcing with $\mathbb{Q}(\lambda)$ over $V[S]$ does not add any new functions from
  $\lambda$ to $\lambda$.

  $\br$ Suppose $\lambda > \mu$ and $\lambda$ is singular. As in the previous case, it suffices to show
  that, for all regular, uncountable $k \in [\mu, \lambda)$, $\mathbb{Q}(\lambda)$ does not add any new
  functions from $k$ to $k$ and, therefore, $\forces_{\mathbb{Q}(\lambda)}``\chr(\mathcal G_k) = k."$
  Fix such a $k$. In $V[S_{k^+}]$, again by the strategic closure version of Easton's Lemma,
  we have that, for all $j > \kappa^+$, $\mathbb{P}_{k^+, \lambda^+} \times \mathbb{P}^*_{\lambda^+, \kappa^+} \times \mathbb{P}_{\kappa^+, j}$
  is $k^+$-distributive and hence does not add any new functions from $k$ to $k$. Therefore,
  ${^k}k \cap V^{\mathbb{P} * \dot{\mathbb{Q}}(\lambda)} \subseteq V^{\mathbb{P}_{k^+}}
  \subseteq V^{\mathbb{P}_{\lambda^+}}$, thus completing the proof.
\end{proof}

\section*{Acknowledgments}
The authors were partially supported by the Israel Science Foundation (grant $\#$1630/14).

We thank S. Fuchino for informing us about \cite{rc_implies_frp}.
The results of this paper were presented by the first author at the Hebrew University Logic Seminar in January 2017 and
at the University of Helsinki Logic Seminar in April 2017,
and by the second author at the \emph{Set Theory} workshop in Oberwolfach, February 2017. We thank the organizers for the warm hospitality.

Finally, we thank the anonymous referees for their thoughtful feedback.

\end{document}